\newcommand{\set}[1]{\left\{ #1 \right\}}
\newcommand{\n}[1]{\left\| #1 \right\|}
\newcommand{\p}[1]{\left( #1 \right)}
\newcommand{\va}[1]{\left| #1  \right|}
\newcommand{\brac}[1]{\left\langle #1 \right\rangle}
\newcommand{\FBI}{\textup{FBI}}
\newcommand{\scat}{\textup{scat}}
\DeclareMathOperator{\re}{Re}
\DeclareMathOperator{\im}{Im}
\DeclareMathOperator{\tr}{tr}
\newtheorem{lemma}{Lemma}
\newtheorem{proposition}[lemma]{Proposition}
\newtheorem{thm}[lemma]{Theorem}
\theoremstyle{definition}
\newtheorem{definition}{Definition}
\theoremstyle{remark}
\newtheorem{remark}{Remark}
\title{Upper bound on the number of resonances for even asymptotically hyperbolic manifolds with real-analytic ends}
\author{Malo Jézéquel\footnote{Department of Mathematics, Massachusetts Institute of Technology, Cambridge, MA 02139. Email: mpjez@mit.edu}}
\date{}
\begin{document}

\maketitle

\begin{abstract}
We prove a polynomial upper bound on the number of resonances in a disk whose radius tends to $+ \infty$ for even asymptotically hyperbolic manifolds with real-analytic ends. Our analysis also gives a similar upper bound on the number of quasinormal frequencies for Schwarzschild--de Sitter spacetimes.
\end{abstract}

\section{Introduction}

The purpose of this work is to prove an upper bound for the number of resonances for even asymptotically hyperbolic manifolds with real-analytic (but \emph{a priori} not exactly hyperbolic) ends. Let us recall that an asymptotically hyperbolic manifold is a Riemannian manifold $(M,g)$ such that $M$ is the interior of a compact manifold with boundary $\overline{M}$ and there is an identification of a neighbourhood of $\partial \overline{M}$ with $[0,\epsilon[_{y_1} \times \partial \overline{M}_{y'}$ that puts the metric $g$ into the form
\begin{equation}\label{eq:normal_intro}
g = \frac{\mathrm{d} y_1^2 + g_1(y_1,y',\mathrm{d}y')}{y_1^2},
\end{equation}
where $g_1(y_1,y',\mathrm{d}y')$ is a family of metrics on $\partial \overline{M}$ depending on $y_1$. We say that $(M,g)$ is even if $g_1$ is a smooth function of $y_1^2$. We refer to \cite[\S 5.1]{dyatlov_zworski_book} for a detailed discussion of this notion. 

Letting $\Delta$ denote the (non-positive) Laplace operator on an even asymptotically hyperbolic manifold $(M,g)$ of dimension $n$, one commmonly introduces the family of operators, depending on the complex parameter $\lambda$,
\begin{equation}\label{eq:resolvent_L2}
\p{-\Delta - \frac{(n-1)^2}{4} - \lambda^2 }^{-1} : L^2(M) \to L^2(M), \quad \im \lambda > 0.
\end{equation}
Since the essential spectrum of $- \Delta$ is $[(n-1)^2/4, + \infty[$, this family of operators is well-defined and meromorphic for $\im \lambda > 0$, with maybe a finite number of poles between $0$ and $i (n-1)/2$ on the imaginary axis, corresponding to the eigenvalues of $- \Delta$ in $]0, (n-1)^2/4[$. Notice that the residues of these poles have finite ranks.

The \emph{scattering resolvent} of $(M,g)$ is then defined as the meromorphic continuation of \eqref{eq:resolvent_L2}, as provided by the following result.

\begin{thm}[\cite{mazzeo_melrose,guillarmou_resolvent}]\label{thm:extension_resolvent}
Let $(M,g)$ be an even asymptotically hyperbolic manifold of dimension $n$. Then the resolvent \eqref{eq:resolvent_L2} admits a meromorphic extension $R_{\scat}(\lambda)$ to $\mathbb{C}$ as an operator from $C_c^\infty(M)$ to $\mathcal{D}'(M)$, with residues of finite rank.
\end{thm}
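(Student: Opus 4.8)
The plan is to follow the boundary parametrix construction of Mazzeo--Melrose, incorporating Guillarmou's observation that evenness of $g$ is precisely what promotes the continuation from ``meromorphic off an exceptional arithmetic progression'' to ``meromorphic on all of $\mathbb{C}$''. Write $P(\lambda) = -\Delta - (n-1)^2/4 - \lambda^2$. Away from $\partial\overline M$ there is nothing to do: on any compact $K \Subset M$ the operator $P(\lambda)$ is elliptic with coefficients depending holomorphically on $\lambda \in \mathbb{C}$, so a standard interior parametrix $E_{\mathrm{int}}(\lambda)$ is automatically a holomorphic family on $\mathbb{C}$. Choosing a finite cover of $\overline M$ by one relatively compact interior piece and finitely many boundary charts $[0,\epsilon[_{y_1} \times U_{y'}$ and a subordinate partition of unity $\set{\chi_{\mathrm{int}}, \chi_1, \dots, \chi_N}$, with cutoffs $\psi_\bullet$ equal to $1$ near $\operatorname{supp}\chi_\bullet$, I would assemble a first approximate right inverse $Q_0(\lambda) = \chi_{\mathrm{int}} E_{\mathrm{int}}(\lambda) \psi_{\mathrm{int}} + \sum_j \chi_j E_j(\lambda) \psi_j$, where each $E_j(\lambda)$ is a boundary parametrix still to be built.

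In a boundary chart the evenness hypothesis lets me pass to the variable $x = y_1^2$, in which $g_1$, and hence (after a conjugation by a suitable power of $x$) all the coefficients of $P(\lambda)$, are smooth up to $x = 0$ with only \emph{integer} Taylor powers. Freezing these coefficients at $x = 0$ produces a model operator which is a bundle over $\partial\overline M$ of copies of the shifted Laplacian $-\Delta_{\mathbb{H}^n} - (n-1)^2/4 - \lambda^2$ on hyperbolic space; its resolvent has an explicit kernel built from hypergeometric functions of $y_1^2$, $\tilde y_1^2$ and the tangential distance, and---because only integer powers of $x$ occur---this kernel continues to a family $C_c^\infty \to C^\infty$ that is holomorphic, respectively meromorphic with finite-rank residues, on all of $\mathbb{C}$, instead of acquiring logarithmic terms at the points where the two indicial roots $\frac{n-1}{2} \pm i\lambda$ resonate. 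Taking $E_j(\lambda)$ to be this model resolvent transplanted to the chart and improved by finitely many terms of a Neumann-type iteration inside Mazzeo's $0$-pseudodifferential calculus, one obtains $P(\lambda) Q_0(\lambda) = I + K(\lambda)$ where, after multiplication by a fixed weight $x^m$ with $m$ large, $K(\lambda)$ is a holomorphic family on a weighted space (say $x^m L^2(M)$, or $\dot C^\infty(\overline M)$ and its dual) that is \emph{compact}---indeed smoothing, with Schwartz kernel vanishing to high order at the boundary faces---uniformly on compact subsets of $\mathbb{C}$.

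Granting this parametrix, the remaining steps are formal. For $\im\lambda$ large the resolvent \eqref{eq:resolvent_L2} exists and $Q_0(\lambda)$ is a genuine approximate inverse there, so $\n{K(\lambda)} < 1$ and $I + K(\lambda)$ is invertible; the analytic Fredholm theorem then shows $\lambda \mapsto \p{I + K(\lambda)}^{-1}$ is meromorphic on $\mathbb{C}$ with finite-rank residues. Hence $R_{\scat}(\lambda) := Q_0(\lambda) \p{I + K(\lambda)}^{-1}$ is meromorphic on $\mathbb{C}$ with finite-rank residues; it maps $C_c^\infty(M)$ into $\mathcal{D}'(M)$ (in fact into spaces of polyhomogeneous conormal distributions); and since for $\im\lambda$ large it is a two-sided inverse of $P(\lambda)$ it agrees with \eqref{eq:resolvent_L2} on $\im\lambda > 0$ by uniqueness of meromorphic continuation.

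The main obstacle is the boundary model step: proving that the model resolvent---and, propagated through the parametrix construction, the full resolvent---really does continue meromorphically across the exceptional set $\set{\lambda : 2i\lambda \in \mathbb{N}_0}$ with only finite-rank poles and without essential singularities. This is exactly where evenness enters: one must track that the indicial roots $\frac{n-1}{2} \pm i\lambda$ interact with the purely-even (in $y_1$, i.e.\ integer powers of $x = y_1^2$) Taylor expansion of the metric so that no half-integer powers of $y_1$ and no logarithms are generated in the iteration; without evenness genuine essential singularities occur, and it is Guillarmou's refinement of Mazzeo--Melrose that settles this. A secondary, purely technical, point is the bookkeeping in the $0$-calculus needed to make $K(\lambda)$ genuinely compact on a space on which $I + K(\lambda)$ is Fredholm---choosing the weight $m$ and the number of Neumann corrections, and checking all mapping properties uniformly for $\lambda$ in compact subsets of $\mathbb{C}$.
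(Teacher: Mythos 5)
The paper does not prove this statement at all: it is quoted from the literature (\cite{mazzeo_melrose,guillarmou_resolvent}), and the paper's own construction of the scattering resolvent is the alternative route of Vasy, which it then upgrades with real-analytic microlocal analysis. Your proposal instead reconstructs the proof of the cited references: the Mazzeo--Melrose $0$-calculus parametrix, with the boundary model given by the resolvent of $-\Delta_{\mathbb{H}^n}-\tfrac{(n-1)^2}{4}-\lambda^2$, and Guillarmou's observation that evenness (only integer powers of $x=y_1^2$ in the expansion of the metric) is what prevents essential singularities at the exceptional points and yields meromorphy on all of $\mathbb{C}$; analytic Fredholm theory then finishes. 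That is a legitimate and historically accurate route, and your outline of it is correct, but be aware that the step you flag as ``the main obstacle'' --- continuing the model resolvent across $\set{\lambda: 2i\lambda\in\mathbb{N}_0}$ and propagating this through the Neumann iteration without generating half-integer powers or logarithms --- is precisely the substantive content of those papers, so as written your argument defers the heart of the proof to the references rather than supplying it. By contrast, the approach underlying the present paper (Vasy's method, cf.\ \S\ref{section:asymptotically_hyperbolic} and Proposition \ref{proposition:general_statement}) uses evenness differently and earlier: one passes to the even compactification, glues $M$ into a closed extension $X$, and conjugates and rescales the shifted Laplacian into a family $\mathcal{P}_h(\omega)$ which is elliptic inside $Y$, has a radial source/sink structure over $\partial Y$, and can be made elliptic outside $\overline{Y}$ by a complex-absorbing-type modification; Fredholm theory plus analytic Fredholm continuation on fixed function spaces then gives the meromorphic extension with finite-rank poles, with no exceptional arithmetic progression to worry about and no explicit model resolvent needed. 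The trade-off is the usual one: the $0$-calculus route gives precise polyhomogeneous structure of the resolvent kernel at the boundary, while the Vasy route gives a softer functional-analytic construction that localizes the difficulty at the radial set --- which is exactly what the paper needs in order to insert real-analytic radial-type estimates and count resonances.
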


In the case of manifolds that are exactly hyperbolic near infinity, one may also refer to \cite{guizwor_resolvent}. Notice that we do not use here the same spectral parameter as in \cite{mazzeo_melrose,guillarmou_resolvent,guizwor_resolvent}. The spectral parameter from these references is given in terms of our $\lambda$ as $\zeta = \frac{n-1}{2} - i \lambda$. Another proof of Theorem \ref{thm:extension_resolvent} has been given by Vasy in \cite{vasy_method} (see also \cite{vasy_method_2}, \cite{vasy_revisited} and \cite[Chapter 5]{dyatlov_zworski_book}). 

The poles of the scattering resolvent (the meromorphic continuation of \eqref{eq:resolvent_L2}) are called the resonances of $(M,g)$. If $\mu \neq 0$ is a scattering resonance for $(M,g)$ then we define the multiplicity of $\mu$ as the rank of the operator 
\begin{equation}\label{eq:operateur_rang_fini}
\frac{i}{\pi} \int_\gamma \lambda R_{\scat}(\lambda) \mathrm{d}\lambda,
\end{equation}
where $\gamma$ is a small positively oriented circle around $\mu$ (so that the index of $\mu$ with respect to $\gamma$ is $1$, and the index of any other resonance is zero). That this operator has finite rank follows from the fact that the residues of $R_{\scat}(\lambda)$ have finite ranks. Another definition for the multiplicity of resonances may be found for instance in \cite[Definition 1.2]{guillopezworski}, but it coincides with the one we gave when $\mu$ is non-zero (see \cite[Proposition 2.11]{guillopezworski}). The definition of the multiplicity of $0$ as a resonance is more subtle (and will not matter in our case), see the discussion after Theorem 1 in \cite{zworski_multiplicity_zero}. Notice that in \cite{mazzeo_melrose,vasy_method}, the scattering resolvent $R_{\scat}(\lambda)$ is constructed as an operator from the space $\dot{C}^\infty(M)$ of smooth functions on $\overline{M}$ that vanish at infinite orders on $\partial \overline{M}$ to its dual. Since $C_c^\infty(M)$ is contained in $\dot{C}^\infty(M)$, we stated in Theorem \ref{thm:extension_resolvent} a weaker result. Notice however that, since $C_c^\infty(M)$ is dense in $\dot{C}^\infty(M)$, this simplification does not modify the notion of multiplicity of a resonance.

Our main result is an upper bound on the number of resonances for even asymptotically hyperbolic manifolds with real-analytic ends (as defined in \S \ref{section:geometric_assumption}).

\begin{thm}\label{theorem:main}
Let $(M,g)$ be an even asymptotically hyperbolic manifold real-analytic near infinity (as defined in \S \ref{section:geometric_assumption}) of dimension $n$. For $r > 0$, let $N(r)$ denote the number of resonances of $(M,g)$ of modulus less than $r$, counted with multiplicities. Then:
\begin{equation}\label{eq:bound_main}
N(r) \underset{r \to + \infty}{=} \mathcal{O}(r^n).
\end{equation}
\end{thm}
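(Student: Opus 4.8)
The plan is to realize the resonances in suitable sectors, counted with multiplicity, as the zeros of holomorphic functions of $\lambda$ of order at most $n$ (i.e. bounded by $\exp(C\brac{\lambda}^n)$), and then to read \eqref{eq:bound_main} off Jensen's formula. Starting from the meromorphic continuation of Theorem~\ref{thm:extension_resolvent}, I would first reduce to a holomorphic Fredholm family by \emph{complex scaling in the analytic ends}. In the ends, setting $y_1 = e^{-t}$ and conjugating by $e^{(n-1)t/2}$, the operator $-\Delta - (n-1)^2/4 - \lambda^2$ becomes $-\partial_t^2 - \lambda^2$ plus a second-order term whose coefficients are real-analytic in $t$ for $t$ large — this is where the real-analyticity of $g$ near infinity is used — and decay exponentially as $t \to + \infty$. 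One may therefore deform $t \mapsto e^{i\theta}t$ for $t$ large, $0 < \theta < \pi/2$, interpolated analytically with the identity over the (merely smooth) compact core, producing a holomorphic family $\mathcal{P}_\theta(\lambda)$ that is now \emph{elliptic outside a fixed compact set} — no complex absorbing potential being needed — hence Fredholm of index zero between suitable Sobolev spaces, with $\mathcal{P}_\theta(\lambda)^{-1}$ meromorphic for $\lambda$ in a sector $\Sigma_\theta$ and with poles there exactly at the resonances of $(M,g)$, the multiplicities matching \eqref{eq:operateur_rang_fini}. This is the asymptotically hyperbolic analogue of the Aguilar--Balslev--Combes deformation; justifying it in this geometry — in particular the identification of resonances and of their multiplicities — requires an analytic-microlocal (\FBI{}-transform) analysis of the continued operator near infinity.

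With $h = \brac{\lambda}^{-1}$, the operator $h^2\mathcal{P}_\theta(\lambda)$ is semiclassically elliptic away from a fixed bounded region of phase space sitting over the compact core. The next step is to build a holomorphic family $\mathcal{Q}(\lambda)$ of invertible operators — a semiclassical parametrix for $\mathcal{P}_\theta(\lambda)$, smooth on the core and analytic on the scaled ends — with $\mathcal{P}_\theta(\lambda)\mathcal{Q}(\lambda) = I - \mathcal{K}(\lambda)$, where $\mathcal{K}(\lambda)$ is holomorphic in $\lambda$, trace class, and — the crucial point — satisfies $\n{\mathcal{K}(\lambda)}_{\mathcal{C}_1} = \mathcal{O}(\brac{\lambda}^n)$ uniformly for $\lambda \in \Sigma_\theta$, the exponent $\brac{\lambda}^n = h^{-n}$ reflecting the semiclassical Weyl volume of that bounded region while the contribution of the (non-compact) ends is made exponentially small by the \FBI{} analysis — by contrast, the error over the ends is only $\mathcal{O}(h^\infty)$ in the merely smooth setting, which is what forces the weaker known bound $\mathcal{O}(r^{n+1})$ there, and for exactly hyperbolic ends one instead exploits the explicit model resolvent, the present argument replacing that computation by analytic microlocal analysis. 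Granting this, $D_\theta(\lambda) := \det(I - \mathcal{K}(\lambda))$ is holomorphic in a neighbourhood of $\overline{\Sigma_\theta}$, its zeros are precisely the resonances in $\Sigma_\theta$ counted with multiplicity, and $\log\va{D_\theta(\lambda)} \le \n{\mathcal{K}(\lambda)}_{\mathcal{C}_1} = \mathcal{O}(\brac{\lambda}^n)$.

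To conclude: since $\mathcal{P}_\theta(\lambda)$ is invertible for $\lambda \in \Sigma_\theta$ with $\va{\re\lambda}$ large (ellipticity plus a high-frequency estimate), $D_\theta$ is not identically zero, and the standard zero-counting argument — Jensen's formula together with a lower bound on $\va{D_\theta}$ at a well-chosen point of large modulus, applied after a conformal change of variables sending $\Sigma_\theta$ to a half-plane — shows that the number of zeros of $D_\theta$ of modulus $\le r$, i.e. the number of resonances in $\Sigma_\theta$ of modulus $\le r$, is $\mathcal{O}(r^n)$. Either running this for finitely many scaling angles $\theta \in (0,\pi/2)$, combined with the reflection symmetry $\lambda \mapsto -\overline{\lambda}$ of the resonance set, or carrying out the deformation inside Vasy's global framework (the analyticity of the ends letting one replace the complex absorbing potential there by a complex dilation) accounts for all resonances of modulus $\le r$ except those in a fixed bounded set, which are finite in number; hence $N(r) = \mathcal{O}(r^n)$. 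The Schwarzschild--de Sitter spacetimes are treated identically, their horizons furnishing real-analytic ends to which the above applies.

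I expect the main obstacle to be the estimate $\n{\mathcal{K}(\lambda)}_{\mathcal{C}_1} = \mathcal{O}(\brac{\lambda}^n)$ \emph{uniformly} over an unbounded sector, hand in hand with the assertion that the complex deformation neither creates nor destroys resonances and respects their multiplicities. Both hinge on a delicate analytic-microlocal (\FBI{}-transform) analysis of $\mathcal{P}_\theta(\lambda)$ across the transition region where the smooth compact core meets the complex-scaled real-analytic end, with explicit control of every constant in terms of $\va{\lambda}$ and the scaling angle. Once that analysis is in place — and the resonances identified with the zeros of $D_\theta$ — the passage to the Fredholm determinant and the Jensen argument are routine.
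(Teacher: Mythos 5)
There is a genuine gap, and it is structural rather than technical: your argument never uses the evenness assumption, and as a consequence it cannot reach the statement \eqref{eq:bound_main}. An Aguilar--Balslev--Combes deformation $t\mapsto e^{i\theta}t$ in the ends with $0<\theta<\pi/2$ (the restriction $\theta<\pi/2$ is forced, since the holomorphic extension of the coefficients lives in $\{\va{e^{-t}}\ \text{small}\}$ and the model symbol $e^{-2i\theta}\tau^2-\lambda^2$ must stay elliptic) only produces a family $\mathcal{P}_\theta(\lambda)$ whose poles capture the resonances in a sector $\Sigma_\theta$ with $\arg\lambda$ bounded away from $-\pi/2$. Taking finitely many angles $\theta\in(0,\pi/2)$ and using the symmetry $\lambda\mapsto-\overline{\lambda}$ still leaves out a full sector around the negative imaginary axis; intersected with the disc of radius $r$ this is a region whose size grows with $r$, not ``a fixed bounded set'', and it may contain resonances. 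So, even granting all the analytic-microlocal work you defer (Fredholm property, identification of poles and multiplicities, the uniform trace-norm bound $\mathcal{O}(\brac{\lambda}^n)$ over an unbounded sector), your scheme proves at best a bound on the number of resonances in sectors of the form \eqref{eq:sector_Wang}, i.e.\ a result of the type already obtained by Wang, not the disc bound of the theorem. This is not an accident: without evenness, Guillarmou's work shows essential singularities can appear near that excluded direction, so any proof of \eqref{eq:bound_main} must use evenness in an essential way; your parenthetical fallback (``carrying out the deformation inside Vasy's global framework'') is precisely where the real difficulty sits and is not an argument.

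For comparison, the paper does not scale in the ends at all. It uses the evenness to build the even extension $X$ across $\partial\overline{M}$, conjugates to Vasy's modified operator, which is elliptic in $Y$, has a source/sink (radial) structure over $\partial Y$, and is made elliptic beyond $\partial Y$ by a $C^\infty$ modification; the real-analyticity is exploited only near $\partial Y$, through an FBI transform, a logarithmic escape function supported there, and an exponential weight $e^{\psi/h}$, yielding a real-analytic replacement for the radial estimates that removes the usual threshold. Crucially, the counting is done in the rescaled variable $\omega=h\lambda$ on a \emph{fixed} disc, with $h$-dependent Hilbert spaces and a trace-class perturbation $A$ of trace norm $\mathcal{O}(h^{-n})$; Jensen's formula applied to $\det\p{I-(\widetilde{P}_h(\omega)+A)^{-1}A}$ on that compact region gives at most $Ch^{-n}$ non-invertibility points, hence $N(r)=\mathcal{O}(r^n)$ with $r\sim\delta/h$. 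So the determinant-plus-Jensen endgame you propose is parallel in spirit, but it is run on a bounded $\omega$-region with the large parameter in the spaces, which is what lets the paper count in a full disc; your sector-by-sector scaling cannot.
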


This upper bound is natural, since it is coherent with the asymptotic for the number of eigenvalues for the Laplacian on a closed Riemannian manifold given by Weyl law. There are also non-compact examples for which the bound \eqref{eq:bound_main} is optimal, see the lower bounds from \cite{guillopezworski,borthwickhyperbolic} discussed below.

There is a long tradition of studies of such counting problems in scattering theory, going back to the work of Tullio Regge in the fifties \cite{regge1958analytic}. Results similar to Theorem \ref{theorem:main} have been established in the context of scattering (e.g. by a compactly supported potential or by certain black boxes) on odd-dimensional \emph{Euclidean} spaces \cite{melrose_obstacle,boundscattering,sz,Vodev}. In the context of asymptotically hyperbolic manifolds, the bound \eqref{eq:bound_main} is known for manifolds with \emph{exactly} hyperbolic ends \cite{guillope_zworski_upper_bound,CuevasVodev,borthwickhyperbolic}. Still in the case of manifolds with exactly hyperbolic ends, we also have some lower bounds available: in the case of surfaces Guillopé and Zworski \cite{guillopezworski} proved that $r^2 = \mathcal{O}(N(r))$, which implies that \eqref{eq:bound_main} is optimal in that case. In higher dimension $n$, Borthwick \cite{borthwickhyperbolic} proved a similar lower bound $r^{n} = \mathcal{O}(N^{\textup{sc}}(r))$ for compact perturbations of conformally compact hyperbolic manifolds (a stronger assumption than just having exactly hyperbolic ends). This lower bound is obtained for the counting function $N^{\textup{sc}}(r)$ associated to a larger set of resonances than $N(r)$, and that also satisfies \eqref{eq:bound_main}. However, a few cases in which the same lower bound for $N(r)$ follows are given in \cite{borthwickhyperbolic}. Finally, a lower bound for $N(r)$ of the form
\begin{equation*}
\limsup_{r \to + \infty} \frac{\log N(r)}{\log r} = n
\end{equation*}
is proven for generic compact perturbations of a manifold with exactly hyperbolic ends in \cite{loweroptimal}.

Leaving the context of manifolds with exactly hyperbolic ends, much less is known on the asymptotic of the counting function $N(r)$. The bound \eqref{eq:bound_main} is established by Borthwick and Philipp \cite{warped} in the case of asymptotically hyperbolic manifolds with warped-product ends, that is for which the coordinates $(y_1,y')$ in \eqref{eq:normal_intro} may be chosen so that $g_1(y_1,y',\mathrm{d}y') = g_1(y',\mathrm{d}y')$ does not depend on $y_1$. The proof of a similar bound is sketched in \cite{sketch} for a class of asymptotically hyperbolic manifolds with ends that are asymptotically warped. In \cite{WangAnalytic}, Wang establishes, for certain real-analytic asymptotically hyperbolic metrics on $\mathbb{R}^3$, a polynomial bound $\mathcal{O}(r^6)$ for the number of resonances in a sector of the form 
\begin{equation}\label{eq:sector_Wang}
\set{ z \in \mathbb{C} : \epsilon < \va{z} < r, - \frac{\pi}{2} + \epsilon < \arg z < \frac{3\pi}{2} - \epsilon}
\end{equation}
when $r$ tends to $+ \infty$ while $\epsilon > 0$ is fixed. The evenness assumption is not made in \cite{WangAnalytic}, hence the necessity to count resonances in sectors of the form \eqref{eq:sector_Wang} rather than in disks (one has to avoid the essential singularities that can appear in the noneven case according to \cite{guillarmou_resolvent}). In the even case, our result, Theorem \ref{theorem:main}, improves the bound from \cite{WangAnalytic}, not only because we can count resonances in a disk, but also because our result, valid in any dimension, gives a better exponent in the $3$-dimensional case.

Let us point out that the upper bound \eqref{eq:bound_main} is also satisfied by the counting functions for the \emph{Ruelle resonances} of a real-analytic Anosov flow, as follows from a result of Fried \cite{friedzeta} based on techniques introduced by Rugh \cite{R1,R2}. We gave a new proof of this result with Guedes Bonthonneau in \cite{BJ20}, adapting techniques originally developed by Helffer and Sjöstrand \cite{helfferResonancesLimiteSemiclassique1986, sjostrandDensityResonancesStrictly1996}. The tools of real-analytic microlocal analysis that we use in the present paper rely heavily on \cite{BJ20}.

The main idea behind the proof of Theorem \ref{theorem:main} is to adapt the method of Vasy \cite{vasy_method} to construct the scattering resolvent, by introducing tools of real-analytic microlocal analysis. The method of Vasy does not only apply to even asymptotically hyperbolic manifolds, it may also be used to study resonances associated to the wave equation on \emph{Schwarzschild--de Sitter spacetimes} (in this context, resonances are also called \emph{quasinormal frequencies}). The interested reader may for instance refer to \cite[\S 6]{dafermos_rodnianski} for a description of the geometry of Schwarzschild--de Sitter spacetimes. Consequently, our method also gives an upper bound on the number of resonances (or quasinormal frequencies) for Schwarzschild--de Sitter spacetimes.

\begin{thm}\label{theorem:schwarzschild}
The number of quasinormal frequencies of modulus less than $r$ for a Schwarzschild--de Sitter spacetime is $\mathcal{O}(r^3)$ when $r$ tends to $+ \infty$.
\end{thm}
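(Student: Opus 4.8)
The plan is to run the same argument as for Theorem~\ref{theorem:main}: Vasy's method \cite{vasy_method} was tailored to treat Schwarzschild--de Sitter spacetimes alongside even asymptotically hyperbolic manifolds, and the Schwarzschild--de Sitter metric is real-analytic near the event and cosmological horizons, so the real-analytic microlocal tools of \cite{BJ20} apply here with only cosmetic changes. First I would recall the reduction: the quasinormal frequencies are the poles of the meromorphic continuation of the resolvent of the stationary operator on the spatial slice $X \simeq \, ]r_-,r_+[_r \times \mathbb{S}^2$, and after the classical change of coordinates that makes the metric smooth (in fact analytic) across the two horizons one obtains, on a compact real-analytic $3$-manifold $\widehat{X}$ extending $X$ past the horizons, a holomorphic family $P(\lambda)$ of analytic second-order operators, Fredholm of index $0$ between suitable anisotropic function spaces, whose characteristic values with multiplicities are exactly the quasinormal frequencies.

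Next comes the analytic heart of the matter: using the calculus of \cite{BJ20} one constructs a holomorphic and invertible approximate inverse $Q(\lambda)$ with $P(\lambda) Q(\lambda) = I + K(\lambda)$, where $K(\lambda)$ is trace class and holomorphic in $\lambda$, with trace norm $\mathcal{O}(\va{\lambda}^{3})$ on a large disc, while on a fixed conic neighbourhood of the imaginary axis $I + K(\lambda)$ is invertible with polynomially bounded inverse (there $P(\lambda)$ is already invertible by elliptic and propagation-of-singularities estimates). Since $Q(\lambda)$ is invertible, $P(\lambda)$ fails to be invertible precisely when $I + K(\lambda)$ does, with matching multiplicities. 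The exponent $3 = \dim \widehat{X}$ enters exactly as the semiclassical phase-space volume $h^{-n}$ with $h \sim \va{\lambda}^{-1}$; the role of the analyticity of the metric near the horizons is, as in the asymptotically hyperbolic case, to replace the a priori exponentially large $C^\infty$ remainder by a polynomially bounded one.

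Finally, set $D(\lambda) = \det(I + K(\lambda))$. This is an entire function whose zeros counted with multiplicity are the quasinormal frequencies, and $\log \va{D(\lambda)} \le \n{K(\lambda)}_{\mathrm{tr}} = \mathcal{O}(\va{\lambda}^{3})$. Choosing a base point where $\va{D}$ is bounded below, which is available in the region of invertibility, and applying Jensen's formula gives $\mathcal{O}(r^3)$ zeros in the disc of radius $r$, which is the bound claimed in Theorem~\ref{theorem:schwarzschild}; the finitely many quasinormal frequencies near $0$ are harmless since we let $r \to +\infty$.

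The step I expect to be the main obstacle is the construction of $Q(\lambda)$ near the horizons. In Vasy's microlocal picture these are generalized radial sets, so one needs an analytic version of the radial-point propagation estimates and of the associated Grushin problem, uniform up to and through the horizons; this, rather than a mechanical transcription of the asymptotically hyperbolic argument, is the genuinely new ingredient, and it is also what pins down the precise anisotropic spaces to place on $\widehat{X}$.
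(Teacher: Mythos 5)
Your outline follows essentially the same route as the paper: extend the conjugated operator analytically across the two horizons to a compact real-analytic $3$-manifold, obtain a holomorphic Fredholm family, make it invertible by a trace-class perturbation with trace norm $\mathcal{O}(r^3)$, and count zeros of the resulting determinant by Jensen's formula, the exponent $3$ coming from the phase-space volume $h^{-n}$ with $h\sim\va{\lambda}^{-1}$; this is exactly the mechanism of \S\ref{subsection:quasinormal_frequencies} combined with \S\ref{subsec:counting_resonances}. Two remarks. First, the step you flag as the main obstacle is indeed where all the work lies, and it is worth recording how the paper resolves it, since it is not by an analytic parametrix $Q(\lambda)$ on a single fixed anisotropic space: instead one rescales $\omega=h\lambda$ so that the counting takes place in a \emph{fixed} disc, and works on spaces $\mathcal{H}_1,\mathcal{H}_2$ that \emph{depend on $h$}, built from an FBI transform with a logarithmic escape function supported only near the horizons together with the exponential weight $e^{\psi/h}$ (Proposition \ref{proposition:general_statement}, proved in \S\ref{section:general_construction}); a fixed Sobolev-type space for all $\lambda$ in the disc would collide with the $C^\infty$ radial threshold you allude to. The invertible comparison family is then $P_h(\omega)+A$ with $A$ a compactly microlocalized (low-frequency) trace-class cutoff of trace norm $\mathcal{O}(h^{-n})$ (Lemma \ref{lemma:trace_class}), rather than a family $Q(\lambda)$ with $PQ=I+K$; the two formulations are equivalent, since $K=-A(P_h(\omega)+A)^{-1}$, and your determinant-plus-Jensen step is then Lemma \ref{lemma:bound_extended_resonances} verbatim (note that at the base point one does not get $\va{D}$ bounded below by a constant, only $-\log\va{D}=\mathcal{O}(h^{-n})$, which is what Jensen needs). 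Second, your claim that the characteristic values ``with multiplicities are exactly the quasinormal frequencies'' is stronger than necessary and is not what the paper proves: after identifying $R_h(\lambda)$ with $R_{\textup{SdS}}(\lambda)$ for $\im\lambda>0$ (Lemma \ref{lemma:identification_resolvante}), a Gohberg--Sigal factorization gives only the one-sided inequality that the multiplicity of a quasinormal frequency $\lambda$ is at most the null multiplicity of $P_h(\cdot)$ at $h\lambda$, and this inequality is all the upper bound requires.
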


It is proven in \cite{sabarreto_zworski} that the quasinormal frequencies for a Schwarzschild--de Sitter spacetime are well approximated by the pseudopoles
\begin{equation*}
c\p{\pm \ell \pm \frac{1}{2} - i \left(k + \frac{1}{2}\right)}, 
\end{equation*}
for $k \in \mathbb{N}$ and $\ell \in \mathbb{N}^*$, the corresponding pole having multiplicity $2 \ell +1$. Here, $c$ is a constant depending on the mass of the black hole and the cosmological constant. However, the approximation given in \cite{sabarreto_zworski} is only effective for a pseudopole $\mu$ such that $\va{\mu}$ tends to $+ \infty$ while the imaginary part of $\mu$ remains bounded from below. Consequently, while Theorem \ref{theorem:schwarzschild} seems reasonable in view of the approximation result from \cite{sabarreto_zworski}, these two results discuss two different asymptotics. The result from \cite{sabarreto_zworski} cannot be used to prove Theorem \ref{theorem:schwarzschild}, nor to prove that Theorem \ref{theorem:schwarzschild} is sharp (even though it suggest that it should be the case).

It may be possible that the method of the proof of Theorems \ref{theorem:main} and \ref{theorem:schwarzschild} generalize to the case of slowly rotating Kerr--de Sitter black holes (as the method of Vasy also applies in this context \cite[\S 6]{vasy_method}). However, there are some additional technical difficulties that would probably arise in that case, due to the microlocal geometry being more complicated than in the Scharzschild--de Sitter case. In particular, there are bicharacteristics that originate at the source above the event horizon, then enter the domain of outer communication and eventually leave it. Our strategy of proof would require the propagation of singularities along these bicharacteristics using real-analytic microlocal analysis. Consequently, in order to deal with Kerr--de Sitter spacetimes, one cannot use real-analytic tools only near the event and cosmological horizon, as it is the case in the proof of Theorem \ref{theorem:schwarzschild}, see Remark \ref{remark:explain_assumption}. Since the coefficients of Kerr--de Sitter spacetimes are real-analytic on the whole domain of outer communication, it is not unlikely that this problem may be solved. In any case, we expect that one would need to use an escape function more carefully designed than in our analysis below.

\subsection*{Idea of the proof} As mentioned above, the proof of Theorems \ref{theorem:main} and \ref{theorem:schwarzschild} is based on an adaptation of the method of Vasy \cite{vasy_method} to construct the scattering resolvent, by introducing tools of real-analytic microlocal analysis. Our approach of the method of Vasy is mostly based on the exposition from \cite[Chapter 5]{dyatlov_zworski_book}.

The starting point of the proof of Theorems \ref{theorem:main} and \ref{theorem:schwarzschild} is the following observation. When using the method of Vasy to construct the scattering resolvent, one will construct a meromorphic extension to \eqref{eq:resolvent_L2} on a half plane of the form
\begin{equation}\label{eq:half_plane}
\set{ \lambda \in \mathbb{C} : \im \lambda > - C},
\end{equation}
for a given $C > 0$, by studying the action of a modified Laplacian on a functional space $H_C$ \emph{that depends on $C$}. The constant $C$ may be chosen arbitrarily large, so that we get indeed a meromorphic continuation to $\mathbb{C}$, but this requires a change in the space on which the modified Laplacian is acting. 

In the context of even asymptotically hyperbolic manifolds, the space $H_C$ is constructed in the following manner: one embeds $M$ as a relatively compact subset of a manifold $X$, and replaces the operator $- \Delta - (n-1)^2/4 - \lambda^2$ by a family of modified Laplacians. These modified Laplacians are elliptic on $M$ but have a source/sink structure above the boundary of $M$ in $X$. One can then set up a Fredholm theory for the modified Laplacians by using microlocal radial estimates (see for instance \cite[\S E.4]{dyatlov_zworski_book}). However, radial estimates in the $C^\infty$ category are limited by a threshold condition. In our setting, it imposes choosing space $H_C$ as a space of functions with a number of derivatives proportional to $C$ in order to get a meromorphic continuation of \eqref{eq:resolvent_L2} on the half-plane \eqref{eq:half_plane}. 

Consequently, working only with $C^\infty$ tools, one will \emph{a priori} only have access to bound on the number of resonances when restricting to a half-plane of the form \eqref{eq:half_plane}. A natural idea to tackle this difficulty is to work with a space ``$H_\infty$'' of functions that are smooth near the boundary of $M$ in $X$ (in our case, this would be real-analytic objects). If one is able to prove a real-analytic version of the radial estimates, it should be possible to bypass the threshold condition and construct directly the meromorphic continuation of \eqref{eq:resolvent_L2} to the whole $\mathbb{C}$, working on a single space $H_\infty$. One can then hope that this functional analytic setting can be used to prove a global bound on the number of resonances, without the need to restrict to a half-plane of the form \eqref{eq:half_plane}. We will use the tools from \cite{BJ20}, based on the work of Helffer and Sjöstrand \cite{helfferResonancesLimiteSemiclassique1986,sjostrandDensityResonancesStrictly1996}, to prove an estimate that is in some sense a real-analytic version of a radial estimate (see also \cite{galkowskiViscosityLimits0th2019}). Notice that similar estimates are proved in \cite{galkowskiAnalyticHypoellipticityKeldysh2020,BGJ} in different geometric contexts, and with a focus more on the hypoellipticity statement that may be deduced from the radial estimates rather than on the functional analytic consequences. In some sense, the results on resonances for $0$th order pseudodifferential operators in \cite{galkowskiViscosityLimits0th2019} and the results on real-analytic and Gevrey Anosov flows from \cite{BJ20} are already implicitly based on real-analytic radial estimates.

There is an important technical difference between the idea of the proof of Theorems \ref{theorem:main} and \ref{theorem:schwarzschild} as depicted above and the way the proof is actually written. Indeed, we cannot work with a space $H_\infty$ of functions that are analytic everywhere on $X$ (in particular because we do not want to assume that $g$ is analytic everywhere in $M$). Due to the lack of real-analytic bump functions, it is not easy to construct a space of functions that are real-analytic somewhere but have (at most) finite differentiability somewhere else, and that can be used to construct the scattering resolvent. We solve this issue using a strategy that was already present in \cite{BGJ}: we introduce a semiclassical parameter $h > 0$ and work with a space of distributions $\mathcal{H}$ on $X$ that depends on $h$. Let us point out that the \emph{space} $\mathcal{H}$ really depends on $h$, not only its norm. As $h$ tends to $0$, the elements of $\mathcal{H}$ are more and more regular near the boundary of $M$ in $X$. We can then invert a rescaled modified Laplacian acting on $\mathcal{H}$ after the addition of a trace class operator whose trace class norm is controlled as $h$ tends to $0$, and the upper bound from Theorems \ref{theorem:main} and \ref{theorem:schwarzschild} will follow.

\subsection*{Structure of the paper} In \S \ref{section:general_statement}, we introduce a set of general assumptions that will allow us to deal simultaneously with the analysis in the context of Theorems \ref{theorem:main} and \ref{theorem:schwarzschild}. The point of these assumptions is not to cover a wide generality, but to avoid to write the same proof twice with only notational changes. We state in \S \ref{section:general_statement} a general result, Proposition \ref{proposition:general_statement}, from which Theorems \ref{theorem:main} and \ref{theorem:schwarzschild} will be deduced.

In \S \ref{section:SdS} and \S \ref{section:asymptotically_hyperbolic}, we prove respectively Theorems \ref{theorem:schwarzschild} and \ref{theorem:main}.

In \S \ref{section:FBI_transform}, we recall and extend some results from \cite{BJ20} that will be needed for the proof of Proposition \ref{proposition:general_statement}.

Finally, \S \ref{section:general_construction} is the main technical part of the paper, as it contains the proof of Proposition \ref{proposition:general_statement}.

\subsection*{Acknowledgement} I would like to thank Maciej Zworski and Semyon Dyatlov for useful discussions about resonances for asymptotically hyperbolic manifolds.

\newpage

\tableofcontents

\newpage

\section{A general statement}\label{section:general_statement}

In order to deal simultaneously with the cases of asymptotically hyperbolic manifolds and of Schwarzschild--de Sitter spacetimes, we introduce here an abstract set of assumptions that are enough to make our analysis work.

\subsection{General assumption}\label{subsection:general_assumption}
We will use the notion of semiclassical differential operator, so let us recall very briefly what it means (see \cite{zworski_book} or \cite[Appendix E]{dyatlov_zworski_book} for more details on semiclassical analysis). A semiclassical differential operator $Q$ of order $m \in \mathbb{N}$ on a smooth manifold $X$ is a differential operator on $X$, depending on a small, so-called semiclassical, implicit parameter $h > 0$, of the form
\begin{equation*}
Q = \sum_{k = 0}^m h^k Q_k,
\end{equation*}
where $Q_k$ is a differential operator of order $k$ on $X$ that does not depend on $h$, for $k = 0,\dots,m$. With $Q$ one may associate its (semiclassical) principal symbol $q : T^* X \to \mathbb{C}$, which is a polynomial of degree $m$ in each fiber of $T^* X$. We may define $q$ as the unique $h$-independent function on $T^* X$ such that, for every smooth function $\varphi : M \to \mathbb{C}$ and $x \in X$, we have
\begin{equation*}
e^{- i\frac{\varphi(x)}{h}} Q\p{e^{i \frac{\varphi}{h}}}(x) \underset{h \to 0}{=} q(x, \mathrm{d}_x \varphi) + \mathcal{O}(h).
\end{equation*}
Notice that $q = \sum_{k = 0}^m q_m$ where $q_k$ denotes the (classical) homogeneous principal symbol of the differential operator $Q_k$ for $k = 0,\dots,m$. In the applications from \S \ref{section:SdS} and \S \ref{section:asymptotically_hyperbolic}, the introduction of the semiclassical parameter $h$ will be somehow artificial, this is just a technical trick.

Now that this reminder is done, we are ready to state our set of general assumptions.

Let $X$ be a closed real-analytic manifold of dimension $n$. We endow $X$ with a real-analytic Riemannian metric (this is always possible, see \cite{Morrey_embedding}). Let $Y$ be an open subset of $X$ with real-analytic boundary $\partial Y$. Consider a family of differential operators
\begin{equation}\label{eq:pinceau}
\mathcal{P}_h(\omega) = P_2 + \omega P_1 + \omega^2 P_0,
\end{equation}
where $\omega \in \mathbb{C}$ and the operator $P_j$ for $j \in \set{0,1,2}$ is a semiclassical differential operator (that does not depend on $\omega$) on $X$ of order $j$ with principal symbol $p_j$. We assume that there is $\epsilon > 0$ and a neighbourhood $U$ of $\partial Y$ with real-analytic coordinates $(x_1,x') : U \to ]- \epsilon,\epsilon[ \times \partial Y$ such that $\set{x_1 = 0} = \partial Y$ and $\set{x_1 > 0} = Y \cap U$. We require in addition that $P_0,P_1$ and $P_2$ have real-analytic coefficients in $U$ and that the following properties hold:
\begin{enumerate}
\item for $(x_1,x',\xi_1,\xi') \in T^* U \simeq T^*(]-\epsilon,\epsilon[ \times \partial Y)$, we have $p_2(x_1,x',\xi_1,\xi') = w(x_1) \xi_1^2 + q_1(x_1,x',\xi')$ where $q_1$ is a homogeneous real-valued symbol of order $2$ on $]-\epsilon,\epsilon[ \times T^* \partial Y$ and $w : ]- \epsilon,\epsilon[ \to \mathbb{R}$ is a real-analytic function such that $w(0) = 0$ and $w'(0) > 0$; \label{item:structure_principal}
\item there is a constant $C>0$ such that for $(x_1,x',\xi_1,\xi') \in T^* U$ we have $q_1(x_1,x',\xi') \geq C^{-1} \va{\xi'}^2$; \label{item:elliptique_bord}
\item the symbol $p_1$ is real-valued, $p_1(x_1,x',\xi_1,\xi') = p_1(x_1,\xi_1)$ does not depend on $(x',\xi')$ for $(x_1,x',\xi_1,\xi') \in T^* U$, and there is $C > 0$ such that
\begin{equation*}
\frac{p_1(x_1,\xi_1)}{\xi_1} \leq - C^{-1},
\end{equation*}
in particular the sign of $p_1(x_1,\xi_1)$ is the same as the sign of $-\xi_1$; \label{item:structure_sous_principal}
\item the symbol $p_2$ is real-valued and positive on $T^* Y \setminus \set{0}$; \label{item:ellipticite_interieur}
\item the symbol $p_0$ is real-valued and negative on a neighbourhood of $\overline{Y}$. \label{item:ellipticite_bas}
\end{enumerate}

\begin{remark}\label{remark:explain_assumption}
Let us explain the significance of these assumptions. In the context of the proof of Theorem \ref{theorem:main}, the manifold $X$ will be an even extension for $M$, and $Y$ will be $M$ seen as a subset of the even extension $X$. In the context of Theorem \ref{theorem:schwarzschild}, $Y$ will be the domain of outer communication and $\partial Y$ corresponds to the event and cosmological horizons. In both cases, $\mathcal{P}_h(\omega)$ will be a (semiclassically rescaled) family of modified operators. For instance, in the context of Theorem \ref{theorem:main}, we replace the operator $- h^2 \Delta - h^2 (n-1)^2/4 - \omega^2$ by a modified Laplacian $\mathcal{P}_h(\omega)$ (see for instance \cite[\S 5.3]{dyatlov_zworski_book}). The new operator $\mathcal{P}_h(\omega)$ is defined on the whole $X$, and, for $f$ smooth and compactly supported in $Y$, solving for $u$ the equation $\mathcal{P}_h(\omega) u =f$ with $u$ that satisifes a regularity condition near $\partial Y$ amounts to solve for $\tilde{u}$ the equation $(- h^2 \Delta - h^2 (n-1)^2/4 - \omega^2)\tilde{u} = \tilde{f}$ while imposing a certain behavior at infinity for $\tilde{u}$ (here $\tilde{f}$ depends on $f$ and is smooth and compactly supported in $M$).

A method to construct the scattering resolvent is then to construct a meromorphic inverse $\mathcal{P}_h(\omega)^{-1}$ for $\mathcal{P}_h(\omega)$. In Proposition \ref{proposition:general_statement} below, we give a new construction of this meromorphic inverse (maybe after modifying $\mathcal{P}_h(\omega)$ away from $\overline{Y}$, which is harmless since we only care about what happens on $Y$). This new construction is inspired by the method of Vasy \cite{vasy_method} (see also \cite[Chapter 5]{dyatlov_zworski_book}) with the addition of tools of real-analytic microlocal analysis near $\partial Y$.

Let us explain very briefly how it works. The idea is to set up a Fredholm theory for $\mathcal{P}_h(\omega)$. Inside $Y$, the operator $\mathcal{P}_h(\omega)$ is elliptic (due to assumption \ref{item:ellipticite_interieur}), so that there is no problem here. Outside of $\overline{Y}$, we are allowed to modify $\mathcal{P}_h(\omega)$, and we can consequently deal with this part of $X$ by adding to $\mathcal{P}_h(\omega)$ a well-chosen elliptic operator. This is similar to the addition of a complex absorbing potential in \cite{vasy_method}, and possible because of the hyperbolic structure of $\mathcal{P}_h(\omega)$ near $\partial Y$ in $X \setminus \overline{Y}$. Hence, the most important point is to understand what happens at $\partial Y$, where $\mathcal{P}_h(\omega)$ stops being elliptic. At that place, the operator $\mathcal{P}_h(\omega)$ has a source/sink structure on its characteristic set (this is a consequence of the assumptions \ref{item:structure_principal} and \ref{item:elliptique_bord}), so that one can use radial estimates (see for instance to \cite[\S E.4]{dyatlov_zworski_book}) to set up a Fredholm theory for $\mathcal{P}_h(\omega)$. However, the $C^\infty$ versions of the radial estimates are restricted by a threshold condition: they can be used to construct the scattering resolvent, but they do not give a bound on the number of resonances in disks as in Theorem \ref{theorem:main}. This is where real-analytic microlocal analysis becomes useful: using methods as in \cite{BJ20,galkowskiViscosityLimits0th2019} (see also \cite{galkowskiAnalyticHypoellipticityKeldysh2020,BGJ}), we are able to get an estimate which is in some sense a $C^\omega$ version of a radial estimate and allows us to prove Theorem \ref{theorem:main}. This estimate corresponds to the fourth and fifth case in the proof of Lemma \ref{lemma:high_frequencies}.

There are some technical reasons that make our set of assumptions very specific. The assumption \ref{item:ellipticite_bas} is rather artificial, this is just a way to ensure that our family of Fredholm operators will be invertible at a point. The assumptions \ref{item:structure_principal}, \ref{item:elliptique_bord} and \ref{item:structure_sous_principal} impose that the source/sink structure of $\mathcal{P}_h(\omega)$ on its characteristic set is very particular. This specific structure will allow us to work in the real-analytic category only near $\partial Y$, which is essential because we are not able to ensure that $\mathcal{P}_h(\omega)$ is analytic away from $\partial Y$. Concretely, this ensures that near $\partial Y$ in $X \setminus \overline{Y}$, the projection on $X$ of the bicharacteristics curve of $\mathcal{P}_h(\omega)$ that are contained in its characteristic set go either toward or away from $\partial Y$. This allows us to set up a propagation estimate by working on spaces weighted by $e^{\psi/h}$ where $\psi$ is a function on $X$ monotone along the projection to $X$ of the bicharacteristics of $\mathcal{P}_h(\omega)$. This estimate does not require real-analytic coefficients, so that it can be used to make the link between $\partial Y$ (where we really need real-analytic machinery) and the place in $X \setminus \overline{Y}$ where $\mathcal{P}_h(\omega)$ is artificially made elliptic by the addition of a differential operator with $C^\infty$ coefficients. 
\end{remark}

\subsection{General result}

The assumptions from \S \ref{subsection:general_assumption} allow us to state an abstract result from which Theorems \ref{theorem:main} and \ref{theorem:schwarzschild} follow.

\begin{proposition}\label{proposition:general_statement}
Under the assumptions from \S \ref{subsection:general_assumption}, we may modify the operator $\mathcal{P}_h(\omega)$ away from $\overline{Y}$ into a new operator $P_h(\omega)$ so that the following holds. There are two Hilbert spaces $\mathcal{H}_1,\mathcal{H}_2$ (depending on $h$) and a constant $\kappa >0$ (that does not depend on $h$) such that the following properties hold when $h$ is small enough:
\begin{enumerate}[label=(\roman*)]
\item for $j = 1,2$, there are continuous inclusions $C^\infty(X) \subseteq \mathcal{H}_j \subseteq \mathcal{D}'(X)$;\label{item:continuous_inclusions}
\item for $j = 1,2$, the elements of $\mathcal{H}_j$ are continuous on a neighbourhood of $\partial Y$;\label{item:continuous_boundary}
\item $P_h(\omega) : \mathcal{H}_1 \to \mathcal{H}_2$ is a holomorphic family of bounded operators;\label{item:holomorphic_family}
\item there is $\nu >0$ such that $P_h(i \nu) : \mathcal{H}_1 \to \mathcal{H}_2$ is invertible;\label{item:invertible}
\item for every open and relatively compact subset $V$ of $ \set{z \in \mathbb{C}: \im z > - \kappa}$, if $h$ is small enough then, for every $\omega \in V$, the operator $P_h(\omega) : \mathcal{H}_1 \to \mathcal{H}_2$ is Fredholm of index $0$. Moreover, this operator has a meromorphic inverse $\omega \mapsto P_h(\omega)^{-1}$ on $V$ with poles of finite rank;\label{item:fredholm_family}
\item if $\delta \in ]0,\kappa[$, there is $C > 0$, such that for every $h$ small enough, the number of $\omega$ in the disk of center $0$ and radius $\delta$ such that $P_h(\omega) : \mathcal{H}_1 \to \mathcal{H}_2$ is not invertible (counted with null multiplicity) is less than $C h^{-n}$.\label{item:counting_resonances}
\end{enumerate}
\end{proposition}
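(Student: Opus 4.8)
The plan is to build the spaces $\mathcal{H}_1,\mathcal{H}_2$ by conjugating standard semiclassical Sobolev spaces on $X$ by an FBI-transform weight that is concentrated near $\partial Y$, so that near the boundary the elements become real-analytically regular in the microlocal directions dictated by the source/sink structure. First I would fix the modification: outside a neighbourhood of $\overline{Y}$, add to $\mathcal{P}_h(\omega)$ a well-chosen semiclassical elliptic operator (a "complex absorbing" term as in Vasy's method) supported away from $\overline{Y}$, producing $P_h(\omega)$; this does not touch the dynamics on $Y$ or on $U$, and it makes $P_h(\omega)$ elliptic off a small neighbourhood of $\partial Y$. By assumption \ref{item:ellipticite_interieur} the operator is already elliptic on $T^*Y\setminus\{0\}$, and near $\partial Y$ the principal symbol $p_2 = w(x_1)\xi_1^2 + q_1$ with $w(0)=0$, $w'(0)>0$, together with $q_1 \geq C^{-1}|\xi'|^2$, forces the characteristic set to sit over $\partial Y$ at $\xi'=0$, split into a radial source and a radial sink according to the sign of $\xi_1$ (here assumption \ref{item:structure_sous_principal}, controlling $p_1/\xi_1<0$, fixes which component is which and gives the subprincipal sign that governs the threshold). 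So the whole Fredholm question reduces to a microlocal estimate at these two radial sets.

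Next I would construct, using the tools recalled and extended in \S \ref{section:FBI_transform} from \cite{BJ20}, an anisotropic space $\mathcal{H}_j$ whose weight $e^{\psi/h}$ (or rather its FBI-side analytic analogue) is chosen so that: on $T^*Y$ it is the trivial semiclassical Sobolev weight of a fixed order $m_j$; near $\partial Y$ in the incoming/outgoing directions it imposes analytic-type regularity, which is exactly the $C^\omega$ radial estimate replacing the $C^\infty$ one; and in the intermediate region in $X\setminus\overline{Y}$ the weight $\psi$ is monotone along the projections to $X$ of the null-bicharacteristics (which by the structure near $\partial Y$ flow monotonically toward or away from $\partial Y$), giving a $C^\infty$ propagation estimate that bridges the analytic region to the artificially-elliptic region. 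The continuous inclusions \ref{item:continuous_inclusions} and the boundary continuity \ref{item:continuous_boundary} follow from the construction (the analytic weight near $\partial Y$ gains derivatives there, more and more as $h\to0$). Assembling the interior elliptic estimate, the exterior elliptic estimate, the propagation estimate, and the analytic radial estimate yields, for $\omega$ in any relatively compact $V\subseteq\{\im z>-\kappa\}$ and $h$ small,
\begin{equation*}
\|u\|_{\mathcal{H}_1} \leq C\|P_h(\omega)u\|_{\mathcal{H}_2} + C h^{N}\|u\|_{\mathcal{D}'},
\end{equation*}
with an analogous estimate for the adjoint on the dual spaces; this gives the semi-Fredholm property, index $0$ by a homotopy/deformation argument in $\omega$, and meromorphy of the inverse with finite-rank poles via analytic Fredholm theory, proving \ref{item:holomorphic_family}, \ref{item:fredholm_family}. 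Here $\kappa$ is determined by how far the analytic radial estimate can be pushed, i.e. by the gain available from the FBI machinery; it does not depend on $h$. Item \ref{item:invertible} I would get by taking $\omega = i\nu$ with $\nu$ large: then $\omega^2 P_0$ dominates, and since $p_0<0$ near $\overline{Y}$ (assumption \ref{item:ellipticite_bas}), $P_h(i\nu)$ becomes (semiclassically, and on $\mathcal{H}$) globally elliptic/invertible.

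Finally, for the counting bound \ref{item:counting_resonances}: fix $\delta\in]0,\kappa[$. On the disc $|\omega|\leq\delta$, write $P_h(\omega) = P_h(i\nu) + (P_h(\omega)-P_h(i\nu))$; more usefully, I would produce a holomorphic family of finite-rank operators $K_h(\omega):\mathcal{H}_1\to\mathcal{H}_2$ such that $P_h(\omega)+K_h(\omega)$ is invertible for all $\omega$ in a fixed neighbourhood of the disc, with inverse bounded uniformly in $h$, and with $\operatorname{rank}K_h(\omega) = \mathcal{O}(h^{-n})$ and $\|K_h(\omega)\|_{\text{tr}} = \mathcal{O}(h^{-n})$ — the rank/trace bound coming from counting lattice points in phase space, i.e. a Weyl-type volume bound in the region $|\xi|\lesssim1$, which is where the $h^{-n}$ exponent and the factor $r^n$ in Theorem \ref{theorem:main} originate. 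Then $\omega$ is a resonance iff $I + P_h(\omega)^{-1}\dots$ — more precisely, the zeros of $P_h(\omega)$ in the disc, with multiplicity, are the zeros of $D_h(\omega) := \det\big(I - K_h(\omega)(P_h(\omega)+K_h(\omega))^{-1}\big)$, a holomorphic function on a neighbourhood of the disc. A Jensen-inequality / comparison-of-a-holomorphic-function-and-its-zeros argument then bounds the number of zeros in $|\omega|\leq\delta$ by $\log\big(\sup_{|\omega|\leq\delta'}|D_h(\omega)| / |D_h(i\nu)|\big)$ for some $\delta<\delta'<\kappa$, and $\log|D_h|\leq \|K_h(\omega)(P_h+K_h)^{-1}\|_{\text{tr}} = \mathcal{O}(h^{-n})$, while $|D_h(i\nu)|$ is bounded below uniformly in $h$ using \ref{item:invertible}. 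This yields the $\mathcal{O}(h^{-n})$ count.

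\medskip

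\textbf{Main obstacle.} The hard part is the construction of the anisotropic space $\mathcal{H}_j$ and the proof of the analytic radial estimate on it — i.e. showing that the FBI/Helffer--Sjöstrand weight can be chosen to (a) bypass the $C^\infty$ threshold condition at both the radial source and sink uniformly for $\omega$ in a disc whose radius $\delta$ is independent of $h$, (b) be glued to the $C^\infty$ weight governing interior ellipticity and exterior propagation without losing the uniform bounds, and (c) still give honest trace-class/rank estimates of size $h^{-n}$ for the correction $K_h(\omega)$. This is exactly the content flagged as "the fourth and fifth case in the proof of Lemma \ref{lemma:high_frequencies}" and the technical heart of \S \ref{section:general_construction}; the rest (index $0$, analytic Fredholm theory, Jensen's inequality) is comparatively routine once the estimate is in hand.
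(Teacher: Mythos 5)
Your proposal follows essentially the same route as the paper: the same modification of $\mathcal{P}_h(\omega)$ by an elliptic term supported away from $\overline{Y}$ together with the conjugating weight $e^{\psi/h}$, FBI-transform spaces with a (logarithmic) analytic-type weight concentrated near $\partial Y$ and ordinary Sobolev behaviour elsewhere, elliptic/propagation/analytic-radial estimates yielding Fredholm of index $0$ and invertibility at $\omega = i\nu$ through the dominance of $\omega^2 P_0$ with $p_0<0$, and a trace-class correction of trace norm $\mathcal{O}(h^{-n})$ fed into a Fredholm determinant and Jensen's formula for the counting bound, which is exactly the structure of \S\ref{section:general_construction}. The only slip is your claim that the determinant at $i\nu$ is bounded below uniformly in $h$: in general one only gets, and only needs, $-\log\left|D_h(i\nu)\right| = \mathcal{O}(h^{-n})$, exactly as in the paper's Lemma \ref{lemma:bound_extended_resonances}.
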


\begin{remark}
The notion of null multiplicity used in the statement of Proposition \ref{proposition:general_statement} is defined using the Gohberg--Sigal theory (see for instance \cite[\S C.4]{dyatlov_zworski_book}). In our context, we can use the following definition: if $\omega_0$ is such that the meromorphic inverse $\omega \mapsto P_h(\omega)$ is defined near $\omega_0$, then the null multiplicity of $P_h(\omega)$ at $\omega_0$ is the trace of the residue of $\omega \mapsto P_h(\omega)^{-1} \partial_\omega P_h(\omega)$ at $\omega_0$ (which is a finite rank operator).
\end{remark}

\begin{remark}
The modification of $\mathcal{P}_h(\omega)$ needed to get Proposition \ref{proposition:general_statement} will be obtained by modifying the coefficients of $P_0,P_1$ and $P_2$ away from $\overline{Y}$, so that the general assumption are still satisfied by $P_h(\omega)$ after this modification.
\end{remark}

\section{Schwarzschild--de Sitter spacetimes (proof of Theorem \ref{theorem:schwarzschild})}\label{section:SdS}

In this section, we explain how the general framework from \S \ref{section:general_statement} can be used to prove Theorem \ref{theorem:schwarzschild}. We start with this case because the setting is slightly simpler than in Theorem \ref{theorem:main} that we prove in \S \ref{section:asymptotically_hyperbolic} below. We recall a few basic facts about Schwarzschild--de Sitter spacetimes in \S \ref{subsec:model} and then apply Proposition \ref{proposition:general_statement} in \S \ref{subsection:quasinormal_frequencies}. Finally, in \S \ref{subsec:spherical_harmonics}, we discuss the number of resonances for the operators obtained by decomposing functions on Schwarzschild--de Sitter spacetimes on spherical harmonics.

\subsection{The model}\label{subsec:model}

We start by recalling the definition of Schwarzschild--de Sitter spacetimes and of the associated quasinormal frequencies. The interested reader may refer to \cite{dafermos_rodnianski} for the geometry of Schwarzschild--de Sitter spacetimes (and other notion from general realtivity). For the definition of the resonances, one may refer to \cite{sabarreto_zworski} or \cite{vasy_method}. Fix two constants
\begin{equation*}
M_0 > 0 \textup{ and } 0 < \Lambda < \frac{1}{9 M_0^2}.
\end{equation*}
The constant $M_0$ is called the \emph{mass of the black hole} and $\Lambda$ the \emph{cosmological constant}. We define the function
\begin{equation*}
G(r) = 1 - \frac{\Lambda r^2}{3} - \frac{2 M_0}{r} \quad \textup{ for } r > 0.
\end{equation*}
Let then $r_- < r_+$ be the positive roots of the polynomial $r G(r)$. Define $M = ]r_-,r_+[_r \times \mathbb{S}_y^2$ and $\widehat{M} = \mathbb{R}_t \times M$. Let $g$ be the Lorentzian metric
\begin{equation*}
g = - G \mathrm{d}t^2 + G^{-1} \mathrm{d}r^2 + r^2 g_S(y,\mathrm{d}y),
\end{equation*}
where $g_S$ denotes the standard metric on $\mathbb{S}^2$. The Lorentzian manifold $(\widehat{M},g)$ is then called a Schwarzschild--de Sitter spacetime. The hypersurfaces $\set{r_-} \times \mathbb{S}^2$ and $\set{r_+} \times \mathbb{S}^2$ are called respectively the event and the cosmological horizons.

In order to understand the asymptotic of the solution to the wave equation on $(\widehat{M},g)$, one studies the meromorphic continuation of the resolvant $(P_{\textup{SdS}} - \lambda^2)^{-1}$ where
\begin{equation*}
P_{\textup{SdS}} = G r^{-2} D_r(r^2 G) D_r - G r^{-2} \Delta_{\mathbb{S}^2}.
\end{equation*}
Here, $D_r = - i \partial_r$ and $\Delta_{\mathbb{S}^2}$ is the (non-positive) Laplace operator on the sphere $\mathbb{S}^2$. The operator $P_{\textup{SdS}}$ is self-adjoint and non-negative on the Hilbert space $L^2 \p{]r_-,r_+[ \times \mathbb{S}^2; G^{-1} r^2 \mathrm{d}r \mathrm{d}y}$, where $\mathrm{d}y$ denotes the standard volume form on $\mathbb{S}^2$. Consequently, the operator $(P_{\textup{SdS}} - \lambda^2)^{-1}$ is well-defined on this space when $\im \lambda > 0$. It is proven for instance in \cite[\S 2]{sabarreto_zworski} that $(P_{\textup{SdS}} - \lambda^2)^{-1}$ has a meromorphic continuation $R_{\textup{SdS}}(\lambda)$ to $\mathbb{C}$, with poles of finite rank, as an operator from $C_c^\infty\p{]r_-,r_+[ \times \mathbb{S}^2}$ to $\mathcal{D}'\p{]r_-,r_+[ \times \mathbb{S}^2}$. The poles of this meromorphic continuation are called the \emph{quasinormal frequencies} for the Schwarzschild--de Sitter spacetime. If $\lambda_0 \neq 0$ is a quasinormal frequency, we define its multiplicity as the rank of the operator
\begin{equation*}
\frac{i}{\pi} \int_\gamma \lambda R_{\textup{SdS}}(\lambda) \mathrm{d}\lambda,
\end{equation*}
where $\gamma$ is a positively oriented circle around $\lambda_0$, small enough so that the index of any other quasinormal frequency with respect to $\gamma$ is zero.

\subsection{Upper bound on the number of quasinormal frequencies}\label{subsection:quasinormal_frequencies}

Our proof of Theorem \ref{theorem:schwarzschild} is based on the method of Vasy \cite{vasy_method} to construct the resolvent $R_{\textup{SdS}}(\lambda)$, following mostly the exposition from \cite[Exercise 16 p.376]{dyatlov_zworski_book}. We start with a standard modification of the operator $P_{\textup{SdS}} - \lambda^2$, with some minor changes that will be convenient to check the assumptions from \S \ref{subsection:general_assumption}.

Let us embed a neighbourhood of $[r_-,r_+]$ in the circle $\mathbb{S}^1$ and set $X = \mathbb{S}^1 \times \mathbb{S}^2$ and $Y = ]r_-,r_+[ \times \mathbb{S}^2$. Let $\rho : ]r_-,r_+[ \to [-1,1]$ be a $C^\infty$ function, identically equal to $\pm 1$ near $r_\pm$. Let then $F : ]r_-,r_+[ \to \mathbb{R}$ be a primitive of 
\begin{equation}\label{eq:def_Fprime}
F'(r) = \rho(r)\p{ \frac{1}{G(r)} - \frac{1}{2(1 - (9 M_0^2 \Lambda)^{\frac{1}{3}})}}
\end{equation}
and introduce, for $\lambda \in \mathbb{C}$, the operator
\begin{equation*}
G^{-1} e^{- i \lambda F(r)} (P_{\textup{SdS}} - \lambda^2) e^{i \lambda F(r)},
\end{equation*}
which is explicitly given by the formula
\begin{equation}\label{eq:explicite_SdS}
\begin{split}
& G D_r^2 - r^{-2} \Delta_{\mathbb{S}^2} + \p{2 \lambda F' G - i \p{\frac{2G}{r} + G'}}D_r \\ & \qquad \qquad \qquad \qquad \qquad - i \lambda \p{\frac{2 G F'}{r} + G'F' + G F''} - \lambda^2 \frac{\p{1 - G^2(F')^2}}{G}.
\end{split}
\end{equation}
The coefficients of this differential operator extend as real-analytic functions near $r_-$ and $r_+$. Indeed, the definition of $F$ ensures that $F'G$ continues analytically passed $r_{-}$ and $r_+$. Moreover, near $r_{\pm}$ a direct computation yields
\begin{equation*}
G' F' + G F'' = \mp \frac{G'}{2(1 - (9 M_0^2 \Lambda)^{\frac{1}{3}})}
\end{equation*}
and
\begin{equation*}
\frac{1 - G^2(F')^2}{G} = \frac{1}{1 - (9 M_0^2 \Lambda)^{\frac{1}{3}}} - \frac{G}{4(1 - (9 M_0^2 \Lambda)^{\frac{1}{3}})^2}.
\end{equation*}

Letting $\chi$ be a $C^\infty$ function supported in a small neighbourhood of $[r_-,r_+]$ and identically equal to $1$ on a smaller neighbourhood of $[r_-,r_+]$, we can define a family of operators on $X$ by
\begin{equation*}
\Theta(\lambda) = \chi(r) \times \eqref{eq:explicite_SdS}.
\end{equation*}
Finally, for $\omega \in \mathbb{C}$, we define the semiclassical differential operator
\begin{equation*}
\begin{split}
\mathcal{P}_h(\omega) & \coloneqq h^2 \Theta(h^{-1} \omega) \\
\end{split}
\end{equation*}
Notice that this operator depends on the implicit semiclassical parameter $h$ as in \S \ref{section:general_statement}. It is of the form \eqref{eq:pinceau} with
\begin{equation*}
P_0 = \chi(r) \left(G h^2 D_r^2 - r^{-2} h^2 \Delta_{\mathbb{S}^2} - i \left(\frac{2G}{r}+ G'\right) h^2 D_r\right),
\end{equation*}
\begin{equation*}
P_1 = \chi(r) \left(2F'G h D_r -ih \left(\frac{2GF'}{r} + G'F' + GF'' \right)\right)
\end{equation*}
and
\begin{equation*}
P_2 = - \chi(r) \frac{1 - G^2(F')^2}{G},
\end{equation*}
where it is understood that the factor in parentheses continues analytically in $r$ passed $r_-$ and $r_+$. Let us check that the general assumptions from \S \ref{subsection:general_assumption} are satisfied by this family of operator.

We already mentioned that $\mathcal{P}_h(\omega)$ is of the form \eqref{eq:pinceau}, and it follows from the expression for the $P_j$'s given above that they are semiclassical differential operators of order $j$ with analytic coefficients on a neighbourhood of $\partial Y$. Moreover, the principal symbols of the $P_j$'s are given on $Y$ by
\begin{equation*}
\begin{split}
p_2(r,y,\rho,\eta) = G(r) \rho^2 + r^{-2} \eta^2, \quad p_1(r,y,\rho,\eta) = 2 F'(r) G(r) \rho
\end{split}
\end{equation*}
and
\begin{equation*}
p_0(r,y) = - \frac{1 - G(r)^2 F'(r)^2}{G(r)}.
\end{equation*}
We get the values of these symbols on a neighbourhood of $\overline{Y}$ by continuing these formulas analytically in $r$.

We can define the coordinates $(x_1,x')$ near $\partial Y$ by taking $x_1 = r - r_-$ (when $r$ is near $r_-$) or $x_1 = r_+ -r$ (when $r$ is near $r_+$) and $x' = y$. Beware here that this change of coordinates reverses the orientation of the real line near $r_+$. Then, we see that the assumption \ref{item:structure_principal} holds with $w(x_1) = G(r_{\pm} \mp x_1)$ and $q_1(x_1,y,\eta) = (r_{\pm} \mp x_1)^{-2} \eta^2$. In particular, we have $w'(0) = \mp G'(r_{\pm}) > 0$. The point \ref{item:elliptique_bord} follows from the definition of $q_1$. To get \ref{item:structure_sous_principal}, one only needs to notice that the value at $r_\pm$ of the real-analytic extension of $F'(r) G(r)$ is $\pm 1$ (and that our change of variable reverses orientation near $r_+$). Since $G$ is positive on $]r_-,r_+[$, we get \ref{item:ellipticite_interieur}. In order to check \ref{item:ellipticite_bas}, write
\begin{equation*}
p_0(r,y) = \frac{\rho(r)^2\p{1 - \frac{G(r)}{2(1 -(9 M_0^2 \Lambda)^{\frac{1}{3}})}}^2 - 1}{G(r)}.
\end{equation*}
Since $1 -(9 M_0^2 \Lambda)^{\frac{1}{3}}$ is an upper bound for $G$ on $]r_-,r_+[$, we find that $p_0(r,y) < 0$ for $r \in]r_-,r_+[$. Using that $\rho(r)^2$ is equal to $1$ when $r$ is near $r_\pm$, we find that
\begin{equation*}
p_0(r_\pm,y) = - \frac{1}{1 -(9 M_0^2 \Lambda)^{\frac{1}{3}}} < 0,
\end{equation*}
and thus \ref{item:ellipticite_bas} holds.

Consequently, we can modify $\mathcal{P}_h(\omega)$ away from $\overline{Y}$ in order to get a family of operator $P_h(\omega)$ that satisfies Proposition \ref{proposition:general_statement}. With $\kappa$ as in Proposition \ref{proposition:general_statement}, we let $V$ be a connected, relatively compact and open subset of $\set{z \in \mathbb{C} : \im z > - \kappa}$ that contains the closed disk of center $0$ and radius $\frac{3 \kappa}{4}$. Let $\iota_2$ denote the injection of $C_c^\infty(Y)$ in $\mathcal{H}_2$ and $\iota_1$ denote the map from $\mathcal{H}_1$ to $\mathcal{D}'(Y)$ obtained by composing the injection $\mathcal{H}_1 \to \mathcal{D}'(X)$ with the restriction map $\mathcal{D}'(X) \to \mathcal{D}'(Y)$.

If $\lambda \in h^{-1} V$, we define the resolvent
\begin{equation}\label{eq:resolvent_SdS}
R_h(\lambda) = e^{i \lambda F(r)} h^2 \iota_1 P_h(\lambda h)^{-1}\iota_2 e^{- i \lambda F(r)} G^{-1} : C_c^\infty(Y) \to \mathcal{D}'(Y).
\end{equation}
This is a meromorphic family of operators. We just got a new construction of the meromorphic continuation $R_{\textup{SdS}}(\lambda)$ of the $L^2$ resolvent $(P_{\textup{SdS}} - \lambda^2)^{-1}$, as we will now demonstrate.

\begin{lemma}\label{lemma:identification_resolvante}
If $\lambda \in h^{-1} V$ is such that $\im \lambda > 0$, then $R_h(\lambda)$ is the restriction to $C_c^\infty(Y)$ of the $L^2$ resolvent $(P_{SdS} - \lambda^2)^{-1}$. In particular $R_h(\lambda)$ does not depend on $h$.
\end{lemma}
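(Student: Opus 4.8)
The plan is to prove the identity by unwinding the definitions and exploiting uniqueness of the meromorphic continuation. First I would note that since both sides of the claimed equality are meromorphic in $\lambda$ on $h^{-1}V$ (the left side by Proposition \ref{proposition:general_statement}\ref{item:fredholm_family}, the right side by the construction of $R_{\textup{SdS}}$ recalled in \S\ref{subsec:model}), and $\{\lambda \in h^{-1}V : \im\lambda > 0\}$ is a nonempty open subset of the connected set $h^{-1}V$, it suffices to prove the equality for a single $\lambda$ with $\im\lambda$ large enough — or, more robustly, for all $\lambda \in h^{-1}V$ with $\im\lambda > 0$ directly, which is what the statement asks. So I would fix such a $\lambda$ and show that $R_h(\lambda)f = (P_{\textup{SdS}} - \lambda^2)^{-1}f$ for every $f \in C_c^\infty(Y)$.

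The core computation is as follows. Set $u = R_h(\lambda) f$; by definition $u = e^{i\lambda F(r)}\iota_1 \tilde u$ where $\tilde u = h^2 P_h(\lambda h)^{-1}\iota_2\big(e^{-i\lambda F(r)} G^{-1} f\big) \in \mathcal{H}_1$. Since $f$ is supported in $Y$ where, by the construction in \S\ref{subsection:quasinormal_frequencies}, $P_h(\omega)$ agrees with $\mathcal{P}_h(\omega) = h^2\chi(r)\times\eqref{eq:explicite_SdS} = h^2\times\eqref{eq:explicite_SdS}$ (as $\chi \equiv 1$ near $[r_-,r_+]$), applying $\mathcal{P}_h(\lambda h)$ to $\tilde u$ recovers $e^{-i\lambda F(r)}G^{-1} f$ on $Y$ after dividing by $h^2$. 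Conjugating back by $e^{i\lambda F(r)}$ and using the very definition $G^{-1}e^{-i\lambda F(r)}(P_{\textup{SdS}} - \lambda^2)e^{i\lambda F(r)} = \eqref{eq:explicite_SdS}$, one obtains $(P_{\textup{SdS}} - \lambda^2) u = f$ in $\mathcal{D}'(Y)$. Thus $u$ is a distributional solution of the resolvent equation. It then remains to check that $u$ lies in the domain of the self-adjoint realization of $P_{\textup{SdS}}$ on $L^2(]r_-,r_+[\times\mathbb{S}^2; G^{-1}r^2\,\mathrm{d}r\,\mathrm{d}y)$; since $(P_{\textup{SdS}} - \lambda^2)^{-1}$ is injective on that domain for $\im\lambda > 0$ (as $P_{\textup{SdS}} \geq 0$ and $\lambda^2 \notin [0,+\infty[$), the identification follows.

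The main obstacle — and the step I would spend the most care on — is this last regularity/membership claim: showing that $u = e^{i\lambda F(r)}\iota_1\tilde u$ actually belongs to $L^2$ with the correct weight and decays appropriately toward the horizons $r_\pm$. The point is that $\tilde u \in \mathcal{H}_1$ is merely continuous near $\partial Y$ (Proposition \ref{proposition:general_statement}\ref{item:continuous_boundary}) and a priori only a distribution on $X$; one must translate this boundary behavior, through the weight $e^{i\lambda F(r)}$ with $F' \sim \rho(r)/G(r)$ blowing up logarithmically at $r_\pm$, into the statement that $u$ lies in the operator domain. The mechanism here is exactly the one underlying Vasy's method: elliptic regularity inside $Y$ gives smoothness of $\tilde u$ there, while the factor $e^{i\lambda F(r)}$ with $\im\lambda > 0$ produces genuine decay at the horizons precisely because $F(r) \to \pm\infty$ there (the sign of $F'$ being governed by $\rho$), so that $u$ is square-integrable against $G^{-1}r^2\,\mathrm{d}r$; one checks $P_{\textup{SdS}}u \in L^2$ likewise from $(P_{\textup{SdS}}-\lambda^2)u = f \in C_c^\infty(Y)$ and $u \in L^2$. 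I would organize this by first recording smoothness of $\tilde u$ on $Y$, then estimating $u$ and a few of its derivatives near $r = r_\pm$ using the explicit weight, and finally invoking the uniqueness of the $L^2$-resolvent. The ``in particular'' assertion that $R_h(\lambda)$ is $h$-independent is then immediate, since the right-hand side $(P_{\textup{SdS}} - \lambda^2)^{-1}$ contains no $h$, and by the meromorphic continuation principle this independence propagates to all of $h^{-1}V$.
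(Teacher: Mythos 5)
Your proposal is correct and follows essentially the same route as the paper: verify the resolvent identity $(P_{\textup{SdS}}-\lambda^2)R_h(\lambda)f=f$ by undoing the conjugation by $e^{i\lambda F(r)}$ and $G^{-1}$, then use the continuity of elements of $\mathcal{H}_1$ near $\partial Y$ together with the explicit logarithmic blow-up of $F$ at $r_\pm$ to see that $e^{i\lambda F(r)}=\mathcal{O}(|r-r_\pm|^{\im\lambda/|G'(r_\pm)|})$ gives membership in the weighted $L^2$ space, hence identification with the $L^2$ resolvent for $\im\lambda>0$. One tiny precision: the cutoff $\rho$ is arranged so that $F\to+\infty$ at \emph{both} horizons (not $\pm\infty$), which is exactly what makes $e^{i\lambda F}$ decay at both ends when $\im\lambda>0$.
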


\begin{proof}
Let $\lambda \in h^{-1} V$ be such that $\im \lambda > 0$. Let $u \in C_c^\infty(Y)$. Notice that 
\begin{equation*}
\begin{split}
& (P_{\textup{SdS}} - \lambda^2) R_h(\lambda)u \\ & \qquad  = G e^{i \lambda F(r)} G^{-1} e^{- i \lambda F(r)}(P_{\textup{SdS}} - \lambda^2) e^{i \lambda F(r)} h^2 \iota_1 P_h(\lambda h)^{-1}\iota_2 e^{- i \lambda F(r)} G^{-1} u \\
   & \qquad = G e^{i \lambda F(r)} P_h(\lambda h) \iota_1 P_h(\lambda h)^{-1}\iota_2 e^{- i \lambda F(r)} G^{-1} u = u,
\end{split}
\end{equation*}
where we used that $h^2 G^{-1} e^{- i \lambda F(r)} (P_{\textup{SdS}} - \lambda^2) e^{i \lambda F(r)} \iota_1 = P_h(\lambda h) \iota_1 = \iota_3 P_h(\lambda h)$, where $\iota_3$ is the map obtained by composing the injection $\mathcal{H}_2 \to \mathcal{D}'(X)$ with the restriction map $\mathcal{D}'(X) \to \mathcal{D}'(Y)$. Consequently, we only need to prove that the distribution $R_h(\lambda)u$ belongs to the space $L^2 \p{]r_-,r_+[ \times \mathbb{S}^2; G^{-1} r^2 \mathrm{d}r \mathrm{d}y}$.  Since $P_{\textup{SdS}}$ is elliptic, we know that $u$ is smooth, and thus bounded on all compact subsets of $Y$. It remains to understand the behaviour of $u$ near $\partial Y$.

Notice that $R_h(\lambda) u = e^{i \lambda F(r)} v$, where $v$ is the restriction to $Y$ of an element of $\mathcal{H}_1$. In particular, since the elements of $\mathcal{H}_1$ are continuous near $\partial Y$, there is a compact subset $K$ of $Y$ such that $v$ is continuous and bounded outside of $K$. Let us study for instance the behavior of $u$ near $r = r_-$ (the behavior near $r_+$ is similar). From \eqref{eq:def_Fprime}, we see that
\begin{equation*}
F(r) \underset{r \to r_-}{=} - \frac{\ln \va{r- r-}}{G'(r_-)} + \mathcal{O}(1).
\end{equation*}
Consequently, we have that $e^{i \lambda F(r)}$ is $\mathcal{O}(|r - r_-|^{\im \lambda/G'(r_-)})$ when $r$ tends to $r_-$. Working similarly near $r_+$, we find that $u$ belongs to the Hilbert space $L^2 \p{]r_-,r_+[ \times \mathbb{S}^2; G^{-1} r^2 \mathrm{d}r \mathrm{d}y}$.
\end{proof}

\begin{remark}\label{remark:multiplicity}
It follows from Lemma \ref{lemma:identification_resolvante} that $R_h(\lambda) = R_{\textup{SdS}}(\lambda)$ on $h^{-1} V$. In particular, $\lambda \in h^{-1} V $ is a quasinormal frequency if an only if it is a pole of $R_h(\lambda)$ and, if in addition $\lambda \neq 0$, its multiplicity is the rank of the operator
\begin{equation*}
\frac{i}{\pi}\int_\gamma \mu R_h(\mu) \mathrm{d}\mu,
\end{equation*}
where $\gamma$ is a small circle around $\lambda$.
\end{remark}

With this new construction of the resolvent $R_{\textup{SdS}}(\lambda)$ at our disposal, we are ready to prove Theorem \ref{theorem:schwarzschild}.

\begin{proof}[Proof of Theorem \ref{theorem:schwarzschild}]
Considering the bound on the number of points where $P_h(\omega)$ is not invertible given in Proposition \ref{proposition:general_statement}, we only need to prove that if $\lambda$ is a non-zero complex number of modulus less than $\frac{\kappa}{4h}$ then its multiplicity as a quasinormal frequency is less than the null multiplicity of $\omega \mapsto P_h(\omega)$ at $\lambda h$. 

Let us consider a quasinormal frequency $\lambda$ of modulus less than $\frac{\kappa}{4h}$. Since $P_h(\omega)$ is a holomorphic family of operators with a meromorphic inverse near $\lambda h$ (because $\lambda h$ belongs to $V$), it follows from the Gohberg--Sigal theory \cite[Theorem C.10]{dyatlov_zworski_book}, that there are holomorphic families of invertible operators $U_1(\omega)$ and $U_2(\omega)$ for $\omega$ near $\lambda h$, respectively on $\mathcal{H}_2$ and from $\mathcal{H}_1$ to $\mathcal{H}_2$, an integer $M \geq 0$, operators $P_0,\dots,P_M$ on $\mathcal{H}_2$ and non-zero integeres $k_1,\dots,k_M$ such that
\begin{equation}\label{eq:gohberg_sigal}
P_h(\omega) = U_1(\omega)\p{P_0 + \sum_{m = 1}^M (\omega - \lambda h)^{k_m} P_m} U_2(\omega),
\end{equation}
for $\omega$ near $\lambda h$. Moreover, $P_1,\dots, P_M$ are rank $1$ and $P_\ell P_m = \delta_{\ell,m} P_m$ for $0 \leq \ell,m \leq M$. We also have that $I = \sum_{m = 0}^M P_m$, since $P_h(\omega)$ is invertible for $\omega \neq \lambda h$ near $\lambda h$. Notice that the $k_m$'s must be positive, since $P_{h}(\omega)$ is holomorphic in $\omega$, and that the null multiplicity of $P_h(\omega)$ at $\lambda h$ is $\sum_{m = 1}^M k_m$.

It follows from \eqref{eq:gohberg_sigal} that
\begin{equation}\label{eq:gohberg_sigal_inverse}
P_h(\omega)^{-1} = U_2(w)^{-1} \p{P_0 + \sum_{m = 1}^M (\omega - \lambda h)^{-k_m} P_m} U_1(\omega)^{-1}.
\end{equation}
From \eqref{eq:resolvent_SdS} we get
\begin{equation*}
R_h(\mu) = A_1(\mu) + A_2(\mu), 
\end{equation*}
where $A_1$ and $A_2$ are obtained by replacing the inverse $P_h(\omega)^{-1}$ respectively by $U_2(\omega)^{-1} P_0 U_1(\omega)^{-1}$ and by $U_2(\omega)^{-1} \sum_{m = 1}^M (\omega- \lambda h)^{-k_m} P_m U_1(\omega)^{-1}$ in \eqref{eq:resolvent_SdS}, with $\omega= \mu h$. Notice that $A_1(\mu)$ is holomorphic in $\mu$, so that
\begin{equation}\label{eq:simplification_projecteur}
\int_\gamma \mu R_h(\mu) \mathrm{d}\mu = \int_\gamma \mu A_2(\mu) \mathrm{d}\mu.
\end{equation}
The operator $\mu A_2(\mu)$ is of the form $B_1(\mu) \p{\sum_{k = 1}^M (\mu -\lambda)^{-k_m}P_m} B_2(\mu)$, where $B_1(\mu)$ and $B_2(\mu)$ are holomorphic near $\lambda$. Writing the Taylor expansions for $B_1(\mu)$ and $B_2(\mu)$:
\begin{equation*}
B_j(\mu) = \sum_{\ell \geq 0}(\mu - \lambda)^\ell C_{j,l},
\end{equation*}
we find that the residue of $\mu A_2(\mu)$ at $\lambda$ is 
\begin{equation*}
\sum_{\substack{m,k,\ell \\ k+\ell = k_m - 1}} C_{1,k} P_m C_{2,\ell}.
\end{equation*}
This operator is the sum of $\sum_{m = 1}^M k_m$ operators of rank at most $1$, and thus is of rank at most $\sum_{m = 1}^M k_m$. It follows then from Remark \ref{remark:multiplicity} and \eqref{eq:simplification_projecteur} that the multiplicity of $\lambda$ as a scattering resonance is at most $\sum_{m = 1}^M k_m$, which is the null multiplicity of $\omega \mapsto P_h(\omega)$ at $\lambda h$.
\end{proof}

\subsection{Decomposition on spherical harmonics}\label{subsec:spherical_harmonics}

Notice that the Schwarzschild--de Sitter spacetime is radially symmetric. It is standard to use this kind of symmetries to study quasinormal frequencies by decomposing the operator $P_{\textup{SdS}}$ on spherical harmonics (see for instance \cite{sabarreto_zworski} or \cite{small_black_holes}). Let $\ell \in \mathbb{N}$ and $Y$ be a spherical harmonics satisfying $- \Delta_{\mathbb{S}^2} Y = \ell(\ell +1) Y$. The action of $P_{\textup{SdS}}$ on functions of the form $u(r) Y(y)$ is then equivalent to the action of the operator
\begin{equation*}
P_{\textup{SdS}}^\ell = G r^{-2} D_r(r^2 G) D_r + G r^{-2} \ell(\ell+1).
\end{equation*}
The operator $(P_{\textup{SdS}}^\ell - \lambda^2)^{-1}$ defined for $\im \lambda > 0$ by the spectral theory on $L^2(]r_-,r_+[; G^{-1} r^2 \mathrm{d}r)$ admits a meromorphic continuation to $\mathbb{C}$. The poles of this extension are quasinormal frequencies corresponding to angular momentum $\ell$.

We can then apply Proposition \ref{proposition:general_statement} as in \S \ref{subsection:quasinormal_frequencies} to get:

\begin{thm}
The number of quasinormal frequencies corresponding to the angular momentum $\ell$ of modulus less than $r$ is $\mathcal{O}(r)$ when $r$ tends to $+ \infty$.
\end{thm}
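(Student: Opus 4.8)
The plan is to reduce the angular-momentum-$\ell$ problem to an application of Proposition \ref{proposition:general_statement} in dimension $n = 1$, exactly mirroring the construction carried out in \S\ref{subsection:quasinormal_frequencies} but with the sphere $\mathbb{S}^2$ removed from the picture. Concretely, I would embed a neighbourhood of $[r_-,r_+]$ into the circle $\mathbb{S}^1$, set $X = \mathbb{S}^1$ and $Y = ]r_-,r_+[$, and conjugate $P_{\textup{SdS}}^\ell - \lambda^2$ by $e^{i\lambda F(r)}$ with the same $F$ as in \eqref{eq:def_Fprime}. The resulting operator has coefficients that extend real-analytically across $r_\pm$ (the only new term compared to \eqref{eq:explicite_SdS} is the potential $G r^{-2}\ell(\ell+1)$, which is manifestly analytic near $r_\pm$ and of order $0$, so it only perturbs $P_2$ by a lower-order analytic term), and after cutting off with $\chi(r)$ and semiclassically rescaling as $\mathcal{P}_h(\omega) = h^2 \Theta^\ell(h^{-1}\omega)$ one checks assumptions \ref{item:structure_principal}--\ref{item:ellipticite_bas} by the same computation as in \S\ref{subsection:quasinormal_frequencies}: the principal symbol is $p_2(r,\rho) = G(r)\rho^2$, so $w(x_1) = G(r_\pm \mp x_1)$ with $w'(0) > 0$, there is no $q_1$ term to worry about (assumption \ref{item:elliptique_bord} is vacuous when $\partial Y$ is a point), $p_1(r,\rho) = 2F'(r)G(r)\rho$ has analytic extension equal to $\pm\rho$ at $r_\pm$ giving \ref{item:structure_sous_principal}, $p_2 > 0$ on $T^*Y\setminus\{0\}$ gives \ref{item:ellipticite_interieur}, and $p_0$ is unchanged so \ref{item:ellipticite_bas} holds as before.

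Next I would repeat the identification-of-resolvent argument: define $R_h^\ell(\lambda)$ by the analogue of \eqref{eq:resolvent_SdS} and prove, exactly as in Lemma \ref{lemma:identification_resolvante}, that for $\im\lambda > 0$ it coincides with the $L^2(]r_-,r_+[; G^{-1}r^2\,\mathrm{d}r)$ resolvent of $P_{\textup{SdS}}^\ell - \lambda^2$; the only ingredient is ellipticity of $P_{\textup{SdS}}^\ell$ in the interior together with the bound $e^{i\lambda F(r)} = \mathcal{O}(|r - r_\pm|^{\im\lambda/|G'(r_\pm)|})$ near the horizons, which forces membership in the weighted $L^2$ space. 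Hence $R_h^\ell(\lambda) = R_{\textup{SdS}}^\ell(\lambda)$ on $h^{-1}V$, and a quasinormal frequency of angular momentum $\ell$ in $h^{-1}V$ is a pole of $\omega\mapsto P_h(\omega)^{-1}$ at $\omega = \lambda h$. The multiplicity-counting step is then verbatim the Gohberg--Sigal argument from the proof of Theorem \ref{theorem:schwarzschild}: the multiplicity of $\lambda$ as a pole of the resolvent is bounded by the null multiplicity of $\omega\mapsto P_h(\omega)$ at $\lambda h$.

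Finally I would invoke item \ref{item:counting_resonances} of Proposition \ref{proposition:general_statement}: with $n = 1$, for any fixed $\delta\in]0,\kappa[$ the number of $\omega$ in the disc of radius $\delta$ at which $P_h(\omega)$ fails to be invertible, counted with null multiplicity, is $\mathcal{O}(h^{-1})$ uniformly in small $h$. Taking $h = \delta/r$ (so that $|\lambda| < r$ corresponds to $|\omega| = |\lambda h| < \delta$), the number of quasinormal frequencies of angular momentum $\ell$ with $|\lambda| < r$ is $\mathcal{O}(h^{-1}) = \mathcal{O}(r)$ as $r\to+\infty$, which is the claim.

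I do not expect any genuine obstacle here: the statement is essentially a one-dimensional corollary of machinery already set up, and every step is a transcription of the $\mathbb{S}^1\times\mathbb{S}^2$ case with the $\mathbb{S}^2$ factor deleted. The only point requiring a word of care is that assumption \ref{item:elliptique_bord} becomes vacuous (there is no $x'$ variable), so one should check that Proposition \ref{proposition:general_statement} and its proof do not secretly use $\dim\partial Y \geq 1$; since the microlocal geometry near $\partial Y$ in the $1$-dimensional case is simply a source/sink pair on the two half-lines $\{\pm\xi_1 > 0\}$ over the point $\partial Y$, this is harmless, and if one is worried one can also note that the $\ell$-fixed problem is literally the restriction of the full Schwarzschild--de Sitter problem to the $Y$-eigenspace of $-\Delta_{\mathbb{S}^2}$, on which the spaces $\mathcal{H}_j$ restrict compatibly.
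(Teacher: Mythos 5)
Your proposal is correct and is essentially the paper's own argument: the paper proves this theorem by applying Proposition \ref{proposition:general_statement} to the one-dimensional operator obtained from $P_{\textup{SdS}}^\ell$ exactly as in \S\ref{subsection:quasinormal_frequencies} (with $X=\mathbb{S}^1$, $Y=]r_-,r_+[$, $n=1$), including the same conjugation by $e^{i\lambda F}$, the same resolvent identification, and the same Gohberg--Sigal multiplicity comparison. Your remark that assumption \ref{item:elliptique_bord} is vacuous when $\partial Y$ is zero-dimensional is a sensible sanity check, and it is indeed harmless.
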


\section{Scattering on asymptotically hyperbolic manifolds (proof of Theorem \ref{theorem:main})}\label{section:asymptotically_hyperbolic}

In this section, we specify the geometric assumptions from Theorem \ref{theorem:main} and explain how one can use Proposition \ref{proposition:general_statement} to prove Theorem \ref{theorem:main}. In \S \ref{section:geometric_assumption} we describe the class of asymptotically hyperbolic manifolds with real-analytic ends that we are going to study. In \S \ref{subsec:even_extension} and \ref{subsection:modified_Laplacian}, we check the assumptions from \S \ref{subsection:general_assumption} in order to use Proposition \ref{proposition:general_statement} and prove Theorem \ref{theorem:main} in \S \ref{subsection:bound_resonances}.

The paragraphs \S \ref{section:geometric_assumption}, \S \ref{subsec:even_extension} and \S \ref{subsection:modified_Laplacian} are based on the exposition in \cite[Chapter 5]{dyatlov_zworski_book} of the method of Vasy \cite{vasy_method} to construct the scattering resolvent, with a few additional technicalities required to deal with real-analytic ends and apply Proposition \ref{proposition:general_statement}.

\subsection{Geometric assumptions}\label{section:geometric_assumption}

We explain here how the definition of asymptotically hyperbolic manifold may be modified to obtain the definition of asymptotically hyperbolic manifolds with real-analytic ends that appears in Theorem \ref{theorem:main}. Let us consider a Riemannian manifold $(M,g)$ where $M$ is a real-analytic manifold but the metric $g$ is \emph{a priori} only $C^\infty$. One could just say that $(M,g)$ is asymptotically hyperbolic with real-analytic ends if $M$ is the interior of a compact real-analytic manifold with boundary $\overline{M}$ such that $g$ may be put into the form \eqref{eq:normal_intro}, with $g_1$ real-analytic, near $\partial \overline{M}$, using a real-analytic diffeomorphism between $[0,\epsilon[ \times \partial \overline{M}$ and a neighbourhood of $\partial \overline{M}$. This is for instance the assumption that is made in \cite{zuily_analytic}. However, it may seem \emph{a priori} too restrictive to assume the existence of such coordinates defined on a neighbourhood of the whole $\partial \overline{M}$. Consequently, we will rather make a local assumption on $g$ and then see that it implies that $g$ takes the form \eqref{eq:normal_intro} in real-analytic coordinates.

\begin{definition}\label{definition:manifold}
Let $M$ be a real-analytic manifold and $g$ be a smooth ($C^\infty$) Riemannian metric on $M$. We assume that $M$ is the interior of a compact real-analytic manifold with boundary $\overline{M}$. Assume that, for every $x_0 \in \partial \overline{M}$, there is a neighbourhood $U$ of $x_0$ in $\overline{M}$ and a real-analytic function $y_1$ from $U$ to $\mathbb{R}$ such that:
\begin{enumerate}[label=(\roman*)]
\item $y_1 \geq 0$ on $U$ and $\partial \overline{M} \cap U = \set{y_1 = 0}$;
\item $\mathrm{d} y_1 (x) \neq 0$ for every $x \in \partial \overline{M} \cap U$;
\item $y_1^2 g$ extends to a real-analytic metric $\tilde{g}$ on $U$;
\item $|\mathrm{d}y_1|_{\tilde{g}} = 1$ on $\partial \overline{M} \cap U$.
\end{enumerate}
Then, we say that $(M,g)$ is an asymptotically hyperbolic manifold real-analytic near infinity.
\end{definition}

A function that satisfies (i) and (ii) is called a boundary defining function for $\overline{M}$. Notice that if $y_1$ and $\tilde{y}_1$ are two real-analytic boundary defining functions, then there is a real-analytic real-valued function $f$, defined wherever $y_1$ and $\tilde{y}_1$ are both defined, and such that $\tilde{y}_1 = e^f y_1$. In particular, the validity of (iii) and (iv) does not depend on the choice of the boundary defining function $y_1$. One can check that if $(M,g)$ is an asymptotically hyperbolic manifold real-analytic near infinity, then it is also an asymptotically hyperbolic manifold in the standard ($C^\infty$) sense (see for instance \cite[Definition 5.2]{dyatlov_zworski_book}).

Let us fix an asymptotically hyperbolic manifold real-analytic near infinity $(M,g)$, and let $\overline{M}$ be as in Definition \ref{definition:manifold}. The existence of a real-analytic boundary defining function defined on a neighbourhood of $\partial \overline{M}$ does not seem obvious, and will be established in Lemma \ref{lemma:normal_coordinates} below. However, notice that one easily shows that there are $C^\infty$ boundary defining functions defined on the whole $\overline{M}$ and let us define the conformal class of Riemannian metrics on $\partial M$:
\begin{equation*}
[g]_{\partial \overline{M}} = \set{(y_1^2 g)|_{\partial \overline{M}}: y_1 \in C^\infty(\overline{M}) \textup{ is a boundary defining function}}.
\end{equation*}

It will be convenient to know that:

\begin{lemma}
The conformal class $[g]_{\partial \overline{M}}$ admits a real-analytic representative.
\end{lemma}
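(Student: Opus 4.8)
The plan is to build the real-analytic representative from the local data provided by Definition \ref{definition:manifold} and glue it together with a partition of unity. First I would cover $\partial\overline{M}$ by finitely many open sets $U_\alpha$, $\alpha=1,\dots,N$, in each of which there is a real-analytic boundary defining function $y_1^{(\alpha)}$ and a real-analytic metric $\tilde g^{(\alpha)}=(y_1^{(\alpha)})^2 g$ with $|\mathrm{d}y_1^{(\alpha)}|_{\tilde g^{(\alpha)}}=1$ on $\partial\overline{M}\cap U_\alpha$ (compactness of $\partial\overline{M}$ gives finiteness). Fix also a global $C^\infty$ boundary defining function $y_1$ on $\overline{M}$; on each $U_\alpha$ we have $y_1=e^{f_\alpha}y_1^{(\alpha)}$ for some real-analytic real-valued $f_\alpha$. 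The candidate representative of $[g]_{\partial\overline{M}}$ is $h_0:=(y_1^2 g)|_{\partial\overline{M}}$, and on $U_\alpha\cap\partial\overline{M}$ we have $h_0=e^{2f_\alpha}\,\tilde g^{(\alpha)}|_{\partial\overline{M}}$, which is a real-analytic metric on $U_\alpha\cap\partial\overline{M}$ since $f_\alpha$ and $\tilde g^{(\alpha)}$ are real-analytic. The point, however, is that $h_0$ itself is only $C^\infty$ (because $y_1$ is); what I want is to \emph{perturb} $h_0$ within its conformal class to make it globally real-analytic.

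The key observation is that two boundary defining functions differ multiplicatively by an exponential, so conformal representatives of $[g]_{\partial\overline{M}}$ are exactly the metrics $e^{2\varphi}h_0$ with $\varphi\in C^\infty(\partial\overline{M})$; I need to produce one such $\varphi$ making $e^{2\varphi}h_0$ real-analytic. Locally on $U_\alpha\cap\partial\overline{M}$, writing $h_0=e^{2f_\alpha}\tilde g^{(\alpha)}$, the choice $\varphi=-f_\alpha$ works, but these local choices need not agree on overlaps. To patch them: on $U_\alpha\cap U_\beta\cap\partial\overline{M}$ we have $e^{2f_\alpha}\tilde g^{(\alpha)}=e^{2f_\beta}\tilde g^{(\beta)}$, so the transition data $f_\alpha-f_\beta$ is real-analytic there and relates the two local analytic metrics by a real-analytic conformal factor. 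A clean way to finish is then to invoke that $\partial\overline{M}$, being a compact real-analytic manifold, carries \emph{some} real-analytic Riemannian metric $h_1$ (cf.\ the Morrey embedding theorem \cite{Morrey_embedding} already used in \S\ref{subsection:general_assumption}). On each $U_\alpha\cap\partial\overline{M}$, both $h_1$ and $\tilde g^{(\alpha)}|_{\partial\overline{M}}$ are real-analytic metrics, hence their ratio — well, they are not conformal in general, so instead I compare $h_1$ with $h_0$: the function $\psi_\alpha$ defined on $U_\alpha\cap\partial\overline M$ by $h_0 = e^{2f_\alpha}\tilde g^{(\alpha)}$ tells us that $e^{-2f_\alpha}h_0$ is real-analytic there. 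So the genuinely local statement I will use is: \emph{near each point of $\partial\overline{M}$ there is a real-analytic function $\psi$ with $e^{2\psi}h_0$ real-analytic}, namely $\psi=-f_\alpha$.

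To globalize, pick a $C^\infty$ partition of unity $(\chi_\alpha)$ subordinate to $(U_\alpha\cap\partial\overline{M})$ and set $\varphi:=\sum_\alpha\chi_\alpha\,(-f_\alpha)$; then $e^{2\varphi}h_0\in[g]_{\partial\overline{M}}$, but this is again only $C^\infty$ — summing analytic functions with $C^\infty$ cutoffs destroys analyticity. The honest fix is different and is where the real work lies: I would argue that the obstruction to a \emph{global} real-analytic conformal factor is cohomological and vanishes. Concretely, on overlaps the differences $f_\alpha-f_\beta$ are real-analytic and satisfy the cocycle condition $(f_\alpha-f_\beta)+(f_\beta-f_\gamma)+(f_\gamma-f_\alpha)=0$; since the sheaf of real-analytic functions on the compact real-analytic manifold $\partial\overline{M}$ admits partitions of unity in a suitable sense only after passing to a complexification (Grauert / Cartan B), one uses that $H^1(\partial\overline M,\mathcal{O}^{\omega})$ vanishes on a Stein complexification, so the cocycle $(f_\alpha-f_\beta)$ splits as $f_\alpha-f_\beta=g_\alpha-g_\beta$ with $g_\alpha$ real-analytic on $U_\alpha$. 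Then $e^{2g_\alpha}e^{-2f_\alpha}h_0$ agrees on overlaps with $e^{2g_\beta}e^{-2f_\beta}h_0$ — indeed $e^{2g_\alpha}\tilde g^{(\alpha)}$ patches globally because $e^{2g_\alpha}\tilde g^{(\alpha)} = e^{2g_\alpha-2f_\alpha}h_0$ and the exponent is a global real-analytic function $\varphi$ with $\varphi|_{U_\alpha}=2g_\alpha-2f_\alpha$; hence $e^{\varphi}h_0$ is a real-analytic metric, and it lies in $[g]_{\partial\overline{M}}$ since $e^{\varphi/2}y_1$ is (a positive multiple of) a boundary defining function.

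\textbf{Main obstacle.} The essential difficulty is exactly this last globalization step: one cannot use $C^\infty$ cutoffs to glue real-analytic local solutions, so the argument must go through a genuine vanishing-of-cohomology or direct patching statement for the sheaf of real-analytic functions — equivalently, through a complexification of $\partial\overline{M}$ and Cartan's Theorem B, or through the classical fact that every compact real-analytic manifold admits a real-analytic partition of unity \emph{refinement} in the sense that additive analytic cocycles split. Everything else (finiteness of the cover, the multiplicative relation between boundary defining functions, the cocycle identity) is routine and follows immediately from the remarks already made in the excerpt just before the statement.
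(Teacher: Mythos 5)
Your argument is essentially correct but takes a genuinely different, and much heavier, route than the paper. The paper avoids any gluing: it fixes an auxiliary real-analytic metric $\hat{g}$ on $\partial \overline{M}$ (Morrey), writes an arbitrary smooth representative as $g_0(x) = \hat{g}(x)(B(x)\cdot,\cdot)$, and takes the canonical normalization $g_1(x) = g_0(x)/\n{B(x)}$; since locally $g_0 = e^{2f} g_2$ with $g_2$ real-analytic, the smooth scalar $e^{2f}$ cancels in $B/\n{B} = e^{-2f}B / \n{e^{-2f}B}$, so $g_1$ is locally, hence globally, real-analytic — a one-paragraph elementary trick that sidesteps the patching problem you identify as the main obstacle. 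Your route instead keeps the local analytic metrics $\tilde{g}^{(\alpha)}|_{\partial \overline{M}}$ and glues them by splitting the additive Čech cocycle $(f_\alpha - f_\beta)$ of real-analytic transition functions, invoking $H^1(\partial \overline{M}, \mathcal{A}) = 0$ for the sheaf of real-analytic functions via a Stein complexification and Cartan's Theorem B; this is valid (the Čech $H^1$ of any cover injects into sheaf cohomology), but it rests on Grauert--Cartan machinery where the paper needs nothing beyond Morrey. Two slips in your write-up, neither fatal: since the global boundary defining function $y_1$ is only $C^\infty$, the factors $f_\alpha$ are only smooth (your first paragraph asserts they are real-analytic, which would make $h_0$ itself analytic and trivialize the lemma — you implicitly retract this right after); only the differences $f_\alpha - f_\beta$, being quotients of two real-analytic boundary defining functions, are analytic, and that is all the cocycle argument uses. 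Likewise the final exponent $\varphi|_{U_\alpha} = 2g_\alpha - 2f_\alpha$ is only $C^\infty$, not real-analytic as you state, but that is harmless: global smoothness of $\varphi$ gives membership in $[g]_{\partial \overline{M}}$, while analyticity of the glued metric comes from the local expression $e^{2 g_\alpha}\tilde{g}^{(\alpha)}$.
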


\begin{proof}
Let $g_0$ be any $C^\infty$ representative of $[g]_{\partial \overline{M}}$. Let $\hat{g}$ be a real-analytic Riemannian metric on $\partial \overline{M}$ (whose existence is guaranteed by \cite{Morrey_embedding}). For every $x \in \partial \overline{M}$, let $B(x)$ be the self-adjoint (for $\hat{g}(x)$) endomorphism of $T_x \partial M$ such that $g_0(x) = \hat{g}(x)(B(x) \cdot,\cdot)$. Let $g_1$ be the metric defined by $g_1(x) = g_0(x) / \n{B(x)}$, where the operator norm of $B(x)$ is defined using the metric $\hat{g}(x)$. From its very definition, $g_1$ is a representative of $[g]_{\partial M}$. Let us prove that $g_1$ is real-analytic.

Let $x_0 \in \partial \overline{M}$. From our assumption above (Definition \ref{definition:manifold}), there is a neighbourhood $V$ of $x_0$ in $\partial \overline{M}$ and a real-analytic metric $g_2$ on $V$ such that $g_2$ is conformal to $g_0$ on $V$. We have $g_0 = e^{2f} g_2$ for some $C^\infty$ function $f$ on $V$. For $x \in V$, we have
\begin{equation*}
\begin{split}
g_1 (x) & = \frac{g_0(x)}{\n{B(x)}} = \hat{g}(x)\p{\frac{B(x)}{\n{B(x)}} \cdot,\cdot} \\
    & = \hat{g}(x)\p{\frac{e^{-2f(x)} B(x)}{\n{e^{-2f(x)} B(x)}} \cdot,\cdot}.
\end{split}
\end{equation*}
On the other hand, for $x \in V$, we have
\begin{equation*}
g_2(x) = \hat{g}(x)(e^{-2f(x)} B(x) \cdot,\cdot).
\end{equation*}
And since $g_2$ and $\hat{g}$ are real-analytic, it follows that $x \mapsto e^{-2f(x)} B(x)$ is real-analytic on $V$, and thus so is $g_1$.
\end{proof}

We can then establish the existence of a real-analytic diffeomorphism on a neighbourhood of $\partial \overline{M}$ that puts the metric $g$ into the form \eqref{eq:normal_intro} (this is also known as a canonical product structure). The $C^\infty$ version of this result is standard, see for instance \cite[Theorem 5.4]{dyatlov_zworski_book}.

\begin{lemma}\label{lemma:normal_coordinates}
Let $g_0$ be a real-analytic representative of $[g]_{\partial \overline{M}}$. Then there is a real-analytic boundary function $y_1$ defined on a neighbourhood $U$ of $\partial \overline{M}$ such that
\begin{equation}\label{eq:condition_normal}
\va{\mathrm{d}y_1}_{y_1^2 g} = 1 \textup{ on a neighbourhood of } \partial \overline{M} \textup{ and } g_0 = (y_1^2 g)|_{\partial \overline{M}}.
\end{equation}
Moreover, there is a real-analytic map $y'$ from $U$ to $\partial \overline{M}$ such that $y'$ is the identity on $\partial \overline{M}$, the map $\Psi = (y_1,y')$ is a diffeomorphism from $U$ to $[0,\epsilon[ \times \partial \overline{M}$ for some $\epsilon > 0$, and the pushforward of $g$ under this map has the form
\begin{equation*}
(\Psi^{-1})^*g = \frac{\mathrm{d}y_1^2 + g_1(y_1,y',\mathrm{d}y')}{y_1^2},
\end{equation*}
where $g_1(y_1,y',\mathrm{d}y')$ is a real-analytic family of Riemannian metrics on $\partial \overline{M}$.
\end{lemma}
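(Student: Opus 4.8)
The plan is to reduce everything to a real-analytic Hamilton--Jacobi / geodesic-flow construction, mimicking the standard $C^\infty$ argument (as in \cite[Theorem 5.4]{dyatlov_zworski_book}) but being careful that each step stays in the real-analytic category. Write $\tilde g = y_1^2 g$, which by Definition \ref{definition:manifold} extends real-analytically across $\partial\overline M$ near each boundary point; the issue is only that the defining function $y_1$ we start with may not satisfy $|\mathrm d y_1|_{\tilde g}=1$, and that we want $y_1$ defined on a full neighbourhood of $\partial\overline M$, not just locally. First I would fix an arbitrary real-analytic boundary defining function $\rho$ on a neighbourhood of $\partial\overline M$ (obtained by a real-analytic partition-of-unity-free gluing, e.g. via a real-analytic embedding of $\overline M$ in Euclidean space using \cite{Morrey_embedding} and taking a distance-type function, or by patching the local $y_1$'s from Definition \ref{definition:manifold} — this is the first place one must be slightly careful, since real-analytic bump functions do not exist; one can instead use that $\partial\overline M$ is compact and that two local defining functions differ by a positive real-analytic factor, so an averaging using a real-analytic metric on $\overline M$ gives a global one). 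Then I seek $y_1 = e^{\omega}\rho$ with $\omega$ real-analytic and $\omega|_{\partial\overline M}$ prescribed (to fix the representative $g_0$), such that $|\mathrm d y_1|_{\tilde g_{y_1}} = 1$ near $\partial\overline M$, where $\tilde g_{y_1} = y_1^2 g$.

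The equation $|\mathrm d(e^\omega\rho)|^2_{e^{2\omega}\rho^2 g} = 1$ simplifies, using $|\cdot|_{e^{2\omega}\rho^2 g} = e^{-\omega}\rho^{-1}|\cdot|_{\tilde g}$ (with $\tilde g = \rho^2 g$), to $|\rho\,\mathrm d\omega + \mathrm d\rho|^2_{\tilde g} = 1$, i.e. a first-order PDE for $\omega$ that is non-characteristic at $\partial\overline M = \{\rho = 0\}$ provided $|\mathrm d\rho|_{\tilde g} \neq 0$ there, which holds after rescaling $\rho$ by a real-analytic factor so that $|\mathrm d\rho|_{\tilde g} = 1$ on $\partial\overline M$ (allowed because the conformal class $[g]_{\partial\overline M}$ has a real-analytic representative by the previous lemma, and that is exactly the freedom in the boundary value $\omega|_{\partial\overline M}$). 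This is a standard real-analytic eikonal equation; by the Cauchy--Kovalevskaya theorem (or the real-analytic method of characteristics for first-order PDE) it has a unique real-analytic solution $\omega$ in a neighbourhood of $\partial\overline M$ with the prescribed boundary value, and one checks that the resulting $y_1 = e^\omega\rho$ is still a boundary defining function (its differential is nonzero on $\partial\overline M$ by construction) and satisfies \eqref{eq:condition_normal}: the normalization $|\mathrm d y_1|_{y_1^2 g} = 1$ holds near $\partial\overline M$ by the PDE, and $g_0 = (y_1^2 g)|_{\partial\overline M}$ by the choice of $\omega|_{\partial\overline M}$.

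Next, to get the product structure, I would introduce the real-analytic vector field $V$ dual to $\mathrm d y_1$ with respect to $\tilde g_{y_1} = y_1^2 g$, normalized so that $\mathrm d y_1(V) = 1$; since $|\mathrm d y_1|_{\tilde g_{y_1}}=1$ near $\partial\overline M$, $V$ is the $\tilde g_{y_1}$-gradient of $y_1$ and its flow is real-analytic (flows of real-analytic vector fields are real-analytic). Flowing $\partial\overline M$ along $V$ for time $y_1$ gives the real-analytic map $y' : U \to \partial\overline M$ (the ``projection along the flow'') with $y'|_{\partial\overline M} = \mathrm{id}$, and $\Psi = (y_1,y')$ is a real-analytic diffeomorphism from a neighbourhood $U$ onto $[0,\epsilon[\times\partial\overline M$ for small $\epsilon$ by the inverse function theorem (its differential is invertible along $\partial\overline M$). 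In the coordinates $(y_1,y')$ one has, by the usual computation for a unit gradient field, that $\tilde g_{y_1} = \mathrm d y_1^2 + h(y_1,y',\mathrm d y')$ for a real-analytic family of metrics $h$ on $\partial\overline M$ with $h(0,\cdot) = g_0$ (the cross terms $\mathrm d y_1\,\mathrm d y'_j$ vanish because $V = \partial_{y_1}$ is $\tilde g_{y_1}$-orthogonal to the level sets of $y_1$, which are exactly $\{y' = \mathrm{const}\}^\perp$… more precisely to $\partial_{y'}$). Dividing by $y_1^2$ gives $(\Psi^{-1})^* g = (\mathrm d y_1^2 + g_1(y_1,y',\mathrm d y'))/y_1^2$ with $g_1 = h$, as claimed.

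The main obstacle I expect is the \emph{first} step — producing a \emph{globally defined} real-analytic boundary defining function on a neighbourhood of all of $\partial\overline M$, and then solving the eikonal equation globally near $\partial\overline M$ rather than just locally near each boundary point — precisely because the usual $C^\infty$ trick of gluing local solutions with a partition of unity is unavailable in the analytic category. The resolution is that the eikonal equation is a genuine (non-characteristic, first-order, real-analytic) PDE with Cauchy data on the compact analytic hypersurface $\partial\overline M$, so the local Cauchy--Kovalevskaya solutions are unique and therefore automatically patch into a global real-analytic solution on a one-sided neighbourhood of $\partial\overline M$; similarly the flow-of-$V$ construction is intrinsic and global. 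Everything after that (the orthogonality computation giving the product form, real-analyticity of flows, the inverse function theorem) is routine and identical in form to the $C^\infty$ proof.
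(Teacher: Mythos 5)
Your proposal is correct and follows essentially the same route as the paper: reduce the normalization $\va{\mathrm{d}y_1}_{y_1^2 g}=1$ to a non-characteristic first-order real-analytic (eikonal) equation, solve it in the analytic category near each boundary point, glue the local solutions using uniqueness of the solution plus analytic continuation, and then obtain the product structure by integrating the $y_1^2g$-gradient of $y_1$ exactly as in the $C^\infty$ proof of \cite[Theorem 5.4]{dyatlov_zworski_book}. The only real divergence is your preliminary step of manufacturing a single global real-analytic defining function $\rho$ (by ``averaging'' or a distance-type function built from an embedding); this is the vaguest part of your argument and is also unnecessary: as you observe yourself in your closing paragraph, and as the paper does, one can start from the local real-analytic defining functions provided by Definition \ref{definition:manifold} (adjusted so that the induced boundary metric is $g_0$), solve the eikonal equation locally with the unknown vanishing on $\partial \overline{M}$, and let uniqueness near the boundary together with analytic continuation patch the local solutions into one defined on a full neighbourhood of $\partial \overline{M}$. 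One small inaccuracy: no rescaling of $\rho$ is needed (or possible) to arrange $\va{\mathrm{d}\rho}_{\tilde g}=1$ on $\partial \overline{M}$ --- this boundary value is independent of the choice of defining function and equals $1$ by condition (iv) of Definition \ref{definition:manifold}; the point of that identity is rather that $1-\va{\mathrm{d}\rho}^2_{\tilde g}$ vanishes at $\rho=0$, so dividing by $\rho$ leaves an equation for $\omega$ with real-analytic coefficients which is non-characteristic at the boundary, and to which the analytic existence and uniqueness theorem (Cauchy--Kovalevskaya / \cite[Theorem 1.15.3]{taylor_book}) applies.
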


\begin{proof}
We start by constructing $y_1$ locally. Let $x_0 \in \partial \overline{M}$. Let $\tilde{y}_1$ be a real-analytic boundary function defined on a neighbourhood $U$ of $x_0$ as in Definition \ref{definition:manifold}. Up to multiplying $\tilde{y}_1$ by a real-analytic function, we may assume that $(\tilde{y}_1^2 g)_{|\partial \overline{M} \cap U} = g_0$. We want to construct $y_1$ on a neighbourhood of $x_0$ of the form $y_1 = e^f \tilde{y}_1$ with $f$ real-analytic that vanishes on $\partial \overline{M}$. The condition $\va{\mathrm{d}y_1}_{y_1^2 g} = 1$ may be rewritten as an eikonal equation, $F(x,\mathrm{d}f(x)) = 0$, non-characteristic with respect to $\partial \overline{M}$, like in \cite[(5.1.11) - (5.1.12)]{dyatlov_zworski_book}, which in our case has real-analytic coefficients. We can then use \cite[Theorem 1.15.3]{taylor_book} to find a (unique) solution $f$ to this equation near $x_0$, which happens to be real-analytic. Thus, we constructed a boundary defining function $y_1$ that satisfies \eqref{eq:condition_normal} near $x_0$.

Notice that if $y_1$ and $y_2$ are boundary defining functions that satisfy \eqref{eq:condition_normal} on open sets $U_1$ and $U_2$ of $\overline{M}$, then $y_1$ and $y_2$ coincide on all the connected components of $U_1 \cap U_2$ that intersect $\partial \overline{M}$. Indeed, we can write $y_1 = e^f y_2$ with $f$ that satisfies an eikonal equation as above and vanishes on $\partial \overline{M}$, and there is only one solution to this equation near $\partial \overline{M}$. We get the coincidence of $y_1$ and $y_2$ on the whole connected component of $U_1 \cap U_2$ by analytic continuation. 

We can consequently glue the local solutions to \eqref{eq:condition_normal} to get a solution defined on a neighbourhood of the whole $\partial \overline{M}$.

Finally, we construct the normal coordinates $(y_1,y')$ by integrating the gradient vector field $\nabla^{y_1^2g} y_1$ starting on $\partial \overline{M}$ as in the proof of \cite[Theorem 5.4]{dyatlov_zworski_book}.
\end{proof}

\begin{definition} \label{definition:even}
Using the notation from Lemma \ref{lemma:normal_coordinates}, we say that $(M,g)$ is even if for every integer $k$, we have
\begin{equation}\label{eq:evenness}
\partial_{y_1}^{2k+1} g_1(0,y',\mathrm{d}y') = 0.
\end{equation}
\end{definition}

From now on, we will always assume that $(M,g)$ satisfies the evenness assumption Definition \ref{definition:even}. Notice that Definitions \ref{definition:manifold} and \ref{definition:even} together are the hypotheses from Theorem \ref{theorem:main}. It is also worth noticing that the eveness assumption \eqref{eq:evenness} does not depend on the choice of the canonical product structure, see \cite[Theorem 5.6]{dyatlov_zworski_book}.

\subsection{Even extension}\label{subsec:even_extension}

We define an even extension $X$ for $M$ in the following way. We fix a canonical product structure $(y_1,y')$ on a neighbourhood $U \simeq [0,\epsilon[ \times \partial \overline{M}$ of $\partial \overline{M}$, as in Lemma \ref{lemma:normal_coordinates}. Let us define the real-analytic diffeomorphisms
\begin{equation*}
\psi_+ : U \cap M \to ]0,\epsilon^2[ \times \partial \overline{M}, \quad x \mapsto (y_1(x)^2,y'(x))
\end{equation*}
and
\begin{equation*}
\psi_- : U \cap M \to ]-1 - \epsilon^2,-1[ \times \partial \overline{M}, \quad x \mapsto (-1 - y_1(x)^2,y'(x)).
\end{equation*}

We let $X$ be the closed real-analytic manifold obtained by gluing $]-1-\epsilon^2,\epsilon^2[ \times \partial \overline{M}$ with two distinct copies of $M$ using the maps $\psi_-$ and $\psi_+$. We let $x_1$ be the function on $X$ given by the first coordinate in $]-1-\epsilon^2,\epsilon^2[ \times \partial \overline{M}$. Up to making $\epsilon$ smaller, we extend $x_1$ to a smooth function on $X$, real-analytic on $]-1-\epsilon^2,\epsilon^2[ \times \partial \overline{M}$, and such that $]-1-\epsilon^2,\epsilon^2[ \times \partial \overline{M} = \set{-1 - \epsilon^2 < x_1 < \epsilon^2}$.

The features of the even extension $X$ of $M$ in $\set{x_1 < 0}$ are somehow irrelevant: we are only concerned by the analysis in $\set{x_1 \geq 0}$ (but it is more convenient to work on a closed real-analytic manifold). In particular, we will identify $Y \coloneqq \set{x_1 > 0}$ with $M$. We will never do that with $\set{x_1 < - 1}$. Notice however that $\overline{Y} \subseteq X$ does not have the same smooth structure as $\overline{M}$ as defined above (the manifold $\overline{Y}$ is the even compactification of $M$).

Notice that the diffeomorphism $\psi_+ : U \cap M \to ]0,\epsilon^2[_{x_1} \times \partial \overline{M}_{x'}$ puts the metric $g$ into the form
\begin{equation*}
(\psi_+^{-1})^* g = \frac{\mathrm{d}x_1^2}{4 x_1^2} + \frac{g_1(\sqrt{x_1},x',\mathrm{d}x')}{x_1}.
\end{equation*}
It follows from our evenness assumption, Definition \ref{definition:even}, that the family $x_1 \mapsto g_1(\sqrt{x_1},x',\mathrm{d}x')$ of real-analytic metrics on $\partial \overline{M}$ has a real-analytic extension to $\set{- \zeta < x_1 < \zeta}$ for some $\zeta > 0$.

\subsection{The modified Laplacian}\label{subsection:modified_Laplacian}

Let $\eta > 0$ be smaller than $\zeta/2,\epsilon^2/2$ and $1$ (where $\zeta$ and $\epsilon$ are defined in the previous section), and choose a function $\rho : \mathbb{R} \to \mathbb{R}$ such that $\rho(x) = x$ for $\va{x} \leq \eta$ and $\rho(x) = \pm 3 \eta /2$ for $\va{x} \geq 2 \eta$ (where $\pm$ is the sign of $x$). Notice that we can choose $\rho$ such that $\rho'(x) x/\rho(x) \leq 1$ for positive $x$. Define then the function
\begin{equation*}
\tilde{x}_1 = \rho\p{\frac{4 x_1}{(1+x_1)^2}}
\end{equation*}
on $X$. For $\lambda \in \mathbb{C}$, let us consider the operator on $M \simeq Y$
\begin{equation}\label{eq:modified_operator}
\tilde{x}_1^{\frac{i \lambda}{2} - \frac{n+3}{4}}\p{- \Delta_g - \frac{(n-1)^2}{4} - \lambda^2} \tilde{x}_1^{\frac{n-1}{4} - \frac{i \lambda}{2}},
\end{equation}
where $\Delta_g$ is the (non-positive) Laplacian on $M$. Using $\psi_+$ to identify the set $\set{0 < x_1 < \eta}$ with $]0,\eta[_{x_1} \times \partial \overline{M}_{x'}$, we see that the operator \eqref{eq:modified_operator} takes the form 
\begin{equation}\label{eq:mal_de_tete}
\begin{split}
& - x_1 (1+x_1)^2 \partial_{x_1}^2 - \frac{(1+x_1)^2}{4} \Delta_{g_1}\\
   & \qquad \qquad + (1+x_1)\p{ (n-2 - i \lambda) x_1 + i \lambda - 1 - \gamma x_1 (1+x_1) } \partial_{x_1} \\
   & \qquad \qquad - \p{\frac{n-1}{2} - i \lambda}\Bigg( x_1\frac{n-1}{2} + i \lambda - 1 \\ & \qquad \qquad \qquad \qquad \qquad \qquad \qquad \qquad \quad - \gamma \frac{ (1+x_1)(1-x_1)}{2}\Bigg)
\end{split}
\end{equation}
there. Here $\Delta_{g_1}$ is the Laplacian for the metric $g_1(\sqrt{x_1},x',\mathrm{d}x')$ on $\partial \overline{M}$, the function $\gamma$ is the logarithmic derivative $J^{-1} \frac{\partial J}{\partial x_1}$ with respect to $x_1$ of the Jacobian $J$ of the metric $g_1(\sqrt{x_1},x',\mathrm{d}x')$ on $\partial \overline{M}$. The Jacobian $J$ may be defined by taking local coordinates on $\partial \overline{M}$. While $J$ depends on the choice of coordinates, the logarithmic derivative $\gamma$ does not. It follows from our evenness assumption that $\gamma$ extends to a real-analytic function on $\set{- \eta < x_1 < \eta} \subseteq X$. Notice that the expression \eqref{eq:mal_de_tete} extends real-analytically to $\set{- \eta < x_1 < \eta} \subseteq X$.

\begin{remark}
Here, we differ from the exposition in \cite[Chapter 5]{dyatlov_zworski_book} where, instead of \eqref{eq:modified_operator}, the operator
\begin{equation}\label{eq:modified_original}
x_1^{\frac{i \lambda}{2} - \frac{n+3}{4}}\p{- \Delta_g - \frac{(n-1)^2}{4} - \lambda^2} x_1^{\frac{n-1}{4} - \frac{i \lambda}{2}}
\end{equation}
is considered. This is an artificial modification that we introduce in order to be able to check assumption \ref{item:ellipticite_bas} from \S \ref{subsection:general_assumption}. The formula \eqref{eq:mal_de_tete} for \eqref{eq:modified_operator} can be deduced from the formula for \eqref{eq:modified_original} given in \cite[Lemma 5.10]{dyatlov_zworski_book}.
\end{remark}

Let $\chi : \mathbb{R} \to [0,1]$ be a smooth function such that $\chi(t) = 0$ for $t \leq -2\eta/3$ and $\chi(t) = 1$ for $t \geq - \eta/3$. Define then for $\lambda \in \mathbb{C}$ the differential operator $P(\lambda)$ on $X$ by
\begin{equation*}
P(\lambda) = \tilde{x}_1^{\frac{i \lambda}{2} - \frac{n+3}{4}}\p{- \Delta_g - \frac{(n-1)^2}{4} - \lambda^2} \tilde{x}_1^{\frac{n-1}{4} - \frac{i \lambda}{2}} \textup{ on } Y \simeq M,
\end{equation*}
\begin{equation*}
P(\lambda) = \chi(x_1)\times \eqref{eq:mal_de_tete} \textup{ on } \set{ - \eta < x_1 < \eta} ,
\end{equation*}
and
\begin{equation*}
P(\lambda) = 0 \textup{ on } \set{x_1 < - \frac{2\eta}{3}}.
\end{equation*}

Notice that the differential operator $P(\lambda)$ has real-analytic coefficients on the set $\set{ - \eta/3 < x_1 < \eta}$. Let us define for $\omega \in \mathbb{C}$ and $h > 0$ the semiclassical operator
\begin{equation*}
\mathcal{P}_h(\omega) = h^2 P(\omega/h).
\end{equation*}
Let us check that this family of operators satisfy the general assumptions from \S \ref{subsection:general_assumption}. We recall that the manifold $X$ and its open subset $Y$ have been defined at the end of \S \ref{subsec:even_extension}. It follows from \eqref{eq:mal_de_tete} that $\mathcal{P}_h(\omega)$ is of the form \eqref{eq:pinceau} with $P_0,P_1$ and $P_2$ that have real-analytic coefficients in the neighbourhood $\set{-  \eta/3 < x_1 <  \eta}$ of $\partial Y$. 

Let $p_j$ denote the principal symbol of $P_j$ for $j = 0,1,2$. For $x$ in the interior of $Y$, we have
\begin{equation*}
p_2(x,\xi) = \frac{(1+x_1)^2}{4 x_1} \va{\xi}_{g(x)}^2, \quad p_1(x,\xi) = - \frac{(1+x_1)^2}{4 x_1} \brac{\xi, \frac{\mathrm{d}\tilde{x}_1}{\tilde{x}_1}}_{g(x)}
\end{equation*}
and 
\begin{equation*}
p_0(x) = \frac{(1+x_1)^2}{4 x_1}\p{\va{\frac{\mathrm{d}\tilde{x}_1}{2 \tilde{x}_1}}_{g(x)}^2 - 1}.
\end{equation*}
Near $\partial Y$, we can express this symbols in the $(x_1,x')$ coordinates to find
\begin{equation*}
p_2(x_1,x,\xi_1,\xi') = x_1 (1+x_1)^2 \xi_1^2 + \frac{(1+x_1)^2}{4}\va{\xi'}_{g_1(\sqrt{x_1},x')}^2,
\end{equation*}
\begin{equation*}
p_1(x_1,x,\xi_1,\xi') = -(1+x_1)(1-x_1) \xi_1, \quad \textup{ and } \quad p_0(x_1,x') = - 1.
\end{equation*}

We are now in position to check that the assumptions from \S \ref{subsection:general_assumption} are satisfied. We see that \ref{item:structure_principal} holds with $w(x_1) = x_1(1+x_1)$ and $q_1(x_1,x',\xi') =  \frac{1+x_1}{4}\va{\xi'}_{g_1(\sqrt{x_1},x')}^2$. It is clear from the definition of $q_1$ that \ref{item:elliptique_bord} also holds. The validity of \ref{item:structure_sous_principal} and \ref{item:ellipticite_interieur} follows immediately from the formulae for $p_1(x_1,x,\xi_1,\xi')$ and $p_2(x,\xi)$ above.

It remains to prove \ref{item:ellipticite_bas}, that is that $p_0$ is negative on a neighbourhood of $\overline{Y}$. It is clear that $p_0$ is negative on a neighbourhood of $\partial Y$ from the formula above, so that we only need to check that 
\begin{equation*}
\va{\frac{\mathrm{d}\tilde{x}_1}{2 \tilde{x}_1}}_{g(x)} < 1
\end{equation*}
on the interior of $Y$.

Notice that we have
\begin{equation*}
\frac{\mathrm{d} \tilde{x}_1}{ 2 \tilde{x}_1} = \frac{\rho'\p{\frac{4 x_1}{(1+x_1)^2}}}{\rho\p{\frac{4 x_1}{(1+x_1)^2}}} \frac{4 x_1}{(1+x_1)^2} \frac{1 - x_1}{1+x_1} \frac{\mathrm{d}x_1}{2 x_1}.
\end{equation*}
Since $\va{\frac{\mathrm{d}x_1}{2 x_1}}_{g(x)} = 1$ when $0 < x_1 < 2 \eta$, we get
\begin{equation*}
\va{\frac{\mathrm{d}\tilde{x}_1}{2 \tilde{x}_1}}_{g(x)} = \va{\frac{\rho'\p{\frac{4 x_1}{(1+x_1)^2}}}{\rho\p{\frac{4 x_1}{(1+x_1)^2}}} \frac{4 x_1}{(1+x_1)^2}} \frac{1 - x_1}{1+x_1} \leq \frac{1 - x_1}{1+x_1},
\end{equation*}
and the validity of the assumption \ref{item:ellipticite_bas} follows.

\subsection{Upper bound on the number of resonances}\label{subsection:bound_resonances}

Since the assumption from \S \ref{subsection:general_assumption} are satisfied by the operator $\mathcal{P}_h(\omega)$ introduced in \S \ref{subsection:modified_Laplacian}, we may modify $\mathcal{P}_h(\omega)$ to get an operator $P_h(\omega)$ that satisfies Proposition \ref{proposition:general_statement}.

From here, the strategy to prove Theorem \ref{theorem:main} is the same as in \S \ref{subsection:quasinormal_frequencies}. We let $\kappa$ be as in Proposition \ref{proposition:general_statement} and choose a connected, relatively compact and open subset $V$ of $\set{z \in \mathbb{C} : \im z > - \kappa}$ that contains the closed disk of center $0$ and radius $\frac{3 \kappa}{4}$. We write $\iota_2$ for the inclusion of $C_c^\infty(M)$ in $\mathcal{H}_2$ and $\iota_1$ for the map obtained by composition of the inclusion of $\mathcal{H}_1$ in $\mathcal{D}'(X)$ and the restriction map $\mathcal{D}'(X) \to \mathcal{D}'(M)$.

For $\lambda \in h^{-1} V$, define the resolvent
\begin{equation*}
R_h(\lambda) = \tilde{x}_1^{\frac{n-1}{4} - \frac{i \lambda}{2}} \iota_1 h^{2} P_h(h \lambda)^{-1} \iota_2 \tilde{x}_1^{\frac{i \lambda}{2} - \frac{n+3}{4}} : C_c^\infty\p{M} \to \mathcal{D}'(M).
\end{equation*}

As in \S \ref{subsection:quasinormal_frequencies}, we get
\begin{lemma}\label{lemma:vrai_resolvante}
If $h$ is small enough, $\lambda$ is in $h^{-1}V$ and $\im \lambda > 0$, then $R_h(\lambda)$ coincides with the inverse of $- \Delta_g - \frac{(n-1)^2}{4} - \lambda^2$ on $L^2(M)$. In particular, $R_h(\lambda)$ does not depend on $h$ for $\lambda \in h^{-1} K$.
\end{lemma}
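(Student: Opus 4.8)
The plan is to mimic the proof of Lemma \ref{lemma:identification_resolvante} from the Schwarzschild--de Sitter case, transposing each step to the asymptotically hyperbolic setting. First I would fix $h$ small enough and $\lambda \in h^{-1}V$ with $\im \lambda > 0$, and check the algebraic identity: for $u \in C_c^\infty(M)$, applying $-\Delta_g - \frac{(n-1)^2}{4} - \lambda^2$ to $R_h(\lambda)u$ should give back $u$. This is the formal computation
\[
\p{-\Delta_g - \tfrac{(n-1)^2}{4} - \lambda^2} R_h(\lambda) u = \tilde{x}_1^{\frac{n-1}{4} - \frac{i\lambda}{2}} \cdot \tilde{x}_1^{\frac{i\lambda}{2} - \frac{n+3}{4}} \p{-\Delta_g - \tfrac{(n-1)^2}{4} - \lambda^2} \tilde{x}_1^{\frac{n-1}{4} - \frac{i\lambda}{2}} \cdot h^2 P_h(h\lambda)^{-1} \iota_2 \tilde{x}_1^{\frac{i\lambda}{2} - \frac{n+3}{4}} u,
\]
where the middle factor is exactly $h^{-2}\mathcal{P}_h(h\lambda) = h^{-2}\cdot h^2 P(\lambda)$ acting on $Y \simeq M$, which by the modification only away from $\overline{Y}$ equals $P_h(h\lambda)$ on $Y$; using $P_h(h\lambda) \iota_1 = \iota_3 P_h(h\lambda)$ and $P_h(h\lambda) P_h(h\lambda)^{-1} = I$ (item \ref{item:fredholm_family}, for $h\lambda \in V$), and the fact that $\tilde{x}_1^{\frac{n-1}{4}-\frac{i\lambda}{2}}$ cancels against $\tilde{x}_1^{\frac{i\lambda}{2}-\frac{n-1}{4}}$ after noting the surviving power of $\tilde{x}_1$ combines with the $\iota_2$-conjugation, one recovers $u$. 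I would write this out carefully paralleling the displayed computation in Lemma \ref{lemma:identification_resolvante}, keeping track of the $G$-analogue (here there is no $G$, but there is the extra $\tilde{x}_1$-weight, so the bookkeeping of exponents is the part to do precisely).

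Second, having shown $R_h(\lambda)u$ solves the equation, I must show it lies in $L^2(M)$, so that it coincides with the genuine $L^2$-resolvent applied to $u$ (which is unique since $\im\lambda>0$ puts us in the resolvent set, $-\Delta_g$ being self-adjoint with essential spectrum $[(n-1)^2/4,\infty[$). Since $-\Delta_g$ is elliptic in the interior, $R_h(\lambda)u$ is smooth on $M$, hence bounded on compact subsets of $M$; the only issue is the behaviour near $\partial\overline{M}$, i.e. near $x_1 = 0$ in $Y$. By item \ref{item:continuous_boundary}, the element $v \in \mathcal{H}_1$ with $R_h(\lambda)u = \tilde{x}_1^{\frac{n-1}{4}-\frac{i\lambda}{2}} v|_Y$ is continuous (hence bounded) on a neighbourhood of $\partial Y$. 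So near infinity $R_h(\lambda)u = \mathcal{O}(|\tilde{x}_1^{\frac{n-1}{4}-\frac{i\lambda}{2}}|) = \mathcal{O}(x_1^{\frac{n-1}{4} + \frac{\im\lambda}{2}})$ as $x_1 \to 0^+$, using $\tilde{x}_1 = x_1$ near $x_1 = 0$ and $|\tilde{x}_1^{-i\lambda/2}| = \tilde{x}_1^{\im\lambda/2}$. Then I would recall that the Riemannian volume of $(M,g)$ near $\partial\overline M$ is comparable, in the $(y_1,y')$ coordinates, to $y_1^{-n}\,\mathrm{d}y_1\,\mathrm{d}y'$, hence in the $x_1 = y_1^2$ coordinate to $x_1^{-\frac{n}{2}-\frac12}\,\mathrm{d}x_1\,\mathrm{d}x'$; so $R_h(\lambda)u \in L^2$ near $x_1 = 0$ amounts to $\int_0 x_1^{2(\frac{n-1}{4}+\frac{\im\lambda}{2})} \cdot x_1^{-\frac{n}{2}-\frac12}\,\mathrm{d}x_1 = \int_0 x_1^{-1 + \im\lambda}\,\mathrm{d}x_1 < \infty$, which holds precisely because $\im\lambda > 0$. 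This pins down the choice of exponents in the definition of $R_h(\lambda)$ and closes the argument.

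Finally, the last sentence ("$R_h(\lambda)$ does not depend on $h$ for $\lambda \in h^{-1}K$") follows immediately: on the overlap of the ranges $h^{-1}V$ and $(h')^{-1}V$ for two admissible values $h, h'$, and in particular on any fixed compact $K$ in the upper half-plane, both $R_h(\lambda)$ and $R_{h'}(\lambda)$ equal the $L^2$-resolvent by the above, hence agree; then by meromorphic continuation (item \ref{item:fredholm_family} gives a meromorphic inverse on all of $V$, so $R_h$ is meromorphic on $h^{-1}V$) they agree wherever both are defined. I expect the main obstacle to be the second step — specifically, getting the volume-density computation and the exponent arithmetic exactly right so that the integrability threshold falls exactly at $\im\lambda > 0$, matching the choice of the weights $\tilde{x}_1^{\frac{n-1}{4}-\frac{i\lambda}{2}}$ and $\tilde{x}_1^{\frac{i\lambda}{2}-\frac{n+3}{4}}$; the algebraic identity in the first step, while notation-heavy, is essentially forced once the conjugation is unwound, and the $h$-independence is a soft consequence. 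One subtlety worth flagging is that one should take $h$ small enough that $V$ (a fixed set) is covered appropriately and that Proposition \ref{proposition:general_statement} applies, exactly as in the statement.
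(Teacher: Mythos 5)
Your proposal follows exactly the paper's route: the paper's proof simply says it is the same argument as Lemma \ref{lemma:identification_resolvante}, with the only new point being that $\tilde{x}_1^{\frac{n-1}{4}-\frac{i\lambda}{2}}$ lies in $L^2(M)$ when $\im\lambda>0$, which is precisely the exponent/volume computation you carry out (and your arithmetic, $x_1^{\frac{n-1}{2}+\im\lambda}\cdot x_1^{-\frac n2-\frac12}=x_1^{-1+\im\lambda}$, is right, noting only that $\tilde{x}_1$ is comparable to, not equal to, $x_1$ near $x_1=0$). The rest — the conjugation identity via $P_h(h\lambda)\iota_1=\iota_3 P_h(h\lambda)$, boundedness near $\partial Y$ from item \ref{item:continuous_boundary}, and $h$-independence by agreement with the $L^2$ resolvent plus meromorphic continuation — matches the paper, so the proposal is correct and essentially identical in approach.
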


\begin{proof}
The proof is the same as for Lemma \ref{lemma:identification_resolvante}. One just needs to notice that if $\im \lambda > 0$ then the function $\tilde{x}_1^{\frac{n-1}{4} - \frac{i \lambda}{2}}$ belongs to $L^2(M)$.
\end{proof}

Notice that Lemma \ref{lemma:vrai_resolvante} implies that for $\lambda \in h^{-1} V$ the scattering resolvent $R_{\scat}(\lambda)$ coincides with $R_h(\lambda)$. With Proposition \ref{proposition:general_statement} and Lemma \ref{lemma:vrai_resolvante} at our disposal, the proof of Theorem \ref{theorem:main} follows exactly the same lines as the proof of Theorem \ref{theorem:schwarzschild} given in \S \ref{subsection:quasinormal_frequencies}. Consequently, we do not repeat it.

\section{Real-analytic Fourier--Bros--Iagolnitzer transform}\label{section:FBI_transform}

In this section, we detail the tools of real-analytic microlocal analysis that will be used in the proof of Proposition \ref{proposition:general_statement} in \S \ref{section:general_construction}. The main ingredient that we need is a real-analytic Fourier--Bros--Iagolnitzer transform as we studied in \cite{BJ20} with Guedes Bonthonneau. 

In \S \ref{subsec:generality}, we recall the main feature of such an FBI transform, and prove a slight generalization, Proposition \ref{proposition:toeplitz}, of \cite[Proposition 2.10]{BJ20}. In \S \ref{subsection:duality_statement}, we give a description, Proposition \ref{proposition:duality}, of the dual of a Hilbert space defined in \S \ref{subsec:generality}. This result will be useful to construct the injection of the spaces $\mathcal{H}_1$ and $\mathcal{H}_2$ in $\mathcal{D}'(X)$ in the proof of Proposition \ref{proposition:general_statement} (see Proposition \ref{proposition:inclusions_naturelles}) and to reuse results from \cite{BGJ} in \S \ref{subsec:logarithmic}, where we study the specificities of certain spaces defined using FBI transform and logarithmic weights (rather than weight of order $1$ as in \cite{BJ20}).

\subsection{Generality}\label{subsec:generality}

Let us recall the tools from \cite{BJ20} that we need for the proof of Proposition \ref{proposition:general_statement}. As in \S \ref{section:general_statement}, we let $X$ be a closed real-analytic manifold, and we endow it with a real-analytic metric $g_X$ (which is possible due to \cite{Morrey_embedding}). We endow $T^* X$ with an associated metric $g_{KN}$ which is given, using the decomposition into horizontal and vertical direction $T_\alpha(T^* X) \simeq T_{\alpha_x} X \oplus T^*_{\alpha_x} X \simeq T_{\alpha_x} X \oplus T_{\alpha_x} X$ for $\alpha = (\alpha_x,\alpha_\xi) \in T^*M$, by the formula 
\begin{equation*}
g_{KN,\alpha}((u,v),(u,v)) = g_{X,\alpha_x}(u,u) + \frac{g_{X,\alpha_x}(v,v)}{1 + g_{X,\alpha_x}(\alpha_\xi,\alpha_\xi)}
\end{equation*}
for $(u,v) \in T_{\alpha_x} X \oplus T_{\alpha_x} X$. This metric can be used to give a characterization of Kohn--Nirenberg symbols (see for instance \cite[Remark 2.5]{BJ20}), and we will consequently call it a Kohn--Nirenberg metric. Let $\widetilde{X}$ be a complexification of $X$ (endowed with any smooth distance) and $T^* \widetilde{X}$ its cotangent bundle. If $r > 0$ is small, we let $(X)_r$ denote the Grauert tube (see for instance \cite{grauert_tube_I,grauert_tube_II}) of size $r$ for $X$, that is the image of 
\begin{equation}\label{eq:boule}
\set{ (x,v) \in TX : g_{X,x}(v,v) \leq r^2}
\end{equation}
by the map
\begin{equation*}
(x,v) \mapsto \exp_x(iv),
\end{equation*}
which is well-defined on \eqref{eq:boule} if $r$ is small enough (here we use the holomorphic extension of the exponential map for $g_X$). We define similarly the Grauert tube $(T^*X)_r \subseteq T^* \widetilde{X}$ by using the Kohn--Nirenberg metric on $T^* X$. Because of the non-compactness of $T^* X$, it is not clear \emph{a priori} that $(T^* X)_r$ is well-defined. However, one can reduce the study of the Kohn--Nirenberg metric on $T^* X$ to its study near the zero section and the study of its pullbacks by the dilations $(\alpha_x,\alpha_\xi) \mapsto (\alpha_x,\lambda \alpha_\xi)$ for $\lambda \geq 1$ on a bounded subset of $T^* X$ (for instance the space between the spheres of radii $1$ and $2$ in each fiber). Since these pullbacks are uniformly analytic and positive definite, we see in particular that $(T^* X)_{r}$ is well-defined when $r$ is small enough.

Working in the holomorphic extension of real-analytic coordinates on $X$, we get a holomorphic trivialization $(\tilde{x},\tilde{\xi}) = (x+iy,\xi+i \eta)$ of $T^*\widetilde{X}$ in which $T^* X$ is described by $\set{y= \eta = 0}$. Using the same dilation trick as above, one may then check that, for every compact subset $K$ of the domain of the coordinate patch $\tilde{x}$, there is $C > 0$ such that, for every $r> 0$ small enough, the image of $(T^* X)_{r}$ above $K$ in this trivialization is intermediate between $$T^*_K \widetilde{X} \cap \set{\va{y} \leq C^{-1} r, \va{\eta} \leq C^{-1} \p{1 + \va{\xi}}  r}$$ and $$T_K^* \widetilde{X} \cap \set{\va{y} \leq C r, \va{\eta} \leq C \p{1 + \va{\xi} } r}.$$
Here, we write $T^*_K \widetilde{X}$ for the reciprocal image of $K$ by the canonical projection $T^* \widetilde{X} \to \widetilde{X}$.

If $m$ is a real number, $r > 0$ is small and $a$ is a smooth function on $(T^* X)_{r}$, we say that $a \in S_{KN}^m((T^* X)_{r})$ is a Kohn--Nirenberg symbol of order $m$ on $(T^* X)_{r}$ if, for every compact subset of the domain of a coordinate patch as above and every $k,k',\ell,\ell' \in \mathbb{N}^n$ there is a constant $C > 0$ such that on the image of $T^*_K \widetilde{X} \cap (T^* X)_{r}$ by the trivialization of $T^* \widetilde{X}$ associated to the coordinate patch, we have
\begin{equation*}
\va{\partial_x^k \partial_y^{k'} \partial_\xi^{\ell}\partial_\eta^{\ell'} a (\tilde{x},\tilde{\xi})} \leq C \p{1 + \va{\xi}}^m \p{1 + \va{\xi}}^{- \va{\ell} - \va{\ell'}}.
\end{equation*}
We define similarly symbols of logarithmic order by replacing $\p{1 + \va{\xi}}^m$ by $\log(2 + \va{\xi})$.

Let us fix a \emph{real} $C^\infty$ metric $\tilde{g}$ on the vector bundle $T^* \widetilde{X} \to X$ (seen as a real vector bundle) and define for $\alpha = (x,\xi) \in T^* \widetilde{X}$ the japanese bracket
\begin{equation*}
\brac{\va{\alpha}} = \sqrt{2 + \tilde{g}_x(\xi)}.
\end{equation*}
This is just a more convenient way to denote the size of $\alpha$ than taking the norm of $\xi$ directly, notice in particular that $\brac{\va{\alpha}}$ and $\log \brac{\va{\alpha}}$ are bounded from below. Notice that if $r > 0$ is small enough, then the function $\alpha \mapsto \brac{\va{\alpha}}$ is a Kohn--Nirenberg symbol of order $1$ on $(T^* X)_r$, as defined above.

It will also be useful to endow $T^* \widetilde{X}$ with a distance adapted to Kohn--Nirenberg symbols. One way to do that is to endow $T \widetilde{X}$ with a smooth Hermitian metric, which gives an identification of $T^* \widetilde{X}$ with $T \widetilde{X}$. Then, one may define a Kohn--Nirenberg metric on $T \widetilde{X}$ as above when $\widetilde{X}$, seen as a real manifold, is endowed with a smooth Riemannian metric (e.g. the real part of the Hermitian metric). We let $d_{KN}$ denote the associated distance. Restricting to a compact subset $K$ of $\widetilde{X}$, one may check that $\alpha,\beta \in T^*_K \widetilde{X}$ are close for $d_{KN}$ if their position variables are close to each other and, in local coordinates, their momentum variables have the same order of magnitude and the Euclidean distance between them is small with respect to this order of magnitude. This can be proved using a rescaling argument as described above.

For $R \gg 1$, so that $(X)_{\frac{1}{R}}$ is defined, we let $\widetilde{E}_R(X)$ denote the space of bounded holomorphic functions on the interior of $(X)_{\frac{1}{R}}$, endowed with the supremum norm. Then, we let $E_{R}(X)$ denote the closure of $\widetilde{E}_{R'}(X)$ in $E_{R}(X)$ for any $R'< R$ large enough so that $(X)_{\frac{1}{R'}}$ is well-defined. It follows from the Oka--Weil Theorem \cite[Theorems 2.3.1 and 2.5.2]{forstneric_book} that the space $E_R(X)$ does not depend on the choice of $R'$. Let $E_{R}'(X)$ denote the dual of $E_R(X)$, and notice that if $R > R'$ are such that $(X)_{\frac{1}{R}}$ and $(X)_{\frac{1}{R'}}$ are well-defined, then the injection of $E_{R'}(X)$ in $E_{R}(X)$ has dense image (because it contains $\widetilde{E}_{R''}(X)$ for some $R'' < R'$), so that the adjoint of this map defines an injection of $E_{R}'(X)$ into $E_{R'}'(X)$.

Then, we choose a real-analytic FBI transform $T : \mathcal{D}'(X) \to C^\infty(T^* X)$ on $X$, as defined in \cite[Definition 2.1]{BJ20}. This is a transform defined by a real-analytic kernel $K_T$:
\begin{equation*}
Tu(\alpha) = \int_X K_T(\alpha,x) u(x) \mathrm{d}x,
\end{equation*}
for $u \in \mathcal{D}'(X)$ and $\alpha \in T^* X$. Here, $\mathrm{d}x$ denotes the Lebesgue density associated to the Riemannian metric $g_X$ on $X$. The kernel $K_T$, and thus $T$, depends on the implicit semiclassical parameter $h > 0$ introduced in the beginning of \S \ref{subsection:general_assumption}. Unless the opposite is explicitly stated, all the estimates below will be uniform in $h$. The fact that $T$ is a real-analytic FBI transform \cite[Definition 2.1]{BJ20} means that the kernel $K_T$ has a holomorphic extension to $(T^* X)_r \times (X)_r$ for some small $r > 0$, which satisfies the following properties:
\begin{itemize}
\item for every $\delta > 0$, there is $r' > 0$ such that if $(\alpha,x) \in (T^* X)_{r'} \times (X)_{r'}$ are such that $d(\alpha_x,x) \geq \delta$ then
\begin{equation}\label{eq:small_away_diagonal}
\va{K_T(\alpha,x)} \leq (r')^{-1} \exp\p{- r' \frac{\brac{\va{\alpha}}}{h}};
\end{equation}
\item there is $\delta >0$ and $r' >0$ such that if $(\alpha,x) \in (T^* X)_{r'} \times (X)_{r'}$ are such that $d(\alpha_x,x) \leq \delta$ then
\begin{equation}\label{eq:local_behavior}
\va{K_T(\alpha,x) - e^{i \frac{\Phi_T(\alpha,x)}{h}} a(\alpha,x)} \leq (r')^{-1} \exp\p{- r' \frac{\brac{\va{\alpha}}}{h}}.
\end{equation}
\end{itemize}
Here, $a(\alpha,x)$ is an analytic symbol defined near the diagonal, elliptic in the class of $h^{-\frac{3n}{4}} \brac{\va{\alpha}}^{\frac{n}{4}}$, meaning that for $r',\delta > 0$ small enough, there is a constant $C > 0$ such that $a(\alpha,x)$ is holomorphic in $\set{(\alpha,x) \in (T^*X)_{r'} \times (X)_{r'} : d(\alpha_x,x) < \delta}$ and satisfies on that set the estimate
\begin{equation*}
 C^{-1} h^{- \frac{3n}{4}} \brac{\va{\alpha}}^{\frac{n}{4}} \leq \va{a(\alpha,x)} \leq C h^{- \frac{3n}{4}} \brac{\va{\alpha}}^{\frac{n}{4}}.
\end{equation*} 
The phase $\Phi_T (\alpha,x)$ from \eqref{eq:local_behavior} is an analytic symbol of order $1$ on the set $\set{(\alpha,x) \in (T^*X)_{r'} \times (X)_{r'} : d(\alpha_x,x) < \delta}$ (it is holomorphic and bounded by $C\brac{\alpha}$ for some $C > 0$), which satisfies in addition the following properties
\begin{itemize}
\item for $\alpha \in T^* X $, we have $\Phi_T(\alpha,\alpha_x) = 0$;
\item for $\alpha  \in T^* X$, we have $\mathrm{d}_x \Phi_T(\alpha,\alpha_x) = - \alpha_\xi$;
\item there is $C > 0$ such that, if $(\alpha,x) \in T^*X \times X$ and $d(\alpha_x,x) < \delta$, then
\begin{equation}\label{eq:imaginary_coercivity}
\im\p{\Phi_T(\alpha,x)} \geq C^{-1} \brac{\va{\alpha}} d(\alpha_x,x)^2.
\end{equation}
\end{itemize}

According to \cite[Theorem 6]{BJ20}, such a FBI transform exists. Moreover, if we endow $T^* X$ with the volume associated to the canonical symplectic form, then we may assume that the formal adjoint $S \coloneqq T^*$ of $T$ is a left inverse for $T$, i.e. that $T$ is an isometry on its image. Notice that $S$ has a real-analytic kernel $K_S$ that satisfies for $\alpha$ and $x$ real
\begin{equation*}
K_S(x,\alpha) = \overline{K_T(\alpha,x)}.
\end{equation*}
In particular, $K_S$ is negligible away from the diagonal, and may be described near the diagonal in a similar fashion as $K_T$.

Let us fix some small $r > 0$, and let $G_0$ be a Kohn--Nirenberg symbol of order $1$ on $(T^* X)_r$ and set $G = \tau G_0$ for some small $\tau > 0$ (the function $G$ is sometimes called an escape function). We let $\Lambda = \Lambda_G$ be the submanifold of $(T^* X)_r$ defined by
\begin{equation}\label{eq:defLambda}
\Lambda = e^{H_G^{\omega_I}} T^* X,
\end{equation}
where $H_G^{\omega_I}$ is the Hamiltonian vector field of $G$ for the symplectic form $\omega_I = \im \omega$, where $\omega$ denotes the canonical complex symplectic form on $T^* \widetilde{X}$. By taking $\tau$ small, we ensure that $\Lambda$ is $C^\infty$ close to $T^* X$ (this statement can be made uniform by pulling back $\Lambda$ to a bounded subset of $T^* \widetilde{X}$ using dilation in the fibers as above). Notice that in \cite[Definition 2.2]{BJ20}, the symbol $G_0$ was assumed to be supported in $(T^* X)_{r'}$ for some $r' < r$. The only reason for that was to ensure that the flow of $H_G^{\omega_I}$ is complete, which implies that \eqref{eq:defLambda} makes sense. However, taking $\tau$ small (which we will always do) is enough to ensure that \eqref{eq:defLambda} is well-defined. Moreover, we see that $\Lambda$ only depends on the values of $G$ on $(T^* X)_{r'}$ for some $r' < r$, so that the assumption on the support of $G_0$ from \cite[Definition 2.2]{BJ20} may be lifted without harm.

We will say that a smooth function $a$ on $\Lambda$ is a symbol of order $m \in \mathbb{R}$, and write $a \in S_{KN}^m(\Lambda)$, if the function $a \circ e^{H_G^{\omega_I}}$ is a symbol of order $m$, in the standard Kohn--Nirenberg class on $T^* X$. We define similarly symbols on $\Lambda \times \Lambda$.

On $\Lambda$, we can construct a real-valued symbol $H$ of order $1$ such that $\mathrm{d}H = -\im \theta$ where $\theta$ denotes the canonical complex $1$-form on $T^* \widetilde{X}$ (see \cite[\S 2.1.1]{BJ20}, in particular equation (2.9) there). Notice also that $\omega_R = \re \omega$ is a symplectic form on $\Lambda$ if $\tau$ is small enough. We let $\mathrm{d}\alpha = \omega_R^n /n!$ denote the associated volume form.

Notice that if $u \in E_R'(X)$ with $R$ large enough, then $Tu$ is well-defined and holomorphic on $(T^* X)_r$ for some small $r > 0$, so that if $\tau$ is small enough, $Tu$ is defined on $\Lambda$. We can consequently define the FBI transform $T_\Lambda$ associated to $\Lambda$ by restriction $T_\Lambda u = (T u)_{|\Lambda}$. Notice that since the kernel of $S$ is holomorphic, we also have an operator\label{page:Slambda} $S_\Lambda$ that is a left inverse for $T_\Lambda$ (see \cite[Lemma 2.7]{BJ20}). We will work with the spaces
\begin{equation*}
L_k^2(\Lambda) \coloneqq L^2(\Lambda, \brac{\va{\alpha}}^{2k} e^{- \frac{2H}{h}} \mathrm{d}\alpha)  \quad \textup{ for } \quad k \in \mathbb{R}
\end{equation*}
and
\begin{equation*}
\mathcal{H}_\Lambda^k \coloneqq \set{u \in E_{R}'(X) : T_\Lambda u \in L_k^2(\Lambda)}.
\end{equation*}
Here, $R$ needs to be large enough so that $E_R(X)$ is well-defined, and $\tau$ small enough depending on $R$ (but the particular choice of $R$ is irrelevant when $\tau$ is small). According to \cite[Corollary 2.2]{BJ20}, we know that $\mathcal{H}_\Lambda^k$ is a Hilbert space. We let also $\mathcal{H}_{\Lambda,\textup{FBI}}^k \subseteq L_k^2(\Lambda)$ denote the (closed) image of $\mathcal{H}_\Lambda^k$ by $T_\Lambda$. The structure of the projector $\Pi_\Lambda \coloneqq T_\Lambda S_\Lambda$ on the image of $T_\Lambda$ has been studied in \cite[\S 2.2]{BJ20}. The orthogonal projector $B_\Lambda$ on $\mathcal{H}_{\Lambda,\textup{FBI}}^0$ in $L_0^2(\Lambda)$ is studied in \cite[\S 2.3]{BJ20}. Notice that in order to prove Proposition \ref{proposition:general_statement}, we will work with a symbol $G_0$ which is of logarithmic order. As explained in \S \ref{subsec:logarithmic} (see also \cite{BGJ}), it implies that $\mathcal{H}_\Lambda^k$ is in fact a space of distributions. Consequently, we could have worked from the beginning only with distributions (and avoid the introduction of the space $E_R'(X)$). However, we decided to start from the context of \cite{BJ20} and then specify to the case of logarihtmic weights in \S \ref{subsec:logarithmic}. This is because we will need some extensions of the results from \cite{BJ20} that are not made easier by assuming that $G_0$ is of logarithmic order. It is also useful to see the case of logarithmic weights as a particular case of \cite{BJ20}, as it allows us to use the results from this reference.

Assume that $A(\alpha,\beta)$ is a smooth function on $\Lambda \times \Lambda$ and let $A$ be the associated operator
\begin{equation*}
A u (\alpha) = \int_\Lambda A(\alpha,\beta) u(\beta) \mathrm{d}\beta \textup{ for } \alpha \in \Lambda.
\end{equation*}
The operator $A$ may be defined for instance as an operator from the space of smooth compactly supported functions $u$ on $\Lambda$ to the space of smooth functions on $\Lambda$. In order to understand the action of $A$ on $L_0^2(\Lambda)$, one has to study the reduced kernel of $A$:
\begin{equation*}
A_{\textup{red}}(\alpha,\beta) = A(\alpha,\beta) e^{\frac{H(\beta) - H(\alpha)}{h}}.
\end{equation*}
To study the action of $A$ from $L_k^2(\Lambda)$ to $L_\ell^2(\Lambda)$, one can study the kernel $A_{\textup{red}}(\alpha,\beta) \brac{\va{\beta}}^{- 2 k} \brac{\va{\alpha}}^{2\ell}$. We will say that the kernel $A$ is negligible if
\begin{equation}\label{eq:reduced_kernel_small}
A_{\textup{red}}(\alpha,\beta) = \mathcal{O}_{C^\infty}\p{h^\infty (\brac{\va{\alpha}} + \brac{\va{\beta}})^{- \infty}}.
\end{equation}
Here, the $C^\infty$ estimates may be understood by identifying $\Lambda$ with $T^* X$ using $e^{H_G^{\omega_I}}$, taking a trivialization for $T^* X$ and then asking for all partial derivatives of $A_{\textup{red}}$ to be $\mathcal{O}(h^\infty (\brac{\va{\alpha}} + \brac{\va{\beta}})^{- \infty})$. We do not need to ask for symbolic estimates in that case, as it is automatic for something that decays that fast. Notice that an operator whose reduced kernel satisfies \eqref{eq:reduced_kernel_small} is bounded from $L_k^2(\Lambda)$ to $L_\ell^2(\Lambda)$ for every $k,\ell \in \mathbb{R}$, with norm $\mathcal{O}(h^\infty)$. An operator whose reduced kernel satisfy \eqref{eq:reduced_kernel_small} will be called a negligible operator. 

Recall the phase $\Phi_{TS}(\alpha,\beta)$ from \cite[\S 2.2]{BJ20}, which is the critical value of $y \mapsto \Phi_T(\alpha,y) + \Phi_S(y,\beta)$. Here, $\Phi_S$ is the phase that appear when describing the kernel $K_S(y,\beta)$ of $S$ locally as we do for $K_T$ in \eqref{eq:local_behavior}. That is $\Phi_S(y,\beta) = -\overline{\Phi_T(\bar{\beta},\bar{y})}$. The following fact follows from the analysis in \cite{BJ20}.

\begin{lemma}\label{lemma:factorisation}
Let $\delta > 0$ be small enough. Assume that $\tau > 0$ and $h > 0$ are small enough. Assume that $A(\alpha,\beta)$ is a smooth function on $\Lambda \times \Lambda$ and let $A$ be the associated operator. Let $m \in \mathbb{R}$. Assume that there is a symbol $a \in S_{\textup{KN}}^m(\Lambda \times \Lambda)$ supported in $\set{(\alpha,\beta) \in \Lambda \times \Lambda, d_{KN}(\alpha,\beta) < \delta}$ such that
\begin{equation}\label{eq:desired_reduced}
A_{\textup{red}}(\alpha,\beta) =\frac{1}{(2\pi h)^n} e^{\frac{H(\beta) + i \Phi_{TS}(\alpha,\beta) - H(\alpha)}{h}} a(\alpha,\beta) + \mathcal{O}_{C^\infty}\p{h^\infty (\brac{\va{\alpha}} + \brac{\va{\beta}})^{- \infty}}.
\end{equation}
Then, $A$ is bounded from $L_k^2(\Lambda)$ to $L_{k-m}^2(\Lambda)$ for every $k \in \mathbb{R}$, and there is a symbol $\sigma \in S_{KN}^m(\Lambda)$ such that the operators $B_\Lambda A B_\Lambda$ and $B_\Lambda \sigma B_\Lambda$ differ by a negligible operator. 

Moreover, $\sigma$ coincides with  $\alpha \mapsto a(\alpha,\alpha)$ up to $\mathcal{O}(h)$ in $S_{\textup{KN}}^{m-1}(\Lambda)$.
\end{lemma}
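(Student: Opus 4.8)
The plan is to deduce Lemma~\ref{lemma:factorisation} from the FBI calculus on $\Lambda$ of \cite{BJ20}, treating separately the mapping property and the Toeplitz reduction. For the boundedness $L^2_k(\Lambda)\to L^2_{k-m}(\Lambda)$ I would pass to the conjugated kernel $\tilde A(\alpha,\beta)=\brac{\va{\alpha}}^{k-m}\brac{\va{\beta}}^{-k}A_{\textup{red}}(\alpha,\beta)$, whose $L^2(\Lambda,\mathrm{d}\alpha)$-boundedness is equivalent to the claim, and run a Schur test. On the support of $a$ one has $d_{KN}(\alpha,\beta)<\delta$, so $\brac{\va{\alpha}}\sim\brac{\va{\beta}}$ and $\brac{\va{\alpha}}^{k-m}\brac{\va{\beta}}^{-k}\va{a(\alpha,\beta)}\lesssim 1$; the key input is the coercivity of the reduced phase, as in \cite[\S 2.2]{BJ20}, namely $H(\beta)-H(\alpha)-\im\Phi_{TS}(\alpha,\beta)\le -c\,q(\alpha,\beta)$ with $q\ge 0$ comparable, in rescaled Kohn--Nirenberg coordinates, to $\brac{\va{\alpha}}\va{\alpha_x-\beta_x}^2+\brac{\va{\alpha}}^{-1}\va{\alpha_\xi-\beta_\xi}^2$. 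Hence $\va{\tilde A(\alpha,\beta)}\lesssim h^{-n}e^{-cq(\alpha,\beta)/h}$ on $\mathrm{supp}\,a$, and the rescaling $u=\sqrt{\brac{\va{\alpha}}/h}\,(\alpha_x-\beta_x)$, $v=(h\brac{\va{\alpha}})^{-1/2}(\alpha_\xi-\beta_\xi)$ (Jacobian $\sim h^n$) gives $\int_\Lambda\va{\tilde A}\,\mathrm{d}\beta\lesssim 1$ and, symmetrically, $\int_\Lambda\va{\tilde A}\,\mathrm{d}\alpha\lesssim 1$, uniformly in $h$; the $\mathcal O_{C^\infty}(h^\infty(\brac{\va{\alpha}}+\brac{\va{\beta}})^{-\infty})$ remainder contributes even more easily. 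Uniformity over the non-compact $T^*X$ is obtained, as elsewhere in the paper, by the fiberwise-dilation reduction recalled before \eqref{eq:boule}.

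For the Toeplitz part I would first recall from \cite[\S 2.3]{BJ20} that the orthogonal projector $B_\Lambda$ on $\mathcal H^0_{\Lambda,\textup{FBI}}$ has a reduced kernel of exactly the form \eqref{eq:desired_reduced}, with $m=0$, with elliptic amplitude $b\in S^0_{KN}(\Lambda\times\Lambda)$ supported in $\set{d_{KN}<\delta}$ and $b(\alpha,\alpha)=1+\mathcal O(h)$ in $S^{-1}_{KN}(\Lambda)$. Then $B_\Lambda A B_\Lambda$ is a threefold composition of operators of type \eqref{eq:desired_reduced} of orders $0,m,0$, and I would invoke the composition calculus of \cite[\S\S 2.2--2.3]{BJ20}: the reduced kernel of a product $A_1A_2$ is $\int_\Lambda A_{1,\textup{red}}(\alpha,\beta)A_{2,\textup{red}}(\beta,\gamma)\,\mathrm{d}\beta$, and stationary phase in $\beta$---using that $\beta\mapsto\Phi_{TS}(\alpha,\beta)+\Phi_{TS}(\beta,\gamma)$ has a nondegenerate critical point with critical value $\Phi_{TS}(\alpha,\gamma)$, whose Hessian supplies the normalizing $(2\pi h)^n$---shows that $A_1A_2$ is again of type \eqref{eq:desired_reduced}, of order equal to the sum of the orders, with amplitude agreeing on the diagonal with the product of the amplitudes up to $\mathcal O(h)$. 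Nothing here uses the orders being zero, so the argument applies verbatim to $0,m,0$; this is the slight generalization stated as Proposition~\ref{proposition:toeplitz}. Iterating, $B_\Lambda A B_\Lambda$ has reduced kernel of type \eqref{eq:desired_reduced} with amplitude $\tilde a\in S^m_{KN}(\Lambda\times\Lambda)$ supported near the diagonal and $\tilde a(\alpha,\alpha)=a(\alpha,\alpha)+\mathcal O(h)$ in $S^{m-1}_{KN}(\Lambda)$ (here $b(\alpha,\alpha)=1+\mathcal O(h)$ is used).

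Finally set $\sigma(\alpha):=\tilde a(\alpha,\alpha)\in S^m_{KN}(\Lambda)$; then $\sigma$ coincides with $\alpha\mapsto a(\alpha,\alpha)$ up to $\mathcal O(h)$ in $S^{m-1}_{KN}(\Lambda)$, which is the last assertion of the lemma. Since multiplication by $\sigma$ has a reduced kernel carried by the diagonal, $B_\Lambda\sigma B_\Lambda$ is itself of type \eqref{eq:desired_reduced} by the same calculus, with amplitude equal to $\tilde a|_{\textup{diag}}$ on the diagonal up to $\mathcal O(h)$; hence $D:=B_\Lambda A B_\Lambda-B_\Lambda\sigma B_\Lambda=B_\Lambda D B_\Lambda$ has reduced kernel of type \eqref{eq:desired_reduced} with amplitude $d\in S^m_{KN}$, supported near the diagonal, vanishing on the diagonal up to $\mathcal O(h)$. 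The standard bootstrap then applies: using the nondegeneracy of $\Phi_{TS}$, an amplitude of order $m$ vanishing on the diagonal can be written as $\sum_j(\ell_j-\ell_j')d_j$ with $d_j\in S^m_{KN}$ and $\ell_j,\ell_j'$ coordinate functions, each factor $\ell_j-\ell_j'$ can be absorbed into a $B_\Lambda$ on one side to lower the amplitude order by one while keeping it diagonal-vanishing, and iterating gains an arbitrary power of $h\brac{\va{\cdot}}^{-1}$, so $D$ is negligible in the sense of \eqref{eq:reduced_kernel_small}; the boundedness already established makes all composition manipulations legitimate. The main obstacle is precisely this composition/stationary-phase core together with the diagonal-vanishing bootstrap, all of which must be uniform in $h$ and symbolic over the non-compact $T^*X$; but this is the substance of \cite[\S\S 2.2--2.3]{BJ20}, and the only things to verify are that those arguments are insensitive to raising one symbol order from $0$ to $m$ (the order being merely transported through the stationary phase) and that uniformity over $T^*X$ follows from the fiberwise-dilation reduction used throughout the paper.
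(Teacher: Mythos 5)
Your overall route is the paper's: reduce everything to the FBI/FIO calculus of \cite{BJ20} (Schur test with the coercivity of the reduced phase for the $L^2_k\to L^2_{k-m}$ bound, exactly as in the proof of \cite[Proposition 2.4]{BJ20}; composition by stationary phase for the Toeplitz reduction, as in \cite[Proposition 2.10]{BJ20}; and a projector identity to fix the normalization, which the paper does via $B_\Lambda\Pi_\Lambda B_\Lambda=B_\Lambda B_\Lambda$ and \cite[Lemma 2.10]{BJ20}, while you do it through the diagonal amplitude of $B_\Lambda$ itself). Up to the level of detail, the first two thirds of your argument are in line with the intended proof.

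The last step, however, contains a genuine error. You set $\sigma:=\tilde a|_{\mathrm{diag}}$ and claim that the difference $D=B_\Lambda A B_\Lambda-B_\Lambda\sigma B_\Lambda$ is negligible because, in the bootstrap, absorbing a factor $\ell_j-\ell_j'$ into a neighbouring $B_\Lambda$ lowers the order \emph{``while keeping it diagonal-vanishing''}. That is not true: each absorption produces a gain of $h\brac{\va{\cdot}}^{-1}$ but the new amplitude is (up to lower order) a \emph{derivative} of the previous one evaluated at the diagonal, which has no reason to vanish there. This is the familiar Berezin--Toeplitz phenomenon (Wick versus anti-Wick ordering): $B_\Lambda A B_\Lambda$ equals $B_\Lambda\sigma B_\Lambda$ modulo negligible operators only for a symbol $\sigma$ admitting a full expansion $\sigma\sim\sum_{k\ge 0}\sigma_k$ with $\sigma_k\in h^k S_{KN}^{m-k}(\Lambda)$ and $\sigma_0=a|_{\mathrm{diag}}$, and the corrections $\sigma_k$, $k\ge 1$, are generically nonzero. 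With your choice $\sigma=\tilde a|_{\mathrm{diag}}$ the operator $D$ is only one order better (an operator of the same type with amplitude in $hS_{KN}^{m-1}$), not $\mathcal{O}_{C^\infty}\p{h^\infty(\brac{\va{\alpha}}+\brac{\va{\beta}})^{-\infty}}$. The fix is exactly what the statement of the lemma anticipates, since it pins $\sigma$ down only modulo $\mathcal{O}(h)$ in $S_{KN}^{m-1}(\Lambda)$: iterate your absorption step, subtracting at each stage the new diagonal term, and define $\sigma$ as a Borel-type sum of the resulting series (this is precisely what the proof of \cite[Proposition 2.10]{BJ20}, rewritten with $A$ in place of $fT_\Lambda PS_\Lambda$, provides). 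With that correction, and keeping your normalization argument to identify $\sigma_0$ with $a|_{\mathrm{diag}}$, the proof goes through.
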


Indeed, the boundedness statement follows from the proof of \cite[Proposition 2.4]{BJ20}. Our assumption on the kernel of $A$ implies that $A$ belongs to the class of FIO from \cite[Definition 2.5]{BJ20}, and thus the proof of \cite[Proposition 2.10]{BJ20} may be rewritten replacing the operator ``$f T_\Lambda PS_\Lambda$'' by the operator $A$. This gives the symbol $\sigma$ such that $B_\Lambda A B_\Lambda - B_\Lambda \sigma B_\Lambda$ is a negligible operator. The proof gives that $\sigma$ coincides with $\alpha \mapsto g_0(\alpha) a (\alpha,\alpha)$ for a symbol $g_0$ of order $0$ that does not depend on $A$. To see that one can take $g_0 = 1$, just notice that the operator $\Pi_\Lambda = T_\Lambda S_\Lambda$ satisfies the hypotheses from Lemma \ref{lemma:factorisation} with $\alpha \mapsto a(\alpha,\alpha)$ identically equal to $1$ up to $\mathcal{O}(h)$ in $S_{KN}^{-1}(\Lambda)$, according to \cite[Lemma 2.10]{BJ20}, and that $B_\Lambda \Pi_\Lambda B_\Lambda = B_\Lambda B_\Lambda$. Moreover, one may retrieve the leading part of a symbol $\sigma$ from restriction to the diagonal of the kernel of the operator $B_\Lambda \sigma B_\Lambda$ (the kernel may be computed by the stationary phase method as in \cite[Lemma 2.16]{BJ20}).

We need to extend certain results from \cite{BJ20} to a slightly more general context in order to prove Proposition \ref{proposition:general_statement}. Let $P$ be a semiclassical differential operator of order $m$ with $C^\infty$ coefficients and let $p$ be the principal symbol of $P$. We make the following assumption
\begin{equation}\label{eq:assumption}
\begin{split}
& \textup{ for every } x \in X \textup{ either } G_0(y,\xi)=0  \textup{ for every } y \textup{ near } x \\ & \textup{ and } \xi \in T^*_y X, \textup{ or } P \textup{ has real-analytic coefficients near } x.
\end{split}
\end{equation}
Notice that under the assumption \eqref{eq:assumption} the principal symbol $p$ of $P$ may be restricted to $\Lambda$ provided $\tau$ is small enough. Indeed, for every $x \in X$, either $p$ has a holomorphic extension near $T^*_x X$ or $\Lambda$ coincides with $T^* X$ near $T^*_x X$. We let $p_\Lambda$ denote this restriction. If $P$ is an operator that satisfies \eqref{eq:assumption}, we may define $T_\Lambda P S_\Lambda$ as the operator with kernel
\begin{equation}\label{eq:def_kernel_TPS}
T_\Lambda P S_\Lambda(\alpha,\beta) = \int_M K_T(\alpha,y) P_y\p{K_S(y,\beta)} \mathrm{d}y.
\end{equation}
The reason for which we use this definition is because since $P$ is \emph{a priori} not an operator with real-analytic coefficients, it is not straightforward to define the action of $P$ on elements of $E_R'(X)$. Notice that the following result allows to define $P$ as an operator from $\mathcal{H}_\Lambda^k$ to $\mathcal{H}_\Lambda^{k - m}$. When we will specify to the case of logarithmic weights in \S \ref{subsec:logarithmic}, the spaces $\mathcal{H}_\Lambda^k$'s will be included in $\mathcal{D}'(M)$, and the natural relation $T_\Lambda P u = T_\Lambda P S_\Lambda T_\Lambda u$ will be satisfied, see Lemma \ref{lemma:seems_simple}.

\begin{proposition}\label{proposition:toeplitz}
Under the assumption \eqref{eq:assumption}, if $\tau$ is small enough, then the operator $T_\Lambda P S_\Lambda$ is bounded from $L_k^2(\Lambda)$ to $L_{k-m}^2(\Lambda)$. Moreover, if $\ell \in \mathbb{R}$ and $f \in S_{KN}^\ell(\Lambda)$, there is a symbol $\sigma \in S_{KN}^{m + \ell}(\Lambda)$ and an operator $L$ with negligible kernel such that
\begin{equation*}
B_\Lambda f T_\Lambda P S_\Lambda B_\Lambda = B_\Lambda \sigma B_\Lambda + L.
\end{equation*}
In addition, $\sigma$ coincides with $f p_\Lambda$ up to $\mathcal{O}(h\brac{\va{\alpha}}^{m+ \ell-1})$. 
\end{proposition}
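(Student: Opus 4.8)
The plan is to reduce Proposition~\ref{proposition:toeplitz} to the analysis of \cite{BJ20} by means of a partition of unity adapted to the dichotomy \eqref{eq:assumption}. Let $\mathcal{O}_1$ be the (open) set of $x \in X$ near which $G_0$ vanishes identically on the fibres of $T^*X$, so that $\Lambda = T^*X$ over $\mathcal{O}_1$, and let $\mathcal{O}_2$ be the (open) set of $x$ near which $P$ has real-analytic coefficients; by \eqref{eq:assumption} we have $\mathcal{O}_1 \cup \mathcal{O}_2 = X$. Using the shrinking lemma, fix a relatively compact open set $\mathcal{O}_1'$ with $\overline{\mathcal{O}_1'} \subset \mathcal{O}_1$ and $\mathcal{O}_1' \cup \mathcal{O}_2 = X$, and pick $\chi^{(2)} \in C^\infty(X)$ with $0 \le \chi^{(2)} \le 1$, $\chi^{(2)} \equiv 1$ on $X \setminus \mathcal{O}_1'$, and $\operatorname{supp}\chi^{(2)} \subseteq \mathcal{O}_2$; set $\chi^{(1)} = 1 - \chi^{(2)}$. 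Then $\operatorname{supp}\chi^{(1)} \subseteq \overline{\mathcal{O}_1'} \subseteq \mathcal{O}_1$, and — crucially — $\chi^{(1)}$ and $\chi^{(2)}$ are non-constant only on a compact subset of $\mathcal{O}_1$. Using \eqref{eq:def_kernel_TPS}, write $T_\Lambda P S_\Lambda = A^{(1)} + A^{(2)}$ where $A^{(j)}$ has kernel $\int_X K_T(\alpha,y)\chi^{(j)}(y) P_y\big(K_S(y,\beta)\big)\,\mathrm{d}y$. For both $j$, the bound \eqref{eq:small_away_diagonal} on $K_T$ together with a Cauchy-estimate argument on the holomorphic kernel $K_S$ (giving $P_y(K_S(y,\beta)) = \mathcal{O}(e^{-c\brac{\va{\beta}}/h})$ once $d(y,\beta_x) \geq \delta$) show that $A^{(j)}$ is negligible unless $\alpha_x$ and $\beta_x$ both lie within $\delta$ of $\operatorname{supp}\chi^{(j)}$, the integration variable $y$ being then confined to a $\delta$-neighbourhood of $\operatorname{supp}\chi^{(j)}$.

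Consider $A^{(1)}$ first. Since $\operatorname{supp}\chi^{(1)}$ is a compact subset of $\mathcal{O}_1$ and $\Lambda = T^*X$ over a neighbourhood of it, the relevant $\alpha$, $\beta$ and $y$ are genuinely real, and $A^{(1)}$ is a cut-off piece of the FBI--Toeplitz quantisation of $P$ for the real transform $T$. Inserting the local models \eqref{eq:local_behavior} for $K_T$ and $K_S$ near the diagonal (the complementary contributions being negligible) and using that $P_y$ turns the WKB ansatz $e^{i\Phi_S(y,\beta)/h}a_S(y,\beta)$ into $e^{i\Phi_S(y,\beta)/h} b(y,\beta)$, where $b$ is a symbol of order $m$ in $\brac{\va{\beta}}$ whose leading part is the principal symbol $p$ of $P$ times $a_S$, a real stationary-phase expansion in $y$ — with critical value $\Phi_{TS}(\alpha,\beta)$, exactly as for $\Pi_\Lambda$ in \cite[Lemma~2.10 and Lemma~2.16]{BJ20} — shows that the reduced kernel of $A^{(1)}$ has the form \eqref{eq:desired_reduced} for a symbol $a^{(1)} \in S_{KN}^m(\Lambda \times \Lambda)$ supported in $\{d_{KN}(\alpha,\beta) < \delta\}$, with $a^{(1)}(\alpha,\alpha) = \chi^{(1)}(\alpha_x) p_\Lambda(\alpha) + \mathcal{O}(h\brac{\va{\alpha}}^{m-1})$. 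That $\chi^{(1)}$ is merely $C^\infty$ is immaterial here, since no contour is deformed.

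For $A^{(2)}$, the coefficients of $P$ are analytic on $\operatorname{supp}\chi^{(2)}$, so $y \mapsto P_y(K_S(y,\beta))$ extends holomorphically in $\beta$ and, together with the holomorphy of $K_T$ in $\alpha$, makes $A^{(2)}(\alpha,\beta)$ holomorphic in $(\alpha,\beta)$ in a neighbourhood of $\Lambda\times\Lambda$. If $\alpha_x,\beta_x$ stay away from $\mathcal{O}_1'$ then the cutoff $\chi^{(2)}$ is \emph{constant} near the relevant $y$ (equal to $0$, a negligible contribution, or to $1$), so $A^{(2)}$ is there exactly an operator of the class treated in \cite[Definition~2.5 and Proposition~2.10]{BJ20}: deforming the $y$-contour and applying complex stationary phase yields the form \eqref{eq:desired_reduced}. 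If instead $\alpha_x,\beta_x$ are close to $\mathcal{O}_1'$, they lie over $\mathcal{O}_1$, where $\Lambda = T^*X$ and everything is real, so the real computation of the previous paragraph applies with the harmless $C^\infty$ cutoff $\chi^{(2)}$. These two descriptions agree on their overlap — both are the holomorphic continuation in $(\alpha,\beta)$ of the formula valid for real arguments — so patching them with a partition of unity in $(\alpha_x,\beta_x)$ gives that the reduced kernel of $A^{(2)}$ has the form \eqref{eq:desired_reduced} for a symbol $a^{(2)} \in S_{KN}^m(\Lambda\times\Lambda)$ supported near the diagonal, with $a^{(2)}(\alpha,\alpha) = \chi^{(2)}(\alpha_x) p_\Lambda(\alpha) + \mathcal{O}(h\brac{\va{\alpha}}^{m-1})$.

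Adding the two pieces, $T_\Lambda P S_\Lambda$ has reduced kernel of the form \eqref{eq:desired_reduced} with $a = a^{(1)} + a^{(2)} \in S_{KN}^m(\Lambda\times\Lambda)$ supported near the diagonal, so $T_\Lambda P S_\Lambda : L_k^2(\Lambda) \to L_{k-m}^2(\Lambda)$ is bounded by Lemma~\ref{lemma:factorisation}. Multiplying the reduced kernel by the symbol $f(\alpha) \in S_{KN}^\ell(\Lambda)$ produces the reduced kernel of $f T_\Lambda P S_\Lambda$, again of the form \eqref{eq:desired_reduced} with symbol $f(\alpha)a(\alpha,\beta) \in S_{KN}^{m+\ell}(\Lambda\times\Lambda)$, and Lemma~\ref{lemma:factorisation} then provides $\sigma \in S_{KN}^{m+\ell}(\Lambda)$ and a negligible operator $L$ with $B_\Lambda f T_\Lambda P S_\Lambda B_\Lambda = B_\Lambda \sigma B_\Lambda + L$, where $\sigma$ coincides with $\alpha \mapsto f(\alpha)a(\alpha,\alpha)$ up to a $\mathcal{O}(h)$-term in $S_{KN}^{m+\ell-1}(\Lambda)$. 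Since $a(\alpha,\alpha) = \big(\chi^{(1)}(\alpha_x) + \chi^{(2)}(\alpha_x)\big)p_\Lambda(\alpha) + \mathcal{O}(h\brac{\va{\alpha}}^{m-1}) = p_\Lambda(\alpha) + \mathcal{O}(h\brac{\va{\alpha}}^{m-1})$, we conclude $\sigma = f p_\Lambda + \mathcal{O}(h\brac{\va{\alpha}}^{m+\ell-1})$, which is the claim. The one delicate point is the treatment of $A^{(2)}$: the cutoff $\chi^{(2)}$ is only $C^\infty$, so it must not obstruct the holomorphic deformation of the $y$-contour — which is exactly why it is arranged to be locally constant off the region $\mathcal{O}_1$ where $\Lambda$ is flat and ordinary real stationary phase is available (a device already used in \cite{BGJ}); one must also check that the real and complex stationary-phase expansions return the same phase $\Phi_{TS}$ and compatible amplitudes so that the local descriptions glue into a single kernel of the form \eqref{eq:desired_reduced}, and that the point at which $p$ is evaluated by stationary phase lies on $\Lambda$, yielding $p_\Lambda$ rather than merely $p$.
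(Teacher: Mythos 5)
Your overall route is essentially the paper's: reduce everything to Lemma \ref{lemma:factorisation} by showing that the reduced kernel of $T_\Lambda P S_\Lambda$ has the form \eqref{eq:desired_reduced}, running $C^\infty$ (non-)stationary phase where $\Lambda$ coincides with $T^*X$ and the holomorphic versions where $P$ has analytic coefficients, and reading off $f p_\Lambda$ on the diagonal (up to the $\mathcal{O}(h)$ ambiguity fixed by the normalization of $\Pi_\Lambda$). The organizational difference is that you cut the \emph{operator} with a partition of unity $\chi^{(1)}+\chi^{(2)}$ in the integration variable $y$, arranged so that the cutoffs are non-constant only over the region where $\Lambda$ is real, whereas the paper keeps the kernel \eqref{eq:def_kernel_TPS} intact and performs a pointwise case analysis in $(\alpha,\beta)$, localizing the $y$-integral to balls entirely contained in $U_1$ or $U_2$ by taking the constant $L$ large. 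Your device is a legitimate variant (already in the spirit of \cite{BGJ}) and makes explicit why the non-analytic cutoff never obstructs contour deformation; the analytic content is otherwise the same.

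There is, however, one step whose justification as written would fail: the claim that $A^{(j)}$ is negligible unless \emph{both} $\alpha_x$ and $\beta_x$ lie within $\delta$ of $\operatorname{supp}\chi^{(j)}$, justified only by the pointwise bound \eqref{eq:small_away_diagonal} on $K_T$ and Cauchy estimates on $K_S$. Consider the mixed regime, say $\alpha_x$ at distance $\geq\delta$ from $\operatorname{supp}\chi^{(1)}$, $\beta_x$ inside it, and $\brac{\va{\beta}}\gg\brac{\va{\alpha}}$: on the support of the integrand one has $d(y,\beta_x)<\delta$, so $P_y(K_S(y,\beta))$ is oscillatory rather than small, and your bounds only produce $e^{-r'\brac{\va{\alpha}}/h}$ times a quantity polynomially bounded in $\brac{\va{\beta}}$ and $h^{-1}$; after multiplication by $e^{(H(\beta)-H(\alpha))/h}$ (with $H(\beta)=0$ here and $\tau$ small) this is $\mathcal{O}(h^\infty)$ but \emph{not} $\mathcal{O}\p{(\brac{\va{\alpha}}+\brac{\va{\beta}})^{-\infty}}$, so it is not negligible in the sense \eqref{eq:reduced_kernel_small} that Lemma \ref{lemma:factorisation} requires. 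The missing decay in $\brac{\va{\beta}}$ must come from integrating by parts in $y$ against the phase of $K_S$ (frequency of size $\brac{\va{\beta}}/h$), with amplitude $K_T(\alpha,y)\chi^{(j)}(y)$ — i.e.\ exactly the non-stationary phase step, Lemma \ref{lemma:non_stationary_C_infty} or its holomorphic analogue, that the paper carries out by splitting the $y$-integral into a piece near $\alpha_x$, a piece near $\beta_x$, and a piece away from both. The same remark applies to the off-diagonal regime where $\alpha_x,\beta_x$ are close but the fibre variables are far apart, which you dispose of only by analogy with $\Pi_\Lambda$. Since the required tools are already in your argument, the gap is reparable, but as stated the localization step does not deliver negligibility.
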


The proof of Proposition \ref{proposition:toeplitz} is based on applications of the stationary and non-stationary phase methods with complex phase. We will apply both the $C^\infty$ and the holomorphic versions of these methods. We are not aware of a reference stating the $C^\infty$ version of the non-stationary phase method with complex phase that would cover all the cases we are going to consider (for the stationary phase method, see \cite{melin_sjostrand}, and for a standard version of the non stationary phase method with complex phase, see \cite[Theorem 7.7.1]{horm1}), so that we prove here a statement adapted to our needs. This result and its proof should be no surprise for specialists.

\begin{lemma}\label{lemma:non_stationary_C_infty}
Let $m,n$ be integer. Let $U,V$ be open subsets respectively of $\mathbb{R}^m$ and $\mathbb{R}^n$. Let $\Phi : U \times V \to \mathbb{C}$ be a $C^\infty$ function. Let $K_1$ and $K_2$ be compact subsets respectively of $U$ and $V$. Assume that for every $(x,y) \in K_1 \times K_2$ we have $\im \Phi(x,y) \geq 0$ and $\mathrm{d}_y \Phi(x,y) \neq 0$. Then, for every $L,N > 0$, there are constants $k \in \mathbb{N}$ and $\lambda_0 > 0$ such that for every $\lambda \geq \lambda_0$, every $C^k$ function $u$ supported in $K_2$ and every $x \in U$ such that $d(x,K_1) \leq L \log \lambda / \lambda$, we have
\begin{equation*}
\va{\int_V e^{i \lambda \Phi(x,y)} u(y) \mathrm{d}y} \leq \lambda^{-N} \n{u}_{C^k}.
\end{equation*}
\end{lemma}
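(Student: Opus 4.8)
This is the standard non-stationary phase estimate with a complex phase, and the key point is that the lower bound $d_y\Phi \neq 0$ degrades only mildly as $x$ moves off $K_1$, which is why we are allowed the slack $d(x,K_1) \le L\log\lambda/\lambda$. First I would set up the usual integration-by-parts operator. Since $\im\Phi \ge 0$ on $K_1\times K_2$ and $d_y\Phi(x,y)\neq 0$ there, by continuity there is an open neighborhood $W_1\times W_2$ of $K_1\times K_2$ and a constant $c>0$ such that $|d_y\Phi(x,y)| \ge c$ and $\im\Phi(x,y) \ge -\epsilon_0$ for $(x,y)\in W_1\times W_2$, where $\epsilon_0>0$ will be chosen small later; shrinking if necessary we keep $\im\Phi \ge 0$ is not available off $K_1$, so instead I track the (small) negative part of $\im\Phi$. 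Now define the first-order differential operator
\begin{equation*}
\mathcal{L} = \frac{1}{i\lambda}\,\frac{\overline{\partial_y\Phi}\cdot \partial_y}{|\partial_y\Phi|^2},
\end{equation*}
so that $\mathcal{L}(e^{i\lambda\Phi}) = e^{i\lambda\Phi}$ on $W_1\times W_2$. Its transpose $\mathcal{L}^t$ is a first-order operator whose coefficients are $C^\infty$ in $(x,y)$, bounded together with finitely many derivatives on a slightly smaller neighborhood, and carry a factor $\lambda^{-1}$.

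**Main estimate.** Integrating by parts $N'$ times (with $N'$ to be fixed) gives
\begin{equation*}
\int_V e^{i\lambda\Phi(x,y)} u(y)\,\mathrm{d}y = \int_V e^{i\lambda\Phi(x,y)}\, (\mathcal{L}^t)^{N'}u(y)\,\mathrm{d}y,
\end{equation*}
valid for $x\in W_1$ and $u$ supported in $K_2$ (the boundary terms vanish since $u$ is compactly supported in $V$), provided $u\in C^{k}$ with $k = N'$. The right-hand side is bounded by
\begin{equation*}
C_{N'}\,\lambda^{-N'}\, \n{u}_{C^{N'}} \sup_{y\in K_2} e^{-\lambda\,\im\Phi(x,y)} \cdot \mathrm{vol}(K_2).
\end{equation*}
Now I use the slack in $x$: for $x\in K_1$ we have $\im\Phi(x,y)\ge 0$, hence by the $C^1$ bound on $\im\Phi$, for any $x$ with $d(x,K_1)\le L\log\lambda/\lambda$ we get $\im\Phi(x,y) \ge -C_0 L\log\lambda/\lambda$ with $C_0 = \|\nabla_x\im\Phi\|_{L^\infty(W_1\times K_2)}$, so $e^{-\lambda\,\im\Phi(x,y)} \le \lambda^{C_0 L}$. (For $\lambda$ large, such $x$ automatically lies in $W_1$.) Altogether the integral is $\le C_{N'}\,\lambda^{-N'+C_0 L}\,\n{u}_{C^{N'}}$ up to a fixed constant. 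Choosing $N' = \lceil N + C_0 L + 1\rceil$ and then $k = N'$, $\lambda_0$ large enough to absorb the fixed constant $C_{N'}\,\mathrm{vol}(K_2)$ into one extra power of $\lambda$, yields the claimed bound $\lambda^{-N}\n{u}_{C^k}$ for all $\lambda\ge\lambda_0$.

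**The main obstacle.** The only genuinely delicate point is making the off-$K_1$ extension of the lower bound $|d_y\Phi|\ge c$ and of the near-positivity of $\im\Phi$ quantitatively compatible with the $\log\lambda/\lambda$ displacement; but since that displacement tends to $0$, for $\lambda$ large every admissible $x$ sits in a fixed compact neighborhood of $K_1$ on which all the relevant quantities ($|d_y\Phi|$ bounded below, coefficients of $\mathcal{L}^t$ and their derivatives bounded, $\nabla_x\im\Phi$ bounded) are controlled uniformly. So the "loss" from allowing $x$ to wander is just the factor $\lambda^{C_0 L}$, which is killed by taking $N'$ large — this is exactly why $k$ must be allowed to depend on $L$ and $N$. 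Everything else is the textbook non-stationary phase argument; I would keep the exposition brief, remarking only that the complex phase enters solely through the harmless factor $e^{-\lambda\im\Phi}$, which is $\le 1$ on $K_1\times K_2$ and at most $\lambda^{C_0 L}$ on the enlarged set.
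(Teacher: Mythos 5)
Your proposal is correct and follows essentially the same route as the paper's proof: the same integration-by-parts operator built from $\overline{\partial_y\Phi}/|\partial_y\Phi|^2$, the same observation that the displacement $d(x,K_1)\leq L\log\lambda/\lambda$ costs only a factor $\lambda^{C_0 L}$ via $\im\Phi(x,y)\geq -C_0 L\log\lambda/\lambda$, and the same choice of a sufficiently large number of integrations by parts (depending on $N$ and $L$) to absorb that loss.
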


\begin{proof}
Let $x \in U$ be such that $d(x,K_1) \leq L \log \lambda / \lambda$. From our non-stationary assumption, we see that if $\lambda$ is large enough then $\mathrm{d}_y \Phi(x,y) \neq 0$ for every $y \in K_2$. We can consequently introduce the differential operator
\begin{equation*}
L_x = -i \sum_{j = 1}^n \frac{\overline{\partial_{y_j} \Phi(x,y)}}{\va{\nabla_y \Phi(x,y)}^2} \partial_{y_j},
\end{equation*}
and notice that $L_x(e^{i \lambda \Phi(x,y)}) = \lambda e^{i \lambda \Phi(x,y)}$. Letting $k$ be a large integer and ${}^t L_x$ denote the formal adjoint of $L_x$, we find that
\begin{equation*}
\int_V e^{i \lambda \Phi(x,y)} u(y) \mathrm{d}y = \lambda^{-k} \int_{V} e^{i \lambda \Phi(x,y)} {}^t L_x^k u (y) \mathrm{d}y.
\end{equation*}
Then, we notice that the $L^\infty$ norm of ${}^t L_x^k u$ is controlled by the $C^k$ norm of $u$. Moreover, since $d(x,K_1) \leq L \log \lambda / \lambda$, we find that for every $y \in K_2$, if $\lambda$ is large enough, we have $\im \Phi(x,y) \geq - C_\Phi L \log \lambda / \lambda$ for some constant $C_\Phi$ that does not depend on $k$ nor $u$. Consequently, we have for $\lambda$ large:
\begin{equation*}
\va{\int_V e^{i \lambda \Phi(x,y)} u(y) \mathrm{d}y} \leq C \lambda^{-k+C_\Phi L} \n{u}_{C_k}.
\end{equation*}
Here the constant $C$ may depend on $k$ and $\Phi$, but not on $\lambda$ nor $u$. Taking $k$ large enough, we ensure that $k - C_\Phi L > N$ and the result follows.
\end{proof}

We have now at our disposal all the tools to prove Proposition \ref{proposition:toeplitz}.

\begin{proof}[Proof of Proposition \ref{proposition:toeplitz}]
We want to apply Lemma \ref{lemma:factorisation} to the operators $T_\Lambda P S_\Lambda$ and $f T_\Lambda P S_\Lambda$. Let us introduce the open sets
\begin{equation*}
U_1 = \set{x \in X : G_0(y,\xi) = 0 \textup{ for every } y \textup{ near } x \textup{ and } \xi \in T_y^* X}
\end{equation*}
and
\begin{equation*}
U_2 = \set{ x \in X : P \textup{ has real-analytic coefficients near } x}.
\end{equation*}
By assumption $X = U_1 \cup U_2$. We start by proving that for every $\delta > 0$, provided $\tau$ is small enough, we have
\begin{equation}\label{eq:undiagonal}
T_\Lambda P S_\Lambda(\alpha,\beta)e^{\frac{H(\beta)-H(\alpha)}{h}} = \mathcal{O}\p{h^\infty (\brac{\va{\alpha}} + \brac{\va{\beta}})^{- \infty}}
\end{equation}
whenever $\alpha,\beta \in \Lambda$ are such that $d_{KN}(\alpha,\beta) \geq \delta$. Let us write $\alpha = e^{H_G^{\omega_I}}(x,\xi)$ and $\beta = e^{H_G^{\omega_I}} (y,\eta)$ where $(x,\xi)$ and $(y,\eta)$ are in $T^* X$. Assume first that $x$ and $y$ are at distance larger than $\delta/L$ for some large constant $L \gg 1$. We can then write
\begin{equation}\label{eq:decomposition_kernel_undiagonal}
\begin{split}
& T_\Lambda P S_\Lambda(\alpha,\beta) \\ & \quad = \p{\int_{D(x,\delta/10L)} + \int_{D(y,\delta/10L)} + \int_{X \setminus (D(x,\delta/10L) \cup D(y,\delta/10L)}} \\ & \qquad \qquad \qquad \qquad \qquad \qquad \qquad \qquad \qquad \qquad K_T(\alpha,z) P_z\p{K_S(z,\beta)} \mathrm{d}z.
\end{split}
\end{equation}
We write $D(w,r)$ for the ball of center $w$ and radius $r$ in $X$. Notice that, provided $\tau$ is small enough, the third integral in \eqref{eq:decomposition_kernel_undiagonal} is $\mathcal{O}\p{\exp\p{- \frac{\brac{\va{\alpha}} + \brac{\va{\beta}}}{C h}}}$ since the kernel $K_T$ and $K_S$ are negligible away from the diagonal \eqref{eq:small_away_diagonal}. Since $e^{\frac{H(\beta) - H(\alpha)}{h}}$ is $\mathcal{O}\p{\exp\p{C \tau (\brac{\va{\alpha}} + \brac{\va{\beta}})}}$, we see that for $\tau$ small enough we have
\begin{equation*}
\begin{split}
& e^{\frac{H(\beta) - H(\alpha)}{h}} \int_{X \setminus (D(x,\delta/10L) \cup D(y,\delta/10L)} K_T(\alpha,z) P_z\p{K_S(z,\beta)} \mathrm{d}z \\ & \qquad \qquad \qquad \qquad \qquad \qquad \qquad \qquad \qquad = \mathcal{O}\p{\exp\p{- \frac{\brac{\va{\alpha}} + \brac{\va{\beta}}}{C h}}} \\ & \qquad \qquad \qquad \qquad \qquad \qquad \qquad \qquad \qquad = \mathcal{O}\p{h^\infty (\brac{\va{\alpha}} + \brac{\va{\beta}})^{- \infty}},
\end{split}
\end{equation*}
and we only need to care about the two other terms.

Let us deal with the first term in \eqref{eq:decomposition_kernel_undiagonal}. Up to a negligible term, it is given by
\begin{equation}\label{eq:premier_terme}
\int_{D(x,\delta/10L)} e^{i \frac{\Phi_T(\alpha,z)}{h}} a(\alpha,z) P_z(K_S(z,\beta)) \mathrm{d}z.
\end{equation}
By taking $L$ large enough, we have either $D(x,\delta/10L) \subseteq U_1$ or $D(x,\delta/10L) \subseteq U_2$.

Let us begin with the case of $D(x,\delta/10L) \subseteq U_1$. In that case, the differential operator $P$ has \emph{a priori} only $C^\infty$ coefficients on $D(x,\delta/10L)$ so that we find that $P_z(K_S(z,\beta))$ is $\mathcal{O}\p{\exp(- \brac{\va{\beta}}/Ch)}$ in $C^\infty$. Notice also that $\mathrm{d}_y \Phi_T(\alpha,\alpha_x) = - \alpha_\xi$ and that the imaginary part of $\Phi_T(\alpha,z)$ is non-negative when $z \in D(x,\delta/10L)$. Hence, provided $L$ is large enough, we can use the $C^\infty$ non-stationary phase method (apply Lemma \ref{lemma:non_stationary_C_infty} with a rescaling argument) to find that \eqref{eq:premier_terme} is an $$\mathcal{O}\p{h^\infty \brac{\va{\alpha}}^{- \infty}\exp(- \brac{\va{\beta}}/Ch)}.$$ Here, the integrand is not supported away from the boundary of the domain of integration, but since the imaginary part of the phase is larger than $C^{-1} \brac{\va{\alpha}}/h$ near the boundary of the domain of integration, we may just introduce a bump function to fix that. The same trick allows to remove the dependence on $x$ of the domain of integration. Using that $x \in U_1$, we find that $\alpha = (x,\xi)$ and that $H(\alpha) = 0$ (see \cite[(2.9)]{BJ20}), so that $e^{\frac{H(\beta) - H(\alpha)}{h}} = e^{\frac{H(\beta)}{h}} = \mathcal{O}\p{\exp\p{C \tau \frac{\brac{\va{\beta}}}{h}}}$. Hence, for $\tau$ small enough, we find that
\begin{equation}\label{eq:reduced_premier_terme}
\begin{split}
e^{\frac{H(\beta) - H(\alpha)}{h}}\int_{D(x,\delta/10L)} e^{i \frac{\Phi_T(\alpha,z)}{h}} a(\alpha,z) & P_z(K_S(z,\beta)) \mathrm{d}z \\ & \quad \qquad = \mathcal{O}\p{h^\infty (\brac{\va{\alpha}} + \brac{\va{\beta}})^{- \infty}}.
\end{split}
\end{equation}

When $D(x,\delta/10L) \subseteq U_2$, the coefficients of $P$ are analytic, and $P_z(K_S(z,\beta))$ is $\mathcal{O}\p{\exp(- \brac{\va{\beta}}/Ch)}$ as a real-analytic function. Hence, provided $L$ is large enough, we can use the holomorphic non-stationary phase method (see for instance \cite[Proposition 1.1]{BJ20}, and use a rescaling argument) as in the proof of \cite[Lemma 2.9]{BJ20} to see that \eqref{eq:premier_terme} is $\mathcal{O}\p{\exp\p{- \frac{\brac{\va{\alpha}}+ \brac{\va{\beta}}}{h}}}$, provided $\tau$ is small enough. Hence, if $\tau$ is small enough, this is enough to beat the potential growth of the factor $e^{\frac{H(\beta) - H(\alpha)}{h}}$, so that we also have \eqref{eq:reduced_premier_terme} in that case.

We deal similarly with the second term in \eqref{eq:decomposition_kernel_undiagonal}, distinguishing the cases $D(y,\delta/10L) \subseteq U_1$ and $D(y,\delta/10L) \subseteq U_2$.

Let us now prove \eqref{eq:undiagonal} when the distance between $x$ and $y$ is less than $\delta/L$ (and consequently $\xi$ and $\eta$ are away from each other in a trivialization of $T^* X$). As above, we can discard the $z$'s that are away from $x$ (and thus from $y$) and write up to a negligible term the kernel of $T_\Lambda P S_\Lambda$ as (the error term coming from the approximation \eqref{eq:local_behavior} is dealt with by an application of the non-stationary pahse method as in the previous case)
\begin{equation}\label{eq:almost_diagonal}
\int_{D(x,10\delta/L)} e^{i \frac{\Phi_T(\alpha,z) + \Phi_S(z,\beta)}{h}} a(\alpha,z) \tilde{b}(z,\beta) \mathrm{d}z,
\end{equation} 
where the symbol $\tilde{b}$ is defined by
\begin{equation*}
\tilde{b}(z,\beta) = e^{- i \frac{\Phi_S(z,\beta)}{h}} P_z\p{e^{i \frac{\Phi_S(z,\beta)}{h}} b(z,\beta)}.
\end{equation*}
Notice that the phase in \eqref{eq:almost_diagonal} is holomorphic and non-stationary. Indeed, working in coordinates and assuming that $L$ is large enough, we find that for some $C > 0$ and every $z \in D(x,10\delta/L)$:
\begin{equation*}
\begin{split}
\va{\nabla_z\p{\Phi_T(\alpha,z) + \Phi_S(z,\beta)}} & = \va{\beta_\xi - \alpha_\xi} + \mathcal{O}\p{\frac{\max(\brac{\va{\alpha}}, \brac{\va{\beta}})}{L}} \\
    & \geq C^{-1} \max(\brac{\va{\alpha}},\brac{\va{\beta}}).
\end{split}
\end{equation*}
Moreover, provided $\tau$ is small enough, the imaginary part of the phase is larger than $C^{-1} \max(\brac{\va{\alpha}},\brac{\va{\beta}})$ when $z$ is on the boundary of $D(x,10\delta/L)$ (because $z$ is away from $\alpha_x$ and $\beta_x$), and is always non-negative when $D(x,10\delta/L) \subseteq U_1$. We can apply the $C^\infty$ non-stationary phase method when $D(x,10\delta/L) \subseteq U_1$ and the holomorphic non-stationary phase method when $D(x,10\delta/L) \subseteq U_2$ (for this second case, see the similar computation in the proof of \cite[Lemma 2.9]{BJ20}). Indeed, in the latter case $\tilde{b}$ is holomorphic in $z$, while in the first case it is only $C^\infty$. In the first case, we get that \eqref{eq:almost_diagonal} is $\mathcal{O}(h^\infty (\brac{\va{\alpha}} + \brac{\va{\beta}})^{- \infty})$ and in the second case that it is $\mathcal{O}\p{\exp\p{- \frac{\brac{\va{\alpha}} + \brac{\va{\beta}}}{Ch}}}$. Noticing that in the first case $H(\alpha) = H(\beta) = 0$, we find that \eqref{eq:undiagonal} holds.

Notice that differentiating the kernel of $K_T$ or of $K_S$ (in a local trivialization of $T^* X$) amount to replace the symbols $a$ and $b$ by symbols of higher orders (in terms of $\alpha,\beta$ and $h$). Thus, all the estimates that we established when $\alpha$ and $\beta$ are away from each other actually hold in $C^\infty$.

We must now understand what happens when $\alpha$ and $\beta$ are close to each other. We write as above $\alpha = e^{H_G^{\omega_I}}(x,\xi)$ and $\beta = e^{H_G^{\omega_I}} (y,\eta)$ where $(x,\xi)$ and $(y,\eta)$ are in $T^* X$. Then, up to negligible terms, the kernel of $T_\Lambda P S_\Lambda$ at $(\alpha,\beta)$ is given as above, for some small $\delta > 0$, by
\begin{equation*}
\int_{D(x,\delta)} e^{i \frac{\Phi_T(\alpha,z) + \Phi_S(z,\beta)}{h}} a(\alpha,z) \tilde{b}(z,\beta) \mathrm{d}z.
\end{equation*}
As above, the error coming from the approximation \eqref{eq:local_behavior} is dealt with by an application of the non-stationary phase method. The asymptotic of this integral when $\brac{\va{\alpha}}/h$ tends to $+ \infty$ is given by the stationary phase method. Indeed, when $\alpha = \beta$, the rescaled phase $y \mapsto (\Phi_T(\alpha,y) + \Phi_S(y,\beta))/\brac{\va{\alpha}}$ has a uniformly non-degenerate critical point at $y = \alpha_x = \beta_x$, as a consequence of \eqref{eq:imaginary_coercivity}. Moreover, when $D(x,\delta) \subseteq U_1$, the imaginary part of this phase is non-negative on $D(x,\delta)$, provided the distance between $\alpha_x$ and $\beta_x$ is way smaller than $\delta$. When $D(x,\delta) \subseteq U_2$, we may ensure that the imaginary part of the (rescaled) phase is uniformly positive on the boundary of $D(x,\delta)$ by taking $\tau$ small enough. As above, we apply the stationary phase method in the $C^\infty$ category (see \cite[\S 2]{melin_sjostrand}) when $D(x,\delta) \subseteq U_1$ and in the $C^\omega$ category when $D(x,\delta) \subseteq U_2$ (see \cite[\S 2]{sjostrand_asterisque} for the general method and the proof of \cite[Lemma 2.10]{BJ20} in the case $s = 1$, page 111, for the details of the computation in our particular setting). In both cases, we can use the fact that the imaginary part of the phase is positive on the boundary of the domain of integration to remove the dependence of this domain on $x$. In the first case we get an expansion with an error term of the form $\mathcal{O}(h^\infty \brac{\va{\alpha}}^{-\infty})$ and in the second case of the form $\mathcal{O}\p{\exp\p{- \brac{\va{\alpha}}/h}}$. Since in the first case we have $H(\alpha) = H(\beta) = 0$, we see that in both cases we get the desired expansion \eqref{eq:desired_reduced} for the reduced kernel of $T_\Lambda P S_\Lambda$, with an error term of the required size. 

We can then apply Lemma \ref{lemma:factorisation} to end the proof. Indeed, we just saw that the kernel of $f T_\Lambda P S_\Lambda$ is of the form \eqref{eq:desired_reduced}. Moreover, it follows from the application of the stationary phase method that, up to $\mathcal{O}(h)$ in $S_{KN}^{m-1}\p{\Lambda}$, the symbol $\alpha \mapsto a(\alpha,\alpha)$ coincides with $f p_\Lambda g_0$, where $g_0$ is a symbol of order $0$ that does not depend on $P$. Thus, the operator $f T_\Lambda P S_\Lambda - f p_\Lambda \Pi_\Lambda$ is also of the form \eqref{eq:desired_reduced} but with an $a$ such that $a \mapsto a(\alpha,\alpha)$ is $\mathcal{O}(h)$ in $S_{KN}^{m+ \ell-1}\p{\Lambda}$. Consequently, there is a symbol $\tilde{\sigma} \in h S_{KN}^{m+ \ell-1} (\Lambda)$ such that $B_\Lambda(f T_\Lambda P S_\Lambda - f p_\Lambda \Pi_\Lambda)B_\Lambda - B_\Lambda \tilde{\sigma} B_\Lambda = B_\Lambda f T_\Lambda P S_\Lambda B_\Lambda - B_\Lambda (f p_\Lambda + \tilde{\sigma}) B_\Lambda$ is a negligible operator. We get the announced result with $\sigma = f p_\Lambda + \tilde{\sigma}$.
\end{proof}

\subsection{Duality statement}\label{subsection:duality_statement}

In \cite[Lemma 2.24]{BJ20}, an identification between $\mathcal{H}_{\Lambda}^{-k}$ and the dual of $\mathcal{H}_\Lambda^k$ is given. However, the pairing used to define this identification is not the $L^2$ pairing. We explain here how to describe the dual of $\mathcal{H}_\Lambda^k$ using the $L^2$ pairing. This will allow us in particular to reuse results from \cite{BGJ} in \S \ref{subsec:logarithmic}.

Let us first recall that there is an anti-holomorphic involution $\alpha \mapsto \bar{\alpha}$ on $(T^* X)_r$ such that $\set{\alpha \in (T^* X)_r: \alpha = \bar{\alpha}} = T^* X$, see \cite{grauert_tube_I}. Let $G$ be a symbol of order $1$ on $(T^* X)_r$ as above (of the form $G = \tau G_0$ with $\tau$ small) and $\Lambda$ be defined by \eqref{eq:defLambda}. Let us introduce a new symbol $G^*(\alpha) = - G(\bar{\alpha})$, and notice that the Lagrangian associated to $G^*$ by \eqref{eq:defLambda} is $\overline{\Lambda}$, that is the image of $\Lambda$  by the involution $\alpha \mapsto \bar{\alpha}$. Notice also that changing $G$ to $G^*$, we have to replace $H$ by the function $H^*$ on $\overline{\Lambda}$ given by $H^*(\alpha) = - H(\bar{\alpha})$.

Consequently, if $u \in \mathcal{H}_\Lambda^k$ and $v \in \mathcal{H}_{\overline{\Lambda}}^{-k}$, we may define the pairing
\begin{equation}\label{eq:pairing}
\langle u,v \rangle = \int_\Lambda T_\Lambda u(\alpha) \overline{T_{\overline{\Lambda}} v(\bar{\alpha})} \mathrm{d}\alpha,
\end{equation}
for which we can prove:

\begin{proposition}\label{proposition:duality}
Let $R \gg 1$. Assume that $\tau$ is small enough. The pairing \eqref{eq:pairing} induces an indentification between $\mathcal{H}_{\overline{\Lambda}}^{-k}$ and the dual of $\mathcal{H}_\Lambda^k$. Moreover, if $u$ or $v$ belongs to $E_{R}(X)$ then \eqref{eq:pairing} is just the natural (sesquilinear) pairing between elements of $E_R(X)$ and $E_R'(X)$.
\end{proposition}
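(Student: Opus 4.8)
The plan is to establish four things in turn: (A) the pairing \eqref{eq:pairing} is well defined and bounded; (B) it reproduces the natural $E_R(X)$–$E_R'(X)$ duality; (C) the induced map $J\colon\mathcal{H}_{\overline\Lambda}^{-k}\to(\mathcal{H}_\Lambda^k)'$ is injective; (D) it is surjective. The backbone throughout is the anti\nobreakdash-holomorphic involution $\sigma\colon\alpha\mapsto\bar\alpha$: it maps $\Lambda$ onto $\overline\Lambda$, it preserves the volume $\mathrm{d}\alpha$ (because $\sigma^*\omega_R=\omega_R$), it satisfies $\brac{\va{\bar\alpha}}\asymp\brac{\va{\alpha}}$, and $H^*\circ\sigma=-H$; hence $f\mapsto f\circ\sigma$ is an isometry from $L^2_{-k}(\overline\Lambda)$ onto $L^2\p{\Lambda,\brac{\va{\alpha}}^{-2k}e^{2H(\alpha)/h}\,\mathrm{d}\alpha}$. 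For (A), this isometry together with Cauchy--Schwarz gives $\va{\langle u,v\rangle}\le\n{u}_{\mathcal{H}_\Lambda^k}\n{v}_{\mathcal{H}_{\overline\Lambda}^{-k}}$, so $\langle\cdot,\cdot\rangle$ is a bounded sesquilinear form and $J$ is a bounded operator.

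For (B), when $u\in E_R(X)$ and $v\in E_R'(X)$ I would write $T_{\overline\Lambda}v(\bar\alpha)=v_x\p{K_T(\bar\alpha,x)}$ and use the reflection identity $\overline{K_T(\bar\alpha,x)}=K_S(x,\alpha)$ (the restriction to real $x$ of the holomorphic relation $K_S(x,\alpha)=\overline{K_T(\bar\alpha,\bar x)}$ encoding $S=T^*$) to express $\overline{T_{\overline\Lambda}v(\bar\alpha)}$ as $v$ tested against $K_S(\cdot,\alpha)$. Since $R\gg1$ and $\tau$ is small, deforming the $x$\nobreakdash-contour of $Tu(\alpha)=\int_X K_T(\alpha,x)u(x)\,\mathrm{d}x$ into the domain of holomorphy of $u$ shows that $T_\Lambda u$ decays on $\Lambda$ fast enough to outweigh the growth of $\n{K_S(\cdot,\alpha)}_{E_R(X)}$, so $\alpha\mapsto T_\Lambda u(\alpha)K_S(\cdot,\alpha)$ is Bochner integrable in $E_R(X)$; interchanging $v$ with the $\Lambda$\nobreakdash-integral and invoking the reconstruction identity $S_\Lambda T_\Lambda=\mathrm{Id}$,
\[
\langle u,v\rangle=v_x\Bigl(\int_\Lambda T_\Lambda u(\alpha)\,K_S(x,\alpha)\,\mathrm{d}\alpha\Bigr)=v_x\bigl(S_\Lambda T_\Lambda u(x)\bigr)=v(u)
\]
(up to the relevant conjugation), which is the asserted natural pairing; the case $v\in E_R(X)$ is symmetric. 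Alternatively, for $u,w\in E_R(X)$ one deforms $\Lambda$ back to $T^*X$ — legitimate because $\tau$ is small — and quotes the Plancherel identity $\langle Tu,Tw\rangle_{L^2(T^*X)}=\langle u,w\rangle_{L^2(X)}$. Point (C) is then immediate: $E_R(X)\subseteq\mathcal{H}_\Lambda^k$, so if $Jv=0$ then by (B) $v$ annihilates $E_R(X)$, whence $v=0$ in $E_R'(X)\supseteq\mathcal{H}_{\overline\Lambda}^{-k}$.

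For (D), given $\ell\in(\mathcal{H}_\Lambda^k)'$: since $S_\Lambda\colon L^2_k(\Lambda)\to\mathcal{H}_\Lambda^k$ is bounded with $S_\Lambda T_\Lambda=\mathrm{Id}$, Riesz representation of $\ell\circ S_\Lambda$ produces $g_0\in L^2_{-k}(\Lambda)$ with $\ell(u)=\int_\Lambda T_\Lambda u(\alpha)\overline{g_0(\alpha)}\,e^{-2H(\alpha)/h}\,\mathrm{d}\alpha$. Set $v:=S_{\overline\Lambda}\bigl(\sigma^*(e^{-2H/h}g_0)\bigr)\in\mathcal{H}_{\overline\Lambda}^{-k}$ and $\rho:=(\mathrm{Id}-\Pi_{\overline\Lambda})\bigl(\sigma^*(e^{-2H/h}g_0)\bigr)\in\ker S_{\overline\Lambda}$, using $\Pi_{\overline\Lambda}=T_{\overline\Lambda}S_{\overline\Lambda}$ and $S_{\overline\Lambda}T_{\overline\Lambda}=\mathrm{Id}$ for the Lagrangian $\overline\Lambda$ (associated to $G^*$); then $T_{\overline\Lambda}v(\bar\alpha)=e^{-2H(\alpha)/h}g_0(\alpha)-\rho(\bar\alpha)$, so
\[
\langle u,v\rangle=\ell(u)-\int_\Lambda T_\Lambda u(\alpha)\,\overline{\rho(\bar\alpha)}\,\mathrm{d}\alpha .
\]
Thus surjectivity reduces to the identity $\int_\Lambda T_\Lambda u(\alpha)\,\overline{\rho(\bar\alpha)}\,\mathrm{d}\alpha=0$ for all $u\in\mathcal{H}_\Lambda^k$ and $\rho\in\ker S_{\overline\Lambda}$, which is the heart of the matter. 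Both sides being continuous on $\mathcal{H}_\Lambda^k\times L^2_{-k}(\overline\Lambda)$, it suffices to treat $u$ in a dense subspace of $\mathcal{H}_\Lambda^k$ (e.g. $C^\infty(X)$ or $E_R(X)$, density being established as in \cite{BJ20,BGJ}) and $\rho$ in a dense subspace of $\ker S_{\overline\Lambda}$. For the latter I write $\rho=(\mathrm{Id}-\Pi_{\overline\Lambda})\mu$ and restrict $\mu$ to the dense class of smooth functions with $\va{\mu(\beta)}\le C_N\brac{\va{\beta}}^{-N}e^{-D\brac{\va{\beta}}/h}e^{H^*(\beta)/h}$ for all $N$, with $D$ fixed large: the Gaussian off\nobreakdash-diagonal decay of the kernel of $\Pi_{\overline\Lambda}$ (from the smallness of $K_T,K_S$ away from the diagonal and the bound $\im\Phi_{TS}\gtrsim\brac{\va{\cdot}}\,d_{KN}(\cdot,\cdot)^2$) forces, for $D$ large relative to $\tau$, an exponential bound $\va{\rho(\beta)}\lesssim e^{-c\brac{\va{\beta}}/h}$ with $c>0$ beating the possible growth of $K_T$ along $\Lambda$. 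For such $u$ and $\rho$ the double integral converges absolutely, Fubini applies, and the reflection identity $\overline{K_T(\bar\gamma,x)}=K_S(x,\gamma)$ together with $S_{\overline\Lambda}\rho=0$ yields
\[
\int_\Lambda T_\Lambda u(\alpha)\,\overline{\rho(\bar\alpha)}\,\mathrm{d}\alpha=\int_X u(x)\,\overline{\,\int_{\overline\Lambda}K_S(x,\gamma)\rho(\gamma)\,\mathrm{d}\gamma\,}\,\mathrm{d}x=\int_X u(x)\,\overline{S_{\overline\Lambda}\rho(x)}\,\mathrm{d}x=0 .
\]
Hence $Jv=\ell$, so $J$ is a bounded bijection between Banach spaces, and the open mapping theorem finishes the proof.

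The routine parts are the $\sigma$\nobreakdash-bookkeeping in (A) and the reconstruction\nobreakdash-formula manipulations. The real obstacles are the density statements and the quantitative FBI kernel estimates — off\nobreakdash-diagonal smallness of $K_T$ and of $\Pi_{\overline\Lambda}$, the lower bound on $\im\Phi_{TS}$, and the control of growth along $\Lambda$ in terms of $\tau$ — which are exactly what make every Fubini interchange above legitimate and are where the hypotheses "$R\gg1$" and "$\tau$ small" enter; the reflection relations among $K_T,K_S,\Phi_T,\Phi_S,\Phi_{TS}$ under $\alpha\mapsto\bar\alpha$ are the remaining structural ingredient, already recorded in \cite{BJ20}.
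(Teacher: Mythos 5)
Your proof is correct and follows essentially the same route as the paper: coincidence with the natural pairing via the reflection identities for the kernels and the decay of $T_\Lambda u$ for $u \in E_R(X)$, injectivity deduced from that coincidence, and surjectivity by Riesz representation of $\ell \circ S_\Lambda$ on $L^2_k(\Lambda)$, transferring the representative to $\overline{\Lambda}$ through the involution and applying $S_{\overline{\Lambda}}$. The only cosmetic differences are that you verify $\langle u, S_{\overline{\Lambda}} h_2\rangle = \ell(u)$ by splitting off the $\ker S_{\overline{\Lambda}}$ component and killing it with Fubini and $\overline{K_S(x,\gamma)} = K_T(\bar{\gamma},x)$, whereas the paper moves $\Pi_{\overline{\Lambda}}$ across using $\overline{\Pi(\bar{\alpha},\bar{\beta})} = \Pi(\beta,\alpha)$ together with $\Pi_\Lambda T_\Lambda u = T_\Lambda u$ (the same mechanism, justified by the same approximation-by-rapidly-decaying-functions trick), and that for the natural-pairing step the paper shifts the contour from $T^*X$ to $\Lambda$ (your stated alternative) rather than interchanging the functional with the $\Lambda$-integral.
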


\begin{proof}
Assume that $u$ is in $E_{R}(X)$ and that $v \in \mathcal{H}_{\overline{\Lambda}}^{-k}$. Since $T$ is an isometry on its image, we know that
\begin{equation}\label{eq:self_adjoint}
\int_X u \bar{v} \mathrm{d}x = \int_{T^* X} T u \overline{T v} \mathrm{d}\alpha.
\end{equation}
Notice that the function $\alpha \mapsto Tu(\alpha) \overline{Tv (\bar{\alpha})}$ is holomorphic on $(T^* X)_r$. Moreover, from \cite[Lemmas 2.4 and 2.5, Corollary 2.2]{BJ20}, we see that, provided $\tau$ is small enough, there is $r > 0$ such that $Tu(\alpha) \overline{Tv (\bar{\alpha})}$ decays exponentially fast in $(T^* X)_{r}$. This allows us to shift contour in \eqref{eq:self_adjoint} to find that $\int_X u \bar{v} \mathrm{d}x$ coincides with \eqref{eq:pairing}, provided $\tau$ is small enough. By symmetry, we have the same equality when $v$ is assumed to belong to $E_{R}(X)$.

Consequently, the (antilinear) map from $\mathcal{H}_{\overline{\Lambda}}^{-k}$ to the dual of $\mathcal{H}_\Lambda^k$ induced by the pairing \eqref{eq:pairing} is injective. Let us prove that it is surjective. Let $l$ be a continuous linear form on $\mathcal{H}_\Lambda^k$. It follows from \cite[Proposition 2.4]{BJ20} that $S_\Lambda$ is bounded from $L_k^2(\Lambda)$ to $\mathcal{H}_\Lambda^k$, and we can thus define a linear form $\tilde{l}$ on $L_k^2(\Lambda)$ by the formula $\tilde{l}(w) = l(S_\Lambda w)$. Notice that if $u \in \mathcal{H}_\Lambda^k$ then $l(u) = \tilde{l}(T_\Lambda u)$. Let then $h_1$ be the element of $L_k^2(\Lambda)$ such that
\begin{equation*}
\tilde{l}(w) = \int_\Lambda w(\alpha) \overline{h_1(\alpha)} \brac{\va{\alpha}}^{2k} e^{- \frac{2 H(\alpha)}{h}} \mathrm{d}\alpha
\end{equation*}
for every $w \in L_k^2(\Lambda)$. Let us define the function $h_2$ on $\overline{\Lambda}$ by
\begin{equation*}
h_2(\alpha) = h_1(\bar{\alpha}) \brac{\va{\bar{\alpha}}}^{2k} e^{- \frac{2 H(\bar{\alpha})}{h}}, 
\end{equation*}
and notice that $h_2$ belongs to $L_{-k}^2 (\overline{\Lambda})$, so that $v \coloneqq S_{\overline{\Lambda}} h_2$ belongs to $\mathcal{H}_{\overline{\Lambda}}^{-k}$. Let $u \in E_{R}(X)$, then with the pairing above, we have
\begin{equation*}
\langle u,v \rangle = \int_\Lambda T_\Lambda u (\alpha) \overline{\Pi_{\overline{\Lambda}} h_2 (\overline{\alpha})} \mathrm{d}\alpha.
\end{equation*}
Notice that the kernel of the operators $\Pi_\Lambda$ and $\Pi_{\overline{\Lambda}}$ are obtained by restricting respectively to $\Lambda \times \Lambda$ and $\overline{\Lambda} \times \overline{\Lambda}$ the holomorphic kernel of the operator $\Pi = TS$. We write $\Pi(\alpha,\beta)$ for this kernel. Since $S$ is the adjoint of $T$, we find by analytic continuation that $\overline{\Pi(\bar{\alpha},\bar{\beta})} = \Pi(\beta,\alpha)$. It follows then from Fubini's theorem that
\begin{equation*}
\begin{split}
\langle u,v \rangle & = \int_{\Lambda} \Pi_\Lambda T_\Lambda u (\alpha) \overline{h_2(\bar{\alpha})} \mathrm{d}\alpha \\
    & = \int_\Lambda T_\Lambda u (\alpha) \overline{h_1(\alpha)} \brac{\va{\alpha}}^{2k} e^{- 2 H(\alpha)} \mathrm{d}\alpha = l(u).
\end{split}
\end{equation*}
The equality on the first line can be proved first by replacing $h_2$ by a rapidly decaying function and then using an approximation argument. It follows from \cite[Corollary 2.3]{BJ20} and the Oka-Weil theorem that $E_R(X)$ is dense in $\mathcal{H}_\Lambda^k$ and the result follows.
\end{proof}

\subsection{Particularity of logarithmic weights}\label{subsec:logarithmic}

When applying the FBI transform techniques that we describe here in \S \ref{section:general_construction}, the weight $G_0$ will be of logarithmic order. This is a strategy that we already applied in \cite{BGJ}. It amounts to doing $C^\infty$ microlocal analysis with respect to the large parameter $\brac{\va{\alpha}}$ but real-analytic microlocal analysis with respect to the small parameter $h$. 

Using a logarithmic weight allows us to construct spaces that are intermediate between $C^\infty(X)$ and $\mathcal{D}'(X)$.

\begin{proposition}\label{proposition:inclusions_naturelles}
Assume that $G_0$ has logarithmic order. Assume that $\tau$ and $h$ are small enough. Then, for every $k \in \mathbb{R}$, there are continuous injections $C^\infty(X) \subseteq \mathcal{H}_\Lambda^k \subseteq \mathcal{D}'(X)$. Moreover, these injections are natural in the following sense: the diagram
\begin{equation*}
\begin{tikzcd}
C^\infty(X) \arrow[r] \arrow[rd] & \mathcal{H}_\Lambda^k(X) \arrow[r] \arrow[d] & E_R'(X) \\
                       & \mathcal{D}'(X) \arrow[ru] &
\end{tikzcd}
\end{equation*}
is commutative, with $R$ as in the definition of $\mathcal{H}_\Lambda^k$. The arrows that are not given by the proposition are the standard injections.
\end{proposition}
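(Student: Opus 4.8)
The proposition has three parts: the inclusion $C^\infty(X)\subseteq\mathcal{H}_\Lambda^k$, the inclusion $\mathcal{H}_\Lambda^k\subseteq\mathcal{D}'(X)$, and the naturality/commutativity of the diagram. The key structural fact to exploit is that when $G_0$ has logarithmic order, the Lagrangian $\Lambda$ is only logarithmically far from $T^*X$; quantitatively, if one pulls $\Lambda$ back to a bounded piece of $T^*\widetilde{X}$ by fiber dilation, the deformation shrinks like $\tau\log\langle|\alpha|\rangle/\langle|\alpha|\rangle$. Consequently the exponential weight $e^{-2H/h}$ on $L^2_k(\Lambda)$ is, along $\Lambda$, comparable to $\langle|\alpha|\rangle^{\pm C\tau/h}$ — a polynomial weight — rather than a genuinely exponential one. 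So morally $\mathcal{H}_\Lambda^k$ sits between two semiclassical Sobolev-type spaces $H_h^{s_-}(X)$ and $H_h^{s_+}(X)$ with $s_\pm$ depending on $k$, $\tau$, $h$, and this immediately gives both continuous inclusions once $\tau$ (hence $h$) is small enough that the relevant exponents are finite.

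\textbf{Step 1 ($C^\infty\subseteq\mathcal{H}_\Lambda^k$).} For $u\in C^\infty(X)$ I would first observe $u\in E_R(X)$ is false in general, so instead I work directly: $Tu$ extends holomorphically to $(T^*X)_r$ for $r$ small (the kernel $K_T$ is analytic and $u$ is smooth, so one gets the standard FBI decay $|Tu(\alpha)|\le C_N h^{-3n/4}\langle|\alpha|\rangle^{n/4-N}$ for every $N$, uniformly on $\Lambda$ once $\tau$ is small — this is exactly the kind of estimate underlying \cite[Lemma 2.4]{BJ20}). Restricting to $\Lambda$ and pairing this rapid decay against the at-most-polynomial weight $\langle|\alpha|\rangle^{2k}e^{-2H/h}\lesssim\langle|\alpha|\rangle^{2k+C\tau/h}$ shows $T_\Lambda u\in L^2_k(\Lambda)$, so $u\in\mathcal{H}_\Lambda^k$; tracking constants gives continuity of $C^\infty(X)\to\mathcal{H}_\Lambda^k$ in the usual Fréchet topology. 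Note that one also must check $u\in E_R'(X)$, which holds because $C^\infty(X)\subseteq\mathcal{D}'(X)$ restricts to a functional on $E_R(X)\subseteq C^\infty$ — so $\mathcal{H}_\Lambda^k$ as defined makes sense and contains $C^\infty(X)$.

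\textbf{Step 2 ($\mathcal{H}_\Lambda^k\subseteq\mathcal{D}'(X)$).} This is the substantive point and the main obstacle. An element $u\in\mathcal{H}_\Lambda^k$ is a priori only in $E_R'(X)$, a space of analytic functionals, which does \emph{not} embed in $\mathcal{D}'(X)$ in general; the logarithmic weight is exactly what saves this. The plan is: given $u$, write $u = S_\Lambda T_\Lambda u$ (valid since $S_\Lambda$ is a left inverse of $T_\Lambda$, \cite[Lemma 2.7]{BJ20}), and estimate $S_\Lambda$ applied to an $L^2_k(\Lambda)$ function. Using the duality description from Proposition \ref{proposition:duality}, $u$ acts on $\varphi\in C^\infty(X)$ by $\langle u,\varphi\rangle=\int_\Lambda T_\Lambda u(\alpha)\,\overline{T_{\overline\Lambda}\varphi(\bar\alpha)}\,\mathrm{d}\alpha$; by Cauchy–Schwarz this is bounded by $\|T_\Lambda u\|_{L^2_k(\Lambda)}$ times $\|T_{\overline\Lambda}\varphi\|_{L^2_{-k}(\overline\Lambda)}$, and the second factor is controlled — because $\overline\Lambda$ is only logarithmically far from $T^*X$ and $G^*_0$ is still of logarithmic order — by a fixed $C^\infty$ seminorm of $\varphi$ (a semiclassical Sobolev norm $\|\varphi\|_{H^{N}}$ with $N$ depending on $k,\tau,h$). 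This shows $u$ defines a distribution and that the map $\mathcal{H}_\Lambda^k\to\mathcal{D}'(X)$ is continuous. One subtlety to handle carefully: injectivity, i.e. that distinct elements of $\mathcal{H}_\Lambda^k$ give distinct distributions — this follows because $C^\infty(X)$ (or $E_R(X)$) is dense in $\mathcal{H}_\Lambda^k$ by \cite[Corollary 2.3]{BJ20} and Oka–Weil, so an element killing all of $C^\infty$ is $0$.

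\textbf{Step 3 (naturality).} The diagram involves the standard injections $C^\infty(X)\to E_R(X)$(dense image?), $E_R(X)\to\mathcal{H}_\Lambda^k$, $\mathcal{H}_\Lambda^k\to E_R'(X)\to\mathcal{D}'(X)$, and $C^\infty(X)\to\mathcal{D}'(X)$, together with the new arrows from Steps 1–2. Commutativity is a matter of checking equality of the two composites on the dense subspace $E_R(X)$ (or $C^\infty$), where every map in sight is the obvious one; the equality of pairings is precisely the content of the second sentence of Proposition \ref{proposition:duality} (that $\langle u,v\rangle$ reduces to the natural $E_R$–$E_R'$ pairing when one argument is in $E_R(X)$), combined with the fact that the $\mathcal{D}'$-action and the $E_R'$-action of a smooth function both reduce to integration against it. So Step 3 is essentially bookkeeping once Steps 1 and 2 are in place, and the real work is the weight-comparison estimates in Step 2 — where the logarithmic order of $G_0$ is used decisively to turn the nominally exponential weight $e^{-2H/h}$ into a polynomial one.
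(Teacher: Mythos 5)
Your argument follows essentially the same route as the paper's proof: the inclusion $C^\infty(X)\subseteq\mathcal{H}_\Lambda^k$ from rapid decay of $Tu$ on the logarithmically deformed $\Lambda$ beating the weight $\brac{\va{\alpha}}^{2k}e^{-2H/h}\lesssim\brac{\va{\alpha}}^{2k+C\tau/h}$, the inclusion $\mathcal{H}_\Lambda^k\subseteq\mathcal{D}'(X)$ from the duality of Proposition \ref{proposition:duality} together with the continuous inclusion of $C^\infty(X)$ into $\mathcal{H}_{\overline{\Lambda}}^{-k}$, and commutativity from the second statement of Proposition \ref{proposition:duality}. Two remarks. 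First, the decay of $Tu(\alpha)$ for $\alpha\in\Lambda$ and $u$ merely $C^\infty$ is precisely what Lemma \ref{lemma:non_stationary_C_infty} is designed for (excursions of size $L\log\lambda/\lambda$ off the reals, giving a loss that is only polynomial and hence beaten by integration by parts); the paper's bound on $H$ is \cite[(2.9)]{BJ20}, and the resulting estimates are not uniform in $h$, which is harmless here. Your citation of the boundedness statements of \cite{BJ20} is not quite the right reference for this point, but your reasoning is the correct one. Second, and more substantively, your injectivity argument as written is off: density of $C^\infty(X)$ in $\mathcal{H}_\Lambda^k$ does not by itself show that an element of $\mathcal{H}_\Lambda^k$ annihilating all smooth test functions vanishes. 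What is needed is density of $C^\infty(X)$ in the \emph{dual} space $\mathcal{H}_{\overline{\Lambda}}^{-k}$ (obtained by running your Step 1 with $G$ replaced by $G^*$, which is again of logarithmic order — exactly what the paper does), combined with the nondegeneracy of the pairing \eqref{eq:pairing} coming from the identification of $\mathcal{H}_{\overline{\Lambda}}^{-k}$ with the dual of $\mathcal{H}_\Lambda^k$: if $u\neq 0$, there is $v\in\mathcal{H}_{\overline{\Lambda}}^{-k}$ with $\brac{u,v}\neq 0$, and approximating $v$ by smooth functions in $\mathcal{H}_{\overline{\Lambda}}^{-k}$ contradicts $\brac{u,\varphi}=0$ for all $\varphi\in C^\infty(X)$. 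This is a local fix; all the ingredients are already present in your Step 2.
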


\begin{proof}
It follows from Lemma \ref{lemma:non_stationary_C_infty}, using for instance \cite[(2.9)]{BJ20} to bound $H$, that $C^\infty(X)$ is contained in $\mathcal{H}_\Lambda^k$, where we identify an element of $C^\infty(X)$ with an element of $E_R'(X)$ using the $L^2$ pairing (see also \cite[Lemma 4.10]{BGJ}). The proof of this result actually proves that the injection is continuous (even if the estimates are not uniform in $h$). Notice that $C^\infty(X)$ is dense in $\mathcal{H}_\Lambda^k$ as a consequence of \cite[Corollary 2.3]{BJ20}. Replacing $G$ by $G^*$, we find that $C^\infty(X)$ is also a dense subset of $\mathcal{H}_{\overline{\Lambda}}^{-k}$, with continuous injection. Consequently, the pairing \eqref{eq:pairing} induces a continuous injection of $\mathcal{H}_\Lambda^k$ into $\mathcal{D}'(X)$ according to Proposition \ref{proposition:duality}. Since the pairing \eqref{eq:pairing} coincides with the $L^2$ pairing when $u$ or $v$ is in $E_R(X)$, we see that the diagram above is indeed commutative.
\end{proof}

\begin{remark}\label{remark:pairing_natural}
It follows from Propositions \ref{proposition:duality} and \ref{proposition:inclusions_naturelles} that if $u \in \mathcal{H}_\Lambda^k$ and $v \in \mathcal{H}_{\overline{\Lambda}}^{-k}$ are such that $u$ or $v$ is in $C^\infty(X)$, then the pairing \eqref{eq:pairing} coincides with the natural pairing between a smooth function and a distribution.
\end{remark}

When $G_0$ is of logarithmic order, we may identify the $\mathcal{H}_\Lambda^k$'s with spaces of distributions, and consequently it makes sense to let a differential operator $P$ with $C^\infty$ coefficients act on the elements of the $\mathcal{H}_\Lambda^k$'s. In the following lemma, we see that under the assumption \eqref{eq:assumption} we can relate the action of $P$ on these spaces with the action of the operator $T_\Lambda P S_\Lambda$ that we studied in Proposition \ref{proposition:toeplitz}.

\begin{lemma}\label{lemma:seems_simple}
Assume that $G_0$ has logarithmic order. Let $P$ be a semiclassical operator of order $m \in \mathbb{N}$ that satisfy \eqref{eq:assumption}. Assume that $\tau$ is small enough. Then, for every $k \in \mathbb{R}$, the operator $P$ is bounded from $\mathcal{H}_\Lambda^k$ to $\mathcal{H}_\Lambda^{k-m}$ and for every $u \in \mathcal{H}_\Lambda^k$ we have
\begin{equation*}
T_\Lambda P u = (T_\Lambda P S_\Lambda)T_\Lambda u,
\end{equation*}
where we recall that $T_\Lambda P S_\Lambda$ is the operator with kernel \eqref{eq:def_kernel_TPS}.
\end{lemma}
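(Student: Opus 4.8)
The plan is to reduce everything to the identity $T_\Lambda P u = (T_\Lambda P S_\Lambda) T_\Lambda u$, from which the boundedness statement is immediate: by Proposition \ref{proposition:toeplitz} the operator $T_\Lambda P S_\Lambda$ is bounded from $L_k^2(\Lambda)$ to $L_{k-m}^2(\Lambda)$, and since $T_\Lambda$ is (by construction, as the restriction of an isometry) bounded from $\mathcal{H}_\Lambda^k$ onto $\mathcal{H}_{\Lambda,\FBI}^k \subseteq L_k^2(\Lambda)$ with $S_\Lambda$ a left inverse, the composition $u \mapsto S_\Lambda (T_\Lambda P S_\Lambda) T_\Lambda u$ lands in $\mathcal{H}_\Lambda^{k-m}$ and equals $Pu$ once the identity is known (here I use that $S_\Lambda T_\Lambda = \mathrm{id}$ on $\mathcal{H}_\Lambda^k$ and that $T_\Lambda$ is injective, so $T_\Lambda P u \in L_{k-m}^2(\Lambda)$ forces $Pu \in \mathcal{H}_\Lambda^{k-m}$). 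So the whole content is the kernel identity.

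First I would prove the identity for $u \in E_R(X)$, where everything is classical: for such $u$, $P u$ makes sense as a distribution (even when $P$ only has $C^\infty$ coefficients, $P$ acts on distributions), and one can write $T_\Lambda P u(\alpha) = \int_X K_T(\alpha,y)\, P u(y)\, \mathrm{d}y$. The point is to move $P$ off of $u$ and onto the kernel. Since $u$ is holomorphic on a Grauert tube, one has $u = S_\Lambda T_\Lambda u$ by Lemma 2.7 of \cite{BJ20}, i.e. $u(y) = \int_\Lambda K_S(y,\beta)\, T_\Lambda u(\beta)\, \mathrm{d}\beta$; substituting and using Fubini (justified by the rapid decay of $K_T$ and $K_S$ away from the diagonal together with the exponential decay of $T_\Lambda u$ on $\Lambda$, which comes from the coercivity \eqref{eq:imaginary_coercivity} and the smallness of $\tau$) gives
\begin{equation*}
T_\Lambda P u(\alpha) = \int_\Lambda \left( \int_X K_T(\alpha,y)\, P_y\bigl(K_S(y,\beta)\bigr)\, \mathrm{d}y \right) T_\Lambda u(\beta)\, \mathrm{d}\beta = (T_\Lambda P S_\Lambda) T_\Lambda u(\alpha),
\end{equation*}
using the definition \eqref{eq:def_kernel_TPS}. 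The transfer of $P$ from $u$ to the $y$-variable of $K_S$ is the one step needing care: for a differential operator it is simply integration by parts against the smooth, compactly (in $y$) concentrated kernel, and the assumption \eqref{eq:assumption} plays no role here — it is only needed to make sense of $T_\Lambda P S_\Lambda$ itself, which is already granted by Proposition \ref{proposition:toeplitz}.

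Second, I would extend from $E_R(X)$ to all of $\mathcal{H}_\Lambda^k$ by density. Since $G_0$ is of logarithmic order, Proposition \ref{proposition:inclusions_naturelles} tells us $C^\infty(X) \subseteq \mathcal{H}_\Lambda^k \subseteq \mathcal{D}'(X)$ with continuous injections, and $C^\infty(X)$ — hence $E_R(X)$, which contains it after the standard identification, or directly via \cite[Corollary 2.3]{BJ20} and Oka--Weil — is dense in $\mathcal{H}_\Lambda^k$. Both sides of the identity are continuous in $u \in \mathcal{H}_\Lambda^k$: the right-hand side because $T_\Lambda : \mathcal{H}_\Lambda^k \to L_k^2(\Lambda)$ and $T_\Lambda P S_\Lambda : L_k^2(\Lambda) \to L_{k-m}^2(\Lambda)$ are bounded; the left-hand side because $P : \mathcal{D}'(X) \to \mathcal{D}'(X)$ is continuous and $T_\Lambda$ maps into $L_{k-m}^2(\Lambda) \hookrightarrow \mathcal{D}'$-type objects continuously once one knows the range is there — more cleanly, having established the identity on a dense set and knowing the right-hand side is a bounded map $\mathcal{H}_\Lambda^k \to L_{k-m}^2(\Lambda)$, one \emph{defines} $Pu$ for general $u$ as $S_\Lambda (T_\Lambda P S_\Lambda) T_\Lambda u$ and checks it agrees with the distributional action of $P$ by testing against $C^\infty$ elements of $\mathcal{H}_{\overline{\Lambda}}^{-k}$ and using Remark \ref{remark:pairing_natural} plus the density of $C^\infty$ there. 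The main obstacle is thus bookkeeping rather than depth: making the Fubini interchange and the integration by parts rigorous uniformly, and checking the two notions of $Pu$ coincide — all of which follow the same pattern already used in \cite{BGJ} and in the proof of Proposition \ref{proposition:toeplitz}.
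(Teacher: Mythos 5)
Your argument is correct, but it takes a genuinely different route from the paper. You prove the identity first on the dense subspace $E_R(X)$, where $u = S_\Lambda T_\Lambda u$ holds with an absolutely convergent integral and $T_\Lambda u$ decays exponentially on $\Lambda$, so that differentiating under the integral sign and Fubini give $T_\Lambda P u = (T_\Lambda P S_\Lambda) T_\Lambda u$ there; you then extend by density, using the boundedness of $T_\Lambda P S_\Lambda$ from Proposition \ref{proposition:toeplitz}, the closedness of $\mathcal{H}^{k-m}_{\Lambda,\FBI}$ in $L^2_{k-m}(\Lambda)$, and the continuous injections of Proposition \ref{proposition:inclusions_naturelles} (plus Remark \ref{remark:pairing_natural}) to identify the limit with the distributional $Pu$. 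The paper instead proves the identity directly for an arbitrary $u \in \mathcal{H}_\Lambda^k$: it transposes $P$ onto the kernel, writing $T_\Lambda P u(\alpha)$ as the pairing of $u$ with the smooth function $h_\alpha = {}^t P\p{K_T(\alpha,\cdot)}$, and then establishes a contour-shifted reproducing formula $h_\alpha(x) = \int_\Lambda K_T(\beta,x)\, T_\Lambda P S_\Lambda(\alpha,\beta)\, \mathrm{d}\beta$, the shift from $T^*X$ to $\Lambda$ being justified by the rapid (but $h$-dependent) decay of ${}^t S h_\alpha$ in a neighbourhood of $T^*X$ of logarithmic width, obtained from the $C^\infty$ non-stationary phase Lemma \ref{lemma:non_stationary_C_infty}; this is precisely where the logarithmic order of $G_0$ enters in the paper, whereas in your route it enters through Proposition \ref{proposition:inclusions_naturelles}. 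What each approach buys: yours avoids the contour deformation and any phase analysis in this lemma, at the cost of the limiting argument and of checking that the extension agrees with the distributional action of $P$; the paper's is approximation-free and gives the pointwise identity for every $u$ at once, at the cost of the kernel estimate for ${}^t S h_\alpha$. Two small points you should tighten: the decay of $T_\Lambda u$ for $u \in E_R(X)$ comes from the holomorphy of $u$ on a Grauert tube (as in \cite[Lemmas 2.4, 2.5 and Corollary 2.2]{BJ20}), not from \eqref{eq:imaginary_coercivity}; and in the limit passage you should note explicitly that $(T_\Lambda P S_\Lambda) T_\Lambda u$ is an $L^2_{k-m}$-limit of the functions $T_\Lambda P u_n$, hence lies in the closed image $\mathcal{H}^{k-m}_{\Lambda,\FBI}$, so that applying $S_\Lambda$ and then $T_\Lambda$ recovers it exactly rather than its projection.
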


\begin{proof}
For $\alpha \in \Lambda$, we have by definition
\begin{equation}\label{eq:def_TPu}
T_\Lambda P u(\alpha) = \int_M P u(y) K_T(\alpha,y) \mathrm{d}y = \int_M u(y) {}^t P_y(K_T(\alpha,y)) \mathrm{d}y,
\end{equation}
where ${}^t P$ denotes the adjoint of $P$ for the bilinear (rather than sesquilinear) $L^2$ pairing on $M$. Notice that for $\alpha \in \Lambda$, the function $h_\alpha : y \mapsto {}^t P_y(K_T(\alpha,y))$ is $C^\infty$. Consequently, one may use the $C^\infty$ non-stationary phase method, Lemma \ref{lemma:non_stationary_C_infty}, to find that ${}^t S h_\alpha(\beta)$ decays faster than the inverse of any polynomial when $\beta$ becomes large while its imaginary part remains bounded (from the Kohn--Nirenberg point of view) by $L \log \brac{\va{\beta}} / \brac{\va{\beta}}$ (for any large constant $L$). Notice however that this estimate is not uniform in $h$ (we apply Lemma \ref{lemma:non_stationary_C_infty} with $h$ fixed and $\brac{\va{\beta}}$, rather than $\brac{\va{\beta}}/h$, as a large parameter). Consequently, we can shift contour in the integral equality ${}^t T {}^t S h_\alpha = {}^t (ST) h_\alpha = h_\alpha$ to find 
\begin{equation*}
\begin{split}
h_{\alpha}(x) & = \int_\Lambda K_T(\beta,x) \p{\int_M K_S(y,\beta) h_\alpha(y) \mathrm{d}y} \mathrm{d}\beta \\ & = \int_\Lambda K_T(\beta,x) T_\Lambda P S_\Lambda(\alpha,\beta) \mathrm{d}\beta.
\end{split}
\end{equation*}
Using the fast decay of ${}^t S h_\alpha$, we see that this integral actually converges in $C^\infty(X)$, and plugging this equality in \eqref{eq:def_TPu}, we get $T_\Lambda P u = (T_\Lambda P S_\Lambda) T_\Lambda u$. It follows then from Proposition \ref{proposition:toeplitz} that $T_\Lambda P u \in L_{k-m}^2(\Lambda)$, that is $P u \in \mathcal{H}_\Lambda^{k-m}(\Lambda)$.
\end{proof}

The following result will be used in the demonstration of Proposition \ref{proposition:general_statement} to prove that the elements of the spaces $\mathcal{H}_1$ and $\mathcal{H}_2$ are bounded near $\partial Y$.

\begin{proposition}\label{proposition:continuity}
Let $K$ be a compact subset of $X$. Assume that $G_0$ has logarithmic order and that there is $C > 0$ such that if $\alpha \in T^*_{K} X$ is large enough then
\begin{equation*}
G_0(\alpha) \leq - C^{-1} \log \brac{\va{\alpha}}
\end{equation*}
Assume that $\tau$ is small enough. Then, for every $k \in \mathbb{R}$, if $h$ is small enough then the elements of $\mathcal{H}_\Lambda^k$ are continuous on a neighbourhood of $K$.
\end{proposition}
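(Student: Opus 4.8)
The plan is to exploit the reconstruction formula $u = S_\Lambda T_\Lambda u$, which holds for every $u \in \mathcal{H}_\Lambda^k$ because $S_\Lambda$ is a left inverse for $T_\Lambda$ (see \cite[Lemma 2.7]{BJ20}), and to show that the resulting integral representation
\begin{equation*}
u(x) = \int_\Lambda K_S(x,\alpha) \, T_\Lambda u(\alpha) \, \mathrm{d}\alpha
\end{equation*}
defines a continuous function of $x$ on a neighbourhood $K'$ of $K$. Writing $L_k^2(\Lambda) = L^2(\Lambda, \brac{\va{\alpha}}^{2k} e^{-\frac{2H}{h}} \mathrm{d}\alpha)$ and applying the Cauchy--Schwarz inequality, this reduces to the two claims:
\begin{enumerate}[label=(\alph*)]
\item there is a neighbourhood $K'$ of $K$ such that, if $h$ is small enough,
\begin{equation}\label{eq:continuity_key_integral}
\sup_{x \in K'} \int_\Lambda \va{K_S(x,\alpha)}^2 \brac{\va{\alpha}}^{-2k} e^{\frac{2H(\alpha)}{h}} \, \mathrm{d}\alpha < + \infty ;
\end{equation}
\item the map $x \mapsto K_S(x,\cdot)$ is continuous from $K'$ to $L^2(\Lambda, \brac{\va{\alpha}}^{-2k} e^{\frac{2H}{h}} \mathrm{d}\alpha)$.
\end{enumerate}
Granting \eqref{eq:continuity_key_integral} and (b), the integral above converges and depends continuously on $x \in K'$, and it represents the distribution $u$ there (using Proposition \ref{proposition:inclusions_naturelles} and the density of $C^\infty(X)$ in $\mathcal{H}_\Lambda^k$); the proposition follows.

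The heart of the matter is \eqref{eq:continuity_key_integral}. I would estimate the integral by splitting $\Lambda$ according to whether $d(\alpha_x, x) \geq \delta$ or $d(\alpha_x,x) < \delta$, for a small fixed $\delta > 0$. On the first region, the description of $K_S$ away from the diagonal (the analogue of \eqref{eq:small_away_diagonal} for $K_S$) gives $\va{K_S(x,\alpha)} \leq C \exp(- \brac{\va{\alpha}}/(Ch))$, while the assumption that $G_0$ has logarithmic order forces, via \cite[(2.9)]{BJ20}, the weight $e^{2H(\alpha)/h}$ to grow at most like a fixed power of $\brac{\va{\alpha}}$; hence this region contributes an $\mathcal{O}(h^\infty)$. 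On the second region, \eqref{eq:local_behavior} lets us replace $K_S(x,\alpha)$ by $e^{i \Phi_S(x,\alpha)/h} b(x,\alpha)$ up to a negligible term, where $b$ is elliptic in the class $h^{-3n/4} \brac{\va{\alpha}}^{n/4}$ and $\Phi_S$ is of order $1$; the imaginary part of $\Phi_S$ provides a Gaussian factor $e^{- \brac{\va{\alpha}} d(\alpha_x,x)^2/(Ch)}$, up to corrections coming from the deformation of $T^*X$ into $\Lambda$ that are absorbed into the weight $e^{2H/h}$ exactly as in the boundedness arguments of \cite[Proposition 2.4 and Lemma 2.5]{BJ20} (this is where $\tau$ must be taken small). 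Integrating the Gaussian in $\alpha_x$ produces a factor $(h/\brac{\va{\alpha}})^{n/2}$, so that, up to the $\mathcal{O}(h^\infty)$ from the first region,
\begin{equation*}
\int_\Lambda \va{K_S(x,\alpha)}^2 \brac{\va{\alpha}}^{-2k} e^{\frac{2H(\alpha)}{h}} \, \mathrm{d}\alpha \leq C h^{-n} \int_{\mathbb{R}^n} \brac{\va{\alpha}}^{-2k} e^{\frac{2H(\alpha)}{h}} \, \mathrm{d}\alpha_\xi ,
\end{equation*}
the remaining integral being over the fibre above $K'$. Since $K'$ may be chosen so that on $T^*_{K'} X$ the hypothesis gives $G_0(\alpha) \leq - C^{-1} \log \brac{\va{\alpha}}$, and since $H$ differs from $\tau G_0$ by lower-order terms (again by \cite[(2.9)]{BJ20}), we get $H(\alpha) \leq - \frac{\tau}{2C} \log \brac{\va{\alpha}}$ for $\tau$ small, hence $e^{2H(\alpha)/h} \leq \brac{\va{\alpha}}^{- \tau/(Ch)}$. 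The last integral therefore converges as soon as $\tau/(Ch) > n - 2k$, that is, once $h$ is small enough (depending on $n$, $k$, $C$ and $\tau$) --- this is precisely the role of the smallness of $h$ in the statement. Put differently, near $K$ the space $\mathcal{H}_\Lambda^k$ is contained in an $h$-dependent Sobolev space $H^{s}_{\mathrm{loc}}$ with $s = k + \tau/(2Ch)$, and $s > n/2$ once $h$ is small, so the conclusion also follows from the Sobolev embedding $H^s_{\mathrm{loc}} \hookrightarrow C^0$.

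Claim (b) is then routine: for $x$ in a slightly smaller neighbourhood the integrand in \eqref{eq:continuity_key_integral} is continuous in $x$ and dominated --- away from the diagonal by the exponential bound, near the diagonal by the same Gaussian-times-power estimate with $d(\alpha_x,x)$ replaced by, say, $\tfrac12 d(\alpha_x, \{x,x'\})$ --- so that dominated convergence applies. I expect the main obstacle to be the bookkeeping in (a): one must track the interplay between the Gaussian localization of $K_S$ near the diagonal (which survives the $\tau$-deformation of $T^*X$ into $\Lambda$ only for $\tau$ small), the powers of $h$ in the ellipticity class of $b$, and the logarithmic weight, in order to check that the polynomial gain $\brac{\va{\alpha}}^{-\tau/(Ch)}$ coming from the hypothesis on $G_0$ beats the polynomial loss $h^{-3n/2}\brac{\va{\alpha}}^{n/2}$ once $h$ is small. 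All the analytic ingredients are available in \cite{BJ20}, and the strategy is the one already used in \cite{BGJ}.
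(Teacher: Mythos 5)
Your proposal is correct, but it follows a genuinely different route from the paper's. The paper argues on the dual side: it cites \cite[Lemmas 4.2 and 4.9]{BGJ} to get that compactly supported elements of $H^{-N}(X)$, $N>n$, near $K$ belong to $\mathcal{H}_{\overline{\Lambda}}^{-k}$ with norm controlled by the $H^{-N}$ norm, then pairs $u$ with the modulated cutoffs $\chi(x)e^{ix\xi}$ (Remark \ref{remark:pairing_natural} identifying the pairing \eqref{eq:pairing} with the distributional one) to conclude that $\widehat{\chi u}(\xi)=\mathcal{O}(\brac{\xi}^{-N})$, hence $\chi u$ is continuous. You work on the primal side: via $u=S_\Lambda T_\Lambda u$ and Cauchy--Schwarz you show directly that the evaluation functionals are uniformly bounded on $\mathcal{H}_\Lambda^k$ above a neighbourhood of $K$, by estimating $\n{K_S(x,\cdot)}$ in the weighted $L^2$ space dual to $L^2_k(\Lambda)$. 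The quantitative mechanism is the same in both proofs: above $K$ one has $H\leq -\frac{\tau}{2C}\log\brac{\va{\alpha}}$, so the weight supplies a gain $\brac{\va{\alpha}}^{-\tau/(Ch)}$ that, once $h$ is small compared with $\tau$, beats the fixed polynomial losses ($h^{-3n/2}\brac{\va{\alpha}}^{n/2}$ from the ellipticity class of the amplitude, the factor $\brac{\va{\alpha}}^{-2k}$, and the $(h/\brac{\va{\alpha}})^{n/2}$ from the Gaussian integration). In effect you reprove, for the Dirac-type functionals you need, the special case of the embedding that the paper simply imports from \cite{BGJ}; what you gain is a self-contained pointwise bound (with a uniform modulus of continuity in terms of $\n{T_\Lambda u}_{L^2_k(\Lambda)}$) that bypasses $\mathcal{H}_{\overline{\Lambda}}^{-k}$ and the Fourier transform, while the paper gains brevity by outsourcing the kernel bookkeeping.

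Two points to tighten, neither fatal. First, the hypothesis is stated on $T^*_KX$ only; your near-diagonal estimate needs it on $T^*_{K'}X$ for a slightly larger compact neighbourhood, which follows from the symbol estimates ($\partial_xG_0=\mathcal{O}(\log\brac{\va{\alpha}})$) at the cost of replacing $C^{-1}$ by $(2C)^{-1}$, and this also dictates how small $\delta$ must be. Second, the crux of your claim (a) is that for $\alpha\in\Lambda$ (not merely real) and $x$ real one still has $\va{K_S(x,\alpha)}e^{H(\alpha)/h}\leq C_h\,h^{-3n/4}\brac{\va{\alpha}}^{n/4}e^{-\brac{\va{\alpha}}d(\re\alpha_x,x)^2/(Ch)}$: the first-order effect of the deformation on $\im\Phi_S$ is cancelled exactly by the corresponding term in $H$, leaving the gain $\tau G_0$ plus corrections of size $\tau^2(\log\brac{\va{\alpha}})^2/\brac{\va{\alpha}}$, which is what makes your bookkeeping close. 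Since the coercivity \eqref{eq:imaginary_coercivity} is stated only at real points, this deformation argument (a Taylor expansion in $\tau$, as in the boundedness proofs for $S_\Lambda$ and $\Pi_\Lambda$ in \cite{BJ20}) deserves a sentence rather than only a citation. Finally, your claim (b) can be dispensed with: for smooth $u_n\to u$ in $\mathcal{H}_\Lambda^k$, the bound (a) gives uniform convergence of $u_n=S_\Lambda T_\Lambda u_n$ on $K'$, so the limit is continuous and coincides with $u$ as a distribution.
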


\begin{proof}
Let $N > n$. It follows from \cite[Lemmas 4.2 and 4.9]{BGJ} that, for $h$ small enough, there is a neighbourhood $U$ of $K$ such that if $v$ is in $H^{-N}(X)$ and supported in $U$ then $v$ belongs to $\mathcal{H}_{\overline{\Lambda}}^{-k}$ and its norm in this space is less than $C \n{v}_{H^{-N}}$, where the constant $C$ may depend on $h$ but not on $v$.

Let $u \in \mathcal{H}_\Lambda^k$. If $\chi$ is a $C^\infty$ function supported in the intersection of $U$ with a coordinates patch, then we see that in this coordinates the Fourier transform of $\chi u$ decays faster than $\brac{\xi}^{-N}$. Indeed, the $H^{-N}$ norm of the functions given in coordinates by $\chi(x) e^{i x \xi}$ decays like $\brac{\xi}^{-N}$ when $\xi$ tends to $+ \infty$. Thus, the same is true for the norm of these functions in $\mathcal{H}_{\overline{\Lambda}}^{-k}$. It follows then from Remark \ref{remark:pairing_natural} that $\widehat{\chi u}(\xi)$, which is the $L^2$ pairing of $u$ with one of these functions, decays like $\brac{\xi}^{-N}$ when $\xi$ tends to $+ \infty$. Consequently, the distribution $\chi u$ is a continuous function, and the result follows by a partition of unity argument.
\end{proof}

\section{General construction (proof of Proposition \ref{proposition:general_statement})}\label{section:general_construction}

The aim of this section is to prove Proposition \ref{proposition:general_statement}. We will use the notation that we introduced in \S \ref{subsection:general_assumption}.

In \S \ref{subsec:choice_parameters}, we fix the value of certain parameters that play an important role in the proof of Proposition \ref{proposition:general_statement} and define the modification $P_h(\omega)$ of $\mathcal{P}_h(\omega)$. In \S \ref{subsec:spaces}, we define the spaces that will be $\mathcal{H}_1$ and $\mathcal{H}_2$ in Proposition \ref{proposition:general_statement}, and explain how the action of $P_h(\omega)$ on these spaces is related to the values of a certain symbol (Proposition \ref{proposition:multiplication_formula}). In \S \ref{subsec:ellipticity_estimates}, we prove ellipticity estimates on this symbol (Lemmas \ref{lemma:high_frequencies} and \ref{lemma:everywhere}). In \S \ref{subsection:invertibility_and_Fredholm}, we use these estimates to study the functional analytic properties of $P_h(\omega)$ acting on the spaces defined in \S \ref{subsec:ellipticity_estimates}: we prove that $P_h(\omega)$ is Fredholm by proving that it is invertible after pertubration by a compact operator (Lemmas \ref{lemma:estimee_fredholm} and \ref{lemma:estimee_fredholm_adjoint} and Proposition \ref{proposition:meromorphic_inverse}), and that $P_h(i\nu)$ is invertible for some $\nu > 0$ (Lemmas \ref{lemma:estimee_invertibility} and \ref{lemma:estimee_invertibility_adjoint} and Proposition \ref{proposition:meromorphic_inverse}). In \S \ref{subsec:counting_resonances}, we prove the crucial point \ref{item:counting_resonances} from Proposition \ref{proposition:general_statement} (from which our upper bounds on resonances, Theorems \ref{theorem:main} and \ref{theorem:schwarzschild}, follow). This is done by evaluating the trace class norm of the compact perturbation that we use to make $P_h(\omega)$ invertible (Lemmas \ref{lemma:bound_extended_resonances} and \ref{lemma:trace_class}). Finally, in \S \ref{subsec:summary}, we put all these information together in order to get a full proof of Proposition \ref{proposition:general_statement}.

Notice that, in most of this section, we are not working directly with the operator $P_h(\omega)$, but rather with an operator $\widetilde{P}_h(\omega)$, defined in \S \ref{subsec:choice_parameters}, which is conjugated to $P_h(\omega)$, but simpler to apprehend.

\subsection{Choice of parameters and modification of the operator}\label{subsec:choice_parameters}

We use the notation from \S \ref{subsection:general_assumption}. Up to making $\epsilon$ smaller, we may assume that $w'(x_1) > \epsilon$ for every $x_1 \in ]-\epsilon,\epsilon[$ and $p_0(x) < - \epsilon$ for every $x \in U \cup Y$ (this second point is a consequence of assumption \ref{item:ellipticite_bas}). We may also assume that $x_1$ extends to a smooth function on the whole $X$ (analytic on $U$) such that $U = \set{ - \epsilon < x_1 < \epsilon}$, $Y = \set{x_1 > 0}$ and $X \setminus \overline{Y} = \set{x_1 < 0}$.

Let us introduce on $T^* U \simeq T^* (]-\epsilon,\epsilon[)_{(x_1,\xi_1)} \times T^* \partial Y_{(x',\xi')}$ the symbol of logarithmic order
\begin{equation*}
G_1(x_1,x',\xi_1,\xi') = \log\p{2 + \xi_1^2 + \va{\xi'}^2}
\end{equation*}
and denote by $H_{G_1}$ the Hamiltonian flow of $G_1$ for the canonical symplectic form on $T^* U$. Here, the quantity $\va{\xi'}^2$ is computed using any smooth Riemannian metric on $\partial Y$, e.g. the restriction of $g_X$. Let us compute $H_{G_1} p_2$ where we recall that $p_2$ is the principal symbol of the order $2$ differential operator $P_2$ from \eqref{eq:pinceau}. Using local coordinates on $\partial Y$, we find that
\begin{equation*}
\begin{split}
& H_{G_1} p_2(x_1,x',\xi_1,\xi') = \frac{2 \xi_1}{2 + \xi_1^2 + \va{\xi'}^2} w'(x_1) \xi_1^2 \\ & \qquad \qquad + \frac{2 \xi_1}{2 + \xi_1^2 + \va{\xi'}^2} \frac{\partial q_1}{\partial x_1}(x_1,x',\xi') + \frac{\nabla_{\xi'}(\va{\xi'}^2)}{2 + \xi_1^2 + \va{\xi'}^2} \cdot \nabla_{x'}q_1(x_1,x',\xi') \\ & \qquad \qquad - \frac{\nabla_{x'}(\va{\xi'}^2)}{2 + \xi_1^2 + \va{\xi'}^2} \cdot \nabla_{\xi'}q_1(x_1,x',\xi').
\end{split}
\end{equation*}
Since $w'(x_1) > \epsilon$, the term on the first line is elliptic of order $1$ whenever $\xi_1$ is larger than a fixed proportion of $\va{\xi'}$. Moreover, this term has the same sign as $\xi_1$. The terms on the second and third line are also of order $1$, and they can be made arbitrarily small by assuming that $\xi_1$ is much larger than $\xi'$. Hence, there is some small $\epsilon_1 \in ]0,\epsilon[$ such that if $(x_1,x',\xi_1,\xi') \in T^* U$ and $\va{\xi_1} \geq \epsilon_1^{-1}(1+ \va{\xi'})$ we have
\begin{equation}\label{eq:escape}
\frac{H_{G_1} p_2(x_1,x',\xi_1,\xi')}{\xi_1} \geq C^{-1},
\end{equation}
for some constant $C > 0$.

Let then $C_0$ be a bound for the derivative of $w$ on $]-\epsilon,\epsilon[$. We choose $\epsilon_0 \in ]0,\epsilon[$ so small that if $(x_1,x',\xi_1,\xi') \in T^* U$ and $\va{\xi_1} \leq 2 \epsilon_1^{-1}(1+ \va{\xi'})$ we have, with $\xi = (\xi_1,\xi')$,
\begin{equation}\label{eq:epsilon0_c1}
-C_0 \epsilon_0 \xi_1 ^2 + \va{\xi}^2 + 1 \geq C^{-1}\p{1 + \va{\xi}^2}
\end{equation}
and
\begin{equation}\label{eq:epsilon0_c2}
-C_0 \epsilon_0 \xi_1^2 + q_1(x_1,x',\xi') +1 \geq C^{-1}\p{1 + \va{\xi}^2}.
\end{equation}
Here, $\va{\xi}^2$ is defined using the metric $g_X$ on $X$, and we used the ellipticity condtion on $q_1$ (assumption \ref{item:elliptique_bord}).

Let $\chi : \mathbb{R} \to [0,1]$ be a smooth function such that $\chi(t) = 0$ for $t \leq -\epsilon_0$ and $\chi(t) = 1$ for $t \geq - 5 \epsilon_0/6$. Let $\psi$ be a real-analytic function from $\mathbb{R}$ to $\mathbb{R}$ such that $t \psi'(t) \leq 0$ for $t \in \mathbb{R}$ and $t \psi'(t) < 0$ for $t \neq 0$. One can take for instance $\psi(t) = - t^2/2$.

Let $Q$ be a semiclassical differential operator of order $2$ with principal symbol $q$. We assume that $Q$ has the following properties:
\begin{itemize}
\item the coefficients of $Q$ are supported in $\set{x_1 < - \epsilon_0/2}$;
\item the principal symbol of $Q$ is
\begin{equation*}
q(x,\xi) = \chi_1(x_1)(1+ \va{\xi}^2),
\end{equation*}
where $\chi_1 : \mathbb{R} \to [0,1]$ is a smooth function supported in $] - \infty, - \epsilon_0/2]$ and that takes value $1$ on $]- \infty, - 2 \epsilon_0/3]$. For instance, one can take $Q = \chi_1(I - h^2 \Delta_{g_X})$.
\end{itemize}

The modification $P_h(\omega)$ of the operator $\mathcal{P}_h(\omega)$ for which Proposition \ref{proposition:general_statement} will be established is
\begin{equation}\label{eq:modification_operators}
P_h(\omega) = \chi(x_1) \mathcal{P}_h(\omega) + e^{- \frac{\psi(x_1)}{h}} Q e^{\frac{\psi(x_1)}{h}},
\end{equation}
but we will rather study the conjugated operator
\begin{equation*}
\widetilde{P}_h(\omega) = e^{\frac{\psi(x_1)}{h}}\chi(x_1) \mathcal{P}_h(\omega)e^{- \frac{\psi(x_1)}{h}} + Q.
\end{equation*}

For $(x,\xi) = (x_1,x',\xi_1,\xi') \in T^* U \simeq T^*(]-\epsilon,\epsilon[ \times \partial Y)$, the principal symbol of $\tilde{p}(x,\xi;\omega)$ of $\widetilde{P}_h(\omega)$ is given by
\begin{equation}\label{eq:ptilde}
\begin{split}
\tilde{p}(x,\xi;\omega)& = \chi(x_1)\Big(w(x_1) \xi_1^2 + q_1(x_1,x',\xi') + 2 i w(x_1)\psi'(x_1) \xi_1 + \omega p_1(x_1,\xi_1) \\ & \qquad \qquad \qquad \qquad  + i \omega p_1(x_1,\psi'(x_1)) + \omega^2 p_0(x) - \psi'(x_1)^2 w(x_1)\Big) \\ & \qquad \qquad \qquad + \chi_1(x_1) (1+ \va{\xi}^2).
\end{split}
\end{equation}

Finally, let $\phi$ be a $C^\infty$ function from $\mathbb{R}$ to $[0,1]$, supported in $]- \epsilon_0/3,\epsilon_0[$, such that $\phi(t) = 1$ for $t \in [- \epsilon_0/6,2\epsilon_0/3]$ and $t \phi'(t) \leq 0$ for every $t \in \mathbb{R}$. 

Our choices of parameters are summed up in Figure \ref{figure:choice_parameters}, where the black line represent the $x_1$-axis. The colored zones in this drawing correspond to places where we will use different mechanisms to prove the Fredholm property for $\widetilde{P}_h(\omega)$. In the purple zone (which is compactly contained in $Y$), we will use the ellipticity of $\widetilde{P}_h(\omega)$, which follows from our assumption \ref{item:ellipticite_interieur} in \S \ref{subsection:general_assumption}. In the green zone (which is away from $\overline{Y}$), the operator $\widetilde{P}_h(\omega)$ is also elliptic, but this is just because $Q$ is. Finally, the most interesting part is the blue zone, where two phenomena occur: in some places $\widetilde{P}_h(\omega)$ is elliptic and in other places we need to use propagation and radial estimates to get the Fredholm property. See \S \ref{subsec:ellipticity_estimates} for the details on how Figure \ref{figure:choice_parameters} can be turned into actual estimates.

\begin{figure}[h]
\centering
\includegraphics[scale=0.55]{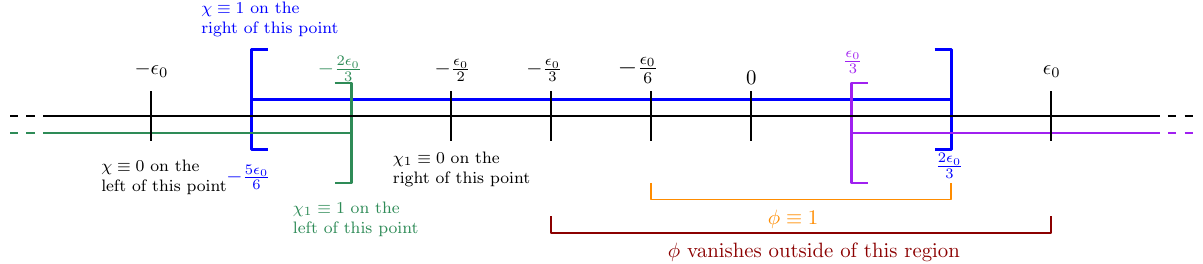}
\caption{Some relevant places near $\partial Y$}
\label{figure:choice_parameters}
\end{figure}

\subsection{Definition of the spaces}\label{subsec:spaces}

We define the symbol $G_0$ on $T^* X$ by
\begin{equation*}
G_0(x,\xi) = - \phi(x_1) G_1(x,\xi) \textup{ for } (x,\xi) \in T^* X.
\end{equation*}
Then, for some small $r > 0$, we extend $G_0$ to $(T^* X)_r$ as a symbol of logarithmic order. The particular features of the extension are irrelevant as soon as we have symbolic estimates, and that $G_0$ is identically equal to $0$ away from a small neighbourhood of the support of $\phi$, so that all derivatives of $G_0$ vanish at any point of $T^* X$ such that $x_1 \leq - \epsilon_0/3$ or $x_1 \geq \epsilon_0$ (even derivatives in directions that are not tangent to $T^* X$). As above we define the escape function $G = \tau G_0$ for some small $\tau > 0$. We let $\Lambda = e^{H_G^{\omega_I}} T^* M$ be the associated Lagrangian deformation and $(\mathcal{H}_\Lambda^k)_{k \in \mathbb{R}}$ the associated family of Hilbert spaces (see \S \ref{section:FBI_transform}). Notice that these are spaces of distributions according to Proposition \ref{proposition:inclusions_naturelles}. For $k \in \mathbb{R}$, define the Hilbert space
\begin{equation}\label{eq:deffk}
\mathcal{F}_k = \set{ u \in \mathcal{D}'(M) : e^{\frac{\psi}{h}} u \in \mathcal{H}_\Lambda^k},
\end{equation}
where we recall that $\psi$ is defined in \S \ref{subsec:choice_parameters}. The spaces $\mathcal{H}_1$ and $\mathcal{H}_2$ in Proposition \ref{proposition:general_statement} will be respectively $\set{u \in \mathcal{F}_k : P_h(0) u \in \mathcal{F}_{k-1}}$ and $\mathcal{F}_{k-1}$.

Notice that it is equivalent to study $P_h(\omega)$ acting on the $\mathcal{F}_k$'s or $\widetilde{P}_h(\omega)$ acting on the $\mathcal{H}_\Lambda^k$'s. Also, we can write $\widetilde{P}_h(\omega) = \widetilde{P}_2 + \omega \widetilde{P}_1 + \omega^2 \widetilde{P}_0$ where the $\widetilde{P}_j$'s are semiclassical differential operator with analytic coefficients near the support of $\phi$. Consequently, these operators satisfy the assumption \eqref{eq:assumption} and it makes sense to restrict their principal symbols (and thus the principal symbol $\tilde{p}(\cdot;\omega)$ of $\widetilde{P}_h(\omega)$) to $\Lambda$, see the remark below \eqref{eq:assumption}. Applying Proposition \ref{proposition:toeplitz} and Lemma \ref{lemma:seems_simple} to the operators $\widetilde{P}_0, \widetilde{P}_1$ and $\widetilde{P}_2$, we find:

\begin{proposition}\label{proposition:multiplication_formula}
Assume that $\tau$ and $h$ are small enough. Let $m \in \mathbb{R}$ and $f$ be a symbol of order $m$ on $\Lambda$. Let $\omega \in \mathbb{C}$. Let $k_1$ and $k_2$ be such that $k_1 + k_2 = m +1 $. Then, there is a constant $C$ such that for every $u,v \in \mathcal{H}_\Lambda^{\infty}$, we have
\begin{equation*}
\va{\int_\Lambda f T_\Lambda \widetilde{P}_h(\omega) u \overline{T_\Lambda v} \mathrm{d}\alpha - \int_\Lambda f (\alpha) \tilde{p}(\alpha;\omega) T_\Lambda u \overline{T_\Lambda v} \mathrm{d}\alpha} \leq C h \n{u}_{\mathcal{H}_\Lambda^{k_1}} \n{v}_{\mathcal{H}_\Lambda^{k_2}}.
\end{equation*}
Here, the constant $C$ depends continuously on $\omega$ and $\tilde{p}(\cdot;\omega)$ denotes the principal symbol of $\widetilde{P}_h(\omega)$. We also wrote $\mathcal{H}_\Lambda^\infty$ for $\bigcap_{k \in \mathbb{R}} \mathcal{H}_\Lambda^k$.
\end{proposition}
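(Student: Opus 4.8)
The plan is to deduce the estimate from Proposition~\ref{proposition:toeplitz} and Lemma~\ref{lemma:seems_simple} by a bookkeeping of orders. Writing $\widetilde{P}_h(\omega) = \widetilde{P}_2 + \omega \widetilde{P}_1 + \omega^2 \widetilde{P}_0$ and $\tilde{p}(\cdot;\omega) = \tilde{p}_2 + \omega \tilde{p}_1 + \omega^2 \tilde{p}_0$, where $\tilde{p}_j$ denotes the principal symbol of $\widetilde{P}_j$ (a symbol of order $j$ whose restriction to $\Lambda$ makes sense because $\widetilde{P}_j$ satisfies \eqref{eq:assumption}: its coefficients are real-analytic on a neighbourhood of $\textup{supp}\,\phi$, away from which $G_0 = -\phi G_1$ vanishes together with all its derivatives), it suffices by linearity to prove, for each $j \in \set{0,1,2}$ and all $u,v \in \mathcal{H}_\Lambda^\infty$ with $k_1 + k_2 = m+1$, the bound $\va{\int_\Lambda f\,(T_\Lambda \widetilde{P}_j u - \tilde{p}_j T_\Lambda u)\,\overline{T_\Lambda v}\,\mathrm{d}\alpha} \leq C h\,\n{u}_{\mathcal{H}_\Lambda^{k_1}}\n{v}_{\mathcal{H}_\Lambda^{k_2}}$; the three contributions are then summed with the weights $1,\omega,\omega^2$, and the resulting constant depends polynomially, hence continuously, on $\omega$.

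First I would rewrite the integrand. By Lemma~\ref{lemma:seems_simple}, $T_\Lambda \widetilde{P}_j u = (T_\Lambda \widetilde{P}_j S_\Lambda) T_\Lambda u$, and since $\Pi_\Lambda = T_\Lambda S_\Lambda$ acts as the identity on the image of $T_\Lambda$ (in particular $\tilde{p}_j T_\Lambda u = \tilde{p}_j \Pi_\Lambda T_\Lambda u$), this gives $T_\Lambda \widetilde{P}_j u - \tilde{p}_j T_\Lambda u = (T_\Lambda \widetilde{P}_j S_\Lambda - \tilde{p}_j \Pi_\Lambda) T_\Lambda u$. Since $u,v \in \mathcal{H}_\Lambda^\infty \subseteq \mathcal{H}_\Lambda^0$, both $T_\Lambda u$ and $T_\Lambda v$ lie in $\mathcal{H}_{\Lambda,\FBI}^0$, the range of the orthogonal projector $B_\Lambda$; hence $B_\Lambda T_\Lambda u = T_\Lambda u$ and $B_\Lambda T_\Lambda v = T_\Lambda v$, and, using that $B_\Lambda$ is self-adjoint for the $L_0^2(\Lambda)$ inner product, the integral in question equals $\brac{B_\Lambda f(T_\Lambda \widetilde{P}_j S_\Lambda - \tilde{p}_j \Pi_\Lambda)B_\Lambda\, T_\Lambda u,\, T_\Lambda v}_{L_0^2(\Lambda)}$. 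Now Proposition~\ref{proposition:toeplitz}, applied with $P = \widetilde{P}_j$ (of order $j$) and our symbol $f$ (of order $m$), yields $B_\Lambda f T_\Lambda \widetilde{P}_j S_\Lambda B_\Lambda = B_\Lambda (f \tilde{p}_j) B_\Lambda + B_\Lambda \tilde{\sigma}_j B_\Lambda + L_j$ with $\tilde{\sigma}_j \in h\,S_{KN}^{m+j-1}(\Lambda)$ and $L_j$ a negligible operator; since $\Pi_\Lambda B_\Lambda = B_\Lambda$ (the range of $B_\Lambda$ being fixed by $\Pi_\Lambda$) we also have $B_\Lambda f \tilde{p}_j \Pi_\Lambda B_\Lambda = B_\Lambda (f\tilde{p}_j) B_\Lambda$, so that $B_\Lambda f(T_\Lambda \widetilde{P}_j S_\Lambda - \tilde{p}_j \Pi_\Lambda)B_\Lambda = B_\Lambda \tilde{\sigma}_j B_\Lambda + L_j$.

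It then remains to bound $\brac{(B_\Lambda \tilde{\sigma}_j B_\Lambda + L_j) T_\Lambda u,\, T_\Lambda v}_{L_0^2(\Lambda)}$. As $j \leq 2$, writing $\tilde{\sigma}_j = h\rho_j$ with $\rho_j$ in a bounded subset of $S_{KN}^{m+1}(\Lambda)$, the Toeplitz operator $B_\Lambda \tilde{\sigma}_j B_\Lambda = h\,B_\Lambda \rho_j B_\Lambda$ is bounded from $L_{k_1}^2(\Lambda)$ to $L_{k_1-(m+1)}^2(\Lambda) = L_{-k_2}^2(\Lambda)$ with norm $\mathcal{O}(h)$ (boundedness of Toeplitz operators along the $L_k^2$-scale, cf.\ Lemma~\ref{lemma:factorisation} and the Schur-type estimates of \cite[\S 2]{BJ20}); likewise $L_j$ maps $L_{k_1}^2(\Lambda)$ into $L_{-k_2}^2(\Lambda)$ with norm $\mathcal{O}(h^\infty)$. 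Therefore $\n{(B_\Lambda \tilde{\sigma}_j B_\Lambda + L_j)T_\Lambda u}_{L_{-k_2}^2(\Lambda)} \leq Ch\,\n{T_\Lambda u}_{L_{k_1}^2(\Lambda)} = Ch\,\n{u}_{\mathcal{H}_\Lambda^{k_1}}$, and Cauchy--Schwarz for the $L_0^2(\Lambda)$ duality between $L_{-k_2}^2(\Lambda)$ and $L_{k_2}^2(\Lambda)$ gives the claimed estimate. I do not expect a serious obstacle here, as this is essentially a corollary of Proposition~\ref{proposition:toeplitz}; the points requiring attention are the order bookkeeping --- that each remainder $\tilde{\sigma}_j$ falls in $h\,S_{KN}^{m+1}(\Lambda)$, which is precisely why the hypothesis $k_1+k_2 = m+1$ is the correct one --- and the reduction of the integral to a $B_\Lambda$-sandwiched $L_0^2(\Lambda)$ pairing, which hinges on $T_\Lambda u$ and $T_\Lambda v$ already lying in the range of $B_\Lambda$ and on the identity $\Pi_\Lambda B_\Lambda = B_\Lambda$.
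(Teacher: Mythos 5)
Your proof is correct and follows essentially the same route as the paper, which obtains the proposition precisely by applying Proposition \ref{proposition:toeplitz} and Lemma \ref{lemma:seems_simple} to $\widetilde{P}_0,\widetilde{P}_1,\widetilde{P}_2$; your reduction to a $B_\Lambda$-sandwiched $L_0^2(\Lambda)$ pairing and the order bookkeeping with $k_1+k_2=m+1$ are exactly the details the paper leaves implicit. Your reading of the integrals as the weighted $L_0^2(\Lambda)$ pairing is also the intended one, as the later applications (e.g.\ the proof of Lemma \ref{lemma:estimee_fredholm}) require.
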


Another consequence of Proposition \ref{proposition:toeplitz} and Lemma \ref{lemma:seems_simple} that it will be useful to remember is that, under the assumptions of Proposition \ref{proposition:multiplication_formula}, for every $k \in \mathbb{R}$ the family $\omega \mapsto \widetilde{P}_h(\omega) - \widetilde{P}_h(0)$ is a holomorphic family of bounded operators from $\mathcal{H}_\Lambda^{k}$ to $\mathcal{H}_\Lambda^{k-1}$.

\subsection{Ellipticity estimates}\label{subsec:ellipticity_estimates}

In order to use Proposition \ref{proposition:multiplication_formula}, let us introduce the following subsets of $T^* X$:
\begin{equation*}
\begin{split}
& V_R = \set{x_1 \leq - 2 \epsilon_0 /3} \cup \set{x_1 \geq \epsilon_0/3} \\ & \quad \qquad \qquad \qquad \qquad \bigcup \p{\set{- 5\epsilon_0/6 \leq x_1 \leq 2 \epsilon_0/3} \cap \set{\va{\xi_1} \leq 2\epsilon_1^{-1}(1+ \va{\xi'})}},
\end{split}
\end{equation*}
\begin{equation*}
V_+ = \set{- 5\epsilon_0/6 \leq x_1 \leq 2\epsilon_0/3} \cap \set{\xi_1 \geq \epsilon_1^{-1}(1 + \va{\xi'})}
\end{equation*}
and
\begin{equation*}
V_- = \set{ - 5\epsilon_0/6 \leq x_1 \leq 2\epsilon_0/3} \cap \set{\xi_1 \leq -\epsilon_1^{-1} (1 + \va{\xi'})}.
\end{equation*}

Here, we see the function $x_1 : X \to \mathbb{R}$ as a function on $T^* X$ by composition by the canonical projection $T^* X \to X$, and the constant $\epsilon_1$ has been defined in \S \ref{subsec:choice_parameters}. Notice that a point of $T^* X$ for which $x_1$ is between $ - \epsilon$ and $\epsilon$ is in $T^* U \simeq T^* (]- \epsilon,\epsilon[) \times T^* (\partial Y)$, and may consequently be written as $(x_1,x',\xi_1,\xi')$ with $(x_1,\xi_1) \in T^* (]- \epsilon,\epsilon[)$ and $(x',\xi') \in T^* \partial Y$. This is how we make sense of $\xi'$ in the equation above. We use the same metric on $T^* \partial Y$ as in \S \ref{subsec:choice_parameters}. 

We let also $W_R,W_+$ and $W_-$ denote the images respectively of $V_R,V_+$ and $V_-$ by $e^{H_G^{\omega_I}}$. Notice that $T^* X = V_R \cup V_+ \cup V_-$ so that $\Lambda = W_R \cup W_+ \cup W_-$. We are going to prove two ellipticity estimates, Lemmas \ref{lemma:high_frequencies} and \ref{lemma:everywhere}, that will be used in \S \ref{subsection:invertibility_and_Fredholm} below to prove that $\widetilde{P}_h(\omega)$ is Fredholm for a certain range of $\omega$'s and invertible for at least one of these $\omega$'s.

\begin{lemma}\label{lemma:high_frequencies}
Let $\tau > 0$ be small and fixed. There is $\kappa > 0$ (depending on $\tau$) such that the following holds. For every compact subset $K$ of $\set{z \in \mathbb{C} : \im z \geq -\kappa}$, there is a constant $C_K$ such that for $\omega \in K$ and $\alpha \in \Lambda$ such that $\brac{\va{\alpha}} \geq C_K$ we have
\begin{itemize}
\item if $\alpha \in W_R$ then
\begin{equation*}
\re \tilde{p}(\alpha;\omega) \geq C_K^{-1} \brac{\va{\alpha}};
\end{equation*}
\item if $\alpha \in W_+$ then
\begin{equation*}
\im \tilde{p}(\alpha;\omega) \leq - C_K^{-1} \brac{\va{\alpha}};
\end{equation*}
\item if $\alpha \in W_-$ then
\begin{equation*}
\im \tilde{p}(\alpha;\omega) \geq  C_K^{-1} \brac{\va{\alpha}}.
\end{equation*}
\end{itemize}
\end{lemma}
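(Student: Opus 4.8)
The plan is to work directly on $\Lambda$ via the exponential map $e^{H_G^{\omega_I}}$, but to exploit the key structural fact that $G_0 = -\phi(x_1)G_1$ is supported (in the $x$-variable) in $\{-\epsilon_0/3 < x_1 < \epsilon_0\}$ and has \emph{logarithmic} order. This has two consequences I would record at the outset. First, since $\tau$ is small, $\Lambda$ is $C^\infty$-close to $T^*X$ uniformly (after rescaling in the fibers), so that for $\alpha \in \Lambda$ written as $\alpha = e^{H_G^{\omega_I}}\beta$ with $\beta \in T^*X$, the deformed principal symbol $\tilde p(\alpha;\omega)$ differs from $\tilde p(\beta;\omega)$ by a controlled amount; more precisely, because $G$ is logarithmic, $\Lambda$ is a $C^\infty$-deformation of $T^*X$ of ``size'' $\tau\log\langle|\beta|\rangle$, and by a first-order Taylor expansion of $\tilde p$ along the Hamiltonian flow one gets $\tilde p(\alpha;\omega) = \tilde p(\beta;\omega) + \tau\,\{G_1,\tilde p\}(\beta;\omega)\,\phi(x_1) + O(\tau^2 (\log\langle|\beta|\rangle)^2 \langle|\beta|\rangle^{-1})$ in the relevant region — note $\{G_1,\tilde p\}$ is of order $1$, so the leading correction is genuinely of the same order as $\tilde p$ itself and must be tracked, but the remainder is lower order. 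Second, outside $\{-\epsilon_0/3 < x_1 < \epsilon_0\}$ we simply have $\Lambda = T^*X$ and $\tilde p(\alpha;\omega)=\tilde p(\alpha;\omega)$ with the undeformed symbol, which covers part of $W_R$.

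The core of the argument is then a case analysis following the three regions, carried out first on $T^*X$ and then transferred to $\Lambda$. \textbf{Region $W_R$ (ellipticity).} For $\alpha \in V_R$: if $x_1 \le -2\epsilon_0/3$ or $x_1 \ge \epsilon_0/3$, the term $\chi_1(x_1)(1+|\xi|^2)$ or the assumption \ref{item:ellipticite_interieur} (via $\chi(x_1)=1$, giving $w\xi_1^2 + q_1 \geq C^{-1}|\xi|^2$ since $p_2$ is positive on $T^*Y\setminus 0$) shows $\re\tilde p \gtrsim 1+|\xi|^2 \gtrsim \langle|\alpha|\rangle$, with the $\omega$-dependent terms ($\omega p_1$, $\omega^2 p_0$, the $\psi'$-terms) being lower order and hence absorbable. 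The remaining piece of $V_R$, where $-5\epsilon_0/6 \le x_1 \le 2\epsilon_0/3$ and $|\xi_1| \le 2\epsilon_1^{-1}(1+|\xi'|)$, is exactly the regime where \eqref{eq:epsilon0_c1} and \eqref{eq:epsilon0_c2} were set up: writing $\re\tilde p \supseteq \chi(x_1)(w(x_1)\xi_1^2 + q_1 - \psi'(x_1)^2 w(x_1) + \re(\omega^2)p_0 + \dots)$, since $w(x_1) \geq -C_0|x_1| \geq -C_0\epsilon_0$ on this set (recall $w(0)=0$, $w'\leq C_0$) one bounds $w\xi_1^2 \geq -C_0\epsilon_0\xi_1^2$ and invokes \eqref{eq:epsilon0_c2} (for the part with $\chi(x_1)=1$, using $q_1$) together with $p_0 < -\epsilon < 0$; the first-order $\psi'$-correction terms and the $\tau\{G_1,\tilde p\}$ correction from passing to $\Lambda$ are of order $\langle|\alpha|\rangle$ but with a factor one can make small (respectively by smallness of $\epsilon_0$ or of $\tau$, or because $\psi'(0)=0$ so $\psi'$ is small near $x_1=0$). \textbf{Regions $W_\pm$ (radial-type estimate).} On $V_+ \cup V_-$ we have $-5\epsilon_0/6 \le x_1 \le 2\epsilon_0/3$ and $|\xi_1| \ge \epsilon_1^{-1}(1+|\xi'|)$, so $\chi(x_1)=1$ and we are entirely within the analytic region. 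Here the relevant term is $\im\tilde p \supseteq 2w(x_1)\psi'(x_1)\xi_1 + \omega$-terms$+ \tau\,\{G_1,\tilde p\}\cdot(\dots)$. The point is that $2w(x_1)\psi'(x_1)\xi_1$: since $w'(0)>\epsilon$ and $x_1\psi'(x_1) < 0$ for $x_1 \neq 0$ — wait, near $x_1 = 0$ this term is quadratically small, so it cannot by itself give $\gtrsim \langle|\alpha|\rangle$. The genuine mechanism must come from the escape/deformation term: $\tilde p$ on $\Lambda$ acquires the correction $\tau\,H_{G_1}p_2$, and by \eqref{eq:escape}, on the cone $|\xi_1| \geq \epsilon_1^{-1}(1+|\xi'|)$ we have $H_{G_1}p_2/\xi_1 \geq C^{-1}$, i.e. this correction has the sign of $\xi_1$ and magnitude $\gtrsim \langle|\alpha|\rangle$. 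Since passing from $T^*X$ to $\Lambda$ along $\omega_I$ contributes precisely a term governed by $\{G, p_2\}$ computed for the \emph{imaginary} symplectic form, which by the standard FBI/Grauert-tube identity equals (to leading order) $-\{G_1, p_2\}$ times $\tau$ entering the \emph{real} part, and correspondingly the $\omega_R$-Hamiltonian piece enters the imaginary part — I would carefully track which of $\re\tilde p$, $\im\tilde p$ picks up the $\tau H_{G_1}p_2$ term. On $W_+$, $\xi_1 > 0$ large, so this term (entering $\im\tilde p$ with the appropriate sign coming from the deformation) is $\leq -C^{-1}\langle|\alpha|\rangle$; on $W_-$, $\xi_1 < 0$, giving $\geq C^{-1}\langle|\alpha|\rangle$. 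The $\omega$-dependent terms $\omega p_1(x_1,\xi_1)$ contribute $\im(\omega)p_1 + \re(\omega)\cdot 0$ to $\im\tilde p$; since $p_1/\xi_1 \leq -C^{-1}$ (assumption \ref{item:structure_sous_principal}), $\im(\omega)p_1(x_1,\xi_1)$ has sign $-\text{sign}(\im\omega)\text{sign}(\xi_1)$, which on $W_+$ is $-\text{sign}(\im\omega)$: this is the \emph{bad} sign when $\im\omega < 0$, and it is precisely here that the threshold $\kappa$ enters — we need $|\im\omega| \leq \kappa$ small enough that $|\im(\omega)p_1| \leq |\im(\omega)| C |\xi_1| \leq \kappa C' \langle|\alpha|\rangle$ is dominated by the $\tau H_{G_1}p_2$ gain $C^{-1}\langle|\alpha|\rangle$; so one chooses $\kappa = \kappa(\tau)$ with $\kappa \ll \tau/C''$.

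The main obstacle, as the above already signals, is \textbf{bookkeeping the sign and size of the deformation correction} $\tau\,H_{G_1}^{\omega_?}\,\tilde p$ when passing from $T^*X$ to $\Lambda$, and confirming it lands in $\im\tilde p$ with the right sign in the $W_\pm$ regions — this is the analytic heart, and is where \eqref{eq:escape} is used, exactly as the text's remark (``fourth and fifth case in the proof of Lemma \ref{lemma:high_frequencies}'') anticipates. The secondary difficulty is the $W_R$ region with bounded $\xi_1/|\xi'|$ ratio: there the sign of $w(x_1)$ is indefinite (negative for $x_1<0$), and one must genuinely use that $|w(x_1)| \le C_0\epsilon_0$ is tiny there plus the $+1$'s built into \eqref{eq:epsilon0_c1}–\eqref{eq:epsilon0_c2}, plus $p_0 < 0$ and $\psi'(0)=0$, to close; but this is a fixed finite computation with all constants already arranged in \S\ref{subsec:choice_parameters}. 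I would organize the write-up as: (i) a preliminary lemma quantifying $\tilde p|_\Lambda - \tilde p|_{T^*X}$ and isolating the leading correction; (ii) the three cases, treating $W_R$ by splitting into the three sub-pieces of $V_R$ and $W_\pm$ by direct use of \eqref{eq:escape}; (iii) at the end, fix $\kappa$ in terms of $\tau$ and $C$ (the constant from \eqref{eq:escape}) and the bound $p_1/\xi_1 \leq -C^{-1}$, and then $C_K$ large enough to absorb all the lower-order ($O(\langle|\alpha|\rangle^{1/2})$, $O(1)$, and the $\tau^2\log^2$) remainders uniformly for $\omega \in K$.
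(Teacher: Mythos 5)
Your plan follows the paper's own route: the same five-region split (the three sub-pieces of $V_R$, then $V_+$ and $V_-$), ellipticity together with the choices \eqref{eq:epsilon0_c1}--\eqref{eq:epsilon0_c2} on $W_R$, and on $W_\pm$ the interplay of the weight term $2w(x_1)\psi'(x_1)\xi_1$, the sub-principal sign assumption \ref{item:structure_sous_principal}, and the escape-function gain \eqref{eq:escape}, with $\kappa$ arising from balancing $\im\omega\, p_1$ against these. But there is a genuine gap at precisely what you call the analytic heart: you never establish, and in fact first misstate, \emph{where} the first-order deformation correction lands. You write that the $\omega_I$-bracket term enters the \emph{real} part, and later simply assert that it enters $\im \tilde{p}$ with the favorable sign, flagging that you would ``carefully track'' this; as written, the application of \eqref{eq:escape} to $\im\tilde{p}$ on $W_\pm$ is unjustified, and this is not a cosmetic point. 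Over $\partial Y$ on the characteristic set ($x_1=0$, $\xi'$ small, $\xi_1$ large) the real part of $\tilde{p}$ is \emph{not} elliptic of order one, so a correction landing in $\re\tilde{p}$, or landing in $\im\tilde{p}$ with the opposite sign, would leave nothing to prove the $W_\pm$ bounds with. The missing ingredient is the computation \eqref{eq:symplectic_calcul} in the paper: since $\im p_2$ vanishes on $T^*X$, which is Lagrangian for $\omega_I$, the field $H^{\omega_I}_{\im p_2}$ is tangent to $T^*X$ and, by the Cauchy--Riemann equations and the fact that $\omega_R$ restricts to the canonical symplectic form, coincides there with the real Hamiltonian field $H_{p_2}$; only then does one get $H_{G_0}^{\omega_I}\im\tilde{p}(x,\xi;\omega)=H_{G_0}p_2(x,\xi)+\mathcal{O}\p{\log\brac{\va{\alpha}}}$ and hence, via \eqref{eq:escape}, the sign $-C^{-1}\phi(x_1)\xi_1$ needed in the fourth and fifth cases.

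A few secondary points you should repair when writing this up. The expansion must be taken with $H_{G_0}$, $G_0=-\phi(x_1)G_1$, not with $\phi\{G_1,\cdot\}$ alone: the derivative falling on $\phi$ produces the extra term $2w(x_1)\phi'(x_1)G_1\xi_1$, whose sign has to be checked (it is favorable because $w\phi'\le 0$). In the sub-piece $\set{x_1\le -2\epsilon_0/3}$ of $W_R$, the term $\chi(x_1)w(x_1)\xi_1^2$ is negative and of order two on $[-\epsilon_0,-2\epsilon_0/3]$, so it is not ``lower order and absorbable''; it is controlled exactly by the smallness of $C_0\epsilon_0$ encoded in \eqref{eq:epsilon0_c1}, as in the paper's first case. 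Your prescription $\kappa\ll\tau$ only covers the part of $V_\pm$ where $\phi=1$; on $\set{-5\epsilon_0/6\le x_1\le -\epsilon_0/3}$ the deformation is absent and the gain comes from $-w(x_1)\psi'(x_1)>0$ there, so $\kappa$ must be taken below $\min\p{-w(x_1)\psi'(x_1)+\tau\phi(x_1)}$ over the whole interval, which is how the paper phrases it. Finally, the second-order remainder along the flow is $\mathcal{O}\p{\tau^2(\log\brac{\va{\alpha}})^2}$ rather than $\mathcal{O}\p{\tau^2(\log\brac{\va{\alpha}})^2\brac{\va{\alpha}}^{-1}}$; this is harmless since both are $o\p{\brac{\va{\alpha}}}$.
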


\begin{proof}
Let us write $\alpha  = e^{H_G^{\omega_I}} (x,\xi)$ for $(x,\xi) \in T^* X$. We will distinguish the different cases that appear in the definitions of $V_R,V_+$ and $V_-$.

\noindent \textbf{First case:} $(x,\xi) \in \set{x_1 \leq - 2 \epsilon_0 /3}$.

In that case, we see that $G_0$ is null on a neighbourhood of $(x,\xi)$ so that $\alpha = (x,\xi)$. Moreover $\chi_1(x_1) = 1$, so that $q(x,\xi) = 1 + \va{\xi}^2$. Using that $\chi(x_1) = 0$ for $x_1 \leq -\epsilon_0$, we find from \eqref{eq:ptilde} that the real part of $\tilde{p}(x,\xi;\omega)$ is greater than
\begin{equation}\label{eq:real_part_1}
- C_0 \epsilon_0 \xi_1^2 - C(1 + \va{\xi_1}) + \va{\xi}^2.
\end{equation}
Here, $C > 0$ is some constant that depends continuously on $\omega$ (and does not depend on $\alpha$). Thanks to our assumption \eqref{eq:epsilon0_c1} on $\epsilon_0$, we see that \eqref{eq:real_part_1} is larger than $C^{-1} \brac{\va{\alpha}}^2$ and hence that $C^{-1} \brac{\va{\alpha}}$ when $\alpha$ is large enough.

\noindent \textbf{Second case:} $(x,\xi) \in \set{x_1 \geq \epsilon_0/3}$.

Notice that $H_G^{\omega_I} \tilde{p}(\alpha;h)$ is $\mathcal{O}\p{\brac{\va{\alpha}} \log \brac{\va{\alpha}}}$, with symbolic estimates (it follows from the fact that $G$ has logarithmic order). Consequently, we have
\begin{equation*}
\begin{split}
\tilde{p}\p{\alpha;h} & = \tilde{p}\p{x,\xi;\omega} + \mathcal{O}\p{\brac{\va{\alpha}} \log \brac{\va{\alpha}}} \\
& = p\p{x,\xi + i \psi'(x_1) \mathrm{d}x_1;\omega} + \mathcal{O}\p{\brac{\va{\alpha}} \log \brac{\va{\alpha}}} \\ & = p_2(x,\xi) + \mathcal{O}\p{\brac{\va{\alpha}} \log \brac{\va{\alpha}}}.
\end{split}
\end{equation*}
Thanks to our assumption \ref{item:ellipticite_interieur} of ellipticity on $p_2$ in \S \ref{subsection:general_assumption}, we see that this quantity is larger than $C^{-1} \brac{\va{\alpha}}^2$ and hence that $C^{-1} \brac{\va{\alpha}}$ when $\alpha$ is large enough.

\noindent \textbf{Third case:} $(x,\xi) \in \set{- 5\epsilon_0/6 \leq x_1 \leq 2\epsilon_0/3} \cap \set{\va{\xi_1} \leq 2\epsilon_1^{-1} (1 + \va{\xi'})}$.

In that case, we notice that $\chi(x_1) = 1$ and that $\tilde{p}\p{\alpha;\omega} = \tilde{p}\p{x,\xi;\omega} + \mathcal{O}\p{\brac{\va{\alpha}} \log \brac{\va{\alpha}}}$ as above. Using the formula \eqref{eq:ptilde}, we find that
\begin{equation*}
\re \tilde{p}\p{\alpha;\omega} \geq q_1(x_1,x',\xi') - C_0 \epsilon_0 \xi_1^2 - C \brac{\va{\alpha}} \log \brac{\va{\alpha}}.
\end{equation*}
Using that $\va{\xi_1} \leq 2\epsilon_1^{-1} (1 + \va{\xi'})$ and our assumption \eqref{eq:epsilon0_c2} on $\epsilon_0$, we see that for $\alpha$ large enough, this real part is larger than $C^{-1}\brac{\va{\alpha}}^2$ and hence that $C^{-1}\brac{\va{\alpha}}$.

\noindent \textbf{Fourth case:} $(x,\xi) \in V_+$.

Notice that we have
\begin{equation}\label{eq:taylor_symbol}
\im \tilde{p}(\alpha;\omega) = \im \tilde{p}(x,\xi;\omega) + \tau H_{G_0}^{\omega_I} \im \tilde{p}(x,\xi;\omega) + \mathcal{O}\p{(\log \brac{\va{\alpha}})^2}.
\end{equation}
We want to estimate $H_{G_0}^{\omega_I} \im \tilde{p}(x,\xi;\omega)$. First, notice that $\tilde{p}(\cdot;\omega) - p_2$ is a symbol of order $1$ on a neighbourhood of the support of $G_0$, so that
\begin{equation*}
\begin{split}
H_{G_0}^{\omega_I} \im \tilde{p}(x,\xi;\omega)& = H_{G_0}^{\omega_I} \im p_2(x,\xi) + \mathcal{O}\p{\log\brac{\va{\alpha}}} \\
    & = -H_{\im p_2}^{\omega_I} G_0(x,\xi) + \mathcal{O}\p{\log\brac{\va{\alpha}}}.
\end{split}
\end{equation*}
Notice that the symbol $\im p_2$ vanishes on the real cotangent bundle $T^* X$, which is a Lagrangian submanifold for the symplectic form $\omega_I$. Consequently, the Hamiltonian vector field $H_{\im p_2}^{\omega_I}$ is tangent to $T^* X$ (this is why we only care about the value of $G_0$ on $T^* X$). Recall that $\omega_R$ denotes the real part of the canonical symplectic form $\omega$ on $(T^* X)_r$. For $u$ tangent to $T^* X$, we have
\begin{equation}\label{eq:symplectic_calcul}
\begin{split}
\omega_R(u, H_{\im p_2}^{\omega_I}) & = \im\p{i \omega(u,H_{\im p_2}^{\omega_I})} = \im\p{\omega(iu,H_{\im p_2}^{\omega_I})} \\
    & = \mathrm{d}(\im p_2) \cdot (iu) = \mathrm{d}(\re p_2) \cdot u \\
    & = \omega_R(u, H_{p_2}),
\end{split}
\end{equation}
where $H_{p_2}$ is the Hamiltonian vector field of $p_2$ for the (real) canonical symplectic form on the real cotangent bundle $T^* X$. We used the Cauchy--Riemann equation on the second line of \eqref{eq:symplectic_calcul}. On the last line, we used the fact that $p_2$ is real-valued on $T^* X$ and that the pullback of $\omega_R$ on $T^* X$ is the canonical symplectic form on $T^* X$. Since $\omega_R$ is symplectic on $T^*X$ and the vector fields $H_{p_2}$ and $H_{\im p_2}^{\omega_I}$ are parallel to $T^* X$, we find that $H_{\im p_2}^{\omega_I}$ coincides with $H_{p_2}$ on $T^* X$. It follows that
\begin{equation*}
\begin{split}
H_{G_0}^{\omega_I} \im \tilde{p}(x,\xi;&\omega) = H_{G_0} p_2 (x,\xi) + \mathcal{O}\p{\log \brac{\va{\alpha}}} \\
    & = - \phi(x_1) H_{G_1} p_2(x,\xi) + 2 w(x_1) \phi'(x_1) G_1(x,\xi) \xi_1 + \mathcal{O}\p{\log \brac{\va{\alpha}}} \\
    & \leq - C^{-1} \phi(x_1) \xi_1 + C_\omega \log \brac{\va{\alpha}},
\end{split}
\end{equation*}
for some constant $C_\omega > 0$ that depends continuously on $\omega$ and some constant $C$ that does not depend on $\omega$. Here, we used \eqref{eq:escape}, which is valid thanks to the assumption $(x,\xi) \in V_+$, and the fact that $w(x_1) \phi'(x_1) \leq 0$. Then, we plug this estimate in \eqref{eq:taylor_symbol} to find that
\begin{equation*}
\begin{split}
\im \tilde{p}(\alpha;\omega) & \leq 2 w(x_1) \psi'(x_1) \xi_1 + \im \omega p_1(x_1,\xi_1) - C^{-1} \tau \phi(x_1) \xi_1 + C_\omega (\log \brac{\va{\alpha}})^2 \\
    & \leq - C^{-1} \p{ -w(x_1) \psi'(x_1) + \tau \phi(x_1) + \im \omega}\xi_1 + C_\omega (\log \brac{\va{\alpha}})^2,
\end{split}
\end{equation*}
where the constants $C$ may change from one line to another but still does not depend on $\omega$. We used here that $p_1(x_1,\xi_1)$ is elliptic of order $1$ with the same sign as $ - \xi_1$, that is our assumption \ref{item:structure_sous_principal} from \S \ref{subsection:general_assumption}. Notice that $w(x_1) \psi'(x_1)$ has the same sign as $x_1 \psi'(x_1)$, and consequently there is a constant $\kappa > 0$ such that $-w(x_1) \psi'(x_1) + \tau \phi(x_1) > \kappa$ for $- \frac{5 \epsilon_0}{6} < x_1 < \frac{2\epsilon_0}{3}$. Hence, if $\im \omega > - \kappa$, we see that $\im \tilde{p}(\alpha;\omega)$ is less than $ - C^{-1} \brac{\va{\alpha}}$ when $\alpha$ is large.

\noindent \textbf{Fifth case:} $(x,\xi) \in V_-$.

This is the same as the fourth case up to a few sign flips.
\end{proof}

\begin{remark}
Let us point out how the five cases in the proof of Lemma \ref{lemma:high_frequencies} correspond to different places in Figure \ref{figure:choice_parameters}. The first and the second cases correspond respectively to the green and the purple zone. The last three cases correspond to the blue zone (to distinguish these cases one need to consider the momentum variable which is not represented on Figure \ref{figure:choice_parameters}).
\end{remark}

\begin{lemma}\label{lemma:everywhere}
Assume that $\nu$ is large enough. Assume that $\tau$ is small enough (depending on $\nu$). Then there is a constant $C > 0$ such that for every $\alpha \in \Lambda$ we have
\begin{itemize}
\item if $\alpha \in W_R$ then
\begin{equation*}
\re \tilde{p}(\alpha;i\nu) \geq C^{-1} \brac{\va{\alpha}};
\end{equation*}
\item if $\alpha \in W_+$ then
\begin{equation*}
\im \tilde{p}(\alpha;i\nu)  \leq - C^{-1} \brac{\va{\alpha}};
\end{equation*}
\item if $\alpha \in W_+$ then
\begin{equation*}
\im \tilde{p}(\alpha;i\nu) \geq C^{-1} \brac{\va{\alpha}}.
\end{equation*}
\end{itemize}
\end{lemma}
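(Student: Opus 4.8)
The plan is to observe that Lemma \ref{lemma:everywhere} is simply a quantitative version of Lemma \ref{lemma:high_frequencies} in which the large-frequency hypothesis $\brac{\va{\alpha}} \geq C_K$ is removed, at the cost of taking $\nu = \im \omega$ large and $\tau$ correspondingly small. So I would follow the five-case split from the proof of Lemma \ref{lemma:high_frequencies}, keeping track of the constants. In the first case ($x_1 \leq -2\epsilon_0/3$) the point is not deformed, $\chi(x_1) = 0$ and $\chi_1(x_1) = 1$, so $\re\tilde p(\alpha; i\nu) = -C_0\epsilon_0\xi_1^2 + \va{\xi}^2 + 1$ exactly (there is no $\mathcal O(1+\va{\xi_1})$ error here since the $\chi$-term vanishes identically, which is the whole point of having chosen $\chi$ supported in $\set{x_1 \geq -\epsilon_0}$), and \eqref{eq:epsilon0_c1} gives $\re\tilde p(\alpha;i\nu) \geq C^{-1}(1+\va{\xi}^2) \geq C^{-1}\brac{\va{\alpha}}$ for \emph{all} $\alpha \in W_R$ with $x_1 \leq -2\epsilon_0/3$. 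Note $\nu$ plays no role in this case.

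In the remaining cases the real difficulty is controlling the terms that are genuinely $\mathcal O(\brac{\va{\alpha}})$ (not lower order) and seeing that the dominant contribution has a good sign \emph{uniformly down to $\brac{\va{\alpha}}$ of order $1$} once $\nu$ is large. In the second case ($x_1 \geq \epsilon_0/3$) one has, as in Lemma \ref{lemma:high_frequencies}, $\tilde p(\alpha; i\nu) = p_2(x,\xi) + \mathcal O(\brac{\va{\alpha}}\log\brac{\va{\alpha}})$ where now the $\mathcal O$ includes the $\omega$-dependent terms $i\nu p_1 + (i\nu)^2 p_0$; here I would use that $p_0 < -\epsilon$ on a neighbourhood of $\overline Y$ (assumption \ref{item:ellipticite_bas} and the choice of $\epsilon$ in \S\ref{subsec:choice_parameters}), so the term $\nu^2(-p_0) > 0$ of size $\nu^2\brac{\va{\alpha}}^2 \cdot \brac{\va{\alpha}}^{-2}$... more carefully: $p_0$ is order $0$, so $(i\nu)^2 p_0 = -\nu^2 p_0$ contributes $+\nu^2\va{p_0}$, a \emph{bounded} positive quantity, which together with the ellipticity $p_2(x,\xi) \geq C^{-1}\va{\xi}^2$ and $p_1 = \mathcal O(1+\va{\xi})$ gives $\re\tilde p(\alpha;i\nu) \geq C^{-1}\va{\xi}^2 - C\nu(1+\va{\xi}) - C_G\brac{\va{\alpha}}\log\brac{\va{\alpha}} + \nu^2/C$; for $\nu$ large and then $\tau$ (hence the log-deformation error, which is $\tau$-small) small, the term $\nu^2/C$ absorbs the error when $\brac{\va{\alpha}}$ is bounded while the $\va{\xi}^2$ term dominates when $\brac{\va{\alpha}}$ is large, giving $\re\tilde p \geq C^{-1}\brac{\va{\alpha}}$ throughout. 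The third case ($\va{\xi_1} \leq 2\epsilon_1^{-1}(1+\va{\xi'})$) is handled the same way using \eqref{eq:epsilon0_c2}.

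For the fourth and fifth cases ($\alpha \in V_\pm$), I would reuse verbatim the computation in the proof of Lemma \ref{lemma:high_frequencies} that yields
\[
\im\tilde p(\alpha;i\nu) \leq - C^{-1}\bigl(-w(x_1)\psi'(x_1) + \tau\phi(x_1) + \nu\bigr)\xi_1 + C_\nu(\log\brac{\va{\alpha}})^2
\]
on $V_+$ (and the sign-flipped version on $V_-$), where $-w\psi' + \tau\phi \geq 0$. The new point is merely that $\nu$ now appears with a definite positive sign in the coefficient of $\xi_1$, and on $V_+$ one has $\xi_1 \geq \epsilon_1^{-1}(1+\va{\xi'}) \geq C^{-1}\brac{\va{\alpha}}$ (after noting $\xi_1 \geq 0$ controls $\brac{\va{\alpha}}$ on $V_+$ since $\va{\xi_1} \geq \epsilon_1^{-1}(1+\va{\xi'})$ forces $\brac{\va{\alpha}} \lesssim \xi_1$); so the main term is $\leq -C^{-1}\nu\brac{\va{\alpha}}$ while the error $C_\nu(\log\brac{\va{\alpha}})^2$ is $o(\brac{\va{\alpha}})$ with a constant depending on $\nu$. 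Fixing $\nu$ large enough that $C^{-1}\nu\brac{\va{\alpha}}$ beats $C_\nu(\log\brac{\va{\alpha}})^2 + (\text{the bounded }\nu^2 p_0\text{-type contributions})$ for all $\brac{\va{\alpha}} \geq 2$ — this is where one must be careful, since $C_\nu$ grows with $\nu$, but $(\log t)^2 = o(t)$ uniformly so for each $\nu$ there is a threshold and then a large enough multiple of $\nu$ clears any fixed bounded contribution — and then choosing $\tau$ small depending on $\nu$ so that the Lagrangian-deformation error terms stay negligible, we get $\im\tilde p(\alpha;i\nu) \leq -C^{-1}\brac{\va{\alpha}}$ on all of $W_+$. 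The fifth case is identical with the signs reversed, giving $\im\tilde p(\alpha;i\nu) \geq C^{-1}\brac{\va{\alpha}}$ on $W_-$. (I note in passing that the statement as typeset lists the condition "$\alpha \in W_+$" twice; the second occurrence should read $\alpha \in W_-$, and the proof will establish that version.) The main obstacle, then, is purely bookkeeping: making the choice of $\nu$ (large) and $\tau$ (small, depending on $\nu$) in the right order so that the genuinely $\mathcal O(\brac{\va{\alpha}})$-sized good terms — coming from $\nu$ times $p_1$, $\xi_1$, or the escape-function derivative $H_{G_1}p_2$ — dominate both the $(\log\brac{\va{\alpha}})^2$-errors and the bounded-in-$\brac{\va{\alpha}}$ contributions, simultaneously at small and large frequency; no new idea beyond the proof of Lemma \ref{lemma:high_frequencies} is needed.
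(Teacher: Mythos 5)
Your overall strategy is exactly the paper's: rerun the five cases of Lemma \ref{lemma:high_frequencies} with $\omega=i\nu$, use the positivity $-\nu^2 p_0\geq \epsilon\nu^2$ (assumption \ref{item:ellipticite_bas}) to handle bounded frequencies in the ``elliptic'' cases, use the ellipticity and sign of $\nu p_1$ on $V_\pm$ together with $\xi_1\gtrsim\brac{\va{\alpha}}$ there, and choose $\nu$ large first and then $\tau$ small (you are also right that the third bullet should read $\alpha\in W_-$). However, two of your steps are wrong as written. First, in the first case you claim that $\chi(x_1)=0$ on $\set{x_1\leq -2\epsilon_0/3}$, so that $\re\tilde p$ is given ``exactly'' by $-C_0\epsilon_0\xi_1^2+\va{\xi}^2+1$ and ``$\nu$ plays no role''. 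This is false: $\chi\equiv 1$ on $[-5\epsilon_0/6,-2\epsilon_0/3]$ (and $\chi$ only vanishes for $x_1\leq-\epsilon_0$), so on a full subregion of the first case the conjugated operator $e^{\psi/h}\chi\mathcal{P}_h(i\nu)e^{-\psi/h}$ does contribute, and its real part contains $\nu$-dependent terms (e.g.\ $-\nu\,p_1(x_1,\psi'(x_1))$, $-\psi'(x_1)^2w(x_1)$, $-\nu^2 p_0$) that must be controlled at bounded $\brac{\va{\alpha}}$. The paper handles this precisely by using $\chi(x_1)\p{-\nu^2p_0}\geq\chi(x_1)\epsilon\nu^2$ and taking $\nu$ large so that $\epsilon\nu^2-C(1+\nu)\geq0$, before invoking \eqref{eq:epsilon0_c1}; your argument simply omits these terms.

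Second, in your second case (and implicitly the third) the intermediate bound $\re\tilde p\geq C^{-1}\va{\xi}^2-C\nu(1+\va{\xi})-C_G\brac{\va{\alpha}}\log\brac{\va{\alpha}}+\nu^2/C$ does not close: in the regime $\va{\xi}\asymp\nu$ the negative cross term $-C\nu\va{\xi}$ is of the same size $\nu^2$ as the good terms, and since the constants $C$ are not at your disposal, neither ``$\nu^2/C$ absorbs at bounded frequency'' nor ``$\va{\xi}^2$ dominates at large frequency'' covers this intermediate range, and no choice of $\nu$ or $\tau$ repairs it because all three terms scale the same way. The point you are missing (and which the paper uses when it writes the error as $\mathcal{O}\p{\nu+\tau\nu^2\brac{\va{\alpha}}\log\brac{\va{\alpha}}}$) is that $p_1$ and $p_2$ are real-valued on the real cotangent bundle, so for real $(x,\xi)$ the terms $i\nu p_1$ and $2iw\psi'\xi_1$ are purely imaginary and the $\psi'$-shift only produces corrections that are $\mathcal{O}(\nu)$ \emph{uniformly in} $\xi$; with an $\mathcal{O}(\nu)$ error the absorption by $\epsilon\nu^2$ (for $\nu$ large, then $\tau$ small to kill the $\tau\nu^2\brac{\va{\alpha}}\log\brac{\va{\alpha}}$ deformation error) works at every frequency. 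With these two corrections your argument becomes the paper's proof; as written, both steps are genuine gaps.
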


\begin{proof}
We write as above $\alpha  = e^{H_G^{\omega_I}} (x,\xi)$ for $(x,\xi) \in T^* X$. We review the same five cases as in the proof of Lemma \ref{lemma:high_frequencies}, with the additional assumption that $\omega = i \nu$ with $\nu > 0 $ large.

\noindent \textbf{First case:} $(x,\xi) \in \set{x_1 \leq - 2 \epsilon_0 /3}$.

The symbol $q(x,\xi)$ is still $1 + \va{\xi}^2$ in that case. Notice that we have here $p_0(x) < - \epsilon$ (see the beginning of \S \ref{subsec:choice_parameters}). Looking at \eqref{eq:ptilde}, we find that, for some constant $C > 0$, the real part of $\tilde{p}(x,\xi;i\nu)$ is larger than 
\begin{equation*}
\chi(x_1)\p{ - C_0 \epsilon_0 \xi_1^2 + \epsilon \nu^2  - C(1+ \nu)} + (1 + \va{\xi}^2) \geq  - C_0 \epsilon_0 \xi_1^2 + (1 + \va{\xi}^2),
\end{equation*}
provided $\nu$ is large enough so that $\epsilon \nu^2 - C(1 + \nu) \geq 0$. Using our assumption \eqref{eq:epsilon0_c1}, we see that the real part of $\tilde{p}(\alpha;i\nu)$ is indeed larger than $C^{-1} \brac{\va{\alpha}}$.

\noindent \textbf{Second case:} $(x,\xi) \in \set{x_1 \geq \epsilon_0/3}$.

Notice that $\tilde{p}(\cdot;i \nu) = \tilde{p}_2 + i \nu\tilde{p}_1 - \nu^2 \tilde{p}_0$, where for $j = 0,1,2$, the principal symbol $\tilde{p}_j$ of $\widetilde{P}_j$ is a symbol of order $j$ that does not depend on $\nu$. It follows that $H_{G}^{\omega_I} \tilde{p}(\cdot;i \nu)$ is $\mathcal{O}\p{\tau\p{\brac{\va{\alpha}} \log \brac{\va{\alpha}} + \nu^2 \frac{\log \brac{\va{\alpha}}}{\brac{\va{\alpha}}}}}$, uniformly in $\nu$ and $\tau$ and with symbolic estimates.

Consequently, we have in this second case
\begin{equation*}
\begin{split}
\re \tilde{p}(\alpha; i \nu) & = \re \tilde{p}(x,\xi;i \nu) + \mathcal{O}\p{\tau\p{\brac{\va{\alpha}} \log \brac{\va{\alpha}} + \nu^2 \frac{\log \brac{\va{\alpha}}}{\brac{\va{\alpha}}}}} \\
    & = \re p\p{x,\xi + i \psi'(x_1) \mathrm{d}x_1;i \nu}  + \mathcal{O}\p{\tau \nu^2 \brac{\va{\alpha}} \log \brac{\va{\alpha}}} \\
    & = p_2(x,\xi) - \nu^2 p_0(x) +  \mathcal{O}\p{\nu + \tau \nu^2 \brac{\va{\alpha}} \log \brac{\va{\alpha}}},
\end{split}
\end{equation*}
uniformly in $\tau$ and $\nu$. We start by taking $\nu$ large enough so that $p_2(x,\xi) - \nu^2 p_0(x) +  \mathcal{O}\p{\nu}$ is larger than $C^{-1} \brac{\va{\alpha}}^2$ (which is possible by our ellipticity assumptions on $p_2$ and $p_0$). Then, by taking $\tau$ small enough, we ensure that $\mathcal{O}\p{\tau \nu^2 \brac{\va{\alpha}} \log \brac{\va{\alpha}}}$ is smaller than $C^{-1} \brac{\va{\alpha}}^2$, which gives the required estimate. Let us point out here that how small $\tau$ needs to be depend on $\nu$, but how large $\nu$ has to be does not depend on $\tau$.

\noindent \textbf{Third case:} $(x,\xi) \in \set{- 5\epsilon_0/6 \leq x_1 \leq 2\epsilon_0/3} \cap \set{\va{\xi_1} \leq 2\epsilon_1^{-1} (1+\va{\xi'})}$.

As in the previous case, we notice that
\begin{equation*}
\tilde{p}(\alpha;i \nu) = \tilde{p}(x,\xi;i \nu) + \mathcal{O}\p{\tau \nu^2 \brac{\va{\alpha}}\log \brac{\va{\alpha}}}.
\end{equation*}
Then, we use \eqref{eq:ptilde} to find that
\begin{equation*}
\re \tilde{p}(x,\xi;i\nu) \geq - C_0 \epsilon_0 \xi_1^2 + q_1(x_1,x',\xi') + \epsilon \nu^2 - C(1 + \nu),
\end{equation*}
for some $C>0$ that does not depend on $\nu$, nor $\tau$. Using \eqref{eq:epsilon0_c2}, we find that if $\nu$ is large enough we have
\begin{equation*}
\re \tilde{p}(x,\xi;i\nu) \geq C^{-1}\p{1+\va{\xi}^2}.
\end{equation*}
Consequently, we have
\begin{equation*}
\re \tilde{p}(\alpha;i \nu)  \geq C^{-1} \p{1+\va{\xi}^2} - C \tau \nu^2 \brac{\va{\alpha}} \log \brac{\va{\alpha}},
\end{equation*}
where the constant $C > 0$ may have changed, but still does not depend on $\nu$, nor $\tau$. Taking $\tau$ small enough (depending on $\nu$), we get rid of the term $ - C \tau \nu^2 \brac{\va{\alpha}} \log \brac{\va{\alpha}}$. Thus, we get the required estimate. As above, it is crucial here that how small $\tau$ needs to be depend on $\nu$, but how large $\nu$ has to be does not depend on $\tau$.

\noindent \textbf{Fourth case:} $(x,\xi) \in V_+$.

Writing $\tilde{p}(\cdot; i \nu) = \tilde{p}_2 + i \nu \tilde{p}_1 - \nu^2 \tilde{p}_0$, we find that
\begin{equation*}
\im \tilde{p}(\alpha;i \nu) = \im \tilde{p}(x,\xi;i \nu) + \tau H_{G_0}^{\omega_I} \im \tilde{p}(x,\xi;i \nu) + \mathcal{O}\p{\tau^2 \nu^2 (\log \brac{\va{\alpha}})^2}.
\end{equation*}
Then, we notice that on a neighbourhood of the support of $G_0$, the symbol $\tilde{p}(\cdot;i \nu) - p_2$ is the sum of a symbol of order $1$, a symbol of order $1$ multiplied by $\nu$ and a symbol of order $0$ multiplided by $\nu^2$. Consequently, we have
\begin{equation*}
H_{G_0}^{\omega_I} \im \tilde{p}(x,\xi;i \nu) = H_{G_0}^{\omega_I} \im p_2 (x,\xi) + \mathcal{O}\p{\nu^2 \log \brac{\va{\alpha}}}.
\end{equation*}
Using \eqref{eq:escape} as in the proof of Lemma \ref{lemma:high_frequencies}, we see that $H_{G_0}^{\omega_I} \im p_2 (x,\xi)$ is non-positive. We recall \eqref{eq:ptilde} and the fact that $w(x_1) \psi'(x_1)$ is non-positive, to find, for some $C > 0$ that does not depend on $\tau$ nor $\alpha$, that
\begin{equation}\label{eq:estimee_touffue}
\im \tilde{p}(\alpha;i \nu) \leq \nu p_1(x_1,\xi_1) + C \tau \nu^2 (\log \brac{\va{\alpha}})^2.
\end{equation}
Since $(x,\xi) \in V_+$, we know that $\xi_1$ is non-zero. Moreover, $p_1(x_1,\xi_1)$ is negative and elliptic. Thus, we only need to take $\tau$ small enough to get rid of the last term in \eqref{eq:estimee_touffue} and the required estimate follows. Once again here, see that $\tau$ depends on $\nu$, but $\nu$ does not depend on $\tau$.

\noindent \textbf{Fifth case:} $(x,\xi) \in V_-$.

This is the same as the fourth case up to a few sign flips.
\end{proof}

\subsection{Invertibility and Fredholm properties}\label{subsection:invertibility_and_Fredholm}

With the estimates from \S \ref{subsec:ellipticity_estimates}, we are now ready to study the functional analytic properties of $\widetilde{P}_h(\omega)$ acting on suitable spaces.

Let $\nu$ be large enough and $\tau$ be small enough so that Lemma \ref{lemma:everywhere} and Proposition \ref{proposition:multiplication_formula} hold. Let then $\kappa$ be as in Lemma \ref{lemma:high_frequencies}. Let $\delta \in ]0,\kappa[$ and $V$ be a relatively compact open subset of $\set{ z \in \mathbb{C}: \im z > - \kappa}$. Without loss of generality, we may assume that $V$ is connected and contains the compact set $\set{ z \in \mathbb{C}: \va{\re z} \leq \nu + \kappa, - \delta \leq \im z \leq 2 \nu + \kappa}$. Let then $C_K$ be the constant from Lemma \ref{lemma:high_frequencies} applied with $K = \overline{V}$. We shall always assume that $h$ is small enough so that Proposition \ref{proposition:multiplication_formula} holds. Let $k$ be any real number.

Let $a$ be a compactly supported smooth function from $\Lambda$ to $\mathbb{R}_+$ such that

\begin{equation}\label{eq:condition_a}
\inf_{\omega \in K} \inf_{\substack{\alpha \in \Lambda \\ \brac{\va{\alpha}} \leq 2 C_K }} a(\alpha) + \re \tilde{p}(\alpha;\omega) > 0.
\end{equation}
We let then $A$ be the operator
\begin{equation}\label{eq:defA}
A \coloneqq S_\Lambda B_\Lambda a B_\Lambda T_\Lambda,
\end{equation}
where we recall that $S_\Lambda$ is a left inverse for $T_\Lambda$, and $B_\Lambda$ is the orthogonal projector on $\mathcal{H}_{\Lambda,\FBI}^0$ in $L_0^2(\Lambda)$ (see page \pageref{page:Slambda}). The operator $A : C^\infty(X) \to \mathcal{D}'(X)$ extends to a bounded operator from $\mathcal{H}_\Lambda^{m}$ to $\mathcal{H}_\Lambda^{\ell}$ for every $m,\ell \in \mathbb{R}$, see for instance \cite[Proposition 2.4 and Remark 2.20]{BJ20}.

Let us define the domain of $\widetilde{P}_h(\omega)$ on $\mathcal{H}_\Lambda^k$ as
\begin{equation*}
D_k = \set{ u \in \mathcal{H}_\Lambda^k : \widetilde{P}_h(0)u \in \mathcal{H}_\Lambda^{k-1}}.
\end{equation*}
We put a Hilbert space structure on $D_k$ by endowing it with the norm
\begin{equation*}
\n{u}_{D_k}^2 = \n{u}_{\mathcal{H}_\Lambda^k}^2 + \|\widetilde{P}_h(0)u\|_{\mathcal{H}_\Lambda^{k-1}}^2.
\end{equation*}

We will need the following approximation result.

\begin{lemma}\label{lemma:approximation}
Let $u \in D_k$. Then $\widetilde{P}_h(\omega)u$ belongs to $\mathcal{H}_\Lambda^{k-1}$ and there is a sequence $(u_n)_{n \in \mathbb{N}}$ of elements of $\mathcal{H}_\Lambda^{\infty}$ such that $(u_n)_{n \in \mathbb{N}}$ tends to $u$ in $\mathcal{H}_\Lambda^k$ and $(\widetilde{P}_h(\omega)u_n)_{n \in \mathbb{N}}$ converges to $\widetilde{P}_h(\omega)u$ in $\mathcal{H}_\Lambda^{k-1}$.
\end{lemma}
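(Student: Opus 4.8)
\textbf{Proof sketch for Lemma \ref{lemma:approximation}.}

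The plan is to first dispose of the boundedness claim, then construct the approximating sequence by a standard regularization-in-the-FBI-picture argument. For the first part: write $\widetilde{P}_h(\omega) = \widetilde{P}_h(0) + \omega \widetilde{P}_1 + \omega^2 \widetilde{P}_0$. By definition of $D_k$ we have $\widetilde{P}_h(0)u \in \mathcal{H}_\Lambda^{k-1}$, and by the remark immediately following Proposition \ref{proposition:multiplication_formula} the operator $\widetilde{P}_h(\omega) - \widetilde{P}_h(0) = \omega \widetilde{P}_1 + \omega^2 \widetilde{P}_0$ is bounded from $\mathcal{H}_\Lambda^k$ to $\mathcal{H}_\Lambda^{k-1}$. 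Adding the two contributions gives $\widetilde{P}_h(\omega)u \in \mathcal{H}_\Lambda^{k-1}$.

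For the approximating sequence, the idea is to cut off in frequency at the level of the FBI transform. Concretely, I would fix a function $\theta \in C_c^\infty(\Lambda)$ identically equal to $1$ near the zero section (or rather near $\{\brac{\va{\alpha}} \le 1\}$), set $\theta_n(\alpha) = \theta(\alpha/n)$ in the sense of the dilation $e^{H_G^{\omega_I}}$-conjugated to fiber dilation on $T^*X$, and define $u_n = S_\Lambda B_\Lambda \theta_n B_\Lambda T_\Lambda u$. Since $\theta_n$ is compactly supported, the mapping properties of $S_\Lambda$, $B_\Lambda$, $T_\Lambda$ (as recalled around Proposition \ref{proposition:toeplitz} and in \cite{BJ20}) guarantee $u_n \in \mathcal{H}_\Lambda^\infty = \bigcap_k \mathcal{H}_\Lambda^k$. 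Convergence $u_n \to u$ in $\mathcal{H}_\Lambda^k$ follows because $T_\Lambda u \in L_k^2(\Lambda)$, $\theta_n T_\Lambda u \to T_\Lambda u$ in $L_k^2(\Lambda)$ by dominated convergence, and $S_\Lambda B_\Lambda$ intertwines $T_\Lambda$ appropriately; more precisely one uses that $B_\Lambda \theta_n B_\Lambda \to B_\Lambda$ strongly on $L_k^2(\Lambda)$ together with $u = S_\Lambda T_\Lambda u$ and $T_\Lambda S_\Lambda = \Pi_\Lambda$, $B_\Lambda$ being the orthogonal projector onto the FBI image.

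The remaining point, which I expect to be the main obstacle, is convergence $\widetilde{P}_h(\omega) u_n \to \widetilde{P}_h(\omega) u$ in $\mathcal{H}_\Lambda^{k-1}$. This does not follow from the strong convergence $u_n \to u$ in $\mathcal{H}_\Lambda^k$ alone together with boundedness $\mathcal{H}_\Lambda^k \to \mathcal{H}_\Lambda^{k-1}$, since one also needs to control $\widetilde{P}_h(0) u_n$. The natural fix is to commute: write $\widetilde{P}_h(0) u_n = \widetilde{P}_h(0) S_\Lambda B_\Lambda \theta_n B_\Lambda T_\Lambda u$ and, using Lemma \ref{lemma:seems_simple} (so that $T_\Lambda \widetilde{P}_h(0) = (T_\Lambda \widetilde{P}_h(0) S_\Lambda) T_\Lambda$) together with the calculus of Lemma \ref{lemma:factorisation}, show that $B_\Lambda \theta_n B_\Lambda$ essentially commutes with the Toeplitz operator $B_\Lambda \sigma B_\Lambda$ representing $\widetilde{P}_h(0)$ up to a commutator which is one order lower (a $\mathcal{O}(h)$-type gain combined with the symbolic gain from differentiating $\theta_n$). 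One then gets $\widetilde{P}_h(0) u_n = S_\Lambda B_\Lambda \theta_n B_\Lambda T_\Lambda \widetilde{P}_h(0) u + (\text{lower order, } \to 0)$, and since $\widetilde{P}_h(0) u \in \mathcal{H}_\Lambda^{k-1}$ the first term converges to $\widetilde{P}_h(0)u$ in $\mathcal{H}_\Lambda^{k-1}$ by the same dominated-convergence argument as before. Combined with the already-established convergence of $(\widetilde{P}_h(\omega) - \widetilde{P}_h(0))u_n \to (\widetilde{P}_h(\omega) - \widetilde{P}_h(0))u$ in $\mathcal{H}_\Lambda^{k-1}$ (immediate from $u_n \to u$ in $\mathcal{H}_\Lambda^k$ and boundedness), this yields $\widetilde{P}_h(\omega) u_n \to \widetilde{P}_h(\omega) u$ in $\mathcal{H}_\Lambda^{k-1}$, completing the proof.
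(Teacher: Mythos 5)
Your construction is the same as the paper's: the regularizers $u_n = S_\Lambda B_\Lambda \theta_n B_\Lambda T_\Lambda u$ (the paper's $I_\epsilon = S_\Lambda B_\Lambda s_\epsilon B_\Lambda T_\Lambda$ with $s_\epsilon(\alpha)=\theta(\epsilon\brac{\va{\alpha}})$), the same splitting $\widetilde{P}_h(\omega) = \widetilde{P}_h(0) + (\omega\widetilde{P}_1+\omega^2\widetilde{P}_0)$ for the membership $\widetilde{P}_h(\omega)u \in \mathcal{H}_\Lambda^{k-1}$, and the same recognition that the whole matter reduces to a commutator term $[\widetilde{P}_h(\omega), S_\Lambda B_\Lambda \theta_n B_\Lambda T_\Lambda]u$. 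However, your treatment of that commutator has a genuine gap: you assert that it is ``lower order (an $\mathcal{O}(h)$-type gain combined with the symbolic gain from differentiating $\theta_n$)'' and therefore ``$\to 0$''. For fixed $h$, being of one order lower, or carrying a factor $h$, only gives a bound on the operator norm $\mathcal{H}_\Lambda^k \to \mathcal{H}_\Lambda^{k-1}$ that is \emph{uniform in $n$}; it provides no smallness as $n \to \infty$, which is what you need. If you instead try to extract smallness from the fact that $\mathrm{d}\theta_n$ is supported at frequencies $\brac{\va{\alpha}} \sim n$ (dominated convergence against $T_\Lambda u \in L^2_k(\Lambda)$ after losing one order), you still have to deal with the remainders produced by the Toeplitz calculus of Lemma \ref{lemma:factorisation} / \cite[Proposition 2.12]{BJ20}: these are negligible in the sense $\mathcal{O}(h^\infty\brac{\va{\alpha}}^{-\infty})$ uniformly over the bounded family $(\theta_n)_n$, but for fixed $h$ they are not small in $n$, so the claimed convergence does not follow from the calculus alone.

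The paper closes exactly this gap by a uniform-boundedness-plus-density argument, and you should add it (or something equivalent): since $(s_\epsilon)_\epsilon$ is bounded in $S^0_{KN}(\Lambda)$, \cite[Proposition 2.12]{BJ20} gives that $[\widetilde{P}_h(\omega), I_\epsilon]$ is bounded $\mathcal{H}_\Lambda^k \to \mathcal{H}_\Lambda^{k-1}$ uniformly in $\epsilon$; on the dense subspace $\mathcal{H}_\Lambda^\infty$ one gets $[\widetilde{P}_h(\omega), I_\epsilon]u \to 0$ by elementary means, with no symbol calculus needed for the limit (one may lose derivatives there: $I_\epsilon \to \mathrm{Id}$ strongly on every $\mathcal{H}_\Lambda^m$, and $\widetilde{P}_h(\omega)$ is bounded $\mathcal{H}_\Lambda^{k+1} \to \mathcal{H}_\Lambda^{k-1}$, so both $\widetilde{P}_h(\omega)I_\epsilon u$ and $I_\epsilon \widetilde{P}_h(\omega)u$ converge to $\widetilde{P}_h(\omega)u$); the uniform bound then transfers the convergence to all $u \in \mathcal{H}_\Lambda^k$, hence to $u \in D_k$. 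With this step inserted, your argument coincides with the paper's proof; the rest of your sketch (membership of $u_n$ in $\mathcal{H}_\Lambda^\infty$, convergence $u_n \to u$ in $\mathcal{H}_\Lambda^k$ by dominated convergence, and the easy term $(\widetilde{P}_h(\omega)-\widetilde{P}_h(0))u_n$) is correct and matches the paper.
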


\begin{proof}
Start by noticing that
\begin{equation*}
\widetilde{P}_h(\omega)u = (\widetilde{P}_h(\omega) - \widetilde{P}_h(0))u + \widetilde{P}_h(0)u.
\end{equation*}
Since $\widetilde{P}_h(\omega) - \widetilde{P}_h(0)$ is bounded from $\mathcal{H}_\Lambda^k$ to $\mathcal{H}_\Lambda^{k-1}$, we see that $\widetilde{P}_h(\omega)u$ belongs to $\mathcal{H}_\Lambda^{k-1}$ when $u$ belongs to $D_k$.

Let then $I_\epsilon$ be the operator
\begin{equation*}
I_\epsilon = S_\Lambda B_\Lambda s_\epsilon B_\Lambda T_\Lambda,
\end{equation*}
where $s_\epsilon$ is a symbol on $\Lambda$ defined by $s_\epsilon(\alpha) = \theta(\epsilon \brac{\va{\alpha}})$, where $\theta$ is a compactly supported function in $\mathbb{R}$, identically equal to $0$ near $1$. It follows for instance from \cite[Proposition 2.4 and Remark 2.20]{BJ20} that if $u \in \mathcal{H}_\Lambda^k$ then $I_\epsilon u \in \mathcal{H}_\Lambda^\infty$. We see that for $u \in \mathcal{H}_\Lambda^k$, we have
\begin{equation*}
\begin{split}
\n{I_\epsilon u - u}_{\mathcal{H}_\Lambda^k} & = \n{\Pi_\Lambda B_\Lambda s_ \epsilon T_\Lambda u - T_\Lambda u}_{L_k^2(\Lambda)} \\
& = \n{B_\Lambda(s_\epsilon - 1) T_\Lambda u}_{L_k^2(\Lambda)} \leq C\n{(s_\epsilon - 1) T_\Lambda u}_{L_k^2(\Lambda)}.
\end{split}
\end{equation*}
It follows that $I_\epsilon u$ converges to $u$ in $\mathcal{H}_\Lambda^k$ when $\epsilon$ tends to $0$.

If $u$ belongs to $D_k$, we see that
\begin{equation*}
\widetilde{P}_h(\omega) I_\epsilon u = I_\epsilon \widetilde{P}_h(\omega) u + \left[\widetilde{P}_h(\omega), I_\epsilon\right] u.
\end{equation*}
From the analysis above, we have that $I_\epsilon \widetilde{P}_h(\omega) u$ converges to $\widetilde{P}_h(\omega) u$ in $\mathcal{H}_\Lambda^{k-1}$ when $\epsilon$ tends to $0$. Notice that the symbol $s_\epsilon$ is uniformly bounded as a symbol of order $0$ on $\Lambda$. Hence, it follows from \cite[Proposition 2.12]{BJ20}, as in the proof of \cite[Lemma 3.4]{BJ20}, that the operator $[\widetilde{P}_h(\omega), I_\epsilon]$ is uniformly bounded from $\mathcal{H}_\Lambda^k$ to $\mathcal{H}_\Lambda^{k-1}$ when $\epsilon$ tends to $0$. If $u$ is in $\mathcal{H}_\Lambda^\infty$, the analysis above implies that $[\widetilde{P}_h(\omega), I_\epsilon]u$ tends to $0$ in $\mathcal{H}_\Lambda^{k-1}$. Thanks to the uniform boundedness of $[\widetilde{P}_h(\omega), I_\epsilon]$ when $\epsilon$ tends to $0$, we see that the same holds when $u$ is only in $\mathcal{H}_\Lambda^{k-1}$.
\end{proof}

We first use Lemma \ref{lemma:high_frequencies} to find:

\begin{lemma}\label{lemma:estimee_fredholm}
There is $C > 0$ such that for $h$ small enough and every $\omega \in V$ and $u \in D_k$ we have
\begin{equation*}
\n{u}_{\mathcal{H}_\Lambda^k} \leq C \n{\p{\widetilde{P}_h(\omega) +A} u}_{\mathcal{H}_\Lambda^{k-1}}.
\end{equation*}
\end{lemma}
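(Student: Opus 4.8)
The strategy is the standard one for deriving a Fredholm estimate from microlocal positivity estimates: reduce to the FBI side, decompose $\Lambda$ according to the cover $\Lambda = W_R \cup W_+ \cup W_-$, and on each piece use the sign condition from Lemma \ref{lemma:high_frequencies} together with Proposition \ref{proposition:multiplication_formula} to control $\n{u}_{\mathcal{H}_\Lambda^k}^2$ by a constant times $\re$ or $\im$ of a pairing $\int_\Lambda \chi_\bullet\, T_\Lambda(\widetilde P_h(\omega)+A)u\,\overline{T_\Lambda u}\,\mathrm{d}\alpha$, with the region $\brac{\va\alpha}\le 2C_K$ absorbed by the extra term $A$. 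By Lemma \ref{lemma:approximation} it suffices to prove the estimate for $u\in\mathcal H_\Lambda^\infty$ and then pass to the limit.

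\textbf{Step 1 (partition of unity on $\Lambda$).} Fix a smooth partition of unity $\chi_R+\chi_++\chi_-=1$ on $\Lambda$, subordinate to the cover $W_R,W_+,W_-$, with each $\chi_\bullet$ a symbol of order $0$ (pull back a homogeneous-type partition from $T^*X$ via $e^{H_G^{\omega_I}}$). One may arrange $\chi_\pm$ supported where $\pm\xi_1\gtrsim 1+\va{\xi'}$ and $-5\epsilon_0/6\le x_1\le 2\epsilon_0/3$, and $\chi_R$ supported in $V_R$. For $u\in\mathcal H_\Lambda^\infty$ write
\begin{equation*}
\n{u}_{\mathcal H_\Lambda^k}^2 \;\lesssim\; \int_\Lambda \chi_R\,\brac{\va\alpha}^{2k}\va{T_\Lambda u}^2\,e^{-2H/h}\,\mathrm d\alpha \;+\; \int_\Lambda \chi_+ (\cdots) \;+\; \int_\Lambda \chi_- (\cdots),
\end{equation*}
up to the usual error coming from $B_\Lambda$ being a projector (the difference between $\n{u}_{\mathcal H_\Lambda^k}$ and $\n{T_\Lambda u}_{L^2_k}$ is controlled since $B_\Lambda T_\Lambda=T_\Lambda$). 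Each of the three integrals is a pairing of the form $\int_\Lambda \chi_\bullet \brac{\va\alpha}^{2k}\, e^{-2H/h}\, T_\Lambda u\,\overline{T_\Lambda u}\,\mathrm d\alpha$, i.e. $\int_\Lambda f_\bullet\, T_\Lambda u\,\overline{T_\Lambda u}\,\mathrm d\alpha$ with $f_\bullet$ a symbol of order $2k$ (the $e^{-2H/h}$ and the weight being part of the measure).

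\textbf{Step 2 (replace $T_\Lambda u$ by $T_\Lambda\widetilde P_h(\omega)u$).} On $W_+$, Lemma \ref{lemma:high_frequencies} gives $\im\tilde p(\alpha;\omega)\le -C_K^{-1}\brac{\va\alpha}$ for $\brac{\va\alpha}\ge C_K$. Hence, with $g_+=\chi_+\brac{\va\alpha}^{2k-1}$ a symbol of order $2k-1$,
\begin{equation*}
\int_\Lambda \chi_+\brac{\va\alpha}^{2k}\va{T_\Lambda u}^2 e^{-2H/h}\,\mathrm d\alpha \;\lesssim\; \va{\re\!\int_\Lambda i\,g_+\,\tilde p(\alpha;\omega)\,T_\Lambda u\,\overline{T_\Lambda u}\,e^{-2H/h}\,\mathrm d\alpha} + C_K^{2}\!\!\int_{\brac{\va\alpha}\le C_K}\!\!\va{T_\Lambda u}^2 e^{-2H/h}\mathrm d\alpha.
\end{equation*}
By Proposition \ref{proposition:multiplication_formula} (applied with $f=ig_+$, $m=2k-1$, $k_1=k$, $k_2=k-1+\dots$ — more precisely split $2k-1=k+(k-1)$), the symbol pairing $\int_\Lambda g_+\tilde p\, T_\Lambda u\,\overline{T_\Lambda u}$ differs from $\int_\Lambda g_+\, T_\Lambda\widetilde P_h(\omega)u\,\overline{T_\Lambda u}$ by $O(h)\n{u}_{\mathcal H_\Lambda^k}\n{u}_{\mathcal H_\Lambda^{k-1}}\le O(h)\n{u}_{\mathcal H_\Lambda^k}^2$, which for $h$ small is reabsorbed in the left side. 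The same argument works on $W_-$ (using $\im\tilde p\ge C_K^{-1}\brac{\va\alpha}$, sign flipped) and on $W_R$ (using $\re\tilde p\ge C_K^{-1}\brac{\va\alpha}$, pairing against $g_R=\chi_R\brac{\va\alpha}^{2k-1}$ without the $i$).

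\textbf{Step 3 (handle the low-frequency region with $A$).} The leftover integrals over $\{\brac{\va\alpha}\le C_K\}$ (or $\le 2C_K$) are bounded, by the positivity condition \eqref{eq:condition_a} on $a$ and Lemma \ref{lemma:high_frequencies} on $C_K\le\brac{\va\alpha}\le 2C_K$, by a pairing involving $a+\re\tilde p$, hence by $\va{\int_\Lambda (\text{bounded symbol})\,T_\Lambda(\widetilde P_h(\omega)+A)u\,\overline{T_\Lambda u}}$ plus an $O(h)$ error; here one uses $A=S_\Lambda B_\Lambda a B_\Lambda T_\Lambda$ so that $T_\Lambda A u = B_\Lambda a B_\Lambda T_\Lambda u$ and $B_\Lambda a B_\Lambda$ is, by Lemma \ref{lemma:factorisation}, $B_\Lambda a B_\Lambda$ up to negligible, giving the symbol $a$ on the diagonal. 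Summing the three pieces and the low-frequency piece, the right-hand sides all have the form $\va{\int_\Lambda f_j\, T_\Lambda(\widetilde P_h(\omega)+A)u\,\overline{T_\Lambda u}}$ with $f_j$ a symbol of order $2k-1$; by Cauchy--Schwarz each is $\le \n{(\widetilde P_h(\omega)+A)u}_{\mathcal H_\Lambda^{k-1}}\n{u}_{\mathcal H_\Lambda^k}$. Dividing by $\n{u}_{\mathcal H_\Lambda^k}$ (or absorbing it after Young's inequality) yields $\n{u}_{\mathcal H_\Lambda^k}\le C\n{(\widetilde P_h(\omega)+A)u}_{\mathcal H_\Lambda^{k-1}}$ for $u\in\mathcal H_\Lambda^\infty$. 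Finally, by Lemma \ref{lemma:approximation}, approximating $u\in D_k$ by $u_n\in\mathcal H_\Lambda^\infty$ with $u_n\to u$ in $\mathcal H_\Lambda^k$ and $\widetilde P_h(\omega)u_n\to\widetilde P_h(\omega)u$ in $\mathcal H_\Lambda^{k-1}$ (and $Au_n\to Au$ in $\mathcal H_\Lambda^{k-1}$ by boundedness of $A$), the estimate passes to the limit. Translating back via \eqref{eq:deffk}, the same estimate holds for $P_h(\omega)+A$ on $\mathcal F_k$, which is the claim.

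\textbf{Main obstacle.} The delicate point is the bookkeeping that turns the pointwise sign estimates of Lemma \ref{lemma:high_frequencies} into an $L^2$-type coercivity: one must choose the test symbols $f_\bullet$ carefully (with the right order, and with the $i$ inserted exactly on the $W_\pm$ pieces and not on $W_R$) so that, after applying Proposition \ref{proposition:multiplication_formula}, the principal term is exactly $\int f_\bullet \re\tilde p\,\va{T_\Lambda u}^2$ or $\int f_\bullet\,\im\tilde p\,\va{T_\Lambda u}^2$ with a definite sign, and the cross terms and commutator errors are all genuinely $O(h)\n{u}_{\mathcal H_\Lambda^k}^2$ (using $\n{u}_{\mathcal H_\Lambda^{k-1}}\le\n{u}_{\mathcal H_\Lambda^k}$) rather than merely $O(h)\n{u}_{\mathcal H_\Lambda^k}\n{u}_{\mathcal H_\Lambda^{k-1}}$ with a bad constant; and one must verify that the partition-of-unity functions $\chi_\bullet$ are honest symbols on $\Lambda$ so that Proposition \ref{proposition:multiplication_formula} applies. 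The region $\brac{\va\alpha}\le C_K$, where Lemma \ref{lemma:high_frequencies} has no content, is precisely what forces the auxiliary operator $A$ into the statement, and checking that \eqref{eq:condition_a} suffices there is the second technical point.
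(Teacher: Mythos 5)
Your proposal is correct and follows essentially the same route as the paper: reduce to $u\in\mathcal H_\Lambda^\infty$ by Lemma \ref{lemma:approximation}, decompose $\Lambda$ according to $W_R,W_+,W_-$ plus a low-frequency piece, pair $(\widetilde P_h(\omega)+A)u$ against symbols of order $2k-1$ via Proposition \ref{proposition:multiplication_formula}, use Lemma \ref{lemma:high_frequencies} and \eqref{eq:condition_a} for the sign conditions, then Cauchy--Schwarz, absorption of the $\mathcal{O}(h)\n{u}_{\mathcal H_\Lambda^k}^2$ error, and division by $\n{u}_{\mathcal H_\Lambda^k}$. The only cosmetic differences are that the paper builds the low-frequency cutoff $f_a$ and the operator $A$ (whose symbol $a\geq 0$ is real, so it helps the real-part estimates and does not affect the imaginary-part ones) into the pairings from the start, and applies Proposition \ref{proposition:multiplication_formula} with $k_1=k_2=k$ (so that $k_1+k_2=m+1$ with $m=2k-1$), which is the correct split rather than $k+(k-1)$.
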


\begin{proof}
Thanks to Lemma \ref{lemma:approximation}, we only need to prove this estimate for $u \in \mathcal{H}_\Lambda^\infty$. Let $f_+,f_-,f_R$ and $f_a$ be symbols of order $0$ on $\Lambda$ such that $f_+ + f_-+f_R + f_a = 1$. Moreover, we assume that $f_+,f_-$ and $f_R$ are supported in the intersection of $\set{\brac{\va{\alpha}} \geq C_K}$ respectively with $W_+,W_-$ and $W_R$ and that $f_a$ is supported in $\set{\brac{\va{\alpha}} \leq 2 C_K}$.

For $u \in \mathcal{H}_\Lambda^\infty$ and $\omega \in K$, we have from Proposition \ref{proposition:multiplication_formula}:
\begin{equation*}
\begin{split}
& \re\p{\int_\Lambda f_R(\alpha) \brac{\va{\alpha}}^{2k-1} T_\Lambda (\widetilde{P}_h(\omega) +A) u \overline{T_\Lambda u} \mathrm{d}\alpha} \\ & \qquad \qquad \geq \int_{\Lambda} f_R(\alpha) \brac{\va{\alpha}}^{2k-1} \re\p{\tilde{p}(\alpha;\omega) +a(\alpha)} \va{T_\Lambda u(\alpha)}^2 \mathrm{d}\alpha - C h \n{u}_{\mathcal{H}_\Lambda^k}^2 \\
   & \qquad \qquad \geq C^{-1} \int_\Lambda f_R(\alpha) \brac{\va{\alpha}}^{2k} \va{T_\Lambda u(\alpha)}^2 \mathrm{d}\alpha - C h \n{u}_{\mathcal{H}_\Lambda^k}^2,
\end{split}
\end{equation*}
where we used Lemma \ref{lemma:high_frequencies} in the last line (since $a$ takes positive values it does not harm the positivity of the real part of $\tilde{p}$). From Cauchy--Schwarz inequality, we find that
\begin{equation*}
\begin{split}
& \int_\Lambda f_R(\alpha) \brac{\va{\alpha}}^{2k} \va{T_\Lambda u(\alpha)}^2 \mathrm{d}\alpha \\
     & \qquad \qquad \leq C \n{(\widetilde{P}_h(\omega) +A) u}_{\mathcal{H}_\Lambda^{k-1}} \n{u}_{\mathcal{H}_\Lambda^k} + C h \n{u}_{\mathcal{H}_\Lambda^k}^2.
\end{split}
\end{equation*}
Replacing the real part by an imaginary part, and varying the sign, we get the same estimates with $f_R$ replaced by $f_+$ and $f_-$. Using \eqref{eq:condition_a}, we get the same estimates with $f_R$ replaced by $f_a$. Summing these four estimates, we find that
\begin{equation*}
\n{u}_{\mathcal{H}_\Lambda^k}^2 \leq C \n{(\widetilde{P}_h(\omega) + A)u}_{\mathcal{H}_\Lambda^{k-1}}\n{u}_{\mathcal{H}_\Lambda^k} + C h \n{u}_{\mathcal{H}_\Lambda^k}^2.
\end{equation*}
When $h$ is small enough, we can get rid of the second term in the right hand side. Dividing by $\n{u}_{\mathcal{H}_\Lambda^k}$ the result follows (the result is trivial when $u = 0$).
\end{proof}

The same proof using Lemma \ref{lemma:everywhere} instead of Lemma \ref{lemma:high_frequencies} gives:

\begin{lemma}\label{lemma:estimee_invertibility}
There is $C > 0$ such that for $h$ small enough and every $u \in D_k$ we have
\begin{equation*}
\n{u}_{\mathcal{H}_\Lambda^k} \leq C \n{\widetilde{P}_h(i \nu) u}_{\mathcal{H}_\Lambda^{k-1}}.
\end{equation*}
\end{lemma}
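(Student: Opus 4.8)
The plan is to imitate the proof of Lemma \ref{lemma:estimee_fredholm} almost verbatim, the only simplification being that the estimates of Lemma \ref{lemma:everywhere} are valid on all of $\Lambda = W_R \cup W_+ \cup W_-$ rather than only for $\brac{\va{\alpha}}$ large, so that the compactly supported operator $A$ is no longer needed. First I would invoke Lemma \ref{lemma:approximation} to reduce to the case $u \in \mathcal{H}_\Lambda^\infty$. Then I would pick a partition of unity $f_R + f_+ + f_- = 1$ by symbols of order $0$ on $\Lambda$ subordinate to the open cover $W_R, W_+, W_-$.

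For the piece supported in $W_R$, applying Proposition \ref{proposition:multiplication_formula} with $m = 1$, $f = f_R \brac{\va{\alpha}}^{2k-1}$ and $k_1 = k_2 = k$, and taking real parts, yields
\[
\re\int_\Lambda f_R(\alpha)\brac{\va{\alpha}}^{2k-1}T_\Lambda\widetilde{P}_h(i\nu)u\,\overline{T_\Lambda u}\,\mathrm{d}\alpha \geq \int_\Lambda f_R(\alpha)\brac{\va{\alpha}}^{2k-1}\re\tilde{p}(\alpha;i\nu)\va{T_\Lambda u(\alpha)}^2\,\mathrm{d}\alpha - Ch\n{u}_{\mathcal{H}_\Lambda^k}^2,
\]
and on $\operatorname{supp} f_R$ the first bullet of Lemma \ref{lemma:everywhere} gives $\re\tilde{p}(\alpha;i\nu) \geq C^{-1}\brac{\va{\alpha}}$, so the right-hand side controls $C^{-1}\int_\Lambda f_R(\alpha)\brac{\va{\alpha}}^{2k}\va{T_\Lambda u(\alpha)}^2\,\mathrm{d}\alpha$ up to $Ch\n{u}_{\mathcal{H}_\Lambda^k}^2$, while Cauchy--Schwarz bounds the left-hand side by $C\n{\widetilde{P}_h(i\nu)u}_{\mathcal{H}_\Lambda^{k-1}}\n{u}_{\mathcal{H}_\Lambda^k}$.

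I would then run the same computation on $W_+$ and $W_-$, replacing the real part by $\mp$ the imaginary part and using the second and third bullets of Lemma \ref{lemma:everywhere} respectively. Summing the three resulting inequalities (using $f_R + f_+ + f_- = 1$) produces
\[
\n{u}_{\mathcal{H}_\Lambda^k}^2 \leq C\n{\widetilde{P}_h(i\nu)u}_{\mathcal{H}_\Lambda^{k-1}}\n{u}_{\mathcal{H}_\Lambda^k}+Ch\n{u}_{\mathcal{H}_\Lambda^k}^2,
\]
and for $h$ small the last term is absorbed into the left-hand side; dividing by $\n{u}_{\mathcal{H}_\Lambda^k}$ (the case $u = 0$ being trivial) gives the claim. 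I do not expect a genuine obstacle here: the work is entirely bookkeeping, and the only points requiring a moment's care are that the constant in Proposition \ref{proposition:multiplication_formula} depends continuously on the now-fixed parameter $\omega = i\nu$ and is therefore harmless, and that $\tau$ has already been chosen small enough, depending on $\nu$, for Lemma \ref{lemma:everywhere} and Proposition \ref{proposition:multiplication_formula} to hold simultaneously, as arranged at the start of \S \ref{subsection:invertibility_and_Fredholm}.
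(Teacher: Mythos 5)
Your proposal is correct and matches the paper's argument: the paper proves this lemma precisely by rerunning the proof of Lemma \ref{lemma:estimee_fredholm} with Lemma \ref{lemma:everywhere} in place of Lemma \ref{lemma:high_frequencies}, and your observation that the auxiliary operator $A$ (and the low-frequency piece $f_a$) can be dropped because the estimates of Lemma \ref{lemma:everywhere} hold on all of $\Lambda$ is exactly the intended simplification.
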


Applying Proposition \ref{proposition:toeplitz} as in the justification of Proposition \ref{proposition:multiplication_formula}, we find that, for every $\omega \in V$, there is a symbol $\sigma_\omega$ of order $2$ on $\Lambda$ and an operator $Z$ with negligible kernel on $\Lambda \times \Lambda$ such that
\begin{equation*}
B_\Lambda T_\Lambda \widetilde{P}_h(\omega)S_\Lambda B_\Lambda = B_\Lambda \sigma_\omega B_\Lambda + Z = B_\Lambda \sigma_\omega B_\Lambda + B_\Lambda Z B_\Lambda.
\end{equation*}
Let us identify the dual of $\mathcal{H}_\Lambda^k$ with $\mathcal{H}_\Lambda^{-k}$ as in \cite[Lemma 2.24]{BJ20}, that is using the pairing
\begin{equation}\label{eq:other_pairing}
\brac{u,v}_\Lambda \coloneqq \int_\Lambda T_\Lambda u \overline{T_\Lambda v} e^{ - \frac{2H}{h}} \mathrm{d}\alpha.
\end{equation}
Notice that it is \emph{a priori} not the $L^2$ pairing (recall that the $L^2$ pairing identifies the dual of $\mathcal{H}_\Lambda^k$ with $\mathcal{H}_{\overline{\Lambda}}^{-k}$, see \S \ref{subsection:duality_statement}). Under this identification, the formal adjoint of $\widetilde{P}_h(\omega)$ may be defined as
\begin{equation*}
\widetilde{P}_h(\omega)^* = S_\Lambda \p{B_\Lambda \bar{\sigma}_\omega B_\Lambda + B_\Lambda Z^* B_\Lambda} T_\Lambda
\end{equation*}
By this, we just mean that if $u,v \in \mathcal{H}_{\Lambda}^{ \infty}$ then
\begin{equation*}
\brac{\widetilde{P}_h(\omega)u,v}_\Lambda = \brac{u,\widetilde{P}_h(\omega)^* v}_\Lambda.
\end{equation*}
Notice that we do not claim that $\widetilde{P}_h(\omega)^*$ is the adjoint of $\widetilde{P}_h(\omega)$ for a Hilbert space structure. We define the domain of $\widetilde{P}_h(\omega)^* $ as
\begin{equation*}
D_{-k}^* = \set{ u \in \mathcal{H}_\Lambda^{-k+1} : \widetilde{P}_h(0)^* u \in \mathcal{H}_\Lambda^{-k}}.
\end{equation*}

Notice that we have $\bar{\sigma}_\omega(\alpha) = \overline{\tilde{p}(\alpha;\omega)} + \mathcal{O}\p{h \brac{\va{\alpha}}}$. Hence, the operator $\widetilde{P}_h(\omega)^* $ satisfies Proposition \ref{proposition:multiplication_formula} with $\tilde{p}$ replaced by $\bar{\tilde{p}}$. In order to introduce the symbol $f$, one may use \cite[Proposition 2.12]{BJ20}. Consequently, we can use Lemmas \ref{lemma:high_frequencies} and \ref{lemma:everywhere}, as in the proofs of Lemmas \ref{lemma:estimee_fredholm} and \ref{lemma:estimee_invertibility}, to get:

\begin{lemma}\label{lemma:estimee_fredholm_adjoint}
There is $C > 0$ such that for $h$ small enough and every $\omega \in V$ and $u \in D_{-k}^*$ we have
\begin{equation*}
\n{u}_{\mathcal{H}_\Lambda^{-k+1}} \leq C \n{\p{\widetilde{P}_h(\omega) +A}^* u}_{\mathcal{H}_\Lambda^{-k}}.
\end{equation*}
\end{lemma}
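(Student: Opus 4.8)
The plan is to mirror exactly the proof of Lemma \ref{lemma:estimee_fredholm}, but working on the dual side with the adjoint operator $\widetilde{P}_h(\omega)^*$ and the pairing \eqref{eq:other_pairing}, using that $\widetilde{P}_h(\omega)^*$ obeys a version of Proposition \ref{proposition:multiplication_formula} with $\tilde{p}$ replaced by its conjugate $\bar{\tilde{p}}$. First I would note that by the density/approximation argument of Lemma \ref{lemma:approximation}, applied to the adjoint operator and its domain $D_{-k}^*$ (the proof of Lemma \ref{lemma:approximation} only uses the structure of $\widetilde{P}_h(\omega)$ through Proposition \ref{proposition:multiplication_formula}-type bounds and the commutator estimate \cite[Proposition 2.12]{BJ20}, both of which are available for $\widetilde{P}_h(\omega)^*$), it suffices to prove the estimate for $u \in \mathcal{H}_\Lambda^\infty$.

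Then, as in the proof of Lemma \ref{lemma:estimee_fredholm}, I would take a partition of unity $f_+ + f_- + f_R + f_a = 1$ by symbols of order $0$ on $\Lambda$, with $f_\pm, f_R$ supported in $\{\brac{\va{\alpha}} \ge C_K\}$ inside $W_\pm, W_R$ respectively, and $f_a$ supported in $\{\brac{\va{\alpha}} \le 2C_K\}$. For the $f_R$ piece, I would write
\begin{equation*}
\re\p{\int_\Lambda f_R(\alpha) \brac{\va{\alpha}}^{-2k+1} T_\Lambda (\widetilde{P}_h(\omega)+A)^* u \, \overline{T_\Lambda u} \, e^{-\frac{2H}{h}} \mathrm{d}\alpha} \geq C^{-1} \int_\Lambda f_R(\alpha) \brac{\va{\alpha}}^{-2k+2} \va{T_\Lambda u(\alpha)}^2 e^{-\frac{2H}{h}} \mathrm{d}\alpha - Ch \n{u}_{\mathcal{H}_\Lambda^{-k+1}}^2,
\end{equation*}
where the lower bound $\re(\bar{\tilde{p}}(\alpha;\omega)+a(\alpha)) = \re(\tilde{p}(\alpha;\omega)) + a(\alpha) \geq C_K^{-1}\brac{\va{\alpha}}$ on $W_R$ (for $\brac{\va{\alpha}} \ge C_K$) comes from Lemma \ref{lemma:high_frequencies} together with $a \ge 0$ — here one uses that the exponents work out because $k_1 + k_2 = m+1$ with $m = 1$ and $k_1 = k_2 - ?$; concretely one takes weights $\brac{\va{\alpha}}^{-2k+1}$ against $T_\Lambda u$ so that the two copies of $u$ carry $\mathcal{H}_\Lambda^{-k+1}$ norms after inserting one extra power of $\brac{\va{\alpha}}$ from the symbol $\tilde p$. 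For the $f_\pm$ pieces I would replace the real part by $\mp$ the imaginary part and use the corresponding bounds of Lemma \ref{lemma:high_frequencies} (the conjugation flips the sign of the imaginary part, which is why one needs to be slightly careful about which of $f_+,f_-$ pairs with $+\im$ versus $-\im$); for $f_a$ I would use \eqref{eq:condition_a}, noting $\re(\bar{\tilde p}(\alpha;\omega)+a(\alpha)) = \re\tilde p(\alpha;\omega)+a(\alpha) > 0$ uniformly on the compact set $\{\brac{\va{\alpha}} \le 2C_K\} \times \overline V$. Summing the four Cauchy--Schwarz estimates gives
\begin{equation*}
\n{u}_{\mathcal{H}_\Lambda^{-k+1}}^2 \leq C \n{(\widetilde{P}_h(\omega)+A)^* u}_{\mathcal{H}_\Lambda^{-k}} \n{u}_{\mathcal{H}_\Lambda^{-k+1}} + Ch \n{u}_{\mathcal{H}_\Lambda^{-k+1}}^2,
\end{equation*}
and absorbing the last term for $h$ small and dividing by $\n{u}_{\mathcal{H}_\Lambda^{-k+1}}$ yields the claim.

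The main obstacle — really a bookkeeping point rather than a deep one — is checking that the pairing \eqref{eq:other_pairing} interacts correctly with $\widetilde{P}_h(\omega)^*$ so that the Proposition \ref{proposition:multiplication_formula} bound and the symbolic positivity from Lemma \ref{lemma:high_frequencies} can be applied verbatim after replacing $\tilde p$ by $\bar{\tilde p}$, and that the sign conventions in the $W_+$ versus $W_-$ cases track correctly through complex conjugation; one also has to make sure the extra weight $e^{-2H/h}$ in \eqref{eq:other_pairing} (which is the defining weight of $L_0^2(\Lambda)$) is exactly what makes $B_\Lambda$ the relevant orthogonal projector so that the quadratic-form manipulations of Lemma \ref{lemma:estimee_fredholm} go through. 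Since $\widetilde{P}_h(\omega)^* - \widetilde{P}_h(0)^*$ is again a holomorphic family of bounded operators $\mathcal{H}_\Lambda^{-k+1} \to \mathcal{H}_\Lambda^{-k}$ (by the same consequence of Proposition \ref{proposition:toeplitz} noted after Proposition \ref{proposition:multiplication_formula}), and $A$ is bounded on all scales, no additional input beyond what is already in the excerpt is needed. I would therefore present the argument as: "The same proof as for Lemma \ref{lemma:estimee_fredholm}, using the estimates from Lemmas \ref{lemma:high_frequencies} and \ref{lemma:everywhere} applied to $\widetilde{P}_h(\omega)^*$ (which satisfies Proposition \ref{proposition:multiplication_formula} with $\tilde p$ replaced by $\bar{\tilde p}$), the pairing \eqref{eq:other_pairing}, and the analogue of Lemma \ref{lemma:approximation} for $D_{-k}^*$, gives the result."
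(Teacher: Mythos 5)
Your proposal is correct and follows essentially the same route as the paper: the paper itself disposes of this lemma by observing that $\widetilde{P}_h(\omega)^*$ satisfies Proposition \ref{proposition:multiplication_formula} with $\tilde{p}$ replaced by $\bar{\tilde{p}}$ (the cutoff symbol $f$ being introduced via \cite[Proposition 2.12]{BJ20}), and then repeating the proof of Lemma \ref{lemma:estimee_fredholm} with the pairing \eqref{eq:other_pairing}, exactly as you do, including the swap of signs between the $W_+$ and $W_-$ regions caused by conjugation.
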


In this statement, $\p{\widetilde{P}_h(\omega) +A}^* = \widetilde{P}_h(\omega)^* +A$ is the formal adjoint of $\widetilde{P}_h(\omega) +A$ for the pairing \eqref{eq:other_pairing}.

\begin{lemma}\label{lemma:estimee_invertibility_adjoint}
There is $C > 0$ such that for $h$ small enough and every $u \in D_{-k}^*$ we have
\begin{equation*}
\n{u}_{\mathcal{H}_\Lambda^{-k+1}} \leq C \n{\widetilde{P}_h(i \nu)^* u}_{\mathcal{H}_\Lambda^{-k}}.
\end{equation*}
\end{lemma}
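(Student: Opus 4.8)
The plan is to repeat the argument of Lemma \ref{lemma:estimee_invertibility} on the dual side, exactly as Lemma \ref{lemma:estimee_fredholm_adjoint} was obtained from Lemma \ref{lemma:estimee_fredholm}: one works with the formal adjoint $\widetilde{P}_h(i\nu)^*$ (for the pairing \eqref{eq:other_pairing}) on the domain $D_{-k}^*$, uses that its principal symbol is $\overline{\tilde{p}(\cdot;i\nu)}$ up to $\mathcal{O}(h\brac{\va{\alpha}})$ so that Proposition \ref{proposition:multiplication_formula} applies with $\tilde{p}$ replaced by $\bar{\tilde{p}}$, and invokes Lemma \ref{lemma:everywhere} in place of Lemma \ref{lemma:high_frequencies}. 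Since Lemma \ref{lemma:everywhere} supplies its lower bounds for \emph{every} $\alpha\in\Lambda$, and not merely for $\brac{\va{\alpha}}$ large, no low-frequency correction is needed, which is why the operator $A$ does not appear in the statement.

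In detail, I would first establish the adjoint analogue of Lemma \ref{lemma:approximation}: for $u\in D_{-k}^*$ one has $\widetilde{P}_h(i\nu)^* u\in\mathcal{H}_\Lambda^{-k}$, because $\widetilde{P}_h(i\nu)^*-\widetilde{P}_h(0)^*$ is bounded from $\mathcal{H}_\Lambda^{-k+1}$ to $\mathcal{H}_\Lambda^{-k}$ (holomorphy of $\omega\mapsto\sigma_\omega$ together with the formula $\widetilde{P}_h(\omega)^*=S_\Lambda(B_\Lambda\bar{\sigma}_\omega B_\Lambda+B_\Lambda Z^* B_\Lambda)T_\Lambda$), and, with the smoothing operators $I_\epsilon=S_\Lambda B_\Lambda s_\epsilon B_\Lambda T_\Lambda$ and the uniform boundedness of $[\widetilde{P}_h(i\nu)^*,I_\epsilon]:\mathcal{H}_\Lambda^{-k+1}\to\mathcal{H}_\Lambda^{-k}$ as $\epsilon\to0$ (via \cite[Proposition 2.12]{BJ20}, as in the proof of Lemma \ref{lemma:approximation}), this reduces the estimate to the case $u\in\mathcal{H}_\Lambda^\infty$. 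For such $u$ I would choose symbols $f_+,f_-,f_R$ of order $0$ on $\Lambda$ with $f_++f_-+f_R=1$ and $f_\pm$, $f_R$ supported respectively in $W_\pm$, $W_R$ (no cutoff to bounded frequencies being required). Applying Proposition \ref{proposition:multiplication_formula} to $\widetilde{P}_h(i\nu)^*$ with the cutoff $f_\bullet\brac{\va{\alpha}}^{-2k+1}$ (inserted using \cite[Proposition 2.12]{BJ20}), one rewrites the real part of $\int_\Lambda f_R\brac{\va{\alpha}}^{-2k+1}T_\Lambda\widetilde{P}_h(i\nu)^* u\,\overline{T_\Lambda u}\,\mathrm{d}\alpha$ as being at least $C^{-1}\int_\Lambda f_R\brac{\va{\alpha}}^{-2k+2}\va{T_\Lambda u}^2\,\mathrm{d}\alpha-Ch\n{u}_{\mathcal{H}_\Lambda^{-k+1}}^2$ by the first bound of Lemma \ref{lemma:everywhere}, and likewise --- replacing the real part by a suitably signed imaginary part --- for $f_+$ and $f_-$ using the other two bounds. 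Bounding the left-hand sides by $C\n{\widetilde{P}_h(i\nu)^* u}_{\mathcal{H}_\Lambda^{-k}}\n{u}_{\mathcal{H}_\Lambda^{-k+1}}$ via Cauchy--Schwarz, summing over $\bullet\in\set{+,-,R}$, absorbing the $\mathcal{O}(h\n{u}_{\mathcal{H}_\Lambda^{-k+1}}^2)$ term into the left-hand side for $h$ small, and dividing by $\n{u}_{\mathcal{H}_\Lambda^{-k+1}}$ yields the claim.

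There is no essentially new difficulty here: all the substance is already in Lemma \ref{lemma:everywhere}, Proposition \ref{proposition:multiplication_formula} and the calculus of \cite{BJ20}. The step most likely to require care is the bookkeeping with the non-$L^2$ pairing \eqref{eq:other_pairing} used to define $\widetilde{P}_h(\omega)^*$ --- in particular checking that the weights $\brac{\va{\alpha}}^{-2k+1}$ and $\brac{\va{\alpha}}^{-2k+2}$ are the ones matching the $\mathcal{H}_\Lambda^{-k}$ and $\mathcal{H}_\Lambda^{-k+1}$ norms, exactly as in the passage from Lemma \ref{lemma:estimee_fredholm} to Lemma \ref{lemma:estimee_fredholm_adjoint}, and that Proposition \ref{proposition:multiplication_formula} does transfer to the adjoint with principal symbol $\bar{\tilde{p}}(\cdot;i\nu)$ --- but this is entirely routine once those lemmas and the identity $\bar{\sigma}_{i\nu}(\alpha)=\overline{\tilde{p}(\alpha;i\nu)}+\mathcal{O}(h\brac{\va{\alpha}})$ are available.
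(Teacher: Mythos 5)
Your proposal is correct and follows essentially the same route as the paper: the paper's own argument for this lemma is precisely to note that $\widetilde{P}_h(\omega)^*$ satisfies Proposition \ref{proposition:multiplication_formula} with $\tilde{p}$ replaced by $\bar{\tilde{p}}$ (since $\bar{\sigma}_\omega = \overline{\tilde{p}(\cdot;\omega)} + \mathcal{O}(h\brac{\va{\alpha}})$, inserting the cutoff via \cite[Proposition 2.12]{BJ20}) and then to rerun the proof of Lemma \ref{lemma:estimee_fredholm} with Lemma \ref{lemma:everywhere} in place of Lemma \ref{lemma:high_frequencies}, which is exactly what you do, including the correct observation that no low-frequency cutoff $f_a$ or compensating operator $A$ is needed and the correct weights $\brac{\va{\alpha}}^{-2k+1}$, $\brac{\va{\alpha}}^{-2k+2}$ for the $\mathcal{H}_\Lambda^{-k}$, $\mathcal{H}_\Lambda^{-k+1}$ norms.
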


\begin{remark}
Here, we used \eqref{eq:other_pairing} rather than the $L^2$ pairing to describe the dual of $\mathcal{H}_\Lambda^k$ because this identification makes $A$ self-adjoint, so that we can reuse directly the estimates from Lemmas \ref{lemma:high_frequencies} and \ref{lemma:everywhere}. We expect however that the $L^2$ pairing studied in \S \ref{subsection:duality_statement} would allow to get similar estimates that we could also use in the proofs below.
\end{remark}

From Lemmas \ref{lemma:estimee_fredholm}, \ref{lemma:estimee_invertibility}, \ref{lemma:estimee_fredholm_adjoint} and \ref{lemma:estimee_invertibility_adjoint}, we deduce:

\begin{proposition}\label{proposition:inverse}
There is $C > 0$ such that for $h$ small enough and $\omega \in V$ the operators $\widetilde{P}_h(\omega) +A$ and $\widetilde{P}_h(i \nu)$ are invertible as operators from $D_k$ to $\mathcal{H}_\Lambda^{k-1}$. Moreover, the operator norms of their inverses is bounded by $C$.
\end{proposition}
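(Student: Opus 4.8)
The plan is to obtain invertibility from the two‑sided a priori estimates already established, by the standard Banach‑space mechanism: a lower bound on an operator yields injectivity and closed range, a lower bound on its (formal) adjoint kills any functional annihilating that range and hence gives density, and closed plus dense means onto — after which the lower bound controls the inverse.

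First I would record that $\widetilde{P}_h(\omega)+A$ (for $\omega\in V$) and $\widetilde{P}_h(i\nu)$ are bounded from $D_k$ to $\mathcal{H}_\Lambda^{k-1}$: this is immediate because $\widetilde{P}_h(0)$ maps $D_k$ to $\mathcal{H}_\Lambda^{k-1}$ by the very definition of the graph norm, while $\widetilde{P}_h(\omega)-\widetilde{P}_h(0)$ is bounded from $\mathcal{H}_\Lambda^k$ to $\mathcal{H}_\Lambda^{k-1}$ (the remark following Proposition \ref{proposition:multiplication_formula}) and $A$ is bounded between all the $\mathcal{H}_\Lambda^m$'s (the mapping property recalled after \eqref{eq:defA}). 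Next I would upgrade Lemmas \ref{lemma:estimee_fredholm} and \ref{lemma:estimee_invertibility}, whose left‑hand side is $\n{u}_{\mathcal{H}_\Lambda^k}$, to the full graph norm: writing $\widetilde{P}_h(0)u=(\widetilde{P}_h(\omega)+A)u-(\widetilde{P}_h(\omega)-\widetilde{P}_h(0))u-Au$ and inserting the boundedness just recalled together with the $\mathcal{H}_\Lambda^k$‑estimate gives $\n{u}_{D_k}\le C\n{(\widetilde{P}_h(\omega)+A)u}_{\mathcal{H}_\Lambda^{k-1}}$ for $\omega\in V$, and likewise $\n{u}_{D_k}\le C\n{\widetilde{P}_h(i\nu)u}_{\mathcal{H}_\Lambda^{k-1}}$. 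Since $D_k$ is complete, both operators are then injective with closed range.

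It remains to prove the range is dense, where the adjoint estimates enter. Identify the dual of $\mathcal{H}_\Lambda^{k-1}$ with $\mathcal{H}_\Lambda^{-k+1}$ through the pairing \eqref{eq:other_pairing}, as in \cite[Lemma 2.24]{BJ20}, for which $(\widetilde{P}_h(\omega)+A)^\ast=\widetilde{P}_h(\omega)^\ast+A$. If $v\in\mathcal{H}_\Lambda^{-k+1}$ annihilates the range of $\widetilde{P}_h(\omega)+A$, then $\brac{u,(\widetilde{P}_h(\omega)+A)^\ast v}_\Lambda=0$ for all $u\in\mathcal{H}_\Lambda^\infty\subseteq D_k$; since $\mathcal{H}_\Lambda^\infty$ is dense in the relevant space and the pairing is nondegenerate, this forces $(\widetilde{P}_h(\omega)+A)^\ast v=0$. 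Rewriting, $\widetilde{P}_h(0)^\ast v=-(\widetilde{P}_h(\omega)^\ast-\widetilde{P}_h(0)^\ast)v-A^\ast v$, whose right‑hand side lies in $\mathcal{H}_\Lambda^{-k}$, so $v\in D_{-k}^\ast$; Lemma \ref{lemma:estimee_fredholm_adjoint} then gives $\n{v}_{\mathcal{H}_\Lambda^{-k+1}}\le C\n{(\widetilde{P}_h(\omega)+A)^\ast v}_{\mathcal{H}_\Lambda^{-k}}=0$, hence $v=0$. Thus the range is dense, and being closed it is all of $\mathcal{H}_\Lambda^{k-1}$. The same argument with Lemma \ref{lemma:estimee_invertibility_adjoint} treats $\widetilde{P}_h(i\nu)$. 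Injectivity plus surjectivity give the inverses, and the graph‑norm lower bounds show their norms are at most the constant $C$ of those estimates, which is uniform over $\omega\in V$ and over small $h$ — the last assertion.

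The main obstacle I expect is not analytic depth but careful bookkeeping in the duality step: verifying that the annihilator of the range really is the kernel of the formal adjoint $\widetilde{P}_h(\omega)^\ast+A$ defined via the symbolic calculus, and in particular that such an annihilating functional automatically satisfies the regularity condition $\widetilde{P}_h(0)^\ast v\in\mathcal{H}_\Lambda^{-k}$ built into $D_{-k}^\ast$, so that Lemma \ref{lemma:estimee_fredholm_adjoint} genuinely applies. One also needs the elementary supporting facts that $D_k$ and $D_{-k}^\ast$ are complete and that $\mathcal{H}_\Lambda^\infty$ is dense in $D_k$, the latter being exactly Lemma \ref{lemma:approximation}.
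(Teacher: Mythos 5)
Your proposal is correct and takes essentially the same route as the paper: the lower bounds of Lemmas \ref{lemma:estimee_fredholm} and \ref{lemma:estimee_invertibility} give injectivity and closed range, and the adjoint estimates of Lemmas \ref{lemma:estimee_fredholm_adjoint} and \ref{lemma:estimee_invertibility_adjoint}, applied to any functional annihilating the range (identified via the pairing \eqref{eq:other_pairing}), give density and hence surjectivity, with the operator norm of the inverse controlled by the constant in the a priori estimate. Your explicit upgrade to the graph norm and the verification that the annihilating functional $v$ indeed lies in $D_{-k}^*$ are details the paper leaves implicit, but the argument is the same.
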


\begin{proof}
From Lemma \ref{lemma:estimee_fredholm}, we find that $\widetilde{P}_h(\omega) +A$ is injective on $D_k$ and that its image is closed in $\mathcal{H}_\Lambda^{k-1}$.

Let us consider and element $v \in \mathcal{H}_\Lambda^{-k+1}$ such that $\brac{u,v}_\Lambda = 0$ for every $u \in \mathcal{H}_\Lambda^{k-1}$ in the image of $\widetilde{P}_h(\omega) +A$. In particular, if $u \in \mathcal{H}_\Lambda^\infty$, we have $\brac{(\widetilde{P}_h(\omega) +A)u,v} = 0$. Notice that $\mathcal{H}_\Lambda^\infty$ is dense in $\mathcal{H}_\Lambda^{-k+1}$ (for instance because it contains all real-analytic functions due to Proposition \ref{proposition:duality}, and they form a dense subset of $\mathcal{H}_\Lambda^{-k+1}$ according to \cite[Corollary 2.3]{BJ20}, one can also work as in Lemma \ref{lemma:approximation}). Consequently, we have
\begin{equation*}
\brac{(\widetilde{P}_h(\omega) +A)u,v}_\Lambda = \brac{u, (\widetilde{P}_h(\omega) +A)^* v}_\Lambda
\end{equation*}
for every $u \in \mathcal{H}_\Lambda^\infty$, since this equality holds when $v \in \mathcal{H}_\Lambda^\infty$. Hence, we have $\brac{u, (\widetilde{P}_h(\omega) +A)^* v} = 0$ for every $u \in \mathcal{H}_\Lambda^\infty$, and thus $(\widetilde{P}_h(\omega) +A)^* v = 0$. It follows from Lemma \ref{lemma:estimee_fredholm_adjoint} that $v = 0$. 

We just proved that the image of $\widetilde{P}_h(\omega) +A$ is dense in $\mathcal{H}_\Lambda^{k-1}$, and thus $\widetilde{P}_h(\omega) +A$ is invertible. The estimate on the operator norm of the inverse immediately follows from Lemma \ref{lemma:estimee_fredholm}.

The argument to invert $\widetilde{P}_h(i \nu)$  is the same using Lemmas \ref{lemma:estimee_invertibility} and \ref{lemma:estimee_invertibility_adjoint} instead of Lemmas \ref{lemma:estimee_fredholm} and \ref{lemma:estimee_fredholm_adjoint}.
\end{proof}

The analytic Fredholm theory then implies that:

\begin{proposition}\label{proposition:meromorphic_inverse}
Assume that $h$ is small enough. For every $\omega \in V$, the operator $\widetilde{P}_h(\omega) : D_k \to \mathcal{H}_\Lambda^{k-1}$ is Fredholm of index $0$. Moreover, the operator $\widetilde{P}_h(\omega): D_k \to \mathcal{H}_\Lambda^{k-1}$ has a meromorphic inverse $\omega \mapsto \widetilde{P}_h(\omega)^{-1}$ with poles of finite rank on $V$.
\end{proposition}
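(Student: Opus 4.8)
The plan is to deduce the statement from Proposition~\ref{proposition:inverse} by a standard compact‑perturbation argument, followed by an application of the analytic Fredholm theorem. The first thing I would record is that the operator $A$ from \eqref{eq:defA} is \emph{compact} as a map $D_k \to \mathcal{H}_\Lambda^{k-1}$. Indeed, since the symbol $a$ is compactly supported on $\Lambda$, a standard estimate on the reduced kernel of $B_\Lambda a B_\Lambda$ — using the compact support of $a$ together with the rapid decay of the FBI kernels recalled in \cite[\S 2]{BJ20} (the same estimates already invoked to get the boundedness of $A$ between all orders) — shows that $A$ is in fact Hilbert--Schmidt, hence compact, from $\mathcal{H}_\Lambda^k$ to $\mathcal{H}_\Lambda^{k-1}$, and therefore from $D_k$ to $\mathcal{H}_\Lambda^{k-1}$ as well. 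I would also note that $\omega \mapsto \widetilde{P}_h(\omega):D_k\to\mathcal{H}_\Lambda^{k-1}$ is holomorphic: it is the polynomial $\widetilde{P}_2+\omega\widetilde{P}_1+\omega^2\widetilde{P}_0$, where $\widetilde{P}_h(0):D_k\to\mathcal{H}_\Lambda^{k-1}$ is bounded by the very definition of the graph norm on $D_k$, and $\widetilde{P}_h(\omega)-\widetilde{P}_h(0)$ is a holomorphic family of bounded operators $\mathcal{H}_\Lambda^k\to\mathcal{H}_\Lambda^{k-1}$, as observed right after Proposition~\ref{proposition:multiplication_formula}.

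Next, for $\omega\in V$, I would use the factorization
\[
\widetilde{P}_h(\omega) = \p{\widetilde{P}_h(\omega)+A}\p{I-\p{\widetilde{P}_h(\omega)+A}^{-1}A},
\]
which makes sense because $\widetilde{P}_h(\omega)+A:D_k\to\mathcal{H}_\Lambda^{k-1}$ is invertible by Proposition~\ref{proposition:inverse}. Set $K(\omega)\coloneqq\p{\widetilde{P}_h(\omega)+A}^{-1}A:D_k\to D_k$. Since $\p{\widetilde{P}_h(\omega)+A}^{-1}$ is bounded $\mathcal{H}_\Lambda^{k-1}\to D_k$ and depends holomorphically on $\omega\in V$ (the inverse of a holomorphic family of invertible operators being holomorphic, by the usual Neumann series argument), and since $A$ is compact $D_k\to\mathcal{H}_\Lambda^{k-1}$, the family $\omega\mapsto K(\omega)$ is a holomorphic family of compact operators on $D_k$. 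Consequently $I-K(\omega)$ is a holomorphic family of Fredholm operators of index $0$ on $D_k$, and $\widetilde{P}_h(\omega):D_k\to\mathcal{H}_\Lambda^{k-1}$, being the composition of the invertible operator $\widetilde{P}_h(\omega)+A$ with $I-K(\omega)$, is Fredholm of index $0$ for every $\omega\in V$.

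Finally, I would apply the analytic Fredholm theorem (see \cite[Appendix C]{dyatlov_zworski_book}) to the holomorphic family $\omega\mapsto I-K(\omega)$ of operators of the form identity minus compact on the Hilbert space $D_k$. The set $V$ is connected and contains $i\nu$ (since $V$ contains the box $\set{|\re z|\le\nu+\kappa,\ -\delta\le\im z\le 2\nu+\kappa}$, and $\re(i\nu)=0$, $\im(i\nu)=\nu$), and at $\omega=i\nu$ we have $I-K(i\nu)=\p{\widetilde{P}_h(i\nu)+A}^{-1}\widetilde{P}_h(i\nu)$, which is invertible on $D_k$ because $\widetilde{P}_h(i\nu):D_k\to\mathcal{H}_\Lambda^{k-1}$ is invertible by Proposition~\ref{proposition:inverse}. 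The analytic Fredholm theorem then gives that $\omega\mapsto\p{I-K(\omega)}^{-1}$ is meromorphic on $V$ with poles of finite rank, whence
\[
\widetilde{P}_h(\omega)^{-1} = \p{I-K(\omega)}^{-1}\p{\widetilde{P}_h(\omega)+A}^{-1}
\]
is meromorphic on $V$ with poles of finite rank, as claimed. The only genuinely technical point in this scheme is the compactness (in fact Hilbert--Schmidt nature) of $A$; everything else is soft functional analysis. This is not really an obstacle here, and in any case a quantitative, $h$‑dependent version of this Hilbert--Schmidt bound will be needed for the resonance count carried out in \S\ref{subsec:counting_resonances}.
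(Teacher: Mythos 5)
Your proposal is correct and follows essentially the same route as the paper: compactness of $A$ from $D_k$ to $\mathcal{H}_\Lambda^{k-1}$ (which the paper gets from Lemma \ref{lemma:trace_class} or \cite{BJ20}), the invertibility of $\widetilde{P}_h(\omega)+A$ and $\widetilde{P}_h(i\nu)$ from Proposition \ref{proposition:inverse}, holomorphy of the family, and the analytic Fredholm theorem on the connected set $V$. Your factorization $\widetilde{P}_h(\omega)=(\widetilde{P}_h(\omega)+A)(I-(\widetilde{P}_h(\omega)+A)^{-1}A)$ is only a cosmetic variant (the paper instead deduces index $0$ from connectedness of $V$ and applies \cite[Theorem C.8]{dyatlov_zworski_book} directly), and it is in fact the same factorization the paper exploits later for the determinant argument.
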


\begin{proof}
From \cite[Proposition 21.3]{BJ20} or Lemma \ref{lemma:trace_class} below, we find that $A$ is a compact operator from $D_k$ to $\mathcal{H}_\Lambda^{k-1}$. Hence, it follows from Proposition \ref{proposition:inverse} that $\widetilde{P}_h(\omega) : D_k \to \mathcal{H}_\Lambda^{k-1}$ is Fredholm for $\omega \in V$.

Since $\widetilde{P}_h(\omega) - \widetilde{P}_h(0)$ is a holomorphic family of bounded operators from $\mathcal{H}_\Lambda^k$ to $\mathcal{H}_\Lambda^{k-1}$, we see that $\widetilde{P}_h(\omega)$ is a holomorphic family of operators from $D_k$ to $\mathcal{H}_\Lambda^{k-1}$, for $\omega$ in $V$. Since this operator is invertible for $\omega = i \nu$ and $V$ is connected, we find that the index of $\widetilde{P}_h(\omega)$ is $0$. Finally, the analytic Fredholm theorem \cite[Theorem C.8]{dyatlov_zworski_book} implies the existence of the meromorphic inverse $\omega \mapsto \widetilde{P}_h(\omega)^{-1}$, with poles of finite rank. 
\end{proof}

\subsection{Counting resonances}\label{subsec:counting_resonances}

We will now use the functional analytic framework from \S \ref{subsection:invertibility_and_Fredholm} to prove the point \ref{item:counting_resonances} in Proposition \ref{proposition:general_statement}. The bounds on the number of resonances from Theorems \ref{theorem:main} and \ref{theorem:schwarzschild} ultimately come from the following lemma.

\begin{lemma}\label{lemma:bound_extended_resonances}
Recall that $\delta \in ]0,\kappa[$. There is $C> 0$ such that, for every $h$ small enough, the number of $\omega$'s in the disk of center $0$ and radius $\delta$ such that $\widetilde{P}_h(\omega) : D_k \to \mathcal{H}_\Lambda^{k-1}$ is not invertible (counted with null multiplicity) is less than $C h^{-n}$.
\end{lemma}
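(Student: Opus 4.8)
The plan is to write $\widetilde{P}_h(\omega)$ as an invertible operator times ``$I$ minus a trace class operator'' and then to count zeros of the resulting Fredholm determinant via Jensen's inequality. Set $B(\omega) = \widetilde{P}_h(\omega) + A$, which by Proposition~\ref{proposition:inverse} is invertible from $D_k$ onto $\mathcal{H}_\Lambda^{k-1}$ for all $\omega \in V$, with $\n{B(\omega)^{-1}}$ bounded by a constant independent of $\omega \in V$ and of $h$ (once $h$ is small). Since $A$ sends $\mathcal{H}_\Lambda^{k-1}$, hence $D_k$, into $\mathcal{H}_\Lambda^{\infty} \subseteq D_k$, the operator $K(\omega) \coloneqq B(\omega)^{-1} A$ is a holomorphic family of operators on $D_k$, and on $D_k$ one has the factorisation $\widetilde{P}_h(\omega) = B(\omega)\p{I - K(\omega)}$ as operators from $D_k$ to $\mathcal{H}_\Lambda^{k-1}$; in particular $\widetilde{P}_h(\omega)$ is invertible if and only if $I - K(\omega) : D_k \to D_k$ is. The crucial input is that $A$, hence $K(\omega)$, is trace class on $D_k$ with $\n{K(\omega)}_{\tr} \leq \n{B(\omega)^{-1}} \n{A}_{\tr} = \mathcal{O}\p{h^{-n}}$: the bound $\n{A}_{\tr} = \mathcal{O}\p{h^{-n}}$ is the phase-space volume estimate for $A = S_\Lambda B_\Lambda a B_\Lambda T_\Lambda$ with $a$ compactly supported on $\Lambda$, which will be proved in Lemma~\ref{lemma:trace_class}. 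Thus $\omega \mapsto D_h(\omega) \coloneqq \det\p{I - K(\omega)}$ is a well-defined holomorphic function on $V$ whose zero set is exactly the set of $\omega \in V$ at which $\widetilde{P}_h(\omega)$ fails to be invertible; it is not identically zero, since $\widetilde{P}_h(i\nu)$, hence $I - K(i\nu)$, is invertible, so $D_h(i\nu) \neq 0$.

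Next I would record the two quantitative bounds on $D_h$. For the upper bound, $\va{\det(I-K)} \leq \exp\p{\n{K}_{\tr}}$ and the trace estimate give $\va{D_h(\omega)} \leq \exp\p{C h^{-n}}$ for all $\omega \in V$. For a lower bound at the point $i\nu$, note that on $D_k$ one has $\p{I - K(i\nu)}^{-1} = \widetilde{P}_h(i\nu)^{-1} B(i\nu)$, which is bounded uniformly in $h$ by Proposition~\ref{proposition:inverse} together with the boundedness, uniform in $h$, of $A$ and of $\widetilde{P}_h(i\nu) : D_k \to \mathcal{H}_\Lambda^{k-1}$. Writing $\p{I-K(i\nu)}^{-1} = I + \p{I-K(i\nu)}^{-1} K(i\nu)$ with $\p{I-K(i\nu)}^{-1} K(i\nu)$ trace class of norm $\mathcal{O}(h^{-n})$, we get $\va{D_h(i\nu)}^{-1} = \va{\det\p{I + (I-K(i\nu))^{-1}K(i\nu)}} \leq \exp\p{C h^{-n}}$, i.e. $\va{D_h(i\nu)} \geq \exp\p{-Ch^{-n}}$.

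Then I would apply Jensen's inequality. From the explicit description of $V$ in \S\ref{subsection:invertibility_and_Fredholm}, $V$ contains $\overline{D(i\nu,\nu+\delta)}$, and being open it contains $\overline{D(i\nu, \rho_2)}$ for some $\rho_2 > \nu + \delta$; set $\rho_1 = \nu + \delta$, so that $\va{\omega} < \delta$ forces $\va{\omega - i\nu} \leq \va{\omega} + \nu < \rho_1 < \rho_2$, whence $\set{\va{\omega}<\delta} \subseteq D(i\nu,\rho_1)$. Jensen's formula for $D_h$ on $D(i\nu,\rho_2)$ then bounds the number of zeros of $D_h$ in $D(i\nu,\rho_1)$, counted with multiplicity, by
\begin{equation*}
\frac{1}{\log(\rho_2/\rho_1)}\p{ \frac{1}{2\pi}\int_0^{2\pi} \log\va{D_h\p{i\nu + \rho_2 e^{i\theta}}} \mathrm{d}\theta - \log\va{D_h(i\nu)}} \leq \frac{2 C h^{-n}}{\log(\rho_2/\rho_1)} = \mathcal{O}\p{h^{-n}},
\end{equation*}
and a fortiori the number of zeros of $D_h$ in $\set{\va{\omega}<\delta}$ is $\mathcal{O}(h^{-n})$.

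It remains to identify the order of vanishing of $D_h$ at a point $\omega_0$ with the null multiplicity of $\widetilde{P}_h$ there, so that the count above is the one claimed. Where $D_h \neq 0$ one has $D_h'/D_h = - \tr\p{(I-K)^{-1} K'}$, so the order of vanishing of $D_h$ at $\omega_0$ equals $- \operatorname{Res}_{\omega_0} \tr\p{(I-K(\omega))^{-1} K'(\omega)}$. On the other hand, $\widetilde{P}_h(\omega)^{-1} \partial_\omega \widetilde{P}_h(\omega) = (I-K)^{-1} B^{-1} B'(I-K) - (I-K)^{-1} K'$ (using $\partial_\omega \widetilde{P}_h = \partial_\omega B$), and the residue at $\omega_0$ of the trace of the first term vanishes: modulo a holomorphic term this operator equals $(I-K)^{-1} K B^{-1} B'(I-K)$, which is trace class because $K$ is, and cyclicity of the trace turns its trace into $\tr\p{K B^{-1} B'}$, holomorphic at $\omega_0$. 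Hence $\operatorname{Res}_{\omega_0}\tr\p{\widetilde{P}_h(\omega)^{-1}\partial_\omega\widetilde{P}_h(\omega)}$ --- which is the trace of a finite rank operator, namely the null multiplicity of $\widetilde{P}_h$ at $\omega_0$ in the sense of the remark following Proposition~\ref{proposition:general_statement} --- equals $-\operatorname{Res}_{\omega_0}\tr\p{(I-K)^{-1}K'} = \operatorname{ord}_{\omega_0}(D_h)$; this is the standard Gohberg--Sigal identity, cf.\ \cite[\S C.4]{dyatlov_zworski_book}. Summing over the finitely many $\omega_0$ in $\set{\va{\omega}<\delta}$ gives the lemma. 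The substantive point in all of this is Lemma~\ref{lemma:trace_class} --- the $\mathcal{O}(h^{-n})$ trace-class bound for $A$, which is where the expected exponent $n$, i.e. the Weyl-type volume count, enters; granting it, the remainder is the standard Fredholm-determinant-plus-Jensen packaging.
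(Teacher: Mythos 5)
Your proposal is correct and follows essentially the same route as the paper: the same spectral determinant $\det\big(I-(\widetilde{P}_h(\omega)+A)^{-1}A\big)$, the same $\mathcal{O}(h^{-n})$ trace-class input from Lemma \ref{lemma:trace_class} combined with the uniform resolvent bounds of Proposition \ref{proposition:inverse} for the upper bound on the determinant and the lower bound at $i\nu$, the same Jensen-type zero count on a disc centred at $i\nu$, and the same identification (via a cyclicity/holomorphy argument equivalent to the paper's manipulation \eqref{eq:manipulation_resolvent}) of the order of vanishing with the null multiplicity.
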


Before being able to prove Lemma \ref{lemma:bound_extended_resonances}, we need to establish a bound on the trace class opertor norm of $A$, which is defined by \eqref{eq:defA}.

\begin{lemma}\label{lemma:trace_class}
The operator $A : D_k \to \mathcal{H}_\Lambda^{k-1}$ is trace class, with trace class norm $\mathcal{O}(h^{-n})$.
\end{lemma}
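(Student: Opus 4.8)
The plan is to estimate the trace class norm of $A = S_\Lambda B_\Lambda a B_\Lambda T_\Lambda$ by reducing it to a trace class estimate on the FBI side, i.e. for the operator $B_\Lambda a B_\Lambda$ acting on $L_0^2(\Lambda)$, and then to compute its trace class norm by a phase-space volume count. First I would recall that $S_\Lambda : L_{k-1}^2(\Lambda) \to \mathcal{H}_\Lambda^{k-1}$ and $T_\Lambda : D_k \to L_k^2(\Lambda)$ (or into $L_{k'}^2(\Lambda)$ for any $k'$) are bounded uniformly, or at worst with a polynomial loss in $h$ that is negligible against the gain we will extract; so it suffices to bound the trace class norm of $B_\Lambda a B_\Lambda$ as an operator on a fixed weighted space $L^2_\bullet(\Lambda)$, since the outer $S_\Lambda, T_\Lambda$ and the insertion of powers of $\brac{\va{\alpha}}$ only contribute bounded factors (the symbol $a$ is compactly supported in $\Lambda$, so powers of $\brac{\va{\alpha}}$ are harmless there). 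This reduction is the analogue of \cite[Proposition 2.21.3]{BJ20} and should be quotable or nearly so.

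Next I would recall the structure of $B_\Lambda$ from \cite[\S 2.3]{BJ20}: it is (up to negligible error) a Toeplitz-type projector whose reduced kernel is concentrated within $d_{KN}$-distance $\mathcal{O}(\sqrt{h})$ of the diagonal, with Gaussian decay in $d_{KN}(\alpha,\beta)^2/h$ and amplitude of order $h^{-n}$ near the diagonal. Consequently $B_\Lambda a B_\Lambda$ has reduced kernel essentially $\frac{1}{(2\pi h)^n} e^{\frac{i\Phi_{TS}(\alpha,\beta)}{h}} \tilde a(\alpha,\beta)$ with $\tilde a$ supported near the diagonal and equal to $a$ on it, by Lemma \ref{lemma:factorisation}. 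The trace class norm is then estimated in the standard way: write $B_\Lambda a B_\Lambda = (B_\Lambda a^{1/2})(a^{1/2} B_\Lambda)$ (using $a \geq 0$), so that its trace class norm is at most $\n{B_\Lambda a^{1/2}}_{HS}^2$, and the Hilbert--Schmidt norm squared of $B_\Lambda a^{1/2}$ is the integral over $\Lambda \times \Lambda$ of the modulus squared of its reduced kernel. Using the Gaussian off-diagonal decay this integral is bounded by $\frac{C}{h^n}\int_\Lambda a(\alpha)\, \mathrm{d}\alpha$: one variable is integrated against the Gaussian (contributing $h^n$ against the $h^{-2n}$ from the amplitude squared, hence $h^{-n}$), and the remaining integral is finite because $a$ is compactly supported. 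This gives the $\mathcal{O}(h^{-n})$ bound.

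The main obstacle I expect is bookkeeping the passage between the three Hilbert-space scales — from $D_k$ with its graph norm, through $\mathcal{H}_\Lambda^k$ and the FBI spaces $L^2_k(\Lambda)$, to $\mathcal{H}_\Lambda^{k-1}$ — while keeping the constants uniform in $h$, since the embeddings $C^\infty(X) \hookrightarrow \mathcal{H}_\Lambda^k$ themselves are not uniform in $h$. The key point that makes this work is that $A$ factors through $B_\Lambda a B_\Lambda$ with $a$ having \emph{fixed compact support} in $\Lambda$, so on the support of $a$ all the weights $\brac{\va{\alpha}}^{\pm 2k}$ and the exponential weight $e^{-2H/h}$ (with $H$ of order $1$, hence bounded on a compact set, with bound possibly growing in $1/h$ but only on a fixed compact region — this needs care and may force a harmless factor like $e^{C/h}$ which would ruin the bound, so I would instead note that $H$ is continuous and bounded \emph{independently of $h$} on compact sets by \cite[(2.9)]{BJ20}) are bounded by $h$-independent constants. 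A cleaner route, if the weight issue bites, is to absorb the weight into $a$ from the start: replace $a$ by $\tilde a = a \brac{\va{\alpha}}^{k-1} e^{-H/h} \cdot (\text{stuff})$ and observe the HS computation only sees $\int \tilde a$, still finite and still $h$-independent up to constants because of compact support. I would conclude by noting the trace class norm bound propagates through the bounded maps $S_\Lambda$, $T_\Lambda$ to give $\n{A}_{\mathrm{tr}} = \mathcal{O}(h^{-n})$ as claimed.

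\begin{proof}[Proof sketch of Lemma \ref{lemma:trace_class}]
Since $a \geq 0$, write $a = a^{1/2} \cdot a^{1/2}$ with $a^{1/2}$ a compactly supported symbol of order $0$ on $\Lambda$, so that
\begin{equation*}
A = \p{S_\Lambda B_\Lambda a^{1/2}} \p{a^{1/2} B_\Lambda T_\Lambda}.
\end{equation*}
By \cite[Proposition 2.4 and Remark 2.20]{BJ20}, $T_\Lambda$ is bounded from $D_k$ to $L_{k}^2(\Lambda)$ and $S_\Lambda$ is bounded from $L_{k-1}^2(\Lambda)$ to $\mathcal{H}_\Lambda^{k-1}$; moreover, since $a^{1/2}$ is compactly supported on $\Lambda$, multiplication by $a^{1/2}$ maps $L_k^2(\Lambda)$ boundedly to $L_0^2(\Lambda)$ and also maps $L_0^2(\Lambda)$ boundedly to $L_{k-1}^2(\Lambda)$, with norms bounded independently of $h$ (on the fixed compact support of $a$ the weights $\brac{\va{\alpha}}^{\pm}$ are bounded, and $H$ is bounded there independently of $h$ by \cite[(2.9)]{BJ20}). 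Hence it suffices to show that $a^{1/2} B_\Lambda$ and $B_\Lambda a^{1/2}$ are Hilbert--Schmidt on $L_0^2(\Lambda)$ with Hilbert--Schmidt norm $\mathcal{O}(h^{-n/2})$, since then $A$ is a product of two such operators and $\n{A}_{\mathrm{tr}} \leq \n{S_\Lambda B_\Lambda a^{1/2}}_{HS} \n{a^{1/2} B_\Lambda T_\Lambda}_{HS} = \mathcal{O}(h^{-n})$.

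By \cite[\S 2.3]{BJ20} (together with Lemma \ref{lemma:factorisation}), the reduced kernel of $B_\Lambda$ is, up to a negligible operator, of the form
\begin{equation*}
(B_\Lambda)_{\textup{red}}(\alpha,\beta) = \frac{1}{(2\pi h)^n} e^{\frac{i \Phi_{TS}(\alpha,\beta)}{h}} b(\alpha,\beta),
\end{equation*}
with $b \in S^0_{KN}(\Lambda \times \Lambda)$ supported in $\set{d_{KN}(\alpha,\beta) < \delta_0}$, and with $\im \Phi_{TS}(\alpha,\beta) \geq C^{-1} \brac{\va{\alpha}}\, d_{KN}(\alpha,\beta)^2$ for $d_{KN}(\alpha,\beta)$ small, as follows from \eqref{eq:imaginary_coercivity}. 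Therefore the reduced kernel of $a^{1/2} B_\Lambda$ is supported, in the $\alpha$ variable, in the compact support of $a^{1/2}$, and satisfies
\begin{equation*}
\va{\p{a^{1/2} B_\Lambda}_{\textup{red}}(\alpha,\beta)} \leq \frac{C}{h^n} \mathbf{1}_{\mathrm{supp}\, a}(\alpha)\, e^{-\frac{d_{KN}(\alpha,\beta)^2}{C h}} + \mathcal{O}\p{h^\infty (\brac{\va{\alpha}} + \brac{\va{\beta}})^{-\infty}},
\end{equation*}
using that $\brac{\va{\alpha}}$ is bounded below by a constant. Hence
\begin{equation*}
\n{a^{1/2} B_\Lambda}_{HS}^2 = \int_{\Lambda \times \Lambda} \va{\p{a^{1/2} B_\Lambda}_{\textup{red}}(\alpha,\beta)}^2 \mathrm{d}\alpha \, \mathrm{d}\beta \leq \frac{C}{h^{2n}} \int_{\mathrm{supp}\, a} \p{\int_\Lambda e^{-\frac{2 d_{KN}(\alpha,\beta)^2}{Ch}} \mathrm{d}\beta} \mathrm{d}\alpha + \mathcal{O}(h^\infty).
\end{equation*}
For each fixed $\alpha$ in the (compact) support of $a$, the inner Gaussian integral over $\beta \in \Lambda$ is $\mathcal{O}(h^n)$ (a Gaussian of width $\sqrt h$ in $n$ real and $n$ fibre variables), so the whole expression is $\mathcal{O}(h^{-n})$, i.e. $\n{a^{1/2} B_\Lambda}_{HS} = \mathcal{O}(h^{-n/2})$. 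The same argument applies to $B_\Lambda a^{1/2}$. Combining with the boundedness of $S_\Lambda$ and $T_\Lambda$ noted above, we conclude $\n{A}_{\mathrm{tr}} = \mathcal{O}(h^{-n})$.
\end{proof}
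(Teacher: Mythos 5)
Your argument is correct in substance, but it takes a genuinely different route from the paper. The paper reduces to $\widetilde{A} = B_\Lambda a B_\Lambda$, builds a parametrix for $\Box_N = B_\Lambda \brac{\va{\alpha}}^N B_\Lambda$ via \cite[Proposition 2.12]{BJ20}, and factorizes $\widetilde{A}$ so that the only factor that is not uniformly bounded is $\Box_N^{-1} B_\Lambda$; by \cite[Lemmas 2.16, 2.23 and 2.25]{BJ20} this Toeplitz-type operator of order $-N < -n$ is trace class with trace equal to the integral of its kernel on the diagonal, which is $\mathcal{O}(h^{-n})$ because $\brac{\va{\alpha}}^{-N}$ is integrable over phase space, and positivity/self-adjointness of $\Box_N^{-1}B_\Lambda$ on $\mathcal{H}_{\Lambda,\FBI}^0$ converts the trace into the trace norm. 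You instead write $A$ as a product of the two Hilbert--Schmidt factors $S_\Lambda B_\Lambda a^{1/2}$ and $a^{1/2} B_\Lambda T_\Lambda$ and bound each Hilbert--Schmidt norm by $\mathcal{O}(h^{-n/2})$ using the Gaussian off-diagonal decay of the reduced kernel of $B_\Lambda$; your $h^{-n}$ comes from the compact support of $a$ together with the $\sqrt{h}$-concentration of the Bergman-type kernel, rather than from the integrability of $\brac{\va{\alpha}}^{-N}$. Your route avoids the parametrix construction and the positivity trick, at the price of invoking the quantitative pointwise bound on the reduced kernel of $B_\Lambda$, which must anyway be extracted from \cite[\S 2.2--2.3]{BJ20}.

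Three small points to tighten, none of which affects the conclusion. First, the coercivity you cite is not \eqref{eq:imaginary_coercivity} (a statement about $\Phi_T$ at real points): what you need is that on $\Lambda \times \Lambda$, near the diagonal and for $\tau$ small, the effective exponent $H(\beta) - H(\alpha) + \im \Phi_{TS}(\alpha,\beta)$ is bounded below by $C^{-1}\brac{\va{\alpha}} d_{KN}(\alpha,\beta)^2$; the $H$-terms must be included, and this is the form in which the estimate appears in \cite{BJ20}. Second, $\sqrt{a}$ of a general smooth $a \geq 0$ need not be smooth; either choose $a$ to be a square (the choice in \eqref{eq:condition_a} is free) or simply observe that your argument only uses that $a^{1/2}$ is bounded with fixed compact support, since no symbolic calculus is applied to it. Third, the uniformity in $h$ of the multiplication maps between the spaces $L_k^2(\Lambda)$ holds because the exponential weight $e^{-2H/h}$ is common to all of them and cancels, not because $H$ is bounded on compact sets (that alone would only give a factor $e^{C/h}$); relatedly, for $k<0$ you cannot compose through the inclusion $L_k^2 \subseteq L_0^2$, so the Hilbert--Schmidt computation should be done directly from $L_k^2(\Lambda)$ to $L_0^2(\Lambda)$, where the extra factor $\brac{\va{\beta}}^{-2k}$ is harmless on the near-diagonal region (there $\brac{\va{\beta}}$ is comparable to $\brac{\va{\alpha}}$, which is bounded on the support of $a$) and is absorbed by the negligible remainder elsewhere.
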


\begin{proof}
We only need to prove that the operator $\widetilde{A} = B_\Lambda a B_\Lambda$ is trace class from $L_k^2(\Lambda)$ to $L_{k-1}^2(\Lambda)$, with trace class norm $\mathcal{O}(h^{-n})$.

For every $N > 0$, introduce the operator $\Box_N \coloneqq B_\Lambda \brac{\va{\alpha}}^N B_\Lambda$. Using \cite[Proposition 2.12]{BJ20} to make a parametrix construction, we see that there is a symbol $\sigma_N$ of order $-N$ on $\Lambda$ such that $\Box_N  B_\Lambda \sigma_N B_\Lambda - B_\Lambda$ and $ B_\Lambda \sigma_N B_\Lambda \Box_N - B_\Lambda$ are negligible operators, in particular they are $\mathcal{O}(h^\infty)$ as operators from $L_{s_1}^2(\Lambda) \to L_{s_2}^2(\Lambda)$ for any $s_1,s_2 \in \mathbb{R}$. Hence, for $h$ small enough, we get an inverse $\Box_N^{-1} : \mathcal{H}_{\Lambda,\FBI}^0 \to \mathcal{H}_{\Lambda,\FBI}^N$ for $\Box_N$, which is bounded uniformly in $h$ and satisfies the equation
\begin{equation*}
\begin{split}
\Box_N^{-1} B_\Lambda = B_\Lambda \sigma_N B_\Lambda & + B_\Lambda \sigma_N B_\Lambda(B_\Lambda - \Box_N B_\Lambda \sigma_N B_\Lambda) \\ & \qquad \qquad + (B_\Lambda - B_\Lambda \sigma_N B_\Lambda \Box_N) \Box_N^{-1} (B_\Lambda - \Box_N B_\Lambda \sigma_N B_\Lambda).
\end{split}
\end{equation*}
Thus, we see that $\Box_N^{-1} B_\Lambda$ is equal to $B_\Lambda \sigma_N B_\Lambda$ up to a negligible operator. Let us recall that $\mathcal{H}_{\Lambda,\FBI}^k$ is the image of $\mathcal{H}_\Lambda^k$ by $T_\Lambda$ (which is also the image of $L_k^2(\Lambda)$ by $B_\Lambda$).

Fix $N > n$. Notice that $\widetilde{A}$ is bounded, uniformly in $h$, as an operator from $L_k^2(\Lambda)$ to $L_{k+N}^2(\Lambda)$ (since $B_\Lambda$ is bounded on $L_k^2(\Lambda)$ and on $L_{k+N}^2(\Lambda)$). We can then write:
\begin{equation}\label{eq:A_produit}
\widetilde{A} = \iota \Box_{k}^{-1} \Box_N^{-1} B_\Lambda \Box_N \Box_k \widetilde{A}.
\end{equation}
On the left hand side, $\widetilde{A}$ is seen as an operator from $L_k^2(\Lambda)$ to $L_{k-1}^2(\Lambda)$. On the right hand side, $\widetilde{A}$ sends $L_k^2(\Lambda)$ into $\mathcal{H}_{\Lambda,\FBI}^{k+N}$, the operator $\Box_k$ sends $\mathcal{H}_{\Lambda,\FBI}^{k+N}$ into $\mathcal{H}_{\Lambda,\FBI}^N$, the operator $\Box_N$ sends $\mathcal{H}_{\Lambda,\FBI}^{N}$ into $\mathcal{H}_{\Lambda,\FBI}^0$, the operator $\Box_N^{-1}B_\Lambda$ sends $\mathcal{H}_{\Lambda,\FBI}^{0}$ into $\mathcal{H}_{\Lambda,\FBI}^0$, the operator $\Box_k^{-1}$ sends $\mathcal{H}_{\Lambda,\FBI}^{0}$ into $\mathcal{H}_{\Lambda,\FBI}^{k}$ and $\iota$ is the inclusion of $\mathcal{H}_{\Lambda,\FBI}^{k}$ into $L_{k-1}^2(\Lambda)$. With these mapping properties, the operators $\widetilde{A},\Box_k, \Box_N,\Box_k^{-1}$ and $\iota$ on the right hand side of \eqref{eq:A_produit} are bounded uniformly in $h$. From \cite[Lemma 2.25]{BJ20}, we see that $\Box_N^{-1}B_\Lambda$ is trace class on $L_0^2(\Lambda)$ (since $B_\Lambda \sigma_N B_\Lambda$ is). Moreover, its trace is given by the integral of its kernel on the diagonal, which is $\mathcal{O}(h^{-n})$. Indeed, $\Box_N^{-1}B_\Lambda$ is a ``complex FIO associated to $\Delta_\Lambda$ of order $-N$'' in the sense of \cite[Definition 2.5]{BJ20} as a consequence of \cite[Lemmas 2.16 and 2.23]{BJ20}. Since $\mathcal{H}_{\Lambda,\FBI}^0$ is a closed subset of $L_0^2(\Lambda)$, we see that $\Box_N^{-1}B_\Lambda$ is also a trace class operator from $\mathcal{H}_{\Lambda,\FBI}^0$ to itself, with the same trace. Moreover, $\Box_N^{-1}B_\Lambda$ is a positive self adjoint operator on $\mathcal{H}_{\Lambda,\FBI}^0$ with $h$ small enough (because $\brac{\va{\alpha}}^N$ is positive), so that its trace class norm coincides with its trace. This ends the proof of the lemma.
\end{proof}

We are now ready to prove Lemma \ref{lemma:bound_extended_resonances}.

\begin{proof}[Proof of Lemma \ref{lemma:bound_extended_resonances}]
For $\omega \in V$, let us introduce the spectral determinant
\begin{equation*}
f_h(\omega) = \det\p{I - (\widetilde{P}_h(\omega)+A)^{-1} A}.
\end{equation*}
Since $\widetilde{P}_h(\omega) - \widetilde{P}_h(0)$ is a holomorphic family of bounded operators from $\mathcal{H}_\Lambda^k$ to $\mathcal{H}_\Lambda^{k-1}$, we see that $\widetilde{P}_h(\omega) + A$ is a holomorphic family of operators from $D_k$ to $\mathcal{H}_\Lambda^{k-1}$. From Proposition \ref{proposition:inverse}, the operators $(\widetilde{P}_h(\omega) + A)^{-1} : \mathcal{H}_\Lambda^{k-1} \to D_k$ are bounded uniformly in $\omega \in V$, and thus it is a holomorphic family of operators in $V$. Consequently, the spectral determinant $f_h(\omega)$ is holomorphic in $V$.

The logarithmic derivative of $f_h$ is given by
\begin{equation*}
\begin{split}
\frac{f_h'(\omega)}{f_h(\omega)} & = \tr\Big(\p{I - \p{\widetilde{P}_h(\omega) + A}^{-1} A}^{-1} \p{\widetilde{P}_h(\omega) + A}^{-1} \\ & \qquad \qquad \qquad \qquad \qquad \qquad \qquad \qquad\times \partial_\omega \widetilde{P}_h(\omega)\p{\widetilde{P}_h(\omega) + A}^{-1} A \Big) \\
   & = \tr\Big(\p{\widetilde{P}_h(\omega) + A}^{-1} A \p{I - \p{\widetilde{P}_h(\omega)  + A}^{-1} A}^{-1} \\ & \qquad \qquad \qquad \qquad \qquad \qquad \qquad \qquad\times \p{\widetilde{P}_h(\omega) + A}^{-1}\partial_\omega \widetilde{P}_h(\omega)\Big).
\end{split}
\end{equation*}
Let us then write
\begin{equation}\label{eq:manipulation_resolvent}
\begin{split}
& \p{\widetilde{P}_h(\omega) + A}^{-1} A \p{I - \p{\widetilde{P}_h(\omega) + A}^{-1} A}^{-1} \\ & \qquad \qquad \qquad \qquad \qquad \qquad \qquad \qquad \qquad \times \p{\widetilde{P}_h(\omega) + A}^{-1}\partial_\omega \widetilde{P}_h(\omega) \\
    & = \p{\p{I - \p{\widetilde{P}_h(\omega) + A}^{-1} A}^{-1} - I}\p{\widetilde{P}_h(\omega) + A}^{-1}\partial_\omega \widetilde{P}_h(\omega) \\
    & = \widetilde{P}_h(\omega)^{-1} \partial_\omega \widetilde{P}_h(\omega) - \p{\widetilde{P}_h(\omega) + A}^{-1}\partial_\omega \widetilde{P}_h(\omega)
\end{split}
\end{equation}
Hence, if $\omega_0$ is in $V$, the residue of the family of operators \eqref{eq:manipulation_resolvent} at $\omega_0$ is the same as the residue of the family of operators $\omega \mapsto \widetilde{P}_h(\omega)^{-1} \partial_\omega \widetilde{P}_h(\omega)$. Consequently, the order of annulation of $f_h$ at $\omega_0$ coincides with the null multiplicity of $\omega \mapsto \widetilde{P}_h(\omega)$ at $\omega_0$.

Since $V$ is open, there is $\eta > 0$ such that the closed disk of center $i \nu$ and radius $\nu + \delta + 2 \eta$ is contained in $V$. Since the poles of $\widetilde{P}_h(\omega)^{-1}$ are isolated, we may choose $0 \leq  \eta' \leq \eta$ such that there is no poles of $\widetilde{P}_h(\omega)^{-1}$ on the circle of center $i \nu$ and radius $\nu + \delta+ \eta + \eta'$. For $r \geq 0$, let $n_h(r)$ denote the number of zeros of $f_h$ in the disk of center $i \nu$ and radius $r$. Notice that
\begin{equation}\label{eq:estimee_simple}
n_h(\nu + \delta) \leq \frac{\nu + \delta + \eta}{\eta} \int_{\nu + \delta}^{\nu + \delta+ \eta} \frac{n_h(r)}{r}\mathrm{d}r \leq \frac{\nu + \delta + \eta}{\eta} \int_{0}^{\nu + \delta + \eta+ \eta'} \frac{n_h(r)}{r}\mathrm{d}r.
\end{equation}
From Jensen's formula, we know that
\begin{equation}\label{eq:Jensen}
\int_{0}^{\nu + \delta + \eta+ \eta'} \frac{n_h(r)}{r}\mathrm{d}r \leq - \log \va{f_h(i \nu)} + \sup_{\va{\omega - i \nu} =  \nu + \delta + \eta + \eta'} \log \va{f_h(\omega)}.
\end{equation}

From Proposition \ref{proposition:inverse} and Lemma \ref{lemma:trace_class}, we know that the trace class norm of the operator $(\widetilde{P}_h(\omega) + A)^{-1} A$ is $\mathcal{O}(h^{-n})$ uniformly in $h$ and in $\omega$ on the circle of center $i \nu$ and radius $\nu + \delta + \eta+ \eta'$. Then, from \cite[Theorem IV.5.2]{book_trace_determinant}, we find that
\begin{equation}\label{eq:estimate_circle}
\sup_{\va{\omega - i \nu} = \nu + \delta + \eta+ \eta'} \log \va{f_h(\omega)} \leq C h^{-n},
\end{equation}
for some $C > 0$ and $h$ small enough. In order to estimate $\va{f_h(i \nu)}$ from below, let us write
\begin{equation*}
\begin{split}
& \p{I - (\widetilde{P}_h(i \nu) + A)^{-1}A}^{-1} = I + \widetilde{P}_h(i \nu)^{-1}A.
\end{split}
\end{equation*}
From Proposition \ref{proposition:inverse} and Lemma \ref{lemma:trace_class}, we see that the trace class operator norm of $(I - (\widetilde{P}_h(i \nu) + A)^{-1})^{-1} - I$ is $\mathcal{O}(h^{-n})$. Since
\begin{equation*}
f_h(i \nu)^{-1} = \det\p{\p{I - (\widetilde{P}_h(i \nu) + A)^{-1}}^{-1}},
\end{equation*}
we find using \cite[Theorem IV.5.2]{book_trace_determinant} again that 
\begin{equation}\label{eq:estimate_point}
- \log \va{f_h(i \nu)} \leq C h^{-n}
\end{equation}
for some $C > 0$ and $h$ small enough. From \eqref{eq:estimee_simple},\eqref{eq:Jensen}, \eqref{eq:estimate_circle} and \eqref{eq:estimate_point}, we find that $n_h(\nu +\delta)$ is $\mathcal{O}(h^{-n})$. The result follows since the disk of center $0$ and radius $\delta$ is contained in the disk of center $i \nu$ and radius $\nu + \delta$.
\end{proof}

\subsection{Summary}\label{subsec:summary}

Let us put together the definitions from \S \ref{subsec:choice_parameters} and \S \ref{subsec:spaces} and the results from \S \ref{subsection:invertibility_and_Fredholm} and \S \ref{subsec:counting_resonances} to check that Proposition \ref{proposition:general_statement} holds.

\begin{proof}[Proof of Proposition \ref{proposition:general_statement}]
We just need to collect facts that we already proved. We recall that the modification $P_h(\omega)$ of $\mathcal{P}_h(\omega)$ is given by \eqref{eq:modification_operators}. Recalling \eqref{eq:deffk}, we let $\mathcal{H}_1 = \set{ u \in \mathcal{D}'(M): e^{\frac{\psi}{h}} u \in D_k} = \set{ u \in \mathcal{F}_k : P_h(0) u \in \mathcal{F}_{k-1}}$ and $\mathcal{H}_2 = \mathcal{F}_{k-1}$ (for any value of $k \in \mathbb{R}$). 

The inclusions $C^\infty(X) \subseteq \mathcal{H}_j \subseteq \mathcal{D}'(X)$ for $j =1,2$ are given by Proposition \ref{proposition:inclusions_naturelles}. The fact that the elements of $\mathcal{H}_j$ are continuous near $\partial Y$ follows from Proposition \ref{proposition:continuity}.

All the properties that we need for $P_h(\omega) : \mathcal{H}_1 \to \mathcal{H}_2$ follow from the same properties for $\widetilde{P}_h(\omega): D_k \to \mathcal{H}_\Lambda^{k-1}$. The holomorphic dependence on $\omega$ follows from the remark after Proposition \ref{proposition:multiplication_formula}. The invertibility for $\omega = i \nu$ with a $\nu > 0$ is given by Proposition \ref{proposition:inverse}. Point \ref{item:fredholm_family} follows from Proposition \ref{proposition:meromorphic_inverse}. 

Finally, the counting bound \ref{item:counting_resonances} is given by Lemma \ref{lemma:bound_extended_resonances}.
\end{proof}

\bibliographystyle{alpha}
\bibliography{biblio.bib}

\begin{thebibliography}{BCHP11}

\bibitem[BCHP11]{loweroptimal}
David Borthwick, Tanya~J. Christiansen, Peter~D. Hislop, and Peter~A. Perry.
\newblock Resonances for manifolds hyperbolic near infinity: optimal lower
  bounds on order of growth.
\newblock {\em Int. Math. Res. Not. IMRN}, (19):4431--4470, 2011.

\bibitem[BGJ22]{BGJ}
Yannick~Guedes Bonthonneau, Colin Guillarmou, and Malo Jézéquel.
\newblock Scattering rigidity for analytic metrics.
\newblock {\em arXiv:2201.02100}, 2022.

\bibitem[BJ20]{BJ20}
Yannick~Guedes Bonthonneau and Malo Jézéquel.
\newblock {FBI} transform in {G}evrey classes and {A}nosov flows.
\newblock {\em arXiv:2001.03610}, 2020.

\bibitem[Bor08]{borthwickhyperbolic}
David Borthwick.
\newblock Upper and lower bounds on resonances for manifolds hyperbolic near
  infinity.
\newblock {\em Comm. Partial Differential Equations}, 33(7-9):1507--1539, 2008.

\bibitem[BP14]{warped}
David Borthwick and Pascal Philipp.
\newblock Resonance asymptotics for asymptotically hyperbolic manifolds with
  warped-product ends.
\newblock {\em Asymptot. Anal.}, 90(3-4):281--323, 2014.

\bibitem[CV03]{CuevasVodev}
Claudio Cuevas and Georgi Vodev.
\newblock Sharp bounds on the number of resonances for conformally compact
  manifolds with constant negative curvature near infinity.
\newblock {\em Comm. Partial Differential Equations}, 28(9-10):1685--1704,
  2003.

\bibitem[DR13]{dafermos_rodnianski}
Mihalis Dafermos and Igor Rodnianski.
\newblock Lectures on black holes and linear waves.
\newblock In {\em Evolution equations}, volume~17 of {\em Clay Math. Proc.},
  pages 97--205. Amer. Math. Soc., Providence, RI, 2013.

\bibitem[DZ19]{dyatlov_zworski_book}
Semyon Dyatlov and Maciej Zworski.
\newblock {\em Mathematical theory of scattering resonances}, volume 200 of
  {\em Graduate Studies in Mathematics}.
\newblock American Mathematical Society, Providence, RI, 2019.

\bibitem[FH00]{sketch}
Richard~G. Froese and Peter~D. Hislop.
\newblock On the distribution of resonances for some asymptotically hyperbolic
  manifolds.
\newblock In {\em Journ\'{e}es ``\'{E}quations aux {D}\'{e}riv\'{e}es
  {P}artielles'' ({L}a {C}hapelle sur {E}rdre, 2000)}, pages Exp. No. VII, 16.
  Univ. Nantes, Nantes, 2000.

\bibitem[For17]{forstneric_book}
Franc Forstneri\v{c}.
\newblock {\em Stein manifolds and holomorphic mappings}, volume~56 of {\em
  Ergebnisse der Mathematik und ihrer Grenzgebiete. 3. Folge. A Series of
  Modern Surveys in Mathematics [Results in Mathematics and Related Areas. 3rd
  Series. A Series of Modern Surveys in Mathematics]}.
\newblock Springer, Cham, second edition, 2017.
\newblock The homotopy principle in complex analysis.

\bibitem[Fri95]{friedzeta}
David Fried.
\newblock Meromorphic zeta functions for analytic flows.
\newblock {\em Comm. Math. Phys.}, 174(1):161--190, 1995.

\bibitem[GGK00]{book_trace_determinant}
Israel Gohberg, Seymour Goldberg, and Nahum Krupnik.
\newblock {\em Traces and determinants of linear operators}, volume 116 of {\em
  Operator Theory: Advances and Applications}.
\newblock Birkh\"{a}user Verlag, Basel, 2000.

\bibitem[GS91]{grauert_tube_I}
Victor Guillemin and Matthew Stenzel.
\newblock Grauert tubes and the homogeneous {M}onge-{A}mp\`ere equation.
\newblock {\em J. Differential Geom.}, 34(2):561--570, 1991.

\bibitem[GS92]{grauert_tube_II}
Victor Guillemin and Matthew Stenzel.
\newblock Grauert tubes and the homogeneous {M}onge-{A}mp\`ere equation. {II}.
\newblock {\em J. Differential Geom.}, 35(3):627--641, 1992.

\bibitem[Gui05]{guillarmou_resolvent}
Colin Guillarmou.
\newblock Meromorphic properties of the resolvent on asymptotically hyperbolic
  manifolds.
\newblock {\em Duke Math. J.}, 129(1):1--37, 2005.

\bibitem[GZ95a]{guizwor_resolvent}
Laurent Guillop\'{e} and Maciej Zworski.
\newblock Polynomial bounds on the number of resonances for some complete
  spaces of constant negative curvature near infinity.
\newblock {\em Asymptotic Anal.}, 11(1):1--22, 1995.

\bibitem[GZ95b]{guillope_zworski_upper_bound}
Laurent Guillop\'{e} and Maciej Zworski.
\newblock Upper bounds on the number of resonances for non-compact {R}iemann
  surfaces.
\newblock {\em J. Funct. Anal.}, 129(2):364--389, 1995.

\bibitem[GZ97]{guillopezworski}
Laurent Guillop\'{e} and Maciej Zworski.
\newblock Scattering asymptotics for {R}iemann surfaces.
\newblock {\em Ann. of Math. (2)}, 145(3):597--660, 1997.

\bibitem[GZ19]{galkowskiViscosityLimits0th2019}
Jeffrey Galkowski and Maciej Zworski.
\newblock Viscosity limits for 0th order pseudodifferential operators.
\newblock {\em arXiv:1912.09840}, 2019.
\newblock To appear in Comm. Pure Appl. Math.

\bibitem[GZ21]{galkowskiAnalyticHypoellipticityKeldysh2020}
Jeffrey Galkowski and Maciej Zworski.
\newblock Analytic hypoellipticity of {K}eldysh operators.
\newblock {\em Proc. Lond. Math. Soc. (3)}, 123(5):498--516, 2021.

\bibitem[H\"03]{horm1}
Lars H\"{o}rmander.
\newblock {\em The analysis of linear partial differential operators. {I}}.
\newblock Classics in Mathematics. Springer-Verlag, Berlin, 2003.
\newblock Distribution theory and Fourier analysis, Reprint of the second
  (1990) edition [Springer, Berlin; MR1065993 (91m:35001a)].

\bibitem[HS86]{helfferResonancesLimiteSemiclassique1986}
Bernard Helffer and Johannes Sj{\"o}strand.
\newblock {R\'esonances en limite semi-classique}.
\newblock {\em Mémoires de la Société mathématique de France}, 1:1--228,
  1986.

\bibitem[HX22]{small_black_holes}
Peter Hintz and Yu~Qing Xie.
\newblock Quasinormal modes of small {S}chwarzschild--de {S}itter black holes.
\newblock {\em J. Math. Phys.}, 63(1):Paper No. 011509, 26, 2022.

\bibitem[Mel84]{melrose_obstacle}
Richard Melrose.
\newblock Polynomial bound on the distribution of poles in scattering by an
  obstacle.
\newblock {\em Journ\'{e}es \'{e}quations auc d\'{e}riv\'{e}es partielles},
  1984.

\bibitem[MM87]{mazzeo_melrose}
Rafe~R. Mazzeo and Richard~B. Melrose.
\newblock Meromorphic extension of the resolvent on complete spaces with
  asymptotically constant negative curvature.
\newblock {\em J. Funct. Anal.}, 75(2):260--310, 1987.

\bibitem[Mor58]{Morrey_embedding}
Charles~B. Morrey.
\newblock The analytic embedding of abstract real-analytic manifolds.
\newblock {\em Ann. of Math. (2)}, 68:159--201, 1958.

\bibitem[MS75]{melin_sjostrand}
Anders Melin and Johannes Sj\"{o}strand.
\newblock Fourier integral operators with complex-valued phase functions.
\newblock In {\em Fourier integral operators and partial differential equations
  ({C}olloq. {I}nternat., {U}niv. {N}ice, {N}ice, 1974)}, pages 120--223.
  Lecture Notes in Math., Vol. 459. 1975.

\bibitem[Reg58]{regge1958analytic}
Tullio Regge.
\newblock Analytic properties of the scattering matrix.
\newblock {\em Il Nuovo Cimento (1955-1965)}, 8(5):671--679, 1958.

\bibitem[Rug92]{R1}
Hans~H. Rugh.
\newblock The correlation spectrum for hyperbolic analytic maps.
\newblock {\em Nonlinearity}, 5(6):1237--1263, 1992.

\bibitem[Rug96]{R2}
Hans~H. Rugh.
\newblock Generalized {{Fredholm}} determinants and {{Selberg}} zeta functions
  for {{Axiom A}} dynamical systems.
\newblock {\em Ergodic Theory and Dynamical Systems}, 16:805--819, 1996.

\bibitem[SBZ97]{sabarreto_zworski}
Ant\^{o}nio S\'{a}~Barreto and Maciej Zworski.
\newblock Distribution of resonances for spherical black holes.
\newblock {\em Math. Res. Lett.}, 4(1):103--121, 1997.

\bibitem[Sj{\"o}82]{sjostrand_asterisque}
Johannes Sj{\"o}strand.
\newblock Singularit\'{e}s analytiques microlocales.
\newblock In {\em Ast\'{e}risque, 95}, volume~95 of {\em Ast\'{e}risque}, pages
  1--166. Soc. Math. France, Paris, 1982.

\bibitem[Sj{\"o}96]{sjostrandDensityResonancesStrictly1996}
Johannes Sj{\"o}strand.
\newblock Density of {{Resonances}} for {{Strictly Convex Analytic Obstacles}}.
\newblock {\em Canadian Journal of Mathematics}, 48(2):397--447, 1996.

\bibitem[SZ91]{sz}
Johannes Sj\"{o}strand and Maciej Zworski.
\newblock Complex scaling and the distribution of scattering poles.
\newblock {\em J. Amer. Math. Soc.}, 4(4):729--769, 1991.

\bibitem[Tay11]{taylor_book}
Michael~E. Taylor.
\newblock {\em Partial differential equations {I}. {B}asic theory}, volume 115
  of {\em Applied Mathematical Sciences}.
\newblock Springer, New York, second edition, 2011.

\bibitem[Vas13a]{vasy_method}
Andr\'{a}s Vasy.
\newblock Microlocal analysis of asymptotically hyperbolic and {K}err-de
  {S}itter spaces (with an appendix by {S}emyon {D}yatlov).
\newblock {\em Invent. Math.}, 194(2):381--513, 2013.

\bibitem[Vas13b]{vasy_method_2}
Andr\'{a}s Vasy.
\newblock Microlocal analysis of asymptotically hyperbolic spaces and
  high-energy resolvent estimates.
\newblock In {\em Inverse problems and applications: inside out. {II}},
  volume~60 of {\em Math. Sci. Res. Inst. Publ.}, pages 487--528. Cambridge
  Univ. Press, Cambridge, 2013.

\bibitem[Vod92]{Vodev}
Georgi Vodev.
\newblock On the distribution of scattering poles for perturbations of the
  {L}aplacian.
\newblock {\em Ann. Inst. Fourier (Grenoble)}, 42(3):625--635, 1992.

\bibitem[Wan19]{WangAnalytic}
Yiran Wang.
\newblock Distribution of resonances for analytic asymptotically hyperbolic
  spaces.
\newblock {\em J. Spectr. Theory}, 9(2):651--675, 2019.

\bibitem[Zui17]{zuily_analytic}
Claude Zuily.
\newblock Real analyticity of radiation patterns on asymptotically hyperbolic
  manifolds.
\newblock {\em Appl. Math. Res. Express. AMRX}, (2):386--401, 2017.

\bibitem[Zwo89]{boundscattering}
Maciej Zworski.
\newblock Sharp polynomial bounds on the number of scattering poles.
\newblock {\em Duke Math. J.}, 59(2):311--323, 1989.

\bibitem[Zwo97]{zworski_multiplicity_zero}
Maciej Zworski.
\newblock Poisson formulae for resonances.
\newblock In {\em S\'{e}minaire sur les \'{E}quations aux {D}\'{e}riv\'{e}es
  {P}artielles, 1996--1997}, pages Exp. No. XIII, 14. \'{E}cole Polytech.,
  Palaiseau, 1997.

\bibitem[Zwo12]{zworski_book}
Maciej Zworski.
\newblock {\em Semiclassical analysis}, volume 138 of {\em Graduate Studies in
  Mathematics}.
\newblock American Mathematical Society, Providence, RI, 2012.

\bibitem[Zwo16]{vasy_revisited}
Maciej Zworski.
\newblock Resonances for asymptotically hyperbolic manifolds: {V}asy's method
  revisited.
\newblock {\em J. Spectr. Theory}, 6(4):1087--1114, 2016.

\end{thebibliography}

\end{document}